\definecolor{purple}{rgb}{0.65, 0, 1}
\definecolor{orange}{rgb}{1,.5,0}
\definecolor{purple}{rgb}{0.5, 0, 0.9}
\definecolor{green}{rgb}{0, 0.7, 0}
\definecolor{orange}{rgb}{1,.5,0}
\definecolor{gray}{rgb}{.6,.6,.6}
\newcommand{\corr}[1]{\textcolor{black}{#1}}
\providecommand{\U}[1]{\protect\rule{.1in}{.1in}}
\newcommand{\R}{{\mathbb R}}
\newcommand{\N}{{\mathbb N}}
\renewcommand{\t}{\tau}
\newcommand{\be}[1]{\begin{equation}\label{#1}}
\newcommand{\ee}{\end{equation}}
\renewcommand{\)}{\right)}
\let\pa\partial
\let\r\rho
\let\eps\varepsilon
\newcommand{\beq}{\begin{eqnarray}}
\newcommand{\eeq}{\end{eqnarray}}
\newcommand{\beqs}{\begin{eqnarray*}}
\newcommand{\eeqs}{\end{eqnarray*}}
\newcommand{\bequ}{\begin{equation}}
\newcommand{\eequ}{\end{equation}}
\def\r{\rho}
\newcommand{\cal}{\mathcal}
\def\e{\varepsilon}
\def\sgn{{\rm sgn}}
\newcommand{\W}{{\mathcal W}}
\newcommand{\WW}{{\mathcal K}}
\newcommand{\NN}{{\mathcal N}}
\newcommand{\E}{{\mathcal E}}
\newcommand{\D}{{\mathcal D}}
\newcommand{\supp}{\textnormal{supp}\,}
\let\pa\partial
\let\eps\varepsilon
\numberwithin{equation}{section}
\newtheorem{theorem}{Theorem}[section]
\newtheorem{remark}[theorem]{Remark}
\newtheorem{lemma}[theorem]{Lemma}
\newtheorem{definition}[theorem]{Definition}
\newtheorem{proposition}[theorem]{Proposition}
\newtheorem{corollary}[theorem]{Corollary}
\renewcommand*\env@matrix[1][\arraystretch]{%
  \edef\arraystretch{#1}%
  \hskip -\arraycolsep
  \let\@ifnextchar\new@ifnextchar
  \array{*\c@MaxMatrixCols c}}
\def\supp{\mathrm{supp}\,}
\def\dist{\mathrm{dist}}
\def\sgn{\mathrm{sgn}\,}
\def\K{W}
\def\R{\mathbb{R}}
\begin{document}

\title[]{Nonlinear Aggregation-Diffusion Equations: Radial Symmetry and Long Time Asymptotics}
\author{J. A. Carrillo$^1$, S. Hittmeir$^2$, B. Volzone$^3$, Y. Yao$^4$}
\thanks{$^1$Department of Mathematics, Imperial College London, SW7 2AZ London, United Kingdom. Email: carrillo@imperial.ac.uk}
\thanks{$^2$Faculty of Mathematics, University of Vienna, Oskar-Morgenstern-Platz 1, A-1090 Vienna, Austria. E-mail: sabine.hittmeir@univie.ac.at}
\thanks{$^3$Dipartimento di Ingegneria, Universi\-t\`{a} degli Studi di Napoli ``Parthenope'', 80143 Italia. E-mail: bruno.volzone@uniparthenope.it}
\thanks{$^4$School of Mathematics, Georgia Institute of Technology, 686 Cherry Street, Atlanta, GA 30332-0160 USA. E-mail: yaoyao@math.gatech.edu}

\begin{abstract}
We analyze under which conditions equilibration between two competing effects, repulsion modeled by nonlinear diffusion and attraction modeled by nonlocal interaction, occurs. This balance leads to continuous compactly supported radially decreasing equilibrium configurations for all masses. All stationary states with suitable regularity are shown to be radially symmetric by means of continuous Steiner symmetrization techniques. Calculus of variations tools allow us to show the existence of global minimizers among these equilibria. Finally, in the particular case of Newtonian interaction in two dimensions they lead to uniqueness of equilibria for any given mass up to translation and to the convergence of solutions of the associated nonlinear aggregation-diffusion equations towards this unique equilibrium profile up to translations as $t\to\infty$.
\end{abstract}

\maketitle


\section{Introduction}
The evolution of interacting particles and their equilibrium configurations has attracted the attention of many applied mathematicians and mathematical analysts for years. Continuum description of interacting particle systems usually leads to analyze the behavior of a mass density $\rho(t,x)$ of individuals at certain location $x\in \R^d$ and time $t\geq 0$. Most of the derived models result in aggregation-diffusion nonlinear partial differential equations through different asymptotic or mean-field limits \cite{O,BV,CCH}. The different effects reflect that equilibria are obtained by competing behaviors: the repulsion between individuals/particles is modeled through nonlinear diffusion terms while their attraction is integrated via nonlocal forces. This attractive nonlocal interaction takes into account that the presence of particles/individuals at a certain location $y\in \R^d$ produces a force at particles/individuals located at $x\in \R^d$ proportional to $-\nabla W(x-y)$ where the given interaction potential $W:\R^d\to \R$ is assumed to be radially symmetric and increasing consistent with attractive forces. The evolution of the mass density of particles/individuals is given by the nonlinear aggregation-diffusion equation of the form:
\begin{equation}
\label{aggregation}
\partial_t \rho = \Delta \rho^m + \nabla \cdot (\rho\nabla(\K*\rho)) \quad x\in\R^d \,, t\geq 0\,
\end{equation}
with initial data $\rho_0 \in L^1_+(\R^d)\cap L^m(\R^d)$. We will work with degenerate diffusions, $m>1$, that appear naturally in modelling repulsion with very concentrated repelling nonlocal forces \cite{O,BV}, but also with linear and fast diffusion ranges $0<m\leq 1$, which are also classical in applications \cite{patlak,Keller-Segel-70}. These models are ubiquitous in mathematical biology where they have been used as macroscopic descriptions for collective behavior or swarming of animal species, see \cite{ML1,BCM1, ML2,MCO,TBC,BCM} for instance, or more classically in chemotaxis-type models, see \cite{patlak,Keller-Segel-70,JL,horst,BDP,BCL,CC2} and the references therein.

On the other hand, this family of PDEs is a particular example of nonlinear gradient flows in the sense of optimal transport between mass densities, see \cite{AGS08,CMV03,Carrillo-McCann-Villani06}. The main implication for us is that there is a natural Lyapunov functional for the evolution of \eqref{aggregation} defined on the set of centered mass densities $\rho \in L^1_+(\R^d)\cap L^m(\R^d)$ given by
\begin{align}
&\mathcal{E} [\rho] = \frac{1}{m-1} \int_{\R^d}  \rho^m(x) \, dx + \frac12  \int_{\R^d\times\R^d} \rho(x)  W(x-y) \rho(y)\, dx\,dy
\label{eq:functional}\\
&\rho(x)\geq 0\, , \quad \int_{\R^d}\rho(x)\, dx = M\geq 0\, , \quad
\int_{\R^d} x\rho(x)\, dx = 0\,  , \nonumber
\end{align}
being the last integral defined in the improper sense, and if $m=1$ we replace the first integral of  $\mathcal{E} [\rho]$ by $ \int_{\R^d} \rho\log\rho dx$. Therefore, if the balance between repulsion and attraction occurs, these two effects should determine stationary states for \eqref{aggregation} including the stable solutions possibly given by local (global) minimizers of the free energy functional \eqref{eq:functional}.

Many properties and results have been obtained in the particular case of Newtonian attractive potential due to its applications in mathematical modeling of chemotaxis \cite{patlak,Keller-Segel-70} and gravitational collapse models \cite{SC}.
In the classical 2D Keller-Segel model with linear diffusion, it is known that equilibria can only happen in the critical mass case \cite{BCC2} while self-similar solutions are the long time asymptotics for subcritical mass cases \cite{BDP,CD}. For supercritical masses, all solutions blow up in finite time \cite{JL}. It was shown in \cite{K,CC} that degenerate diffusion with $m>1$ is able to regularize the 2D classical Keller-Segel problem, where solutions exist globally in time regardless of its mass, and each solution remain uniformly bounded in time. For the Newtonian attraction interaction in dimension $d\geq 3$, the authors in \cite{BCC} show that the value of the degeneracy of the diffusion that allows the mass to be the critical quantity for dichotomy between global existence and finite time blow-up is given by $m=2-2/d$. In fact, based on scaling arguments it is easy to argue that for $m>2-2/d$, the diffusion term dominates when density becomes large, leading to global existence of solutions for all masses. This result was shown in \cite{S} together with the global uniform bound of solutions for all times.

However, in all cases where the diffusion dominates over the aggregation, the long time asymptotics of solutions to \eqref{aggregation} have not been clarified, as pointed out in \cite{BL}. Are there stationary solutions for all masses when the diffusion term dominates? And if so, are they unique up to translations? Do they determine the long time asymptotics for \eqref{aggregation}? Only partial answers to these questions are present in the literature, which we summarize below.

To show the existence of stationary solutions to \eqref{aggregation}, a natural idea is to look for the global minimizer of its associated free energy functional \eqref{eq:functional}. For the 3D case with Newtonian interaction potential and $m>4/3$, Lions' concentration-compactness principle \cite{Lions} gives the existence of a global minimizer of \eqref{eq:functional} for any given mass.  The argument can be extended to kernels that are no more singular than Newtonian potential in $\mathbb{R}^d$ at the origin, and have slow decay at infinity.  The existence result is further generalized by \cite{Bedrossian} to a broader classes of kernels, which can have faster decay at infinity. In all the above cases, the global minimizer of \eqref{eq:functional} corresponds to a stationary solution to \eqref{aggregation} in the sense of distributions. In addition, the global minimizer must be radially decreasing due to Riesz's rearrangement theorem.

Regarding the uniqueness of stationary solutions to \eqref{aggregation}, most of the available results are for Newtonian interaction. For the 3D Newtonian potential with $m>4/3$,  for any given mass, the authors in \cite{LY} prove uniqueness of stationary solutions to \eqref{aggregation} among radial functions, and their method can be generalized to the Newtonian potential in $\mathbb{R}^d$ with $m>2-2/d$. For the 3D case with $m>4/3$, \cite{Strohmer} show that all compactly supported stationary solutions must be radial up to a translation, hence obtaining uniqueness of stationary solutions among compactly supported functions. The proof is based on moving plane techniques, where the compact support of the stationary solution seems crucial, and it also relies on the fact that the Newtonian potential in 3D converges to zero at infinity. Similar results are obtained in \cite{CCV} for 2D Newtonian potential with $m>1$ using an adapted moving plane technique. Again, the uniqueness result is based on showing radial symmetry of compactly supported stationary solutions. Finally, we mention that uniqueness of stationary states has been proved for general attracting kernels in one dimension in the case $m=2$, see \cite{BDF}. To the best of our knowledge, even for Newtonian potential, we are not aware of any results showing that all stationary solutions are radial (up to a translation).

Previous results show the limitations of the present theory: although the existence of stationary states for all masses is obtained for quite general potentials, their uniqueness, crucial for identifying the long time asymptotics, is only known in very particular cases of diffusive dominated problems. The available uniqueness results are not very satisfactory due to the compactly supported restriction on the uniqueness class imposed by the moving plane techniques. And thus, large time asymptotics results are not at all available due to the lack of mass confinement results of any kind uniformly in time together with the difficulty of identifying the long time limits of sequences of solutions due to the restriction on the uniqueness class for stationary solutions.

If one wants to show that the long time asymptotics are uniquely determined by the initial mass and center of mass, a clear strategy used in many other nonlinear diffusion problems, see \cite{Vazquezbook} and the references therein, is the following: one first needs to prove that all stationary solutions are radial up to a translation in a non restrictive class of stationary solutions, then one has to show uniqueness of stationary solutions among radial solutions, and finally this uniqueness will allow to identify the limits of time diverging sequences of solutions, if compactness of these sequences is shown in a suitable functional framework. Let us point out that comparison arguments used in standard porous medium equations are out of the question here due to the lack of maximum principle by the presence of the nonlocal term.

In this work, we will give the first full result of long time asymptotics for a diffusion dominated problem using the previous strategy without smallness assumptions of any kind. More precisely, we will prove that all solutions to the 2D Keller-Segel equation with $m>1$ converge to the global minimizer of its free energy using the previous strategy. The first step will be to show radial symmetry of stationary solutions to \eqref{aggregation} under quite general assumptions on $W$ and the class of stationary solutions. Let us point out that standard rearrangement techniques fail in trying to show radial symmetry of general stationary states to \eqref{aggregation} and they are only useful for showing radial symmetry of global minimizers, see \cite{CCV}. Comparison arguments for radial solutions allow to prove uniqueness of radial stationary solutions in particular cases \cite{LY,KY}. However, up to our knowledge, there is no general result in the literature about radial symmetry of stationary solutions to nonlocal aggregation-diffusion equations.

Our first main result is that all stationary solutions of \eqref{aggregation}, with no restriction on $m>0$, are radially decreasing up to translation by a fully novel application of continuous Steiner symmetrization techniques for the problem \eqref{aggregation}. Continuous Steiner symmetrization has been used in calculus of variations \cite{Brock} for replacing rearrangement inequalities \cite{BLL,LL,Morgan}, but its application to nonlinear nonlocal aggregation-diffusion PDEs is completely new. Most of the results present in the literature using continuous Steiner symmetrization deal with functionals of first order, i.e. functionals involving a power of the modulus of the gradient of the unknown, see \cite[Corollary 7.3]{Brock2} for an application to $p$-Laplacian stationary equations, and in \cite[Section II]{Ka2} and \cite{Ka11,Brock}, while in our case the functional \eqref{eq:functional} is purely of zeroth order. The decay of the attractive Newtonian potential interaction term in $d\geq 3$ follows from \cite[Corollary 2]{Brock} and \cite{Morgan}, which is the only result related to our strategy.

We will construct a curve of measures starting from a stationary state $\rho$ using continuous Steiner symmetrization such that the functional \eqref{eq:functional} decays strictly at first order along that curve unless the base point $\rho$ is radially symmetric, see Proposition \ref{prop:int}. However, the functional \eqref{eq:functional} has at most a quadratic variation when
$\rho$ is a stationary state as the first term in the Taylor expansion cancels. This leads to a contradiction unless the stationary state is radially symmetric. The construction of this curve needs a non-classical technique of slowing-down the velocities of the level sets for the continuous Steiner symmetrization in order to cope with the possible compact support of stationary states in the degenerate case $m>1$, see Proposition \ref{prop:steiner}. This first main result is the content of Section 2 in which we specify the assumptions on the interaction potential and the notion of stationary solutions in details. We point out that the variational structure of \eqref{aggregation} is crucial to show the radially decreasing property of stationary solutions.

The result of radial symmetry for general stationary solutions to \eqref{aggregation} is quite striking in comparison to other gradient flow models in collective behavior based on the competition of attractive and repulsive effects via nonlocal interaction potentials. Actually, there exist numerical and analytical evidence in \cite{KSUB,bigring,BCLR2} that there should be stationary solutions of these fully nonlocal interaction models which are not radially symmetric despite the radial symmetry of the interaction potential. Our first main result shows that this break of symmetry does not happen whenever nonlinear diffusion is chosen to model very strong localized repulsion forces, see \cite{TBC}. Symmetry breaking in nonlinear diffusion equations without interactions has also received a lot of attention lately related to the Caffarelli-Kohn-Nirenberg inequalities, see \cite{DolEstLos,DolEstLos2}. Another consequence of our radial symmetry results is the lack of non-radial local minimizers, and even non-radial critical points, of the free energy functional \eqref{eq:functional}, which is not at all obvious.

We also generalize our radial symmetry result when \eqref{aggregation} has an additional term $\nabla\cdot(\rho\nabla V)$ on the right-hand side, where $V$ is a confining potential (see Section~\ref{sec_v} for precise conditions on $V$), in the sense that it plays the role of preventing particles to drift away in the presence of the diffusion. It is known that with the extra term, the corresponding energy functional has an additional term $\int V(x) \rho(x)\, dx$. The particular case of quadratic confinement $V(x)=\tfrac{|x|^2}2$ is important since it leads to the free energy functional associated to \eqref{aggregation} with homogeneous kernels in self-similar variables \cite{CT00,CCH1,CCH2} and thus, characterizing the self-similar profiles for those problems.

Finally, let us remark that our radial symmetry result applies to stationary states of \eqref{aggregation} for any $m>0$ regardless of being in the diffusion dominated case or not. As soon as stationary states of \eqref{aggregation} exist under suitable assumptions on the interaction potential $W$, and the confining potential $V$ if present, they must be radially symmetric up to a translation. This fact makes our result applicable to the fair-competition cases \cite{BCL,BCC2,BCM08} and the aggregation-dominated cases, see \cite{LS,CLW,CW} with degenerate, linear or fast diffusion. Section \ref{sec:gradient_flow} is finally devoted to deal with the most restrictive case of $\lambda$-convex potentials and the Newtonian potential with $m\geq 1-\tfrac1{d}$. In these cases, we can directly make use of the key first-order decay result of the interaction energy along Continuous Steiner symmetrization curves in Proposition \ref{prop:int}, bypassing the technical result in Proposition \ref{prop:steiner}, in order to give a nice shortcut of the proof of our main Theorem 2.2 based on gradient flow techniques.

We next study more properties of particular radially decreasing stationary solutions. We make use of the variational structure to show the existence of global minimizers to \eqref{eq:functional} under very general hypotheses on the interaction potential $W$ and $m>1$. In Section 3, we show that these global minimizers are in fact radially decreasing continuous functions, compactly supported if $m>1$. These results fully generalize the results in \cite{Strohmer,CCV}. Putting together Sections 2 and 3, the uniqueness and full characterization of the stationary states is reduced to uniqueness among the class of radial solutions. This result is known in the case of Newtonian attraction kernels \cite{LY}.

Finally, we make use of the uniqueness among translations for any given mass of stationary solutions to \eqref{aggregation} to obtain the second main result of this work, namely to answer the open problem of the long time asymptotics to \eqref{aggregation} with Newtonian interaction in 2D and $m>1$. This is accomplished in Section 4 by a compactness argument for which one has to extract the corresponding uniform in time bounds and a careful treatment of the nonlinear terms and dissipation while taking the limit $t\to\infty$. We do not know how to obtain a similar result for Newtonian interaction in $d\geq 3$ due to the lack of uniform in time mass confinement bounds in this case. We essentially cannot show that mass does not escape to infinity while taking the limit $t\to\infty$. However, the compactness and characterization of stationary solutions is still valid in that case.

The present work opens new perspectives to show radial symmetry for stationary solutions to nonlocal aggregation-diffusion problems. While the hypotheses of our result to ensure existence of global radially symmetric minimizers of \eqref{eq:functional}, and in turn of stationary solutions to \eqref{aggregation}, are quite general, we do not know yet whether there is uniqueness among radially symmetric stationary solutions (with a fixed mass) for general non-Newtonian kernels. We even do not have available uniqueness results of radial minimizers beyond Newtonian kernels. Understanding if the existence of radially symmetric local minimizers, that are not global, is possible for functionals of the form \eqref{eq:functional} with radial interaction potential is thus a challenging question. Concerning the long-time asymptotics of \eqref{aggregation}, the lack of a novel approach to find confinement of mass beyond the usual virial techniques and comparison arguments in radial coordinates hinders the advance in their understanding even for Newtonian kernels with $d\geq 3$. Last but not least, our results open a window to obtain rates of convergence towards the unique equilibrium up to translation for the Newtonian kernel in 2D. The lack of general convexity of this variational problem could be compensated by recent results in a restricted class of functions, see \cite{CLM}. However, the problem is quite challenging due to the presence of free boundaries in the evolution of compactly supported solutions to \eqref{aggregation} that rules out direct linearization techniques as in the linear diffusion case \cite{CD}.


\section{Radial Symmetry of stationary states with degenerate diffusion}

Throughout this section, we assume that $m>0$, and $\K$ satisfies the following four assumptions:
\begin{enumerate}[(a)]
\item[(K1)] $\K$ is attracting, i.e., $\K(x) \in C^1(\mathbb{R}^d \setminus \{0\})$ is radially symmetric
\[W(x)=\omega(|x|)=\omega(r)\] and $\omega'(r)>0$ for all $r>0$ with $\omega(1)=0$.
\item[(K2)] $\K$ is no more singular than the Newtonian kernel in $\mathbb{R}^d$ at the origin, i.e., there exists some $C_w>0$ such that $\omega'(r)
    \leq
    C_w r^{1-d}$ for $r\leq 1$.
\item[(K3)] There exists some $C_w>0$ such that $\omega'(r) \leq C_w$ for all $r>1$.
\item[(K4)] Either $\omega(r)$ is bounded for $r\geq 1$  or there exists $C_w>0$ such that for all $a,b\geq0$:
\begin{equation*}
\omega_+(a+b)\leq C_w (1+\omega(1+a)+\omega(1+b))\,.
\end{equation*}
\end{enumerate}
As usual, $\omega_\pm$ denotes the positive and negative part of $\omega$ such that $\omega=\omega_+-\omega_-$. In particular, if $\K=-\NN$, modulo the addition of a constant factor, is the attractive Newtonian potential, where $\NN$ is the fundamental solution of $-\Delta$ operator in $\mathbb{R}^d$, then $\K$ satisfies all the assumptions. Since the equation \eqref{aggregation} does not change by adding a constant to the potential $W$, we will consider that the potential $W$ is defined modulo additive constants from now on.

We denote by $L^{1}_{+}(\R^{d})$ the set of all nonnegative functions in $L^{1}(\R^{d})$. Let us start by defining precisely stationary states to the aggregation equation \eqref{aggregation} with a potential satisfying (K1)-(K4).

\begin{definition}\label{stationarystates}
Given $\rho_s \in L^1_+(\mathbb{R}^d)\cap L^\infty(\mathbb{R}^d)$ we call it a \textbf{stationary state} for the evolution problem \eqref{aggregation} if $\rho_s^{m}\in H^1_{loc} (\mathbb{R}^d)$, $\nabla \psi_s:=\nabla W \ast \rho_s\in L^1_{loc} (\mathbb{R}^d)$, and it satisfies
\begin{equation}\label{steady}
\nabla \rho_s^{m} = - \rho_s\nabla \psi_s \text{ in } \mathbb{\R}^d
\end{equation}
in the sense of distributions in $\R^d$.
\end{definition}

Let us first note that $\nabla\psi_s$ is globally bounded under the assumptions (K1)-(K3). To see this, a direct decomposition in near- and far-field sets yields
\begin{align}
|\nabla \psi_s(x)| &\leq \int_{\R^d} | \nabla \K(x-y) | \rho_s(y) \,dy \leq C_w \int_{\mathcal{A}}\frac{1}{|x-y|^{d-1}} \rho_s(y) \,dy +
          C_w \int_{\mathcal{B}} \rho_s(y) \,dy \label{lipspot0}\\
&\leq C_w\int_{\mathcal{A}} \frac{1}{|x-y|^{d-1}} dy \, \|\rho_s\|_{L^\infty(\R^d)}+C_w \|\rho_s\|_{L^1(\R^d)} \leq C (\|\rho_s\|_{L^1(\R^d)}+\|\rho_s\|_{L^\infty(\R^d)})\,. \nonumber
\end{align}
where we split the integrand into the sets
$\mathcal{A} := \{ y : |x - y| \leq 1 \}$ and $\mathcal{B} :=
\R^d \setminus \mathcal{A}$, and apply the assumptions (K1)-(K3).

Under the additional assumptions (K4) and $\omega(1+|x|)\rho_s \in L^1(\mathbb{R}^d)$, we will show that the potential function $\psi_s(x) = W*\rho_s(x)$ is also locally bounded. First, note that (K1)-(K3) ensures that $|\omega(r)| \leq \tilde C_w \phi(r)$ for all $r\leq 1$ with some $\tilde C_w>0$, where
\begin{equation}\label{defphi}
\phi(r):=\left\{\begin{array}{lr}r^{2-d}-1 & \mbox{if } d\geq 3\\
                                             -\log(r)  & \mbox{if } d=2 \\
                                                    1-r & \mbox{if } d=1 \end{array}\right..
\end{equation}
Hence we can again perform a decomposition in near- and far-field sets and obtain
\begin{align}
|\psi_s(x)| &\leq \int_{\R^d} |\K(x-y) | \rho_s(y) \,dy \leq C_w \int_{\mathcal{A}}\phi(|x-y|) \rho_s(y) \,dy + \int_{\mathcal{B}} \omega_+(|x|+|y|) \rho_s(y) \,dy \nonumber\\
&\leq C_w \int_{\mathcal{A}} \phi(|x-y|) dy \, \|\rho_s\|_{L^\infty(\R^d)}+C_w(1+\omega(1+|x|))\|\rho_s\|_{L^1(\R^d)} + C_w \|\omega(1+|x|)\rho_s\|_{L^1(\R^d)}  \nonumber\\
&\leq C (\|\rho_s\|_{L^1(\R^d)}+\|\rho_s\|_{L^\infty(\R^d)})+\omega(1+|x|)\|\rho_s\|_{L^1(\R^d)} + C_w \|\omega(1+|x|)\rho_s\|_{L^1(\R^d)}\,. \label{lipspot}
\end{align}

Our main goal in this section is the following theorem.
\begin{theorem}
\label{thm:unique}
Assume that $W$ satisfies $(K1)$-$(K4)$ and $m>0$. Let $\rho_s \in L^1_+(\mathbb{R}^d)\cap L^\infty(\mathbb{R}^d)$ with $\omega(1+|x|)\rho_s \in L^1(\mathbb{R}^d)$ be a non-negative stationary state of \eqref{aggregation} in the sense of Definition {\rm \ref{stationarystates}}. Then $\rho_s$ must be radially decreasing up to a translation, i.e. there exists some $x_0\in \mathbb{R}^d$, such that $\rho_s(\cdot - x_0)$ is radially symmetric, and $\rho_s(|x-x_0|)$ is non-increasing in $|x-x_0|$.
\end{theorem}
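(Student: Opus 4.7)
The plan is to deform $\rho_s$ along a continuous Steiner symmetrization curve and combine two facts: (i) this symmetrization strictly decreases the free energy $\E$ unless $\rho_s$ is already invariant; (ii) the stationary-state condition forces $\E$ to be stationary at $\rho_s$. This will imply $\rho_s$ is Steiner symmetric with respect to every hyperplane, and therefore, after a translation, radially decreasing. The prerequisite is to convert the distributional identity $\na\rho_s^{m}=-\rho_s\na\psi_s$ into the Euler--Lagrange relation
\[
\frac{m}{m-1}\rho_s^{m-1}(x)+\psi_s(x)=C\qquad\text{on each connected component of }\supp\rho_s,
\]
and to argue that the constant can be taken uniform across components using the continuity of $\psi_s$ (inherited from the bound \eqref{lipspot}) together with the regularity of $\rho_s$ following from $\rho_s^{m}\in H^{1}_{\mathrm{loc}}$ and $\rho_s\in L^{\infty}$.

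Next, I would fix an affine hyperplane $H\subset\R^{d}$ and let $\rho^{\tau}:=S^{H}_{\tau}\rho_s$ for $\tau\ge 0$ be the continuous Steiner symmetrization of Brock; it is equimeasurable with $\rho_s$, so mass and $\int(\rho^{\tau})^{m}\,dx$ are conserved and the diffusive contribution to $\E[\rho^{\tau}]$ is constant in $\tau$. For the interaction part, I use the layer--cake identity on $W$; by (K1) the sub- and super-level sets of $W$ are Euclidean balls centered at the origin, which reduces the interaction bilinear form to a positive superposition of ball-indicator pairings
\[
\iint \rho^{\tau}(x)\,\mathbf{1}_{B_r}(x-y)\,\rho^{\tau}(y)\,dx\,dy.
\]
By \cite[Corollary~2]{Brock} and \cite{Morgan}, each such pairing is non-decreasing in $\tau$, strictly so unless $\rho_s$ is Steiner symmetric with respect to $H$; the strict monotonicity of $\omega$ in (K1) ensures that the level-set decomposition detects every direction of asymmetry. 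The integrability hypotheses (K2)--(K4) together with $\omega(1+|\cdot|)\rho_s\in L^{1}(\R^{d})$ and the bound \eqref{lipspot} justify Fubini and integration in $r$, yielding $\E[\rho^{\tau}]\le\E[\rho_s]$ for all $\tau\ge 0$, with strict inequality for $\tau>0$ unless $\rho_s$ is already Steiner symmetric with respect to $H$.

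To close the argument, I would differentiate $\E[\rho^{\tau}]$ at $\tau=0^{+}$ (or work with difference quotients) and use the Euler--Lagrange relation to obtain
\[
\left.\frac{d}{d\tau}\E[\rho^{\tau}]\right|_{\tau=0^{+}}=\int\Bigl(\frac{m}{m-1}\rho_s^{m-1}+\psi_s-C\Bigr)\,\pa_{\tau}\rho^{\tau}\big|_{\tau=0^{+}}\,dx\ge 0,
\]
because the integrand vanishes on $\supp\rho_s$ while $\pa_{\tau}\rho^{\tau}|_{\tau=0^{+}}\ge 0$ on its complement (continuous Steiner symmetrization can only deposit mass into empty regions), and on that complement $\psi_s-C\ge 0$ holds by passing to the limit at $\pa\supp\rho_s$ together with the continuity of $\psi_s$. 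Combined with the strict decrease from the previous step, this forces $\rho_s$ to be Steiner symmetric with respect to $H$. Finally, selecting a common center $x_{0}$ via the standard mass-bisection argument and applying the above to every hyperplane through $x_{0}$ yields that $\rho_s(\cdot-x_{0})$ is radially symmetric and non-increasing.

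The main obstacle I foresee is ensuring the Euler--Lagrange constant $C$ is the same on every connected component of $\supp\rho_s$, and relatedly that $\tfrac{m}{m-1}\rho_s^{m-1}+\psi_s\ge C$ holds a.e.\ off the support: both properties are automatic for global minimizers but require additional work for a stationary state in the sense of Definition~\ref{stationarystates}. A secondary technical difficulty is the differentiability of $\E[\rho^{\tau}]$ at $\tau=0^{+}$, since the Brock curve is only Lipschitz in $\tau$; it may be cleaner to bypass the derivative altogether by comparing a finite-$\tau$ strict symmetrization gain against the critical-point estimate $\E[\rho^{\tau}]\ge\E[\rho_s]+o(\tau)$ coming from the Euler--Lagrange relation, and thereby reach a direct contradiction.
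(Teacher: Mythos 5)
Your overall strategy (perturb the stationary state along a continuous Steiner symmetrization and play a strict energy decrease against near-stationarity of $\E$) is indeed the paper's strategy, but as written the proposal has two genuine gaps, and they are precisely the points where the paper has to do real work. First, for a stationary state in the sense of Definition \ref{stationarystates} you only obtain $\tfrac{m}{m-1}\rho_s^{m-1}+\psi_s=C_i$ on each connected component of $\supp\rho_s$, with constants that may genuinely differ from component to component, and you obtain \emph{no} inequality off the support. Your first-variation computation requires both a common constant $C$ and $\psi_s\ge C$ outside $\supp\rho_s$; these are properties of global minimizers (they appear in Theorem \ref{idenminimth}) and cannot be derived from the stationarity condition alone. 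You flag this as an obstacle, but resolving it is the crux, not a technicality. The paper sidesteps it entirely: it replaces $S^\t$ by a modified symmetrization $\tilde S^\t$ in which the level set at height $h$ moves with speed $v(h)\sim (h/h_0)^{m-1}$; combined with the regularity $|\nabla\rho_s^{m-1}|\le C_0$ from Lemma \ref{lem:regularity}, this forces the perturbation to stay inside $\supp\rho_s$, to satisfy the pointwise bound $|\mu(\t,x)-\rho_s(x)|\le C_1\rho_s(x)\t$, and to conserve mass on each connected component (Proposition \ref{prop:steiner}). Then the first-order term in $\E[\mu(\t)]-\E[\rho_s]$ vanishes identically, component by component, with no exterior inequality and no common constant needed.

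Second, your contradiction needs a quantitative rate, and the references you invoke do not supply one. Brock's Corollary 2 and Morgan give monotonicity of the interaction term along $S^\t$ (strictness only in special situations such as characteristic functions), but no lower bound on the decrease; meanwhile your ``critical-point estimate'' $\E[\rho^\t]\ge\E[\rho_s]+o(\t)$ is itself unproven, since $\E$ is not differentiable along the Steiner curve in any obvious sense and, for the unmodified $S^\t$, mass immediately leaves $\supp\rho_s$ where the Euler--Lagrange relation gives no control, so the first-order term need not even have a sign. The paper's resolution is a matching pair of estimates: a strict \emph{linear} decrease $\E[\mu(\t)]\le\E[\rho_s]-c_0\t$, proved via the new one-dimensional interaction Lemmas \ref{lem1} and \ref{lem:dIopenset} and Proposition \ref{prop:int} (estimating pairs of level-set components lying on opposite sides of the hyperplane), against a genuine $O(\t^2)$ bound on $|\E[\mu(\t)]-\E[\rho_s]|$ obtained from the pointwise control above; one must also check that the modified flow, which no longer exactly preserves $\int\rho^m$, still does not increase it. Without these two quantitative inputs the comparison of a ``finite-$\t$ gain'' against an $o(\t)$ error does not close.
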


Before going into the details of the proof, we briefly outline the strategy here.
Assume there is a stationary state $\rho_s$ which is \textbf{not} radially
decreasing under any translation. To obtain a contradiction, we consider the free energy functional $\mathcal{E}[\rho]$ associated with
\eqref{aggregation},
\begin{equation}\label{eq:energy}
\mathcal{E}[\rho] = \frac{1}{m-1} \int_{\mathbb{R}^d} \rho^m dx + \frac{1}{2} \int_{\mathbb{R}^d} \rho (\K*\rho) dx =: \mathcal{S}[\rho] + \mathcal{I}[\rho],
\end{equation}
where $\mathcal{S}[\rho]$ is replaced by $\int \rho\log \rho \,dx$ if $m=1$.
We first observe that $\mathcal{I}[\rho_s]$ is finite since the potential function $\psi_s=W\ast\rho_s \in
\W^{1,\infty}_{loc}(\R^d)$ satisfies \eqref{lipspot} with $\omega(1+|x|)\rho_s \in L^1(\mathbb{R}^d)$. Since $\rho_s \in L^1_+(\mathbb{R}^d)\cap L^\infty(\mathbb{R}^d)$, $\mathcal{S}[\rho_s]$ is finite for all $m>1$, but may be $-\infty$ if $m\in(0,1]$.

Below we discuss the strategy for $m>1$ first, and point out the modification for $m\in (0,1]$ in the next paragraph. Using the assumption that $\rho_s$ is not radially decreasing under any translation, we will apply the continuous Steiner symmetrization to perturb around
$\rho_s$ and construct a continuous family of densities $\mu(\t, \cdot)$ with $\mu(0,\cdot)=\rho_s$, such that $\mathcal{E}[\mu(\t)] -
\mathcal{E}[\rho_s] <
-c\t$
for some $c>0$ and any small $\t>0$.  On the other hand, using that $\rho_s$ is a stationary state, we will show that $|\mathcal{E}[\mu(\t)] -
\mathcal{E}[\rho_s]|
\leq C\t^2$ for some $C>0$ and any small $\t>0$. Combining these two inequalities together gives us a contradiction for sufficiently small  $\t>0$.

For $m\in (0,1)$, even if $\mathcal{S}[\rho_s]$ might be $-\infty$ by itself, the difference $\mathcal{S}[\mu(\tau)] - \mathcal{S}[\rho_s] $ can be still well-defined in the following sense, if we regularize the function $\frac{1}{m-1}\rho^{m}$ by $\frac{1}{m-1}\rho (\rho+\epsilon)^{m-1}$ and take the limit $\epsilon\to 0$:
\begin{equation}\label{def_reg_s}
\mathcal{S}[\mu(\tau)] - \mathcal{S}[\rho_s] := \lim_{\epsilon\to 0} \int \frac{1}{m-1}\Big(\mu(\tau,x) (\mu(\tau,x)+ \epsilon)^{m-1} - \rho_s(x) (\rho_s(x)+ \epsilon)^{m-1}  \Big) dx,
\end{equation}
and if $m=1$ the integrand is replaced by $\mu(\tau,\cdot) \log (\mu(\tau,\cdot)+\epsilon) - \rho_s \log(\rho_s +\epsilon)$. Note that as long as $\mu(\tau)$ has the same distribution as $\rho_s$, the above definition gives $\mathcal{S}[\mu(\tau)] - \mathcal{S}[\rho_s] =0$. With such modification, we will show that the difference $\mathcal{E}[\mu(\t)] - \mathcal{E}[\rho_s]$ is well-defined and satisfies the same two inequalities as the $m>1$ case, so we again have a contradiction for small $\t>0$.

If the kernel $W$ has certain convexity properties and $m\geq 1-\tfrac1{d}$, then it is known that \eqref{aggregation} has a rigorous Wasserstein gradient flow structure. In this case, once we obtain the crucial estimate: $\mathcal{E}[\mu(\t)] -\mathcal{E}[\rho_s] <-c\t$, there is a shortcut that directly lead to the radial symmetry result, which we will discuss in Section \ref{sec:gradient_flow}.

Let us characterize first the set of possible stationary states of \eqref{aggregation} in the sense of Definition \ref{stationarystates} and their regularity. Parts of these arguments are reminiscent from those done in \cite{Strohmer,CCV} in the case of attractive Newtonian potentials.

\begin{lemma}
\label{lem:regularity}
Let $\rho_s \in L^1_+(\mathbb{R}^d)\cap L^\infty(\mathbb{R}^d)$ with $\omega(1+|x|)\rho_s \in L^1(\mathbb{R}^d)$ be a non-negative stationary state of
\eqref{aggregation} for some $m>0$ in the sense of Definition {\rm\ref{stationarystates}}. Then $\rho_s \in \mathcal{C}(\mathbb{R}^d)$, and there exists some
 $C = C(\|\rho_s\|_{L^1}, \|\rho_s\|_{L^\infty}, C_w, d)>0$, such that
\begin{equation}\label{temp_grad1}
\frac{m}{|m-1|}|\nabla (\rho_s^{m-1})| \leq C \quad \text{ in }\supp \rho_s \quad \text{ if } m\neq 1,
\end{equation}
and
\begin{equation}\label{temp_grad2}
|\nabla \log \rho| \leq C \quad \text{ in }\supp \rho_s \quad \text{ if } m= 1.
\end{equation}
In addition, if $m \in (0,1]$, then $\supp \rho_s = \mathbb{R}^d$.
\end{lemma}

\begin{proof}
We have already checked that under these assumptions on $W$ and $\rho_s$, the potential function $\psi_s\in \W^{1,\infty}_{loc}(\R^d)$ due to
\eqref{lipspot0}-\eqref{lipspot}. Since $\rho_s^{m}\in H^1_{loc} (\mathbb{R}^d)$, then $\rho_s^m$ is a weak $H^1_{loc}(\R^{d})$ solution of
\begin{equation}\label{steady2}
\Delta \rho_s^{m} = -\nabla\cdot\left(\rho_s\nabla \psi_s\right) \text{ in } \mathbb{\R}^d
\end{equation}
with right hand side belonging  to $\W^{-1,p}_{loc} (\mathbb{R}^d)$ for all $1\leq p \leq \infty$. As a consequence, $\rho_s^m$ is in fact a weak solution in
$\W_{loc}^{1,p}(\R^d)$ for all $1<p<\infty$ of \eqref{steady2} by classical elliptic regularity results. Sobolev embedding shows that $\rho_s^m$ belongs
to some
H\"older space $\mathcal{C}_{loc}^{0,\alpha}(\mathbb{R}^d)$, and thus $\rho_s\in \mathcal{C}_{loc}^{0,\beta}(\mathbb{R}^d)$ with $\beta := \min\{\alpha/m, 1\}$.
Let us define the set $\Omega=\{x\in\R^d : \rho_s(x)>0\}$. Since $\rho_s\in \mathcal{C}(\R^d)$, then $\Omega$ is an open set and it consists
of a countable number of open possibly unbounded connected components. Let us take any bounded smooth connected open subset $\Theta$ such that $\overline{ \Theta} \subset \Omega$, and start with the case $m\neq 1$.
Since $\rho_s\in \mathcal{C}(\R^d)$, then $\rho_s$ is bounded away from zero in $\Theta$ and thus due to the assumptions on $\rho_s$, we have that
$\frac{m}{m-1}\nabla\rho_s^{m-1} = \frac{1}{\rho_s} \nabla \rho_{s}^{m}$ holds in the distributional sense in $\Theta$. We conclude that wherever $\rho_s$ is positive,
\eqref{steady} can be interpreted as
\begin{equation}\label{steady3}
\nabla \left( \frac{m}{m-1} \rho_s^{m-1} +\psi_s \right) = 0\,,
\end{equation}
in the sense of distributions in $\Omega$. Hence, the function $G(x)=\frac{m}{m-1}\rho_s^{m-1}(x) +\psi_s (x)$ is constant in each connected component of
$\Omega$. From here, we deduce that any stationary state of \eqref{aggregation} in the sense of Definition \ref{stationarystates} is given by
\begin{equation}
\label{eq:rho_s_stat}
\rho_s(x)= \left(\frac{m-1}{m}(G-\psi_s)(x)\right)_+^{\tfrac{1}{m-1}}\,,
\end{equation}
where $G(x)$ is a constant in each connected component of the support of $\rho_s$, and its value may differ in different connected components. Due to $\psi_s\in \W^{1,\infty}_{loc}(\R^d)$, we deduce that $\rho_s \in \mathcal{C}_{loc}^{0,1/(m-1)}(\mathbb{R}^d)$ if $m\geq 2$ and $\rho_s \in\mathcal{C}_{loc}^{0,1}(\mathbb{R}^d)$ for $m \in (0,1)\cup(1,2)$. Putting together \eqref{eq:rho_s_stat} and \eqref{lipspot0}, we conclude the desired estimate.

In addition, from \eqref{eq:rho_s_stat} we have that $\Omega = \mathbb{R}^d$ if $m \in (0,1)$: if not, let $\Omega_0$ be any connected component of $\Omega$, and take $x_0 \in \partial \Omega_0$. As we take a sequence of points $x_n \to x_0$ with $x_n \in \Omega_0$, we have that $\rho_s(x_n)^{m-1}\to \infty$, whereas the sequence $G(x_n) - \psi_s(x_n)$ is bounded (since $\psi_s$ is locally bounded due to \eqref{lipspot}), a contradiction.

If $m=1$, the above argument still goes through except that we replace \eqref{steady3} by
\[
\nabla \left( \log \rho_s +\psi_s \right) = 0
\]
in the sense of distributions in $\Omega$. As a result, the function $G(x)=\log \rho_s +\psi_s (x)$ is constant in each connected component of
$\Omega$. The same argument as the $m\in (0,1)$ case then yields that $\rho_s \in \mathcal{C}_{loc}^{0,1}(\mathbb{R}^d)$ and $\Omega = \mathbb{R}^d$, leading to the estimate $|\nabla \log \rho| \leq C$  in $\mathbb{R}^d$.
\end{proof}



\subsection{Some preliminaries about rearrangements}
Now we briefly recall some standard notions and basic properties of decreasing rearrangements for nonnegative functions that will be used later. For a deeper treatment of these topics, we address the reader to the books \cite{Hardy,Bennett-Sharpley,Ka1,Kesavan,LL} or the papers \cite{Talenti1,Talenti2,Talenti3,Mossino}. We denote by $|E|_{d}$ the Lebesgue measure of a measurable set $E$ in $\R^{d}$. Moreover, the set $E^{\#}$ is defined as the ball centered at the origin such that $|E^{\#}|_{d}=|E|_{d}$. \\[5pt]
A nonnegative measurable function $f$ defined on $\R^{d}$ is called \emph{radially symmetric} if there is a nonnegative function
$\widetilde{f}$ on $[0,\infty)$ such that $f(x)=\widetilde{f}(|x|)$ for all $x\in \R^{d}$. If $f$ is radially symmetric, we will often write $f(x)=f(r)$ for
$r=|x|\ge0$ by a slight abuse of notation. We say that $f$ is \emph{rearranged} if it is radial and $\widetilde{f}$ is a nonnegative
right-continuous,
non-increasing
function of $r>0$. A similar definition can be applied for real functions defined on a ball $B_{R}(0)=\left\{x\in\R^d:|x|<R\right\}$.\\[0.5pt]
We define \emph{the distribution function of $f\in L^{1}_{+}(\R^{d})$} by
\[
\zeta_{f}(\t)=|\left\{x\in\R^{d}:f(x)>\t\right\}|_{d},\quad  \text{ for all }\t>0.
\]
Then the function $f^{\ast}:[0,+\infty)\rightarrow[0,+\infty]$ defined by
\[
f^{\ast}(s)=\sup\left\{\t>0:\zeta_{f}(\t)>s \right\},\quad  s\in [0,+\infty),
\]
will be called the \emph{Hardy-Littlewood one-dimensional decreasing rearrangement of $f$}. By this definition, one could interpret
$f^{\ast}$ as the generalized right-inverse function of $\zeta_{f}(\t)$.

Making use of the definition of $f^{\ast}$, we can define a special radially symmetric decreasing function $f^{\#}$, which we will call the
\emph{Schwarz spherical decreasing rearrangement of $f$} by means of the formula
\begin{equation}
f^{\#}(x)=f^{\ast}(\omega_{d}|x|^{d}) \quad x\in \R^{d},\label{Schwarzsymm}
\end{equation}
where $\omega_d$ is the volume of the unit ball in $\mathbb{R}^d$.
It is clear that if the set $\Omega_{f}=\left\{x\in\R^{d}:f(x)>0\right\}$ of $f$ has finite measure, then $f^{\#}$ is supported in the ball
$\Omega_{f}^{\#}$.\\
One can show that $f^{\ast}$ (and so $f^{\#}$) is equidistributed with $f$ (\emph{i.e.} they have the same distribution function). Thus if $f\in L^{p}(\R^d)$, a simple use of Cavalieri's principle (see \emph{e.g.} \cite{Talenti2, Kesavan}) leads to the invariance property of the $L^{p}$ norms:
\begin{equation}\label{declp}
\|f\|_{L^{p}(\R^{d})} = \|f^\ast\|_{L^{p}(0,\infty)}=\|f^\#\|_{L^{p}(\R^{d})} \qquad  \mbox{for all } 1\leq p \leq \infty \,.
\end{equation}
In particular,using the layer-cake representation formula (see \emph{e.g.} \cite{LL}) one could easily infer that
\[
f^\#(x) = \int_0^\infty \chi_{\{f>\t\}^\#} d\t.
\]
Among the many interesting properties of rearrangements, it is worth mentioning the  \emph{Hardy-Littlewood} inequality (see \cite{Hardy,Bennett-Sharpley,Kesavan} for the proof): for any couple of nonnegative measurable functions $f,\,g$ on $\R^{d}$, we have
\begin{equation}
\int_{\R^d} f(x)g(x)dx\leq \int_{\R^{d}} f^{\#}(x)g^{\#}(x)dx\label{Hardylitt}.
\end{equation}

Since in Section 4 we will use estimates of the solutions Keller-Segel problems in terms of their integrals,
let us now recall the concept of comparison of mass concentration, taken from  \cite{Vsym82}, that is remarkably useful.
\begin{definition}\label{Massconcdef}
Let $f,g\in L^{1}_{loc}(\R^{d})$ be two nonnegative, radially symmetric functions on $\R^{d}$. We say that $f$ is less concentrated than $g$, and we write
$f\prec
g$ if for
all $R>0$ we get
\[
\int_{B_{R}(0)}f(x)dx\leq \int_{B_{R}(0)}g(x)dx.
\]
\end{definition}
The partial order relationship $\prec$ is called \emph{comparison of mass concentrations}.
Of course, this definition can be suitably adapted if $f,g$ are radially symmetric and locally integrable functions on a ball $B_{R}$.
The comparison of mass concentrations enjoys a nice equivalent formulation if $f$ and $g$ are rearranged, whose proof we refer to \cite{AlvTrombLion,Chong, VANS05}:

\begin{lemma}\label{lemma1}
Let $f,g\in L^{1}_{+}(\R^{d})$ be two nonnegative rearranged functions. Then $f\prec g$ if and only if for every convex
nondecreasing
function
$\Phi:[0,\infty)\rightarrow [0,\infty)$ with $\Phi(0)=0$ we have
\begin{equation*}
\int_{\Omega}\Phi(f(x))\,dx\leq \int_{\Omega}\Phi(g(x))\,dx.
\end{equation*}
\end{lemma}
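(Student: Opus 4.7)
The plan is to prove the two implications separately, relying on a standard density argument: any convex nondecreasing $\Phi:[0,\infty)\to[0,\infty)$ with $\Phi(0)=0$ can be approximated monotonically from below by conic combinations $\Phi_n(s)=\alpha_n s+\sum_i\beta_{n,i}(s-t_{n,i})_+$ with $\alpha_n,\beta_{n,i}\ge 0$, $t_{n,i}>0$. By monotone convergence it suffices to verify the integral inequality for the two elementary classes $\Phi(s)=s$ and $\Phi(s)=(s-t)_+$.

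For the forward direction, assume $f\prec g$. Taking $\Phi(s)=s$ is immediate by letting $R\to\infty$ in the definition of $\prec$. For $\Phi(s)=(s-t)_+$ with $t>0$, I use that $f$ is rearranged, so the superlevel set $\{f>t\}$ is an open ball $B_{r_t}$ (empty if $t\ge\|f\|_\infty$). Hence
\[
\int_{\R^d}(f-t)_+\,dx=\int_{B_{r_t}}(f-t)\,dx=\int_{B_{r_t}}f\,dx-t|B_{r_t}|\le\int_{B_{r_t}}g\,dx-t|B_{r_t}|=\int_{B_{r_t}}(g-t)\,dx\le\int_{\R^d}(g-t)_+\,dx,
\]
where the first inequality uses $f\prec g$ applied on the ball $B_{r_t}$ and the last uses $(g-t)\le(g-t)_+$ pointwise. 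The density argument then yields $\int\Phi(f)\le\int\Phi(g)$ for all admissible $\Phi$.

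For the backward direction, I encode $\prec$ as a one-dimensional statement. Since $f,g$ are rearranged, $\int_{B_R}f\,dx=\int_0^{|B_R|}f^\ast(s)\,ds=:F(|B_R|)$ and similarly $G(|B_R|)=\int_{B_R}g\,dx$, so $f\prec g$ is equivalent to $F(s)\le G(s)$ for all $s\ge 0$. Both $F$ and $G$ are concave nondecreasing with $F(0)=G(0)=0$. Testing the hypothesis with $\Phi(s)=(s-c)_+$ and using equimeasurability gives
\[
\int_0^\infty(f^\ast(\sigma)-c)_+\,d\sigma\le\int_0^\infty(g^\ast(\sigma)-c)_+\,d\sigma\qquad\text{for every }c\ge 0.
\]
A short computation shows that the left-hand side equals $\sup_{s\ge 0}(F(s)-cs)=:F^\sharp(c)$ (the supremum being attained at $s=\zeta_f(c)$ where $F'=f^\ast=c$), and likewise for $G$. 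So $F^\sharp(c)\le G^\sharp(c)$ for all $c\ge 0$. Testing with $\Phi(s)=s$ furnishes $F(\infty)\le G(\infty)$, which handles the behaviour at $c=0$. Since $F,G$ are concave and vanish at $0$, the bi-conjugation formula
\[
F(s)=\inf_{c\ge 0}\bigl(F^\sharp(c)+cs\bigr),\qquad G(s)=\inf_{c\ge 0}\bigl(G^\sharp(c)+cs\bigr)
\]
gives $F(s)\le G(s)$ for every $s\ge 0$, which is precisely $f\prec g$.

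The only delicate step is the backward direction, specifically justifying the Legendre-type inversion on the correct class of concave functions (monotone, possibly unbounded, with value $0$ at the origin). I would handle it by noting that $F$ (resp.\ $G$) is characterised as the infimum of its affine tangents $s\mapsto F^\sharp(c)+cs$ taken over $c\in[0,f^\ast(0^+)]$, extending continuously to $c=0$ by the mass bound obtained above. Standard approximation by piecewise linear concave functions or direct subdifferential arguments close this gap without difficulty.
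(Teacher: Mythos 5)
Your argument is correct; note that the paper itself does not prove this lemma (it is quoted from the references Alvino--Trombetti--Lions and V\'azquez), so there is no internal proof to compare against, and your self-contained derivation is a perfectly legitimate one. The forward direction is the standard route: reduce, via the representation of a convex nondecreasing $\Phi$ with $\Phi(0)=0$ as an increasing limit (or integral) of combinations $\alpha s+\sum_i\beta_i(s-t_i)_+$, to the elementary integrands $s$ and $(s-t)_+$, and then use that $\{f>t\}$ is a ball together with $f\prec g$ on that ball; this is exactly the classical Hardy--Littlewood--P\'olya-type argument. The backward direction via the identity $\int_0^\infty(f^\ast(\sigma)-c)_+\,d\sigma=\sup_{s\ge 0}\bigl(F(s)-cs\bigr)=F^\sharp(c)$ and concave duality is also sound, and the ``delicate'' inversion you flag needs less than full biconjugation: for each $s_0>0$ pick a supergradient $c_0\ge 0$ of the finite, concave, nondecreasing $G$ at $s_0$, so that $G^\sharp(c_0)=G(s_0)-c_0s_0$, and then weak duality for $F$ gives $F(s_0)\le F^\sharp(c_0)+c_0s_0\le G^\sharp(c_0)+c_0s_0=G(s_0)$; equivalently, one can argue by contradiction choosing $c=g^\ast(s_0)$. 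Two cosmetic imprecisions worth fixing: the parenthetical ``attained at $s=\zeta_f(c)$ where $F'=f^\ast=c$'' is not quite right ($f^\ast$ need not attain the value $c$; the correct justification is that $f^\ast-c>0$ on $[0,\zeta_f(c))$ and $\le 0$ on $[\zeta_f(c),\infty)$, so $s=\zeta_f(c)$ maximizes $F(s)-cs$), and in the forward direction one should note the monotone (or Tonelli-based) passage from the elementary pieces to general $\Phi$ uses $\Phi(s)=\Phi'_+(0)s+\int_0^\infty(s-t)_+\,d\nu(t)$ with $\nu$ the measure generated by the nondecreasing right derivative; both are routine.
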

From this Lemma, it easily follows that if $f\prec g$ and $f,g\in L^{p}(\R^{d})$ are rearranged and non-negative, then
\begin{equation*}
\|f\|_{L^{p}(\R^{d})}\leq \|g\|_{L^{p}(\R^{d})}\quad \forall p\in[1,\infty].
\end{equation*}
Let us also observe that if $f,g\in L^{1}_{+}(\R^{d})$ are nonnegative and rearranged, then $f\prec g$ if and only if for all $s\geq0$ we have
\begin{equation*}
\int_{0}^{s}f^{\ast}(\sigma)d\sigma\leq \int_{0}^{s}g^{\ast}(\sigma)d\sigma.
\end{equation*}
If $f\in L^1_+(\R^d)$, we denote by $M_2[f]$ the second moment of $f$, \emph{i.e.}
\begin{equation}M_2[f] := \int_{\mathbb{R}^d} f(x)|x|^2dx.\label{secondmoment}\end{equation}
In this regard, another interesting property which will turn out useful is the following
\begin{lemma}
\label{lem:lp}
Let $f, g \in L^1_+(\mathbb{R}^d)$ with $\|f\|_{L^1(\R^d)} = \|g\|_{L^1(\R^d)}$. If additionally $g$ is rearranged and $f^\# \prec g$, then $M_2[f] \geq M_2[g]$.
\end{lemma}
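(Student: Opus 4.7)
My plan is to establish the inequality in two steps by inserting $f^\#$ in the middle and proving the chain $M_2[f]\geq M_2[f^\#]\geq M_2[g]$.

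For the first step, I would use the layer-cake representation $f(x)=\int_0^\infty \chi_{\{f>t\}}(x)\,dt$ together with Fubini's theorem to rewrite
\begin{equation*}
M_2[f]=\int_0^\infty\!\!\int_{\{f>t\}}|x|^2\,dx\,dt,\qquad M_2[f^\#]=\int_0^\infty\!\!\int_{\{f>t\}^\#}|x|^2\,dx\,dt,
\end{equation*}
where the second identity uses $\{f^\#>t\}=\{f>t\}^\#$. The reduction is then the auxiliary fact that for any measurable set $A\subset\R^d$ of finite measure,
\begin{equation*}
\int_A |x|^2\,dx\;\geq\;\int_{A^\#}|x|^2\,dx,
\end{equation*}
which is a direct instance of the bathtub principle: among all sets of a given Lebesgue measure, the centered ball $A^\#$ minimizes the integral of the radially increasing function $|x|^2$. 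Integrating this pointwise-in-$t$ inequality yields $M_2[f]\geq M_2[f^\#]$.

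For the second step, both $f^\#$ and $g$ are rearranged with equal $L^1$-norm and $f^\#\prec g$. By Definition \ref{Massconcdef} and subtraction from the total masses,
\begin{equation*}
\int_{\R^d\setminus B_R} f^\#\,dx\;\geq\;\int_{\R^d\setminus B_R} g\,dx\qquad\text{for all }R>0.
\end{equation*}
Writing $|x|^2=\int_0^\infty 2r\,\chi_{\{r<|x|\}}\,dr$ and applying Fubini,
\begin{equation*}
M_2[f^\#]=\int_0^\infty 2r\!\int_{|x|>r}\! f^\#\,dx\,dr\;\geq\;\int_0^\infty 2r\!\int_{|x|>r}\! g\,dx\,dr=M_2[g].
\end{equation*}
Concatenating with the first step closes the argument.

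I do not expect any serious obstacle here; the most delicate point is just justifying $\int_A|x|^2\,dx\geq\int_{A^\#}|x|^2\,dx$, which requires a line invoking the bathtub principle (or equivalently, the Hardy--Littlewood inequality applied to $\chi_A$ and a truncation of $-|x|^2$ followed by a monotone limit). The argument is indifferent to whether the moments are finite: if $M_2[g]=+\infty$, the second step forces $M_2[f^\#]=+\infty$, and the first step then forces $M_2[f]=+\infty$, so the conclusion still holds. All intermediate applications of Fubini are legitimate since every integrand is nonnegative.
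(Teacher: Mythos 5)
Your proof is correct, and it reaches the conclusion by a more self-contained route than the paper. Both arguments share the same skeleton (pass from $f$ to $f^\#$, then from $f^\#$ to $g$ using $f^\#\prec g$), but the implementations differ. The paper handles the unbounded weight $|x|^2$ by truncation: it sets $\varphi_n=\min\{|x|^2,n\}$, $h_n=n-\varphi_n$ (nonnegative, bounded, rearranged), applies the Hardy--Littlewood inequality \eqref{Hardylitt} to get $\int f h_n\le\int f^\# h_n$, invokes Corollary 2.1 of Alvino--Trombetti--Lions to get $\int f^\# h_n\le\int g h_n$ from $f^\#\prec g$, and then lets $n\to\infty$; the equal-mass hypothesis enters through the terms $n\|f\|_{L^1}=n\|g\|_{L^1}$. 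You instead avoid both the truncation and the external citation: for $M_2[f]\ge M_2[f^\#]$ you use the layer-cake decomposition plus the bathtub-principle inequality $\int_A|x|^2dx\ge\int_{A^\#}|x|^2dx$ (which is elementary since $|A\setminus A^\#|=|A^\#\setminus A|$ and $|x|^2$ is at least, resp.\ at most, the radius squared on these two sets), and for $M_2[f^\#]\ge M_2[g]$ you convert $f^\#\prec g$ plus equal masses into the complementary tail inequality and represent the second moment as $\int_0^\infty 2r\int_{|x|>r}\cdot\,dx\,dr$ via Tonelli. All Fubini/Tonelli uses are legitimate because the integrands are nonnegative, the level sets $\{f>t\}$ have finite measure for $t>0$ since $f\in L^1$, and the subtraction in your second step is justified by the finiteness and equality of the total masses; the argument also degenerates correctly when the moments are infinite. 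What your version buys is that it replaces the reference to Alvino--Trombetti--Lions by a two-line Fubini computation; what the paper's version buys is a single monotone chain of inequalities that fits in one display and reuses machinery (Hardy--Littlewood and the concentration--duality result) already introduced in the section.
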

\begin{proof}
Let us consider the sequence of bounded radially increasing functions  $\left\{\varphi_{n}\right\}$, where $\varphi_{n}(x)=\min\left\{|x|^{2},n\right\}$ is the truncation of the  function $|x|^{2}$ at the level $n$ and define the function
\[
h_{n}=n-\varphi_{n}.
\]
Then $h_{n}$ is nonnegative, bounded and rearranged. Thus using the Hardy-Littlewood inequality \eqref{Hardylitt} and \cite[Corollary 2.1]{AlvTrombLion} we find
\begin{align*}
\int_{\R^{d}}f(x)\,\varphi_{n}(x)dx&=n\|f\|_{L^1(\R^d)}-\int_{\R^{d}}f(x)\,h_{n}(x)dx \geq n\|f\|_{L^1(\R^d)}-\int_{\R^{d}}f^{\#}(x)\,h_{n}(x)dx\\
& \geq  n\|g\|_{L^1(\R^d)}-\int_{\R^{d}}g(x)\,h_{n}(x)dx =\int_{\R^{d}}g(x)\,\varphi_{n}(x)dx
\end{align*}
Then passing to the limit as $n\rightarrow\infty$ we find the desired result.
\end{proof}

\begin{remark}\label{remarkV}
Lemma \ref{lem:lp} can be easily generalized when $|x|^{2}$ is replaced by any nonnegative radially increasing potential $V=V(r)$, $r=|x|$, such that
\[
\lim_{r\rightarrow+\infty}V(r)=+\infty.
\]
\end{remark}

\subsection{Continuous Steiner symmetrization}
Although classical decreasing rearragement techniques are very useful to study properties of the minimizers and for solutions of the evolution problem
\eqref{aggregation} in next sections, we do not know how to use them in connection with showing that stationary states are radially symmetric.
For an introduction of continuous Steiner symmetrization and its properties, see \cite{BLL, Brock, LL}. In this subsection, we will use continuous
Steiner symmetrization to prove the following proposition.

\begin{proposition}
\label{prop:steiner}
Let $\mu_0 \in \mathcal{C}(\mathbb{R}^d) \cap L^1_+(\mathbb{R}^d)$, and assume it is \textbf{not} radially decreasing after any translation.

Moreover, if $m\in (0,1)\cup (1,\infty)$, assume that $|\frac{m}{m-1}\nabla \mu_0^{m-1}| \leq C_0$ in $\supp \mu_0$ for some $C_0$; and if $m=1$ assume that $|\nabla \log \mu_0| \leq C_0$ in $\supp \mu_0$ for some $C_0$. In addition, if $m\in (0,1]$, assume that $\supp \mu_0 = \mathbb{R}^d$.

Then there exist some $\delta_0>0, c_0>0, C_1>0$ (depending on m, $\mu_0$ and $\K$) and a function $\mu \in C([0,\delta_0]\times \mathbb{R}^d)$ with $\mu(0,\cdot) = \mu_0$, such that $\mu$ satisfies the following for a short time
$\t\in
[0,\delta_0]$, where $\mathcal{E}$ is as given in \eqref{eq:energy}:
\begin{equation} \label{E_mu_t}
\mathcal{E}[\mu(\t)] - \mathcal{E}[\mu_0] \leq - c_0 \t,
\end{equation}
\begin{equation} \label{mu_t}
|\mu(\t,x) - \mu_0(x)| \leq C_1 \mu_0(x)^{\max\{1,2-m\}} \t \quad \text{ for all } x\in \mathbb{R}^d,
\end{equation}
\begin{equation}\label{int_0}
 \int_{D_i} \mu(\t,x)-\mu_0(x)dx =0 \text{ for any connected component $D_i$ of $\supp\mu_0$.}
 \end{equation}
\end{proposition}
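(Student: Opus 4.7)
The plan is to construct $\mu(\tau)$ via Brock's continuous Steiner symmetrization applied to $\mu_0$ in a direction along which $\mu_0$ is asymmetric, and then verify \eqref{E_mu_t}--\eqref{int_0} with the aid of the Lipschitz bound $|\nabla \mu_0^{m-1}| \leq C_0$ on $\supp \mu_0$.

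The first step is to locate a useful direction and hyperplane of symmetrization. Since a nonnegative continuous function is radially decreasing about a point if and only if it is Steiner symmetric decreasing with respect to every affine hyperplane through that point, the hypothesis that $\mu_0$ is not radially decreasing under any translation produces a unit vector $e \in \mathbb{S}^{d-1}$ and an affine hyperplane $H$ with normal $e$ such that on a set of positive $(d-1)$-dimensional measure of lines $L$ parallel to $e$, the restriction $\mu_0|_L$ is not symmetric decreasing about $L \cap H$. I would further refine this choice of $H$ (or, in the degenerate case where $\mu_0$ is a sum of individually radial pieces with different centers, restrict attention to a single connected component $D_{i_0}$ of $\supp \mu_0$ and work by a local rearrangement inside $D_{i_0}$) so that the subsequent flow does not transport mass between components.

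Next, define $\mu(\tau,\cdot)$ as Brock's continuous Steiner symmetrization of $\mu_0$ in direction $e$ about $H$: for each line $L \parallel e$, the one-dimensional superlevel sets $\{\mu_0|_L > c\}$ are transported towards $L \cap H$ at unit speed, merging upon contact. Standard properties of this flow yield the joint continuity $(\tau,x) \mapsto \mu(\tau,x)$ and the invariance of all $L^p$ norms, so $\mathcal{S}[\mu(\tau)] \equiv \mathcal{S}[\mu_0]$, and \eqref{E_mu_t} reduces to the strict linear decrease $\mathcal{I}[\mu(\tau)] - \mathcal{I}[\mu_0] \leq -c_0 \tau$. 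This linear-order strict decrease I would establish by a quantitative Riesz-type rearrangement inequality applied slice-by-slice along the flow, exploiting the strict radial monotonicity $\omega'(r)>0$ from (K1) together with the positive-measure asymmetry exhibited above to extract a strictly negative derivative of $\mathcal{I}$ at $\tau=0^+$, uniform for $\tau \in [0,\delta_0]$.

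Property \eqref{mu_t} follows because the boundaries of one-dimensional superlevel sets move at unit speed under the flow, so the pointwise increment $|\mu(\tau,x)-\mu_0(x)|$ is bounded by the variation of $\mu_0$ over a displacement of size $\tau$ along $L$; combining this with the Lipschitz estimate $|\mu_0^{m-1}(x)-\mu_0^{m-1}(y)| \leq C_0|x-y|$ on $\supp \mu_0$ and inverting through $\mu_0 = (\mu_0^{m-1})^{1/(m-1)}$ produces, after a short calculation, the multiplicative bound $|\mu(\tau,x) - \mu_0(x)| \leq C_1 \mu_0(x)\tau$, where the factor $\mu_0(x)$ arises precisely from converting the additive $C^{0,1}$ bound on $\mu_0^{m-1}$ into a bound on $\mu_0$ itself. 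Property \eqref{int_0} is then automatic once the flow has been localized to a single component $D_{i_0}$ in the first step, since mass is trivially preserved on each line parallel to $e$. The main obstacle, and the technical heart of the argument, is precisely the compatibility between \eqref{int_0} and \eqref{E_mu_t}: for a $\mu_0$ whose components are each individually radially symmetric about distinct centers, a global Steiner symmetrization would naturally produce the desired energy decrease but would inevitably transport mass between components; identifying a purely local rearrangement inside one component that still yields a strictly negative linear-order contribution to $\mathcal{I}$, while retaining the uniform pointwise and mass-preservation estimates, is where the delicate work lies.
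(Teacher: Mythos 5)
Your construction via the \emph{unit-speed} continuous Steiner symmetrization cannot satisfy \eqref{mu_t} and \eqref{int_0}, and this is precisely the point where the paper has to introduce a new idea that your proposal is missing. The oscillation bound you invoke does hold ($|S^\t\mu_0(x)-\mu_0(x)|$ is controlled by the oscillation of $\mu_0$ over a segment of length $2\t$), but converting the additive Lipschitz bound $|\mu_0^{m-1}(z)-\mu_0^{m-1}(x)|\leq C_0\t$ into the multiplicative bound $|\mu(\t,x)-\mu_0(x)|\leq C_1\mu_0(x)\t$ requires $C_1\gtrsim C_0/\big((m-1)\mu_0(x)^{m-1}\big)$, which blows up as $\mu_0(x)\to 0$; worse, at points $x$ just outside (or on the low-density edge of) $\supp\mu_0$ in the direction of motion one gets $\mu_0(x)=0$ or $\mu_0(x)$ tiny while $S^\t\mu_0(x)\sim (C_0\t)^{1/(m-1)}$, so \eqref{mu_t} fails outright and mass leaves the connected components, killing \eqref{int_0} as well. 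The paper's resolution is to run a \emph{modified} symmetrization $\tilde S^\t$ in which the level set $\{\mu_0(\cdot,x')>h\}$ travels with level-dependent speed $v(h)=\min\{1,(h/h_0)^{m-1}\}$ as in \eqref{def:v}: combined with the geometric consequence \eqref{eq:dist0} of $|\nabla\mu_0^{m-1}|\leq C_0$, low level sets move so slowly that they never cross $\partial D_i$ (giving \eqref{int_0}) and the displacement of the level at height $\lambda\mu_0(x)$ is small enough to give exactly the multiplicative estimate \eqref{mu_t}. The price is that $\mathcal{S}$ is no longer conserved (only $\mathcal{S}[\tilde S^\t\mu_0]\leq\mathcal{S}[\mu_0]$) and one must show $|\mathcal{I}[\tilde S^\t\mu_0]-\mathcal{I}[S^\t\mu_0]|\leq c\t/2$ by splitting at height $h_0$ and comparing the low parts through an $\infty$-Wasserstein transport map with displacement $\leq 2\t$, choosing $h_0$ small; none of this appears in your outline.

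Your fallback of localizing the symmetrization to a single component $D_{i_0}$ does not repair the argument: you yourself leave it as ``where the delicate work lies,'' it does not address the failure of \eqref{mu_t} near the low-density edge inside a component, and in the genuinely degenerate case where every component is individually radially decreasing but about different centers, no rearrangement confined to one component can decrease $\mathcal{E}$ at linear order (each component is already symmetric, so the gain must come from moving components toward each other). The paper handles exactly this case with the global modified flow: mass in each component shifts toward the mass-splitting hyperplane \emph{within} its own support, so the interaction energy decreases linearly (Proposition \ref{prop:int}, proved via the one-dimensional Lemmas \ref{lem1} and \ref{lem:dIopenset}, which your ``quantitative Riesz slice-by-slice'' step would still need in some form) while \eqref{int_0} is preserved. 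So the energy-decrease half of your plan is broadly in the spirit of the paper, but the compatibility of \eqref{E_mu_t} with \eqref{mu_t}--\eqref{int_0}, which is the actual content of Proposition \ref{prop:steiner}, is not established by your construction.
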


\subsubsection{Definitions and basic properties of Steiner symmetrization} Let us first introduce the concept of Steiner symmetrization for a
measurable set $E\subset\R^{d}$ . If $d=1$, the Steiner symmetrization of $E$ is the symmetric interval $S(E)=\left\{x\in\R:|x|<|E|_{1}/2\right\}$. Now we want to define the Steiner symmetrization of $E$ with respect to a direction in $\R^{d}$ for $d\geq2$. The direction we symmetrize corresponds to the unit vector $e_{1}=(1,0,\ldots,0)$, although the definition can be modified accordingly when considering any other direction in $\R^{d}$.
\\ \indent
Let us label a point $x\in\R^{d}$ by $(x_{1},x^{\prime})$, where $x^{\prime}=(x_{2},\ldots,x_{d})\in\R^{d-1}$ and $x_{1}\in\R$.
Given any measurable subset $E$ of $\R^{d}$ we define, for all $x^{\prime}\in\R^{d-1}$, the \emph{section} of $E$ with respect to the direction $x_{1}$ as the set
\[
E_{x^{\prime}}=\left\{x_{1}\in \R:(x_{1},x^{\prime})\in E \right\}.
\]
Then we define the Steiner symmetrization of $E$ with respect to the direction $x_{1}$ as the set $S(E)$ which is symmetric about the
hyperplane $\left\{x_{1}=0\right\}$ and is defined by
\[
S(E)=\left\{(x_{1},x^{\prime})\in \R^{d}:x_{1}\in S(E_{x^{\prime}}) \right\}.
\]
In particular we have that $|E|_{d}=|S(E)|_{d}$.

Now, consider a non-negative function $\mu_{0}\in L^{1}(\R^{d})$, for $d\geq2$. For all $x^{\prime}\in\R^{d-1}$, let us consider the distribution function of $\mu_{0}(\cdot,x^{\prime})$, \emph{i.e.} the function
\[
\zeta_{\mu_{0}}(h,x^{\prime})=|U_{x'}^h|_{1}\quad\text{ for all }h>0,\,x^{\prime}\in\R^{d-1},
\]
where
\begin{equation}
U_{x'}^h = \{x_1 \in \mathbb{R}: \mu_0(x_1, x')>h\}\label{U}.
\end{equation}
Then we can give the following definition:
\begin{definition}
 We define the \emph{Steiner symmetrization} (or \emph{Steiner rearrangement})  of $\mu_{0}$ in the direction $x_{1}$ as the function
$S \mu_{0}=S \mu_{0}(x_{1},x^{\prime})$ such that $S \mu_{0}(\cdot,x^{\prime})$ is exactly the Schwarz rearrangement of $\mu_{0}(\cdot,x^{\prime})$
\emph{i.e.} (see \eqref{Schwarzsymm})
\[
S\mu_{0}(x_{1},x^{\prime})=\sup\left\{h>0:\zeta_{\mu_{0}}(h,x^{\prime})>2|x_{1}|\right\}.
\]
\end{definition}
As a consequence, the Steiner symmetrization $S\mu_{0}(x_{1},x^{\prime})$ is a function being symmetric about the hyperplane
$\left\{x_{1}=0\right\}$ and for each $h>0$ the level set
\[
\left\{(x_{1},x^{\prime}):S\mu_{0}(x_{1},x^{\prime})>h\right\}
\]
is equivalent to the Steiner symmetrization
\[
S(\left\{(x_{1},x^{\prime}):\mu_{0}(x_{1},x^{\prime})>h\right\})
\]
which implies that $S\mu_{0}$ and $\mu_{0}$ are equidistributed, yielding the invariance of the $L^{p}$ norms when passing from $\mu_{0}$ to $S\mu_{0}$, that is for all $p\in [1,\infty]$ we have
\[
\|S\mu_{0}\|_{L^{p}(\R^{d})}=\|\mu_{0}\|_{L^{p}(\R^{d})}.
\]Moreover, by the layer-cake representation formula, we have
\begin{equation}
S\mu_{0}(x_{1},x^{\prime})=\int_0^\infty \chi_{S(U_{x'}^h)}(x_1) \,dh\,.
\label{Steinerrepres}
\end{equation}
Now, we introduce a continuous version of this Steiner procedure via an interpolation between a set or a function and their Steiner symmetrizations that we will use in our symmetry arguments for steady states.

\begin{definition}
\label{def:steiner_set}
For an open set $U\subset \mathbb{R}$, we define its \emph{continuous Steiner symmetrization} $M^\t(U)$ for any $\t\geq 0$ as below. In the following we
abbreviate
an open interval $(c-r, c+r)$ by $I(c,r)$, and we denote by $\sgn c$ the sign of $c$ (which is $1$ for positive $c$, $-1$ for negative $c$, and $0$ if
$c=0$).
\begin{enumerate}[(1)]
\item If $U = I(c,r)$, then
\[
M^\t(U):=  \begin{cases}I(c-\t\,\sgn c, r) & \text{ for }0\leq \t< |c|,\\
 I(0,r) &\text{ for }\t\geq |c|.
 \end{cases}
 \]
\item If $U = \cup_{i=1}^N I(c_i,r_i)$ (where all $I(c_i, r_i)$ are disjoint), then $M^\t(U) := \cup_{i=1}^N M^\t(I(c_i, r_i))$ for $0\leq \t<\t_1$,
where
    $\t_1$
    is the first time two intervals $M^\t(I(c_i, r_i))$ share a common endpoint. Once this happens, we merge them into one open interval, and repeat
    this
    process starting from $\t=\t_1$.
\item If $U = \cup_{i=1}^\infty I(c_i, r_i)$ (where all $I(c_i, r_i)$ are disjoint), let $U_N = \cup_{i=1}^N I(c_i, r_i)$ for each $N>0$, and define
    $M^\t(U)
    := \cup_{N=1}^\infty M^\t(U_N)$.
\end{enumerate}
\end{definition}

See Figure \ref{fig:steiner} for illustrations of $M^\t(U)$ in the cases (1) and (2). Also, we point out that case (3) can be seen as a limit of
case (2),
since
for each $N_1<N_2$ one can easily check that $M^\t(U_{N_1}) \subset M^\t(U_{N_2})$ for all $\t\geq 0$. Moreover, according to \cite{Brock}, the definition of $M^{\t}(U)$ can be extended to any measurable set $U$ of $\R$, since
\[
U=\bigcap_{n=1}^{\infty} O_{n}\setminus N,
\]
being $O_{n}\supset O_{n+1}$ $n=1,2,\ldots,$ open sets and $N$ a nullset.
\begin{figure}[ht!]
\begin{center}
\includegraphics[scale=0.8]{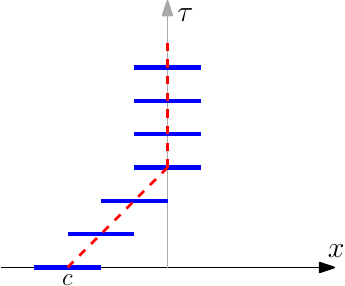}\hspace{2cm}\includegraphics[scale=0.8]{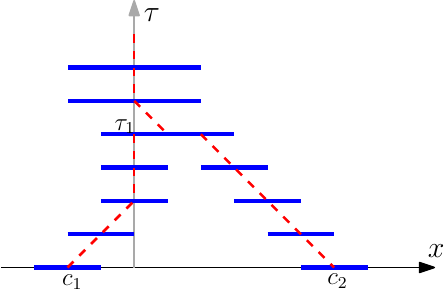}
\caption{Illustrations of $M^\t(U)$ when $U$ is a single open interval (left), and when $U$ is the union of two open intervals (right).
\label{fig:steiner}}
\end{center}
\end{figure}

In the next lemma we state four simple facts about $M^\t$. They can be easily checked for case (1) and (2) (hence true for (3) as well by taking the
limit), and
we omit the proof.
\begin{lemma}
\label{lem:prop_steiner}
Given any open set $U\subset \mathbb{R}$, let $M^\t(U)$ be defined in Definition {\rm\ref{def:steiner_set}}. Then
\begin{enumerate}[(a)]
\item $M^{0}(U)=U$, $M^{\infty}(U)=S(E)$.
\item $|M^\t(U)| = |U|$ for all $\t\geq 0$.
\item If $U_1 \subset U_2$, we have $M^\t(U_1) \subset M^\t(U_2)$ for all $\t\geq 0$.
\item $M^\t$  has the semigroup property: $M^{\t+s}U = M^\t(M^s(U))$ for any $\t,s\geq 0$ and open set $U$.
\end{enumerate}
\end{lemma}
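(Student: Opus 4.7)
The plan is to verify (a)--(d) by induction on the three cases of Definition \ref{def:steiner_set}: the elementary single-interval case (1), the finite-union case (2) obtained by inserting finitely many merge events, and the countable case (3) obtained as a monotone limit in $N$. Since the definition is entirely constructive, each step reduces to a direct check; the only care is needed for the monotonicity (c) and for the $\t\to\infty$ limit in case (3).

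For case (1) with $U=I(c,r)$, the formula gives $M^0(U)=U$ and $M^\t(U)=I(0,r)=S(U)$ for all $\t\geq|c|$, establishing (a), while each interval appearing in the definition has length $2r$, giving (b). The semigroup property (d) is a short case split: in the unsaturated regime $s<|c|$ one uses $\sgn(c-s\,\sgn c)=\sgn c$ to chain translations, and once $s\geq|c|$ both sides equal $I(0,r)$. For (c), writing $I(c_1,r_1)\subset I(c_2,r_2)$ as $c_2-r_2\leq c_1-r_1\leq c_1+r_1\leq c_2+r_2$, one tracks both endpoints along the flow: when $\sgn c_1=\sgn c_2$ both interior endpoints translate at unit speed so the containment persists until the inner interval saturates, while when $\sgn c_1\neq\sgn c_2$ the inclusion already forces one interval to lie inside the reflection of the other, and a brief calculation confirms the inclusion is propagated.

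Case (2) follows from (1) by induction on the finitely many merge times $\t_1<\cdots<\t_K$. On each open sub-interval $(\t_j,\t_{j+1})$, the set $M^\t(U)$ is a disjoint union of translates to which case (1) applies componentwise, so (b)--(d) are inherited; at a merge time two touching intervals are replaced by their union, which is again an open interval of the same total length, preserving (a) and (b) while being built directly into the definition for (d). For (c), if $U_1\subset U_2$ then every merge occurring inside $M^\t(U_1)$ is accompanied by a corresponding (earlier or simultaneous) merge inside $M^\t(U_2)$, so the envelope of the larger set dominates the smaller one throughout.

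Finally, for case (3), set $U_N=\bigcup_{i=1}^N I(c_i,r_i)$, so that by definition $M^\t(U)=\bigcup_N M^\t(U_N)$. Case (2) together with monotonicity gives $M^\t(U_N)\subset M^\t(U_{N+1})$, which transfers (c) by componentwise passage to the limit and (d) because the identity holds at every finite $N$; continuity from below then yields $|M^\t(U)|=\lim_N|U_N|=|U|$, proving (b). For (a) at $\t=\infty$, observe that for each fixed $N$ the set $M^\t(U_N)$ becomes constantly equal to $S(U_N)$ once $\t$ exceeds all the merge and drift times, so $M^\infty(U)=\bigcup_N S(U_N)=S(U)$. The hard part is the sign analysis in (c) for intervals straddling the origin and the just-described limit interchange at $\t=\infty$; neither is deep, but both require careful bookkeeping of endpoints and merge events.
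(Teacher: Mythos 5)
Your overall architecture---verify everything for a single interval, extend to finite unions by induction on the finitely many merge times, and pass to increasing limits for countable unions---is exactly the strategy the paper intends (it omits the proof with precisely this remark), and your treatment of case (1) and of (a), (b) is fine. The genuine problem is your justification of (c) for finite unions, which rests on a false claim: it is not true that every merge inside $M^\tau(U_1)$ is accompanied by an earlier or simultaneous merge inside $M^\tau(U_2)$. Take $U_2=I(0,10)$, a single interval, which never moves and never merges, and $U_1=(5,6)\cup(8,9)\subset U_2$, whose two components merge at time $\tau=7.5$; there is no correspondence of merge events, yet (c) of course holds. The correct mechanism is different: up to the first merge time of \emph{either} configuration, each component of $M^\tau(U_1)$ lies in a single component of $M^\tau(U_2)$ (a connected subset of a disjoint union of open intervals lies in one of them), and the single-interval inclusion $I(c_1,r_1)\subset I(c_2,r_2)$, equivalent to $|c_1-c_2|\le r_2-r_1$, is preserved because the radii are constant and $|c_1(\tau)-c_2(\tau)|$ is non-increasing along the flow (the map $c\mapsto \mathrm{sgn}(c)\max(|c|-\tau,0)$ is $1$-Lipschitz and commutes with itself in $\tau$). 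At a merge time one checks the inclusion still holds (touching endpoints of $U_1$-components force either an interior point or a simultaneous merge of $U_2$-components) and restarts via the semigroup property (d) for finite unions; since there are finitely many merge times, induction closes the argument. So (c) in case (2) is an induction on merge times that \emph{uses} (d), not a pairing of merges.

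The second soft spot is the transfer of (d) to case (3). By definition, $M^\tau\bigl(M^s(U)\bigr)$ is computed from the decomposition of $V:=M^s(U)$ into its connected components and the exhaustion by the first $K$ of them, whereas your limit argument produces $\bigcup_N M^\tau\bigl(M^s(U_N)\bigr)$. These are exhaustions of $V$ by \emph{different} finite unions of intervals, and a component of $V$ need not be contained in any single $M^s(U_N)$ (infinitely many of the original intervals can have merged into one component of $V$ by time $s$, e.g.\ when the $I(c_i,r_i)$ accumulate spatially), so the equality of the two unions is not formal. One inclusion follows from (c) as you say, but the reverse inclusion requires a continuity-from-below statement for $M^\tau$ along increasing exhaustions by finite unions of intervals, which you should state and prove. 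Minor additional point: at a merge, the union of two abutting open intervals is not itself open; the definition replaces it by the open interval containing the shared endpoint, and measure is preserved only because that endpoint is a null set---worth a half sentence where you assert (b) in case (2). These are fixable, localized gaps rather than a wrong approach.
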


Once we have the continuous Steiner symmetrization for a one-dimensional set, we can define the continuous Steiner symmetrization (in a certain direction) for a non-negative function in $\mathbb{R}^d$.

\begin{definition}
\label{def:steiner_func}
Given $\mu_0 \in L^1_+(\mathbb{R}^d)$, we define its continuous Steiner symmetrization  $S^\t \mu_0$ (in direction $e_1 =
(1,0,\cdots,0)$)
as follows. For any $x_1 \in \mathbb{R}, x'\in \mathbb{R}^{d-1}, h>0$, let
\begin{equation*}
S^\t\mu_0(x_1, x') := \int_0^\infty \chi_{M^\t(U_{x'}^h)}(x_1) dh,
\end{equation*}
where $U_{x'}^h$ is defined in \eqref{U}.
\end{definition}

\begin{figure}[ht!]
\begin{center}
\includegraphics[scale=.9]{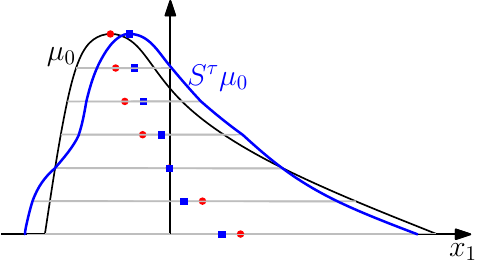}
\caption{Illustrations of $\mu_0$ and $S^\t\mu_0$ (for a small $\t>0$). \label{fig:st}}
\end{center}
\end{figure}

For an illustration of $S^\t \mu_0$ for  $\mu_0\in L^1(\mathbb{R})$, see Figure~\ref{fig:st}.

Using the above definition, Lemma \eqref{lem:prop_steiner}  and the representation \eqref{Steinerrepres} one immediately has
\[
S^{0}\mu_{0}=\mu_{0}, \quad S^{\infty}\mu_{0}=S^{}\mu_{0}.
\]
Furthermore, it is easy to check that $S^\t\mu_0 = \mu_0$ for all $\t$ if and only if $\mu_0$ is symmetric decreasing about the
hyperplane
$H=\{x_1=0\}$. Below is the definition for a function being symmetric decreasing about a hyperplane:
\begin{definition}
Let $\mu_0 \in L^1_+(\mathbb{R}^d)$. For a hyperplane $H \subset \mathbb{R}^d$ (with normal vector $e$), we say $\mu_0$ is \emph{symmetric decreasing}
about $H$
if for any $x\in H$, the function $f(\t):=\mu_0(x+\t e)$ is rearranged, \emph{i.e.} if $f=f^{\#}$.
\end{definition}
Next we state some basic properties of $S^\t$ without proof, see \cite{Brock,Ka1,Ka2} for instance.
\begin{lemma}
\label{lem:Lp}
The continuous Steiner symmetrization  $S^\t \mu_0$ in Definition {\rm\ref{def:steiner_func}} has the following properties:
\begin{enumerate}[(a)]
\item For any $h>0$, $|\{S^\t \mu_0> h\}| = |\{\mu_0>h\}|$. As a result, $\|S^\t \mu_0\|_{L^p(\R^d)} = \|\mu_0\|_{L^p(\R^d)}$ for all $1\leq p\leq +\infty$.
\item $S^\t$  has the semigroup property, that is, $S^{\t+s}\mu_0 = S^\t(S^s\mu_0)$ for any $\t,s\geq 0$ and non-negative $\mu_0 \in L^1(\mathbb{R}^d)$.
\end{enumerate}
\end{lemma}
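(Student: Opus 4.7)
The plan is to reduce both claims to a single slicing identity: for each fixed $x' \in \R^{d-1}$, the one-dimensional super-level set of $S^\t \mu_0(\cdot, x')$ at height $h$ coincides, up to a set of one-dimensional Lebesgue measure zero, with the continuous Steiner symmetrization $M^\t(U_{x'}^h)$ of the corresponding one-dimensional super-level set of $\mu_0(\cdot,x')$. Once this slicing identity is established, part (a) drops out by Fubini plus Cavalieri's principle, and part (b) collapses to the semigroup property of $M^\t$ given in Lemma~\ref{lem:prop_steiner}(d).

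To establish the slicing identity, fix $x' \in \R^{d-1}$ and $x_1 \in \R$. By Definition~\ref{def:steiner_func},
$$S^\t \mu_0(x_1, x') = \int_0^\infty \chi_{M^\t(U_{x'}^h)}(x_1)\, dh = \bigl|\{h>0 : x_1 \in M^\t(U_{x'}^h)\}\bigr|_1.$$
Since $h_1 < h_2$ yields $U_{x'}^{h_1} \supset U_{x'}^{h_2}$, Lemma~\ref{lem:prop_steiner}(c) gives $M^\t(U_{x'}^{h_1}) \supset M^\t(U_{x'}^{h_2})$, so the set $\{h>0 : x_1 \in M^\t(U_{x'}^h)\}$ is downward closed in $(0,\infty)$. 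Hence it is of the form $(0,h^{\ast})$ or $(0,h^{\ast}]$ with $h^{\ast} = h^{\ast}(x_1,x') \in [0,\infty]$, and $S^\t \mu_0(x_1, x') = h^{\ast}$. This yields the sandwich
$$\{x_1 : S^\t \mu_0(x_1, x') > h\} \;\subset\; M^\t(U_{x'}^h) \;\subset\; \{x_1 : S^\t \mu_0(x_1, x') \geq h\},$$
so the outer sets differ at most by boundary points of $M^\t(U_{x'}^h)$, a finite (or countable) union of endpoints and hence a one-dimensional null set.

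For part (a), combining the slicing identity with Lemma~\ref{lem:prop_steiner}(b) gives $|\{x_1 : S^\t \mu_0(x_1, x') > h\}|_1 = |M^\t(U_{x'}^h)|_1 = |U_{x'}^h|_1$ for each $x'$ and almost every $h>0$. Integrating in $x' \in \R^{d-1}$ via Fubini yields $|\{S^\t\mu_0 > h\}|_d = |\{\mu_0 > h\}|_d$. The $L^p$ invariance for $1\le p<\infty$ follows from Cavalieri's formula $\|f\|_{L^p}^p = p\int_0^\infty h^{p-1} |\{f>h\}|_d \, dh$, and the $L^\infty$ case from $\|f\|_{L^\infty} = \inf\{h : |\{f>h\}|_d = 0\}$. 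For part (b), apply the slicing identity first to $S^s\mu_0$: the $x_1$-super-level set of $S^s \mu_0(\cdot, x')$ at height $h$ agrees with $M^s(U_{x'}^h)$ up to a null set. Plugging into Definition~\ref{def:steiner_func} for $S^\t$ applied to $S^s \mu_0$, then invoking Lemma~\ref{lem:prop_steiner}(d), gives
$$S^\t(S^s \mu_0)(x_1, x') = \int_0^\infty \chi_{M^\t(M^s(U_{x'}^h))}(x_1)\, dh = \int_0^\infty \chi_{M^{\t+s}(U_{x'}^h)}(x_1)\, dh = S^{\t+s}\mu_0(x_1, x').$$

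The main technical obstacle is measure-theoretic bookkeeping: one must verify that modifying the relevant one-dimensional level sets on null sets does not affect the iterated layer-cake integral used in part (b), and that $M^\t$ applied to a general measurable $U \subset \R$ (defined through the open exhaustion indicated after Definition~\ref{def:steiner_set}) still enjoys the monotonicity, measure-preservation, and semigroup properties stated for open sets. Modulo these routine checks, the argument is essentially a double application of Fubini together with the one-dimensional facts already recorded in Lemma~\ref{lem:prop_steiner}.
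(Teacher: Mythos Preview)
The paper does not prove this lemma; it states it and cites \cite{Brock,Ka1,Ka2}. Your sketch is along the standard lines of how these facts are established and is essentially correct.

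One step is imprecise and worth tightening. From the sandwich
\[
\{x_1 : S^\t \mu_0(x_1, x') > h\} \;\subset\; M^\t(U_{x'}^h) \;\subset\; \{x_1 : S^\t \mu_0(x_1, x') \geq h\}
\]
you assert that the outer sets differ only by ``boundary points of $M^\t(U_{x'}^h)$''. That is not what the sandwich gives: the difference is contained in the level set $\{x_1 : S^\t\mu_0(x_1,x')=h\}$, which in principle could have positive one-dimensional measure. A cleaner way to close this: the left inclusion together with Lemma~\ref{lem:prop_steiner}(b) gives
\[
|\{x_1 : S^\t \mu_0(x_1, x') > h\}|_1 \;\le\; |M^\t(U_{x'}^h)|_1 \;=\; |U_{x'}^h|_1
\]
for every $h>0$; integrating both sides in $h$ and comparing with the slice identity $\int_{\R} S^\t\mu_0(x_1,x')\,dx_1 = \int_0^\infty |M^\t(U_{x'}^h)|_1\,dh$ (Fubini in the layer-cake definition) forces equality for a.e.\ $h$. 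Since both sides are right-continuous nonincreasing functions of $h$ (they are distribution functions), equality holds for \emph{all} $h>0$. This also bridges the gap between your ``almost every $h$'' and the ``any $h>0$'' asserted in part~(a). The remaining bookkeeping you flag for part~(b) --- that $M^\t$ extended to measurable sets is insensitive to null-set modifications and retains Lemma~\ref{lem:prop_steiner}(b)--(d) --- is indeed routine and handled in the references cited.
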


Lemma \ref{lem:Lp} immediately implies that $\mathcal{S}[S^\t\mu_0]$ is constant in $\t$, where $\mathcal{S}[\cdot]$ is as given in \eqref{eq:energy}.

\subsubsection{Interaction energy under Steiner symmetrization}
 In this subsection, we will investigate $\mathcal{I}[S^\t\mu_0]$. It has been shown in \cite[Corollary 2]{Brock} and \cite[Theorem 3.7]{LL}
 that  $\mathcal{I}[S^\t\mu_0]$
 is non-increasing in $\t$. Indeed, in the case that $\mu_0$ is a characteristic function $\chi_{\Omega_0}$, it is shown in
\cite{Morgan} that $\mathcal{I}[S^\t\mu_0]$ is strictly decreasing for $\t$ small enough if $\Omega_0$ is not a ball. However, in order
to obtain \eqref{E_mu_t} for a strictly positive $c_0$, some refined estimates are needed, and we will prove the following:

\begin{proposition}
\label{prop:int}
Let $\mu_0 \in \mathcal{C}(\mathbb{R}^d) \cap L^1_+(\mathbb{R}^d)$. Assume the hyperplane $H=\{x_1=0\}$ splits the mass of $\mu_0$ into half and half, and
$\mu_0$
is
\textbf{not} symmetric decreasing about $H$. Let $\mathcal{I}[\cdot]$  be given in \eqref{eq:energy}, where $\K$ satisfies the assumptions $(K1)$-$(K3)$.
Then
$\mathcal{I}[S^\t\mu_0]$ is non-increasing in $\t$, and there exists some $\delta_0>0$ (depending on $\mu_0$) and $c_0>0$ (depending on $\mu_0$ and $\K$),
such
that
\begin{equation*}
\mathcal{I}[S^\t \mu_0] \leq \mathcal{I}[\mu_0] - c_0 \t \quad\text{ for all } \t\in [0,\delta_0].
\end{equation*}
\end{proposition}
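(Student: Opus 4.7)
My plan is to reduce the $d$-dimensional problem to a family of one-dimensional double-integral inequalities via layer-cake + slicing, apply the one-dimensional continuous Steiner-Riesz inequality for the non-increase, and then extract a strictly positive linear rate by exploiting continuity of $\mu_0$ together with its asymmetry on a set of positive measure.

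\textbf{Step 1: Reduction to 1D slices.}
First I would use the layer-cake representation together with Definition \ref{def:steiner_func} to write
\[
\mathcal{I}[S^\tau\mu_0]=\frac12\int_{\R^{d-1}}\!\!\int_{\R^{d-1}}\!\!\int_0^\infty\!\!\int_0^\infty J^\tau_{x',y'}(h,k)\,dh\,dk\,dx'\,dy',
\]
where
\[
J^\tau_{x',y'}(h,k):=\int_{M^\tau U^h_{x'}}\!\!\int_{M^\tau U^k_{y'}} K_{x'-y'}(x_1-y_1)\,dx_1\,dy_1,\qquad K_a(s):=\omega\bigl(\sqrt{s^2+|a|^2}\bigr).
\]
Assumption (K1) ($\omega'>0$) makes $K_a$ an even kernel that is strictly increasing in $|s|$ for every $a$, so the one-dimensional sets $M^\tau U^h_{x'}$, $M^\tau U^k_{y'}$ are being symmetrized against exactly the type of kernel for which continuous Steiner-Riesz applies. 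By the one-dimensional continuous Steiner-Riesz inequality (Brock \cite{Brock}, also \cite{LL,Morgan}), $\tau\mapsto J^\tau_{x',y'}(h,k)$ is non-increasing; Fubini gives non-increase of $\mathcal{I}[S^\tau\mu_0]$.

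\textbf{Step 2: Locating a uniformly asymmetric region.}
Since $\mu_0$ is not symmetric decreasing about $H$, there is some $\hat{x}'\in\R^{d-1}$ for which $\mu_0(\cdot,\hat{x}')$ is not a symmetric decreasing function on $\R$. Hence there exists $\hat{h}>0$ such that the open set $U^{\hat{h}}_{\hat{x}'}$ is a disjoint union of intervals whose union is not symmetric about $x_1=0$; in particular, some component interval $I(c_0,r_0)$ has $c_0\neq 0$, or the collection of centers is not symmetric. By continuity of $\mu_0$ (and the fact that non-degenerate components of a superlevel set vary continuously in $x'$ and $h$), I can choose a small neighborhood $\mathcal{N}\subset\R^{d-1}\times(0,\infty)$ of $(\hat{x}',\hat{h})$ and $\delta_0>0$ such that: (i) no merging occurs in the flow $\tau\mapsto M^\tau U^h_{x'}$ for $(x',h)\in\mathcal{N}$ and $\tau\in[0,\delta_0]$; (ii) the ``asymmetry parameter'' $\alpha(x',h):=\sum_i r_i(x',h)\,\sgn c_i(x',h)$ is bounded away from $0$ uniformly on $\mathcal{N}$.

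\textbf{Step 3: Linear decrease on the asymmetric region.}
For $(x',h)\in\mathcal{N}$ and $\tau\in[0,\delta_0]$, the set $M^\tau U^h_{x'}$ is obtained by translating each interval $I(c_i,r_i)$ by $-\tau\sgn c_i$. For a fixed pair of cross-sections $(x',y')$, parameterize the integral by the joint translation of the intervals to obtain
\[
J^\tau_{x',y'}(h,k)=\sum_{i,j}\int_0^{2r_i}\!\!\int_0^{2r_j} K_{x'-y'}\bigl(c_i-c_j-\tau(\sgn c_i-\sgn c_j)+\xi-\eta\bigr)\,d\xi\,d\eta.
\]
Differentiating in $\tau$ at $\tau=0^+$ yields a sum of terms of the form $-(\sgn c_i-\sgn c_j)\,K'_{x'-y'}(\cdot)$. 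Using that $K_a$ is strictly convex in the relevant regime and that $(\sgn c_i-\sgn c_j)$ has a definite sign in a fraction of the summands (by Step 2), one obtains a quantitative bound
\[
\frac{d}{d\tau}\bigg|_{\tau=0^+} J^\tau_{x',y'}(h,k)\;\le\;-\beta\;<\;0
\]
uniformly for $(x',h),(y',k)$ in a product subset of $\mathcal{N}$ of positive measure. Combining with the monotonicity from Step 1 (which guarantees the contribution from the complement of $\mathcal{N}^2$ is non-positive) and integrating yields $\mathcal{I}[S^\tau\mu_0]\le\mathcal{I}[\mu_0]-c_0\tau$ for $\tau\in[0,\delta_0]$, with $c_0>0$ depending on $\mu_0$ and on $\omega'$ on a compact subinterval.

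\textbf{Main obstacle.}
The subtle point is obtaining a uniformly positive linear rate rather than a merely strict inequality. This requires (a) ensuring no interval-merging events occur on the time interval $[0,\delta_0]$, which is why continuity of $\mu_0$ and the neighborhood $\mathcal{N}$ are essential; (b) controlling the near-Newtonian singularity of $\omega$ when bounding $K'_{x'-y'}$ from below on the support of integration, which is handled by restricting to a region where $|x'-y'|$ is bounded away from $0$; (c) ruling out the ``degenerate'' case in which asymmetries in different level sets cancel, which follows because the linearized decrease at each level $h$ has a definite sign once a uniform asymmetry has been fixed. Absent these quantitative refinements, Brock's theorem alone only gives $\mathcal{I}[S^\tau\mu_0]<\mathcal{I}[\mu_0]$, which is insufficient for the contradiction argument sketched after Theorem \ref{thm:unique}.
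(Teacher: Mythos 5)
Your reduction to one-dimensional slices and the idea that the strict decrease comes from pairs of intervals whose centers have opposite signs are indeed the paper's mechanism (Lemmas \ref{lem1} and \ref{lem:dIopenset}), but your Steps 2--3 have a genuine gap: you never use the hypothesis that $H$ splits the mass of $\mu_0$ in half, and without it your construction produces no decrease at all. Localizing around a single asymmetric slice $(\hat x',\hat h)$ and demanding that $\alpha(x',h)=\sum_i r_i\,\sgn c_i$ be bounded away from zero on a neighborhood $\mathcal N$ only tells you that the slices in $\mathcal N$ are, say, right-heavy; but then for two slices both taken from $\mathcal N$ every pair of component intervals has $\sgn c_i=\sgn c_j$, the intervals translate together, and the pairwise derivative is exactly zero. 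The linear decrease in the paper comes from pairing level sets with excess mass on the right of $|U|/2$ against level sets with excess mass on the left of $-|U|/2$ (the sets $B_1^{R,a}$ and $B_2^{R,a}$), and it is precisely the half-mass condition that guarantees \emph{both} families have positive measure; your proposal has no substitute for this step, and $\alpha$ is in any case not the right asymmetry functional (it vanishes for sets symmetric about $0$ that are not intervals, and its continuity in $(x',h)$ is not available since the interval decomposition of a superlevel set of a merely continuous function is not stable).

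The second problem is the passage from a derivative bound at $\t=0^+$ to the estimate on a whole interval $[0,\delta_0]$. You rely on choosing $\mathcal N$ and $\delta_0$ so that ``no merging occurs'' and on components of superlevel sets ``varying continuously'' in $(x',h)$; neither claim is justified for $\mu_0\in C\cap L^1_+$ (level sets may have countably many components with arbitrarily small gaps, so merging can start at arbitrarily small times, and components can split or disappear under perturbation of $h$ and $x'$). The paper avoids this entirely: by the semigroup property of $M^\t$ one re-decomposes $M^{\tau_0}(U)$ into intervals at each time $\tau_0$, and since every interval moves with speed at most $1$, the quantitative mass-location conditions defining $B_1^{R,a}$, $B_2^{R,a}$ persist (with $a$ replaced by $a/4$) for all $\t\in[0,a/4]$, yielding a uniform positive lower bound on the right derivative on the whole interval; merging is harmless because one simply merges intervals sharing endpoints before applying Lemma \ref{lem1}. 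Without an argument of this type your proof only gives strict decrease of the right derivative at $\t=0$, which does not integrate to the stated $\mathcal{I}[S^\t\mu_0]\le \mathcal{I}[\mu_0]-c_0\t$. (A minor further point: for the lower bound on $|K_{|x'-y'|}'|$ you need $|x'-y'|$ bounded \emph{above} and $r$ in a compact interval away from $0$, which is why the paper restricts to $|x'|,|y'|\le R$; keeping $|x'-y'|$ away from $0$ is neither needed nor the relevant issue.)
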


The building blocks to prove Proposition \ref{prop:int} are a couple of lemmas estimating how the interaction energy between two \emph{one-dimensional}
densities
$\mu_1, \mu_2$ changes under continuous Steiner symmetrization for each of them. That is, we will investigate how
\begin{equation} \label{def:I}
I_\WW[\mu_1,\mu_2](\t) := \int_{\mathbb{R}\times \mathbb{R}} (S^\t \mu_1)(x) (S^\t\mu_2)(y) \WW(x-y) dxdy
\end{equation}
changes in $\t$ for a given one dimensional kernel $\WW$ to be determined. We start with the basic case where $\mu_1, \mu_2$ are both
characteristic
functions of some open interval.

\begin{lemma}
\label{lem1}
Assume $\WW(x) \in \mathcal{C}^1(\mathbb{R})$ is an even function with $\WW'(x)<0$ for all $x>0$. For $i=1,2$, let $\mu_i := \chi_{I(c_i,r_i)}$ respectively,
 where $I(c,r)$ is as given in Definition \ref{def:steiner_set}.
Then the following holds for the function $I(\t)
:= I_\WW[\mu_1, \mu_2](\t)$ introduced in \eqref{def:I}:
\begin{enumerate}[(a)]
\item $\frac{d^+}{d \t} I(0) \geq 0$. (Here $\frac{d^+}{d\tau }$ stands for the the right derivative.)
\item If in addition $\sgn c_1 \neq \sgn c_2$, then
\begin{equation}\label{eq_diff_sign}
\frac{d^+}{d \t} I(0) \geq  c_w  \min\{r_1, r_2\} |c_2-c_1|   > 0,
\end{equation}
where  $c_w$ is the minimum of $|\WW'(r)|$ for $r\in [\frac{|c_2-c_1|}{2}, r_1+r_2 + |c_2-c_1|]$.
\end{enumerate}
\end{lemma}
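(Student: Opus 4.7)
The plan is to compute $I(\tau)$ in closed form for small $\tau$, differentiate at $\tau=0$, and estimate the result using the symmetric-decreasing structure of $\WW$. By Definition \ref{def:steiner_set}, as long as $\tau$ is small enough that the two shifted intervals do not collide, we have $S^\tau \chi_{I(c_i,r_i)} = \chi_{I(c_i - \tau\,\sgn c_i,\,r_i)}$. Substituting this into \eqref{def:I} and translating $x,y$ to re-center both intervals at the origin yields
\[
I(\tau) = \phi(\alpha - \tau\beta), \qquad \phi(t) := \int_{-r_1}^{r_1}\!\!\int_{-r_2}^{r_2} \WW(x - y + t)\,dx\,dy,
\]
where $\alpha := c_1 - c_2$ and $\beta := \sgn c_1 - \sgn c_2$. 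Hence $\tfrac{d^+}{d\tau}I(0) = -\beta\,\phi'(\alpha)$.

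To handle $\phi'$, introduce the auxiliary one-dimensional potential $\tilde\psi(z) := \int_{-r_2}^{r_2} \WW(z-y)\,dy$, so that $\phi(t) = \int_{-r_1}^{r_1}\tilde\psi(x+t)\,dx$ and consequently $\phi'(t) = \tilde\psi(t + r_1) - \tilde\psi(t - r_1)$. Because $\WW$ is $C^1$, even, and strictly decreasing on $(0,\infty)$, the convolution $\tilde\psi$ inherits these properties. For part (a): if $\sgn c_1 = \sgn c_2$ then $\beta = 0$ and the derivative vanishes. Otherwise a direct case-check (running through $c_1>0>c_2$, $c_1>0=c_2$, $c_1=0>c_2$, and their sign-reversed versions) shows that $\alpha$ and $\beta$ are nonzero and of the same sign, so $|\beta|\ge 1$. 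Taking WLOG $\alpha,\beta > 0$, the strict inequality $r_1 + \alpha > |r_1 - \alpha|$ combined with the strict monotonicity of $\tilde\psi$ in $|\cdot|$ forces $\phi'(\alpha) < 0$, and hence $\tfrac{d^+}{d\tau}I(0) > 0$.

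For part (b), keep $\alpha,\beta > 0$ and unfold both differences as integrals of derivatives: $\tilde\psi(|r_1-\alpha|) - \tilde\psi(r_1+\alpha) = \int_{|r_1-\alpha|}^{r_1+\alpha}(-\tilde\psi'(s))\,ds$ and, for $s>0$, $-\tilde\psi'(s) = \WW(|s-r_2|) - \WW(s+r_2) = \int_{|s-r_2|}^{s+r_2}(-\WW'(u))\,du$. Substituting gives
\[
\frac{d^+}{d\tau}I(0) \;=\; |\beta|\int_{|r_1-\alpha|}^{r_1+\alpha}\!\!\int_{|s-r_2|}^{s+r_2}(-\WW'(u))\,du\,ds,
\]
with every integrand nonnegative. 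The outer $s$-range has length $2\min(r_1,\alpha)$ and, for each $s$, the inner $u$-range has length $2\min(s,r_2)$. To pin down the constants claimed, I would restrict the outer integration to the sub-interval on which the inner $u$-values stay inside $[|c_2-c_1|/2,\,r_1+r_2+|c_2-c_1|]$, where $-\WW'(u)\ge c_w$ by definition; a short case split (depending on whether $\alpha\le r_1$ and whether $r_1\le r_2$) then produces a factor proportional to $\min\{r_1,r_2\}\,|c_2-c_1|$, and $|\beta|\ge 1$ absorbs the combinatorial factor.

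The main obstacle is this last bookkeeping step. A naive mean-value estimate produces a product of the form $\min(r_1,\alpha)\cdot\min(\xi,r_2)$ for an intermediate $\xi\in(|r_1-\alpha|,\,r_1+\alpha)$, which does not a priori equal $\min\{r_1,r_2\}\,|c_2-c_1|$; arriving at the stated constant requires a careful choice of the restricted sub-range (or, equivalently, splitting the outer integral at $s = r_2$ and using the structure of $\tilde\psi$ in each regime). Fortunately, all signs are dictated by the symmetric-decreasing nature of $\tilde\psi$ and $\WW$, so no cancellations arise --- what remains is purely arithmetic.
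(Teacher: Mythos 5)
Your reduction is correct, and for part (a) it is complete: writing $\frac{d^+}{d\tau}I(0)=-\beta\,\phi'(\alpha)$ with $\phi'(t)=\tilde\psi(t+r_1)-\tilde\psi(t-r_1)$, and noting that $\tilde\psi$ is even with $\tilde\psi'(z)=\WW(z+r_2)-\WW(|z-r_2|)<0$ for $z>0$, does give (a), with strict positivity when the signs differ. This is a genuinely different route from the paper, which writes the derivative as $(\mathrm{sgn}\,c_2-\mathrm{sgn}\,c_1)\int_Q\WW'(x-y)\,dx\,dy$ over a shifted rectangle $Q$ and argues geometrically, discarding a balanced pair $Q^-\cup\tilde Q^+$ and keeping a strip $D$ of area $r_1(c_2-c_1)$; your one-dimensional convolution argument is cleaner for the qualitative part.

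For part (b), however, you stop exactly where the content is, and the deferred step cannot be carried out. In your formula the $(s,u)$-region has measure at most $2\min\{r_1,\alpha\}\cdot 2r_2\le 4r_1r_2$, so $\frac{d^+}{d\tau}I(0)\le 8\,r_1r_2\,\sup|\WW'|$ (supremum over the relevant range), whereas the target is $c_w\min\{r_1,r_2\}\,|c_2-c_1|$; in fact, once $|c_2-c_1|\ge 2(r_1+r_2)$ the constraint $u\ge |c_2-c_1|/2$ is automatic and your double integral is comparable to $c_w\,r_1r_2$, not to $c_w\min\{r_1,r_2\}|c_2-c_1|$. Concretely, for $\WW(x)=-\sqrt{x^2+1}$, $r_1=r_2=1$, $c_1=-10$, $c_2=10$, one computes $\frac{d^+}{d\tau}I(0)\le 8$ while $c_w\min\{r_1,r_2\}|c_2-c_1|\ge 19$, so the inequality you are trying to prove fails for widely separated intervals and no choice of restricted sub-range can ``absorb'' this into $|\beta|\ge 1$. (You are in good company: the paper's own proof tacitly needs its strip $D=[-r_1,r_1]\times[r_2+\tfrac{c_2-c_1}{2},\,r_2+(c_2-c_1)]$ to lie inside $Q$, which requires $c_2-c_1\le 4r_2$ in the case $r_1\le r_2$; your exact formula makes this hidden restriction visible, and an honest completion of your computation yields a bound in which the factor $|c_2-c_1|$ is capped at the scale of $r_1+r_2$.) So as written, part (b) is not proved: the ``purely arithmetic bookkeeping'' is precisely the step where the stated estimate breaks down, and any correct version must either restrict the range of $|c_2-c_1|$ relative to $r_1,r_2$ or weaken the right-hand side accordingly.
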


\begin{proof}
By definition of $S^\t$, we have $S^\t \mu_i = \chi_{M^\t(I(c_i, r_i))}$ for $i=1,2$ and all $\t\geq 0$. If $\sgn c_1 = \sgn c_2$,  the two intervals
$M^\t(I(c_i,
r_i))$ are moving towards the same direction for small enough $\t$, during which their interaction energy $I(\t)$ remains constant, implying $\frac{d}{d \t}
I(0)=0$.
Hence
it suffices to focus on $\sgn c_1 \neq \sgn c_2$ and prove \eqref{eq_diff_sign}.

Without loss of generality, we assume that $c_2>c_1$, so that $\sgn c_2- \sgn c_1$ is either 2 or 1. The definition of $M^\t$ gives
\begin{equation*}
\begin{split}
I(\t) &= \int_{-r_1+c_1- \t\sgn c_1}^{r_1 + c_1 - \t\sgn c_1} \int_{-r_2+c_2- \t\sgn c_2}^{r_2 + c_2 - \t\sgn c_2}\WW(x-y)dydx\\
&= \int_{-r_1}^{r_1} \int_{-r_2}^{r_2}\WW(x-y + (c_1 - c_2) + \t(\sgn c_2 - \sgn c_1))dydx.
\end{split}
\end{equation*}
Taking its right derivative in $\t$ yields
\begin{equation*}
\begin{split}
\frac{d^+}{d\tau} I(0) &= (\sgn c_2 - \sgn c_1) \int_{-r_1}^{r_1} \int_{-r_2}^{r_2}\WW'(x-y + (c_1 - c_2) )dydx.
\end{split}
\end{equation*}
Let us deal with the case $r_1\leq r_2$ first. In this case we rewrite $\frac{d^+}{d\tau} I(0)$ as
\begin{equation}\label{eq:I_0}
\frac{d^+}{d\tau} I(0) = (\sgn c_2 - \sgn c_1) \int_{ Q} \WW'(x-y)dxdy,
\end{equation}
 where $Q$ is the rectangle $[-r_1,r_1]\times [-r_2+(c_2-c_1), r_2+(c_2-c_1)]$, as illustrated in Figure~\ref{fig:Q}. Let $ Q^- =  Q \cap \{x-y>0\}$,
and
 $Q^+ =
 Q \cap \{x-y<0\}$. The assumptions on $\WW$ imply $\WW'(x-y)<0$ in $Q^-$, and $\WW'(x-y)>0$ in $Q^+$.

\begin{figure}[ht!]
\begin{center}
\includegraphics[scale=1.1]{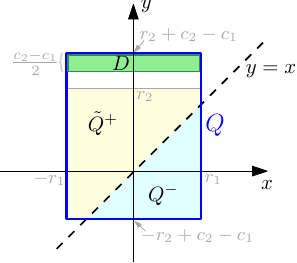}
\caption{Illustration of the sets $Q,  Q^-, \tilde Q^+$ and $D$ in the proof of Lemma \ref{lem1}.\label{fig:Q}}
\end{center}
\end{figure}

Let $\tilde Q^+ :=  Q^+ \cap \{y\leq r_2\}$, and $D:= [-r_1, r_1]\times [r_2 + \frac{c_2-c_1}{2}, r_2+(c_2-c_1)]$.
($\tilde Q^+$ and $D$ are the yellow set
and
green set in Figure \ref{fig:Q} respectively). By definition, $\tilde Q^+$ and $D$ are disjoint subsets of $Q^+$, so
\begin{equation}\label{eq:temp0}
\frac{d^+}{d\tau} I(0)\geq \! (\sgn c_2 - \sgn c_1) \!\Bigg(\!\underbrace{\int_{Q^-}\!\!\!\!\! \WW'(x-y)dxdy}_{\leq 0} + \underbrace{\int_{\tilde Q^+}\!\!\!\!\!
\WW'(x-y)dxdy}_{\geq 0} + \underbrace{\int_{D}\!\! \WW'(x-y)dxdy}_{> 0}\!\Bigg).
\end{equation}
We claim that $\int_{Q^-} \WW'(x-y)dxdy + \int_{\tilde Q^+} \WW'(x-y)dxdy \geq 0$. To see this, note that $ Q^- \cup \tilde Q^+ $ forms a
rectangle, whose
center
has a zero $x$-coordinate and a positive $y$-coordinate. Hence for any $h>0$, the line segment $\tilde Q^+ \cup\{x-y = -h\}$ is longer
than $Q^-\cup\{x-y =
h\}$,
which gives the claim.

Therefore, \eqref{eq:temp0} becomes
\[\frac{d^+}{d\tau} I(0) \geq (\sgn c_2 - \sgn c_1) \int_{D}\WW'(x-y)dxdy \geq \int_{D}\WW'(x-y)dxdy \geq |D| \min_{(x,y)\in D}\WW'(x-y)\]
Note that $D$ is a rectangle with area $r_1(c_2-c_1)$, and for any $(x,y)\in D$, we have (recall that $r_{2}>r_{1}$)
\[
\frac{|c_2-c_1|}{2}+r_{2}-r_{1}\leq y-x \leq r_1+r_2 + |c_2-c_1|.
\]
This finally gives
\[\frac{d^+}{d\tau} I(0)\geq r_1 (c_2-c_1) \min_{r\in [\frac{|c_2-c_1|}{2}, r_1+r_2 + |c_2-c_1|]} |\WW'(r)|.\]
Similarly, if $r_1>r_2$, then $I'(0)$ can be written as \eqref{eq:I_0} with $\tilde Q$ defined as $[-r_1+(c_2-c_1),r_1+(c_2-c_1)]\times [-r_2, r_2]$
instead, and
the above inequality would hold with the roles of $r_1$ and $r_2$ interchanged. Combining these two cases, we have
\[
\frac{d^+}{d\tau} I(0) \geq c_w \min\{r_1, r_2\} |c_2-c_1| \quad\text{ for }\sgn c_1 \neq \sgn c_2,
\]
where $c_w$ is the minimum of $|\WW'(r)|$ for $r\in [\frac{|c_2-c_1|}{2}, r_1+r_2 + |c_2-c_1|]$.
\end{proof}

The next lemma generalizes the above result to open sets with finite measures.

\begin{lemma}
\label{lem:dIopenset}
Assume $\WW(x) \in \mathcal{C}^1(\mathbb{R})$ is an even function with $\WW'(r)<0$ for all $r>0$.
For open sets $U_1, U_2 \subset \mathbb{R}$ with finite measure, let $\mu_i := \chi_{U_i}$ for $i=1,2$, and $I(\t) := I_\WW[\mu_1,\mu_2](\t)$ is as defined
in
\eqref{def:I}. Then
\begin{enumerate}[(a)]
\item $\frac{d}{d \t}I(\t)\geq 0$ for all $\t\geq 0$;
\item In addition, assume that there exists some $a\in(0,1)$ and $R>\max\{|U_1|, |U_2|\}$ such that $|U_1 \cap (\frac{|U_1|}{2}, R)|>a$, and $|U_2 \cap
    (-R,
    -\frac{|U_2|}{2})|>a$.
Then for all $\t\in [0,a/4]$, we have
\begin{equation}\label{ineqI_2}
\frac{d^+}{d \t} I(\t) \geq \frac{1}{128} c_w a^3   > 0,
\end{equation}
where $c_w$ is the minimum of $|\WW'(r)|$ for $r\in [\frac{a}{4}, 4R]$.
\end{enumerate}
\end{lemma}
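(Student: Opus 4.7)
My plan is to reduce both parts to the single-interval Lemma~\ref{lem1} by decomposing each open set into its connected components (a countable disjoint union of open intervals) and exploiting the semigroup property $S^{\tau+s}=S^s\circ S^\tau$ of Lemma~\ref{lem:Lp}(b). Writing $U_i=\bigsqcup_n I(c_{i,n},r_{i,n})$, at any $\tau$ outside the (at most countable) set of merger times we have $M^\tau(U_i)=\bigsqcup_n M^\tau(I(c_{i,n},r_{i,n}))$ as a disjoint union, so
$$I(\tau) = \sum_{n,m} I_{n,m}(\tau), \qquad I_{n,m}(\tau) := I_\WW\bigl[\chi_{I(c_{1,n},r_{1,n})},\chi_{I(c_{2,m},r_{2,m})}\bigr](\tau).$$
The semigroup identity converts a right derivative of $I$ at time $\tau$ into one at time zero for the shifted configuration $S^\tau\mu_1,\, S^\tau\mu_2$, where Lemma~\ref{lem1} applies directly to each interval pair.

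For part (a), Lemma~\ref{lem1}(a) gives $\frac{d^+}{d\tau}I_{n,m}(\tau)\geq 0$ for every pair. To pass this to the possibly infinite sum, I would first truncate $U_i$ to the union of its $N$ largest components, apply Lemma~\ref{lem1}(a) termwise for the finite sum, and then let $N\to\infty$ using the $L^1$-continuity of $(\nu_1,\nu_2)\mapsto I_\WW[\nu_1,\nu_2]$ (valid because all interactions take place in a bounded region where $\WW$ is bounded). This yields $\frac{d^+}{d\tau}I(\tau)\geq 0$ at every non-merger $\tau$, and continuity of $I$ across the measure-zero set of merger times extends the bound to all $\tau\geq 0$.

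For part (b), I apply Lemma~\ref{lem1}(b) pairwise at time $\tau\in[0,a/4]$ to obtain
$$\frac{d^+}{d\tau}I(\tau) \;\geq\; \sum_{(n,m)\in\Pi(\tau)} c_w^{n,m}(\tau)\,\min(r_{1,n},r_{2,m})\,\bigl|c_{1,n}(\tau)-c_{2,m}(\tau)\bigr|,$$
where $\Pi(\tau)$ collects pairs with opposite-sign centers and $c_w^{n,m}(\tau)$ is the local minimum of $|\WW'|$ on the pair's interaction range. The geometric input is that the two-sided mass-separation hypothesis persists under $S^\tau$: since Steiner shifts each positive-center interval leftward at unit speed and $\tau\leq a/4<|U_1|/2$ (which follows from $|U_1|\geq|U_1\cap(|U_1|/2,R)|>a$), intervals of $U_1$ originally contained in $(|U_1|/2,R)$ remain in the positive half at time $\tau$ with centers $\geq a/4$; a mirror statement holds for $U_2$ on the negative side. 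Straddling intervals either retain their positive-center status (if their original center exceeded $\tau$) or collapse to zero-center and drop out of $\Pi(\tau)$. On the retained subcollection we have $|c_{1,n}(\tau)-c_{2,m}(\tau)|\geq a/2$, so the range for $c_w^{n,m}(\tau)$ lies in $[a/4,\,4R]$ and each $c_w^{n,m}(\tau)\geq c_w$. A matching/double-counting argument using the layer-cake representation $\min(r_{1,n},r_{2,m}) = \int_0^\infty \chi_{\{r_{1,n}\geq h\}}\chi_{\{r_{2,m}\geq h\}}\,dh$ then produces $\sum_{\Pi(\tau)}\min(r_{1,n},r_{2,m})|c_{1,n}(\tau)-c_{2,m}(\tau)|\gtrsim a^3$, with careful bookkeeping giving the constant $1/128$.

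The delicate step is the geometric propagation in (b): tracking how a macroscopic portion of the ``right mass'' of $U_1$ and ``left mass'' of $U_2$ survives under $S^\tau$ despite possible interval mergers, and organizing the opposite-sign pair sum so as to retain the full cubic dependence on $a$. The part~(a) approximation is routine but requires justifying the sum-derivative swap and patching across merger times.
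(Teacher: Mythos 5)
Your part (a) and the overall skeleton of part (b) --- reduce to a fixed time $\tau_0$ via the semigroup property, decompose into intervals, apply Lemma~\ref{lem1}(b) to opposite-sign pairs with center separation at least $a/2$, and sum --- are exactly the paper's strategy, and your final summation is fine (your layer-cake observation would in fact give the stronger bound $c_w a^2/16$, compared with the paper's $\tfrac{1}{128}c_wa^3$ obtained via $\min\{u,v\}\geq\min\{u,1\}\min\{v,1\}$). The genuine gap is the persistence step you yourself flag as ``delicate'': you never establish that at every $\tau\in[0,a/4]$ the interval decomposition of $M^\tau(U_1)$ contains a subfamily with centers in $[a/4,\,R+|U_1|/2]$ whose radii sum to at least $a/8$ (and the mirror statement for $U_2$), and the mechanism you propose does not deliver it. Tracking the original components of $U_1$ fails for two reasons. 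First, the hypothesis $|U_1\cap(|U_1|/2,R)|>a$ is a statement about mass, not components: the window mass may be carried entirely by components straddling $|U_1|/2$ or $R$, so there may be no component ``originally contained in $(|U_1|/2,R)$'' at all, and your remark that straddling intervals may ``collapse to zero-center and drop out of $\Pi(\tau)$'' concedes the loss without showing that what remains still carries total radius of order $a$. Second, even a component that starts inside the window need not keep its center above $a/4$: when it meets a stationary component centered at the origin (or one moving rightward), Definition~\ref{def:steiner_set} merges them, and the merged interval's center can be small (merging $I(0,r_0)$ with an incoming $I(c,r)$ yields an interval of center $r$); after such mergers your labels $c_{1,n}(\tau)$ and fixed radii $r_{1,n}$ no longer refer to the actual components to which Lemma~\ref{lem1} must be applied via the semigroup property.

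The paper avoids interval tracking altogether: it tracks the mass $A_1(\tau)=|M^\tau(U_1)\cap(\tfrac{|U_1|}{2}+\tfrac{a}{4},R)|$, which satisfies $A_1(0)>\tfrac{3a}{4}$ and $A_1'\geq -2$ because every component moves with speed at most $1$ (an estimate insensitive to mergers), hence $A_1(\tau)>a/4$ on $[0,a/4]$. Then, at the fixed time $\tau_0$ and for the re-merged decomposition $M^{\tau_0}(U_1)=\cup_k I(c_k^1,r_k^1)$, any component meeting that shrunken window must satisfy $a/4\leq c_k^1\leq R+|U_1|/2$, since otherwise its length would exceed $|U_1|$, contradicting $|M^{\tau_0}(U_1)|=|U_1|$; those components therefore have $\sum_k r_k^1\geq a/8$, and applying the semigroup property to this decomposition (whose components share no endpoints) justifies the pairwise use of Lemma~\ref{lem1}. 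To repair your argument you would need to replace the interval-tracking claim by a mass-in-window argument of this type, or otherwise control mergers quantitatively; as written, part (b) is incomplete at precisely its central step, while part (a) is correct and essentially identical to the paper's proof.
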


\begin{proof}
It suffices to focus on the case when $U_1, U_2$ both consist of a finite disjoint union of open intervals, and for the general case we can take the
limit.
Recall that $S^\t\mu_i = \chi_{M^\t(U_i)}$ for $i=1,2$ and all $\t\geq 0$.

To show (a), due to the semigroup property of $S^\t$ in Lemma \ref{lem:Lp}, all we need to show is $\frac{d^+}{d \t}I(0)\geq 0$. By writing $U_1, U_2$
each as a
union
of disjoint open intervals and expressing $I(\t)$ a sum of the pairwise interaction energy, (a) immediately follows from Lemma~\ref{lem1}(a).

We will prove (b) next. First, we claim that
\begin{equation}\label{eq:u1}
A_1(\t) := \left|M^\t(U_1) \cap  \left(\frac{|U_1|}{2}+\frac{a}{4}, \,R\right)\right|>\frac{a}{4}  \quad\text{ for all $\t\in[0,\frac{a}{4}]$}.
\end{equation}
To see this, note that $A_1(0)>\frac{3a}{4}$ due to the assumption $|U_1 \cap (\frac{|U_1|}{2}, R)|>a$. Since each interval in $M^\t(U_1)$
moves with speed
either
0 or $\pm 1$ at each $\t$, we know $A_1'(\t)\geq -2$ for all $\t$, yielding the claim. (Similarly, $A_2(\t) := |M^\t(U_2) \cap  (-R,
-\frac{|U_2|}{2}-\frac{a}{4})|>\frac{a}{4}$ for all $\t\in[0,\frac{a}{4}]$.)

Now we pick any $\tau_0 \in [0,\frac{a}{4}]$, and we aim to prove \eqref{ineqI_2} at this particular time $\tau_0$. At $\t=\tau_0$, write $M^{\tau_0}(U_1):=
\cup_{k=1}^{N_1} I(c_k^1, r_k^1)$, where all intervals $I(c_k^1, r_k^1)$ are disjoint, and none of them share common endpoints -- if they do, we merge them into one interval.

Note that for every $x\in M^{\tau_0}(U_1) \cap  (\frac{|U_1|}{2}+\frac{a}{4}, \,R)$, $x$ must belong to some $I(c_k^1, r_k^1)$ with $a/4\leq c_k^1 \leq R+|U_1|/2$. Otherwise, the length of $I(c_k^1, r_k^1)$ would exceed $|U_1|$, contradicting Lemma \ref{lem:prop_steiner}(a). We then define
\[
\mathscr{I}_1:= \left\{1\leq k\leq N_1:  \frac{a}{4}\leq c_k^1 \leq R+|U_1|/2\right\}.
\]
Combining the above discussion with \eqref{eq:u1}, we have $\sum_{k\in \mathscr{I}_1}|I(c_k^1, r_k^1)| \geq a/4$, i.e.
\begin{equation}\label{eq:sumk}
\sum_{k\in \mathscr{I}_1} r_k^1 \geq \frac{a}{8}.
\end{equation}
Likewise,  let $M^{\tau_0}(U_2) := \cup_{k=1}^{N_2} I(c_k^2, r_k^2)$, and denote by $\mathscr{I}_2$ the set of indices $k$ such that\\ $-R-|U_2|/2\leq c_k^2
\leq
-\frac{a}{4}$, and similarly we have $\sum_{k\in \mathscr{I}_2} r_k^2 \geq a/8$.

The semigroup property of $M^\t$ in Lemma~\ref{lem:prop_steiner} gives that for all $s>0$,
\[
M^{\tau_0+s}(U_1) = M^s(M^{\tau_0} (U_1)) = M^s( \cup_{k=1}^{N_1} I(c_k^1, r_k^1)).
\]
Since none of the intervals $I(c_k^1, r_k^1)$ share common endpoints, we have
\[
M^s( \cup_{k=1}^{N_1} I(c_k^1, r_k^1))= \cup_{k=1}^{N_1} M^s(I(c_k^1, r_k^1)) \quad\text{ for sufficiently small }s> 0.
\]
A similar result holds for $M^{\tau_0+s}(U_2)$, hence we obtain for sufficiently small $s> 0$:
\[
I(\tau_0+s) = I_\WW[\chi_{M^{\tau_0}(U_1)}, \chi_{M^{\tau_0}(U_2)}](s)= \sum_{k=1}^{N_1} \sum_{l=1}^{N_2} I_\WW[\chi_{I(c_k^1, r_k^1)}, \chi_{I(c_l^2, r_l^2)}](s).
\]
Applying Lemma \ref{lem1}(a) to the above identity yields
\begin{equation}\label{dI_tau}
\begin{split}
\frac{d^+}{d\tau} I(\tau_0) &=  \sum_{k=1}^{N_1} \sum_{l=1}^{N_2} \frac{d}{ds}I_\WW[\chi_{I(c_k^1, r_k^1)}, \chi_{I(c_l^2, r_l^2)}]\Big|_{s=0}\geq
\sum_{k\in\mathscr{I}_1}
\sum_{l\in\mathscr{I}_2} \underbrace{\frac{d}{ds}I_\WW[\chi_{I(c_k^1, r_k^1)}, \chi_{I(c_l^2, r_l^2)}]\Big|_{s=0}}_{=: T_{kl}}.
\end{split}
\end{equation}
Next we will obtain a lower bound for $T_{kl}$. By definition of  $\mathscr{I}_1$ and $ \mathscr{I}_2$, for each $k\in \mathscr{I}_1$ and $l\in
\mathscr{I}_2$ we
have that $c_k^1 \geq \frac{a}{4}$ and $c_l^2 \leq -\frac{a}{4}$, hence $|c_l^2 - c_k^1| \geq \frac{a}{2}$. Thus Lemma \ref{lem1}(b) yields
\[
T_{kl} \geq c_w \min\{r_k^1, r_l^2\} |c_l^2 - c_k^1| \geq c_w \frac{a}{2} \min\{r_k^1, r_l^2\} \quad\text{ for }k\in \mathscr{I}_1, l\in \mathscr{I}_2,
\]
where $c_w = \min_{r\in[\frac{a}{4}, 4R]} |\WW'(r)|$ (here we used that for $k\in \mathscr{I}_1, l\in \mathscr{I}_2$, we have $r_k^1+r_l^2 +
|c_l^2-c_k^1|
\leq
|U_1|/2+|U_2|/2 + (R+|U_1|/2) + (R+|U_2|/2) \leq 4R$, due to the assumption $R>\max\{|U_1|, |U_2|\}$.)

Plugging the above inequality into \eqref{dI_tau} and using $\min\{u,v\} \geq \min\{u,1\} \min\{v,1\}$ for $u,v>0$, we have
\[
\begin{split}
\frac{d^+}{d\tau} I(\tau_0) &\geq \frac{ac_w}{2}  \sum_{k\in\mathscr{I}_1} \sum_{l\in\mathscr{I}_2}  \min\{r_k^1, 1\}  \min\{r_l^2, 1\}\\
&=\frac{ac_w}{2}  \left( \sum_{k\in\mathscr{I}_1}  \min\{r_k^1, 1\} \right) \left(\sum_{l\in\mathscr{I}_2}   \min\{r_l^2, 1\} \right)\\
&\geq \frac{ac_w}{2}   \min\left\{1, \sum_{k\in\mathscr{I}_1} r_k^1\right\}  \min\left\{1, \sum_{l\in\mathscr{I}_2} r_l^2\right\}\\
&\geq \frac{ac_w}{2} \min\left\{1, \frac{a}{8} \right\}^2 \geq  \frac{1}{128} c_w a^3 ,
\end{split}
\]
here we applied \eqref{eq:sumk} in the second-to-last inequality, and used the assumption $a\in(0, 1)$ for the last inequality. Since $\tau_0 \in[0,a/4]$ is
arbitrary, we can conclude.
\end{proof}

Now we are ready to prove Proposition \ref{prop:int}.

\begin{proof}[Proof of  Proposition {\rm\ref{prop:int}}]
Since $\mu_0 \in \mathcal{C}(\mathbb{R}^d) \cap L^1_+(\mathbb{R}^d)$ is \emph{not} symmetric decreasing about $H = \{x_1 = 0\}$, we know that there exists some $x' \in \mathbb{R}^{d-1}$ and $h>0$,
such that
$U_{x'}^h := \{x_1\in\mathbb{R}: \mu_0(x_1, x')>h\}$ has finite measure, and its difference from $(-|U_{x'}^h|/2, |U_{x'}^h|/2)$ has nonzero measure.

For $R>0, a>0$, define
\[
B_1^{R,a} = \left\{(x',h)\in\mathbb{R}^{d-1}\times(0,+\infty): \left|U_{x'}^h \cap (|U_{x'}^h|/2, R)\right|>a, |x'|\leq R\right\},
\]
\[
B_2^{R,a} = \left\{(x',h)\in\mathbb{R}^{d-1}\times(0,+\infty): \left|U_{x'}^h \cap (-R, -|U_{x'}^h|/2)\right|>a, |x'|\leq R\right\}.
\]
Our discussion above yields that at least one of $B_1^{R,a}$ and $B_2^{R,a}$ is nonempty when $R$ is sufficiently large and $a>0$ sufficiently
small (hence
at
least one of them must have nonzero measure by continuity of $\mu_0$). Next let us discuss two cases.

Case 1: Both $B_1^{R,a}$ and $B_2^{R,a}$ have nonzero measure when $R$ is sufficiently large and $a>0$ sufficiently
small. 

Let us define a one-dimensional kernel $K_l(r) := -\tfrac12\K(\sqrt{r^2+l^2})$. Note that for any $l>0$, the kernel $K_l \in \mathcal{C}^1(\mathbb{R})$ is even
in $r$,
and
$K_l'(r)<0$ for all $r>0$. By definition of $S^\t$, we can rewrite $\mathcal{I}[S^\t\mu_0]$ as
\[
\mathcal{I}[S^\t\mu_0] =\!-\!\!\int_{(\mathbb{R}^+)^2}\!\int_{\mathbb{R}^{2(d-1)}}\!  \int_{\mathbb{R}^2} \!\!\chi_{ M^\t(U_{x'}^{h_1}) }(x_1) \chi_{
M^\t(U_{y'}^{h_2}) }(y_1)  \,K_{|x'-y'|}(|x_1-y_1|)  dx_1 dy_1 dx' dy' dh_1dh_2.
\]
Thus using the notation in \eqref{def:I}, $\mathcal{I}[S^\t\mu_0] $ can be rewritten as
\begin{equation}\label{I_temp00}
\mathcal{I}[S^\t\mu_0] =-\int_{(\mathbb{R}^+)^2}\int_{\mathbb{R}^{2(d-1)}}  I_{K_{|x'-y'|}}[\chi_{U_{x'}^{h_1}},\chi_{U_{y'}^{h_2}}](\t)  ~dx' dy' dh_1dh_2,
\end{equation}
and taking its right derivative (and applying Lemma \ref{lem:dIopenset}(a)) yields
\begin{equation}\label{eq:dt_int}
\begin{split}
-\frac{d^+}{d \t}\mathcal{I}[S^\t\mu_0]   \geq \int_{(x',h_1)\in B_1^{R,a}}\int_{(y',h_2)\in B_2^{R,a}} \frac{d}{d \t}
I_{K_{|x'-y'|}}[\chi_{U_{x'}^{h_1}},\chi_{U_{y'}^{h_2}}](\t)~  dy'dh_2 dx'dh_1.
\end{split}
\end{equation}
By definition of $B_1^{R,a}$ and $B_2^{R,a}$, for any $(x',h_1)\in B_1^{R,a}$ and $(y',h_2)\in B_2^{R,a}$, we can apply
Lemma~\ref{lem:dIopenset}(b) to
obtain
\begin{equation}\label{eq:integrand}
\begin{split}
\frac{d^+}{d \t} I_{K_{|x'-y'|}}[\chi_{U_{x'}^{h_1}},\chi_{U_{y'}^{h_2}}](\t) \geq \frac{1}{128}c_w a^3 \quad\text{ for any }\t\in[0,a/4],
\end{split}
\end{equation}
where $c_w$ is the minimum of $|K_{|x'-y'|}'(r)|$ in $[a/4, 4R]$. By definition of $K_l(r)$, we have
\[
K_{|x'-y'|}'(r) = - \tfrac12\K'(\sqrt{r^2+|x'-y'|^2}) \frac{r}{\sqrt{r^2+|x'-y'|^2}}.
\]
Using $|x'|\leq R$ and $|y'|\leq R$ (due to definition of $B_1, B_2$), we have $ \frac{r}{\sqrt{r^2+|x'-y'|^2}} \geq \frac{a}{20R}$ for all
$r\in[a/4,4R]$,
hence
$c_w \geq \frac{a}{40R} \min_{r\in[\frac{a}{4}, 4R]}\K'(r)$.

Plugging \eqref{eq:integrand} (with the above $c_w$) into \eqref{eq:dt_int} finally yields
\[
-\frac{d^+}{d \t}\mathcal{I}[S^\t\mu_0] \geq \frac{1}{6000} |B_1^{R,a}| |B_2^{R,a}|  \min_{r\in[\frac{a}{4}, 4R]}\K'(r) a^4>0 \quad\text{for all
$\t\in[0,a/4]$},
\]
hence we can conclude the desired estimate.

Case 2: Only one of $B_1^{R,a}$ and $B_2^{R,a}$ has nonzero measure for $R\gg 1$ and $0<a\ll 1$. (WLOG assume $B_1^{R,a}$ satisfies this property.) Since $|B_2^{R,a}|=0$ for all $R>0$, $a>0$, it implies $U_{x'}^h \subset (-|U_{x'}^h|/2, +\infty)$ for almost every $x'\in\mathbb{R}^{d-1}$ and $h>0$. Thus using the layer cake representation formula 
\eqref{Steinerrepres}, we have
 $\mu_0\leq S\mu_0$ in $\{x_1<0\}$, where $S\mu_0$ is the Steiner symmetrization of $\mu_0$. 
On the other hand, using the assumption that $H$ splits the mass of $\mu_0$ into half and half, $\mu_0$ and $S\mu_0$ must have the same mass in $\{x_1<0\}$, implying $\mu_0= S\mu_0$ in $\{x_1<0\}$, i.e. 
\begin{equation}
\label{temp01}(-|U_{x'}^h|/2, 0)\subset U_{x'}^h \quad\text{for all }x'\in\mathbb{R}^{d-1}, h>0.
\end{equation} Combining this with $|B_1^{R,a}|>0$, some $U_{x_0'}^{h_0}$ must contain disjoint intervals with a positive gap. By the continuity of $\mu_0$, there exists some $0\leq x_l < x_r$ and some sufficiently small $a>0$, such that
\[
E^a:= \{(x',h)\in\mathbb{R}^{d-1}\times(0,+\infty): U_{x'}^h \cap (x_l, x_r) = \emptyset, |U_{x'}^h \cap (-\infty, x_l)|>2a, |U_{x'}^h \cap (x_r,+\infty)| >2a\}
\]
has a nonzero measure. For $(x', h)\in E^a$, we define
$
V_{x'}^{h} := U_{x'}^h \cap (-\infty, x_l),  W_{x'}^{h} := U_{x'}^h \cap (x_r, +\infty).
$
By \eqref{temp01}, $(-|U_{x'}^h|/2, 0) \subset V_{x'}^h$, and the definition of $E^a$ gives $|U_{x'}^h|=|V_{x'}^h|+|W_{x'}^h| > |V_{x'}^h|+2a$. Thus
$
|V_{x'}^{h}  \cap (-\infty, -|V_{x'}^h|/2) | > a
$ for all $(x',h)\in E^a$,
implying 
\[
\tilde B_2^{R,a} := \left\{(x',h)\in E^a: \left|V_{x'}^h \cap (-R, -|V_{x'}^h|/2)\right|>a, |x'|\leq R\right\}
\]
has a nonzero measure for the above $a>0$, and for $R>0$ sufficiently large.

Let us denote $\tau_0 := \frac{1}{2}(x_r-x_l)>0$. For $(x',h)\in E^a$, since $V_{x'}^{h} $ and $W_{x'}^{h} $ has at least a $2\tau_0$ gap between them, 
$
M^\tau(V_{x'}^{h} )$ and $M^\tau(W_{x'}^{h} )$  remains disjoint for $0<\tau<\tau_0$. Thus for $(x',h)\in E^a$, 
\[
M^\tau(U_{x'}^{h}) = M^\tau(V_{x'}^{h} ) \dot\cup M^\tau(W_{x'}^{h} ) \quad\text{ for }0<\tau<\tau_0,
\]
where $\dot\cup$ represents the disjoint union.
Now for all $0<\tau<\tau_0$, we are ready to take the right derivative of \eqref{I_temp00} (and applying Lemma \ref{lem:dIopenset}(a)) to obtain
\begin{equation}
\begin{split}
-\frac{d^+}{d \t}\mathcal{I}[S^\t\mu_0]   
&\geq \int_{(x',h_1)\in B_1^{R,a}}\int_{(y',h_2)\in E^a} \frac{d}{d \t}
I_{K_{|x'-y'|}}[\chi_{U_{x'}^{h_1}},\,\chi_{V_{y'}^{h_2}} + \chi_{W_{y'}^{h_2}}](\t)~  dy'dh_2 dx'dh_1\\
&\geq \int_{(x',h_1)\in  B_1^{R,a}}\int_{(y',h_2)\in \tilde B_2^{R,a}} \frac{d}{d \t}
I_{K_{|x'-y'|}}[\chi_{U_{x'}^{h_1}},\chi_{V_{y'}^{h_2}}](\t)~  dy'dh_2 dx'dh_1.
\end{split}
\end{equation}
Since $| B_1^{R,a}|>0$ and $|\tilde B_2^{R,a}|>0$, the rest of the argument is identical to the last part of Case 1, and at the end we obtain
\[
-\frac{d^+}{d \t}\mathcal{I}[S^\t\mu_0] \geq \frac{1}{6000} |B_1^{R,a}| |\tilde B_2^{R,a}|  \min_{r\in[\frac{a}{4}, 4R]}\K'(r) a^4>0 \quad\text{for all
$\t\in[0,\min\{a/4,\tau_0\}]$},
\]
finishing the proof for Case 2.
\end{proof}

\subsubsection{Proof of Proposition {\rm\ref{prop:steiner}}}

In the statement of Proposition \ref{prop:steiner}, we assume that $\mu_0$ is not radially decreasing up to any translation. Since Steiner symmetrization only deals with symmetrizing in one direction, we will use the following simple lemma linking radial symmetry with being symmetric decreasing about hyperplanes. Although the result is standard (see \cite[Lemma 1.8]{Fraenkel}), for the sake of completeness we include here the details of the proof.

\begin{lemma}
\label{lem:steiner_radial}
Let $\mu_0 \in \mathcal{C}(\mathbb{R}^d)$. Suppose for every unit vector $e$, there exists a hyperplane $H \subset \mathbb{R}^d$ with normal vector $e$, such
that
$\mu_0$ is symmetric decreasing about $H$. Then $\mu_0$ must be radially decreasing  up to a translation.
\end{lemma}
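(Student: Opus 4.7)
The plan is to reduce the all-directions symmetry hypothesis to reflection symmetry through a \emph{common} center, and then deduce radiality from a single reflection argument. First I would exploit the standard basis: the hyperplanes $H_{e_1},\dots,H_{e_d}$ of symmetric decrease have pairwise orthogonal normals, so they meet at a unique point $x_0\in\mathbb{R}^d$. After translating by $-x_0$, each $H_{e_i}=\{x_i=0\}$, so $\mu_0$ is invariant under every coordinate reflection $R_i$, and in particular under the point inversion $J:=R_1\cdots R_d=-\mathrm{id}$.

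Next I would show that, with this normalization, every hyperplane $H_e=\{x\cdot e=\alpha\}$ of symmetric decrease also passes through the origin. The key step is a short algebraic identity
\[
(R_{H_e}\circ J)^2(z)=z+4\alpha\, e,
\]
which converts the two reflection-type symmetries of $\mu_0$ into invariance under the translation $z\mapsto z+4\alpha e$. Applied to the function $t\mapsto\mu_0(x+te)$ for $x\in H_e$, this gives periodicity with period $4\alpha$, while the ``rearranged'' hypothesis forces the same function to be non-increasing in $|t|$. A non-increasing, periodic function is constant, so $\mu_0$ is constant along every line parallel to $e$ whenever $\alpha\neq 0$; in this degenerate case any hyperplane normal to $e$ witnesses the symmetry, and we may replace $H_e$ by the one through the origin. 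Therefore we can assume $0\in H_e$ for every unit vector $e$.

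To conclude, given any $x,y\in\mathbb{R}^d$ with $|x|=|y|$ and $x\neq y$, I set $e:=(x-y)/|x-y|$; then $e\cdot\tfrac{x+y}{2}=(|x|^2-|y|^2)/(2|x-y|)=0$, so $H_e$ coincides with the perpendicular bisector of $\overline{xy}$ and reflection through it maps $x$ to $y$. This yields $\mu_0(x)=\mu_0(y)$ and hence radial symmetry about the origin; the symmetric-decreasing property along the $e_1$-axis, evaluated at base points in $H_{e_1}$ of arbitrary norm, immediately upgrades radial symmetry to radial decrease by monotonicity of $\tilde\mu(\sqrt{r_0^2+t^2})$ in $|t|$.

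The main obstacle I expect is Step 2: producing the translation identity $(R_{H_e}\circ J)^2=\mathrm{id}+4\alpha e$ and extracting from it the right degeneracy statement to conclude either $\alpha=0$ or that the constancy of $\mu_0$ along $e$-lines permits a harmless redefinition of $H_e$. The other steps (locating the common center from the coordinate basis, and performing the final single reflection between $x$ and $y$) are essentially bookkeeping once the common center is in hand.
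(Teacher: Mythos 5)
Your proof is correct, and while it shares the paper's opening move --- using the coordinate directions to locate a common center and deduce the point symmetry $\mu_0(x)=\mu_0(2x_0-x)$ --- the way you then force every witnessing hyperplane to pass through that center is genuinely different. The paper argues via mass bisection: the point symmetry shows that every hyperplane through the center halves the mass of $\mu_0$, and symmetric decrease about $H'$ makes $H'$ the unique mass-bisecting hyperplane with its normal, so $H'$ must pass through the center; this implicitly uses $\mu_0\in L^1_+$ with nonzero mass (which is how the lemma is applied, even though the statement only assumes $C(\mathbb{R}^d)$). Your route instead composes the reflection $R_{H_e}$ with the inversion $J=-\mathrm{id}$ to get invariance under the translation by $4\alpha e$, and then plays periodicity against the non-increasing-in-$|t|$ property to conclude that either $\alpha=0$ or $\mu_0$ is constant along all lines parallel to $e$, in which degenerate case the hyperplane through the origin also witnesses the symmetry. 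This is more elementary and slightly more general in that it needs no integrability and no uniqueness-of-bisector argument, at the small price of the degenerate case you already isolate (note that in the paper's setting $\mu_0\in L^1_+$ that case forces $\mu_0\equiv 0$, so it is harmless there as well). Your final step --- reflecting $x$ to $y$ across the perpendicular bisector through the origin and then reading off monotonicity of $\tilde\mu\bigl(\sqrt{r_0^2+t^2}\bigr)$ (the base point $r_0=0$ already suffices) --- simply makes explicit what the paper leaves implicit after ``symmetric decreasing about every hyperplane through $a$''. The computations you flag as the main obstacle do check out: $(R_{H_e}\circ J)^2(z)=z+4\alpha e$ is a two-line verification, and a symmetric, non-increasing, periodic function is indeed constant.
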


\begin{proof}
For $i=1,\dots,d$, let $e_i$ be the unit vector with $i$-th coordinate 1 and all the other coordinates 0. By assumption, for each $i$,
there exists some
hyperplane $H_i$ with normal vector $e_i$, such that $\mu_0$ is symmetric decreasing about $H_i$. We then represent each $H_i$ as
$\{(x_1,\dots,x_d): x_i =
a_i$\}
for some $a_i\in\mathbb{R}$, and then define $a\in\mathbb{R}^d$ as $a:=(a_1,\dots, a_d)$. Our goal is to prove that $\mu_0(\cdot-a)$ is radially
decreasing.

We first claim that $\mu_0(x) = \mu_0(2a-x)$ for all $x\in\mathbb{R}^d$. For any hyperplane $H\subset \mathbb{R}^d$, let $T_H: \mathbb{R}^d\to\mathbb{R}^d$
be
the reflection about the hyperplane $H$. Since $\mu_0$ is symmetric with respect to $H_1, \dots, H_d$, we have $\mu_0(x) = \mu_0(T_{H_i}x)$ for
$x\in\mathbb{R}^d$
and all $i=1,\dots,d$, thus $\mu_0(x) = \mu_0(T_{H_1}\dots T_{H_d} x) = \mu_0(2a-x)$.

The claim implies that every hyperplane $H$ passing through $a$ must split the mass of $\mu_0$ into half and half. Denote the normal vector of $H$ by $e$.
By assumption, $\mu_0$ is symmetric decreasing about some hyperplane $H'$ with normal vector $e$. The definition of symmetric decreasing implies that $H'$ is
the only hyperplane with normal vector $e$ that splits the mass into half and half, hence $H'$ must coincide with $H$. Thus $\mu_0$ is symmetric decreasing
about every hyperplane passing through $a$, hence we can conclude.
\end{proof}

\begin{proof}[Proof of Proposition {\rm\ref{prop:steiner}}]
Since $\mu_0$ is not radially decreasing up to any translation, by Lemma \ref{lem:steiner_radial}, there exists some unit vector $e$, such that $\mu_0$ is
not
symmetric decreasing about any hyperplane with normal vector $e$. In particular, there is a hyperplane $H$ with normal vector $e$ that splits the mass of
$\mu_0$
into half and half, and $\mu_0$ is not symmetric decreasing about $H$. We set $e=(1,0,\dots,0)$ and $H = \{x_1=0\}$ throughout the proof without loss of
generality. For the rest of the proof, we will discuss two different cases $m\in (0,1]$ and $m>1$, and construct $\mu(\t, \cdot)$ in different ways.

\noindent\textbf{Case 1:} $m \in (0,1]$. In this case, we simply set $\mu(\t,\cdot) = S^\t \mu_0$. By Proposition~\ref{prop:int}, $\mathcal{I}[S^\t\mu_0]$ is decreasing at least linearly for a short time. Since continuous Steiner symmetrization preserves the distribution function, even if $\mathcal{S}[\mu_0] = -\infty$ by itself, we still have the difference $\mathcal{S}[\mu(\t)] - \mathcal{S}[\mu_0] \equiv 0$ in the sense of \eqref{def_reg_s}. Thus \eqref{E_mu_t} holds for all sufficiently small $\tau>0$. In addition, \eqref{int_0} is automatically satisfied since we assumed that $\supp \mu_0 = \mathbb{R}^d$ for $m\in (0,1]$, and recall that $S^\tau$ is mass-preserving by definition.

It then suffices to prove \eqref{mu_t} for all sufficiently small $\tau>0$. Let us discuss the case $m=1$ first. By assumption,  $|\nabla \log \mu_0| \leq C_0$. For any $y\in \mathbb{R}^d$ and $\tau>0$ we claim that
\begin{equation}\label{logmu}
 \log \mu_0(y) - C_0\tau \leq \log \mu(\tau,y) \leq \log \mu_0(y) + C_0\tau.
\end{equation}
To see this, let us fix any $y = (y_1, y')\in\mathbb{R}^d$. Since $\log \mu_0(\cdot, y')$ is Lipschitz with constant $C_0$, for any $\tau>0$, the following two inequalities hold:
\[
\dist(y_1, \{x_1\in \mathbb{R}: \log \mu_0(x_1, y') > \log \mu_0(y_{1},y^{\prime}) + C_0 \tau\}) \geq \tau
\]
and
\[
\dist(y_1, \{x_1\in \mathbb{R}: \log \mu_0(x_1, y') < \log \mu_0(y_{1},y^{\prime}) - C_0 \tau\}) \geq \tau.
\]
Since the level sets of $\mu_0$ are moving with velocity at most 1 (and note that any level set of $\mu_0$ is also a level set of $\log \mu_0$), we obtain \eqref{logmu}. It implies
\[
\mu_0(y) (e^{-C_0\tau }-1) \leq \mu(\tau,y) - \mu_0(y) \leq \mu_0(y) (e^{C_0\tau }-1).
\]
We then have $|\mu(\tau,y) - \mu_0(y)| \leq 2C_0\mu_0(y) \tau$ for all $\tau \in (0, \frac{\log 2}{C_0})$ and all $y\in \mathbb{R}^d$.

Now we move on to $m\in (0,1)$, where we aim to show that $|\mu(\tau,y) - \mu_0(y)| \leq C_1 \mu_0^{2-m}(y)\tau$ for some $C_1$ for all sufficiently small $\tau>0$. Using the assumption $|\nabla \frac{m}{1-m} \mu_0^{m-1}| \leq C_0$, the same argument to obtain \eqref{logmu} then gives the following for all $y\in \mathbb{R}^d, \tau>0$:
\[
\frac{m}{1-m} \mu_0^{m-1}(y) - C_0\tau \leq \frac{m}{1-m} \mu^{m-1}(\tau,y) \leq \frac{m}{1-m} \mu_0^{m-1}(y) + C_0\tau .
\]
Note that $\mu_0^{m-1}(y)\geq \|\mu_0\|_\infty^{m-1}$, since $\mu_0 \in L^\infty$ and $m\in (0,1)$. Let us set $\delta_0 = \frac{m}{2(1-m)C_0}\|\mu_0\|_\infty^{m-1}$. For any $\tau\in (0,\delta_0)$, the left hand side of the above inequality is strictly positive, thus we have
\begin{equation}
\left( \mu_0^{m-1}(y) + \frac{C_0(1-m)}{m}\tau\right)^{\tfrac{1}{m-1}}\leq \mu(\tau,y) \leq \left( \mu_0^{m-1}(y) - \frac{C_0(1-m)}{m}\tau\right)^{\tfrac{1}{m-1}},\label{eqm<1}
\end{equation}
and note that our choice of $\delta_0$ ensures that $$\mu_0^{m-1}(y) - \frac{C_0(1-m)}{m}\tau \geq\mu_0^{m-1}(y)- \frac{1}{2}\|\mu_0\|_\infty^{m-1} \geq \frac{1}{2}\mu_0^{m-1}(y)$$ for all $\tau\in (0,\delta_0)$.
Let $f(a) := \left( \mu_0^{m-1}(y) +a\right)^{\frac{1}{m-1}} - \mu_0(y)$, which is a convex and decreasing function in $a$ with $f(0)=0$. Using this function $f$, the above inequality \eqref{eqm<1} can be rewritten as
\[
f\left( \frac{C_0(1-m)}{m}\tau\right)\leq \mu(\tau,y) - \mu_0(y) \leq f\left( -\frac{C_0(1-m)}{m}\tau\right).\]
 Since $f$ is convex and decreasing, for all $|a| \leq \frac{C_0(1-m)}{m}\delta_0=\frac{1}{2}\|\mu_{0}\|_{\infty}^{m-1}$ we have
  $$|f'(a)| \leq \frac{1}{|m-1|} \left(\frac{1}{2}\mu_0^{m-1}(y)\right)^{\frac{2-m}{m-1}} = \frac{2^{\frac{m-2}{m-1}}}{|m-1|} \mu_0(y)^{2-m},$$ and this leads to $$|\mu(\tau,y) - \mu_0(y)| \leq C_1 \mu_0(y)^{2-m} \tau \quad \text{ for all }\tau \in (0,\delta_0) $$ with $C_1 := \frac{2^{\frac{m-2}{m-1}}}{m}C_{0}$, which gives \eqref{mu_t}.

\noindent\textbf{Case 2:} $m>1$.
Note that if we set $\mu(\t,\cdot) = S^\t \mu_0$, then it directly satisfies \eqref{E_mu_t} for a short time, since $\mathcal{I}[S^\t\mu_0]$ is decreasing at
least
linearly for a short time by Proposition~\ref{prop:int}, and we also have  $\mathcal{S}[S^\t\mu_0]$ is constant in $\t$. However, $S^\t\mu_0$ does not
satisfy
\eqref{mu_t} and \eqref{int_0}. To solve this problem, we will modify $S^\t \mu_0$ into $\tilde S^\t \mu_0$, where we make the set  $U_{x'}^h :=
\{x_1\in\mathbb{R}: \mu_0(x_1, x')>h\}$ travels at speed $v(h)$ rather than at constant speed 1, with $v(h)$ given by
\begin{equation}
\label{def:v}
v(h) := \begin{cases}
1 & h\geq h_0,\\
\left(\dfrac{h}{h_0}\right)^{m-1} & 0<h<h_0,
\end{cases}
\end{equation}
for some sufficiently small constant $h_0>0$ to be determined later. More precisely, we define $\mu(\t,\cdot) = \tilde S^\t \mu_0$ as
\begin{equation}\label{def_modified_s}
\tilde S^\t\mu_0(x_1, x') := \int_0^\infty \chi_{M^{v(h)\t}(U_{x'}^h)}(x_1) dh
\end{equation}
with $v(h)$ as in \eqref{def:v} For
an illustration on the difference between $S^\t\mu_0$ and $\tilde S^\t \mu_0$, see the left figure of Figure~\ref{fig:st2}.

\begin{figure}[ht!]
\begin{center}
\includegraphics[scale=0.8]{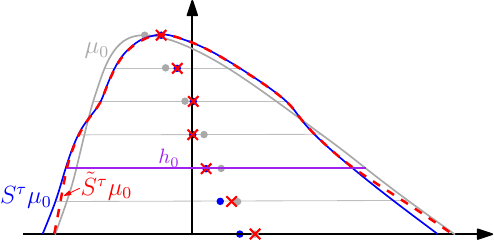} \quad\quad\quad \includegraphics[scale=0.8]{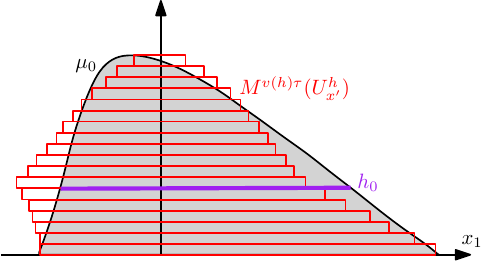}
\caption{Left: A sketch on $\mu_0$ (grey), $S^\t \mu_0$ (blue) and $\tilde S^\t\mu_0$ (red dashed) for a small $\t>0$.
Right: In the construction of $\tilde S^\t$, due to a reduced speed at lower values, a higher value level set may travel over a lower value level set. The figure illustrates this phenomenon for a large $\tau>0$.\label{fig:st2} }
\end{center}
\end{figure}

Note that $\tilde S^\t \mu_0$ and $S^\t \mu_0$ do not necessarily have the same distribution function. Due to a reduced speed $v(h)$ for $h\in(0,h_0)$ in the construction of $\tilde S^\t$, a higher block may travel over a lower block, as illustrated in the right figure of  Figure~\ref{fig:st2}. When this happens, the part that is hanging outside would ``drop down'' as we integrate in $h$ in \eqref{def_modified_s}, thus changing the distribution function of $\tilde S^\t\mu_0$. But, this is not likely (and even impossible) to happen when $\t\ll 1$: indeed, using the regularity assumption $|\nabla \mu_0^{m-1}|\leq C_0$ and the particular $v(h)$ in \eqref{def:v}, one can show that the level sets remain ordered for small enough $\t$. But we will not pursue in this direction, since later we will show in \eqref{eq:s_dec} that $\mathcal{S}[\tilde S^\t\mu_0] \leq \mathcal{S}[\mu_0]$ for all $\t>0$, which is sufficient for us.

Our goal is to show that such $\mu(\t,\cdot)$ satisfies \eqref{E_mu_t}, \eqref{mu_t} and \eqref{int_0} for small enough $\t$.
Let us first prove that for any $h_0>0$, $\mu(\t,\cdot)$ satisfies \eqref{mu_t} and \eqref{int_0} for $\t\in [0,\delta_1]$, where $\delta_1 =
\delta_1(m,h_0,C_0)>0$.
To show \eqref{int_0}, note that the assumption $|\nabla (\mu_0^{m-1})| \leq C_0$ directly leads to the following: for any $x,y\in\mathbb{R}^d$ with
$\mu_0(x)\geq
h>0$ and $\mu_0(y)=0$, we have that $|x-y| \geq h^{m-1}/C_0$.
This implies that
 for any connected component $D_i \subset \supp \mu_0$,
\begin{equation}
\label{eq:dist0}
\dist \left(\{\mu_0>h\} \cap D_i\,, \partial D_i\right) \geq \frac{h^{m-1}}{C_0} \quad \text{ for all } h>0.
\end{equation}
Now define $D_{i,x'}$ as the one-dimensional set $\{x_1\in\mathbb{R}: (x_1, x') \in D_i\}$. The inequality \eqref{eq:dist0} yields
\[
M^{v(h)\t} (U_{x'}^h \cap D_{i,x'}) \subset D_{i,x'} \quad \text{ for all } x'\in\mathbb{R}^{d-1}, \,h>0,\, \t\leq \dfrac{h^{m-1}}{C_0 v(h)},
\]
and note that for any $h>0$, we have $h^{m-1}/(C_0 v(h))\geq h_0^{m-1}/C_0$ by definition of $v(h)$. Using the above equation, the definition of $\tilde S^\t$ and the fact that $M^{v(h)\t}$ is measure-preserving, we have that \eqref{int_0} holds for all $\t\leq h_0^{m-1}/C_0$.

Next we prove \eqref{mu_t}. Let us fix any $y = (y_1, y')\in\mathbb{R}^d$, and denote $h=\mu_0(y)$. Using $|\nabla \mu_0^{m-1}| \leq C_0$, we
have that for
any
$\lambda>1$,
\[
\dist(y_1, U_{y'}^{\lambda h})\geq  \frac{(\lambda^{m-1}-1) h^{m-1}}{C_0}.
\]
So we have $y_1 \not\in M^{v(\lambda h)\t} \left(U_{y'}^{\lambda h}\right)$ for all $\t\leq \frac{(\lambda^{m-1}-1)h^{m-1}}{C_0 v(\lambda h)}$, which is
uniformly
bounded below by $ \frac{(\lambda^{m-1}-1)h_0^{m-1}}{C_0 \lambda^{m-1}}$ due to the fact that $v(\lambda h) \leq (\lambda h/h_0)^{m-1}$ for all $h$. By
definition
of $\tilde S^\t$ and the fact that $\mu_0(y)=h$, the following holds for all $\lambda > 1$:
\[
\tilde S^\t[\mu_0](y) \leq \lambda \mu_0(y) \quad\text{ for all } \t\leq \frac{(\lambda^{m-1}-1)h_0^{m-1}}{C_0 \lambda^{m-1}}.
\]
Note that there exists $c_m^1>0$ only depending on $m$, such that $\lambda^{m-1}-1 \geq c_m^1(\lambda - 1)$ for all $1<\lambda<2$. Hence for all
$1<\lambda<2$ we
have
\[
\tilde S^\t[\mu_0](y) - \mu_0(y) \leq (\lambda -1 ) \mu_0(y) \quad\text{ for } \t= \frac{c_m^1  h_0^{m-1}}{C_0 2^{m-1}}(\lambda-1),
\]
and this directly implies
\begin{equation}
\label{temp1}
\tilde S^\t[\mu_0](y) - \mu_0(y) \leq \frac{C_0 2^{m-1}}{c_m^1 h_0^{m-1} }\mu_0(y)\t \quad\text{ for all } \t\leq \frac{c_m^1 h_0^{m-1}}{C_0 2^{m-1}}.
\end{equation}

Similarly, for any $0<\eta < 1$ we have
$
\dist(y_1, (U_{y'}^{\eta h})^c)\geq  \frac{(1-\eta^{m-1}) h^{m-1}}{C_0},
$
and an identical argument as above gives us
\[
\tilde S^\t[\mu_0](y) \geq \eta \mu_0(y) \quad\text{ for all } \t\leq \frac{(1-\eta^{m-1})h_0^{m-1}}{C_0 \eta^{m-1}}.
\]
Now we let $c_m^2>0$ be such $1-\eta^{m-1} \geq c_m^2(1-\eta)$ for all $\frac{1}{2}<\eta<1$. Hence we have $\tilde S^\t[\mu_0](y) - \mu_0(y) \geq
-(1-\eta)\mu_0(y)$ for $\t= \dfrac{c_m^2 h_0^{m-1}}{C_0}(1-\eta)$,
which implies
\begin{equation}
\label{temp2}
\tilde S^\t[\mu_0](y) - \mu_0(y) \geq -\frac{C_0}{c_m^2 h_0^{m-1} }\mu_0(y)\t \quad\text{ for all } \t\leq \frac{c_m^2 h_0^{m-1}}{2C_0}.
\end{equation}
Combining \eqref{temp1} and \eqref{temp2} together, we have that for any $h_0>0$, \eqref{mu_t} holds for some $C_1$ for all $\t\in[0,\delta_1]$,
where both
$C_1>0$ and $\delta_1>0$ depend on $C_0, h_0$ and $m$.

Finally, we will show that \eqref{E_mu_t} holds for $\mu(\t) = \tilde S^\t[\mu_0]$ if we choose $h_0>0$ to be sufficiently small. First, we point out that
$\mathcal{S}[\tilde S^\t\mu_0]$ is \emph{not} preserved for all $\t$. This is because when different level sets are moving at different speed $v(h)$, we no
longer
have that $M^{v(h_1)\t}(U_{x'}^{h_1}) \subset M^{v(h_2)\t}(U_{x'}^{h_2}) $ for all $h_1>h_2$. Nevertheless, we claim it is still true that
\begin{equation}
\label{eq:s_dec}
\mathcal{S}[\tilde S^\t\mu_0] \leq \mathcal{S}[\mu_0] \text{ for all }\t\geq 0.
\end{equation}
To see this, note that the definition of $\tilde S^\t$ and the fact that $M^{v(h)\t}$ is measure preserving give us
 \[
\left|\{\tilde S^\t\mu_0 > h\}\right| \leq \left|\{\mu_0>h\}\right| \quad\text{ for all }h>0, \t\geq 0,
\]
regardless of the definition of $v(h)$. This implies that $\int f(\tilde S^\t\mu_0(x))dx \leq \int f(\mu_0(x))dx$ for any convex increasing function $f$,
yielding
\eqref{eq:s_dec}.

Due to \eqref{eq:s_dec} and the fact that $\mathcal{E}[\cdot] = \mathcal{S}[\cdot]+\mathcal{I}[\cdot]$, in order to prove \eqref{E_mu_t}, it suffices to
show
\begin{equation}\label{eq:goal0}
\mathcal{I}[\tilde S^\t\mu_0] \leq \mathcal{I}[\mu_0] - c_0 \t \quad\text{ for } \t\in[0,\delta_0], \text{ for some $c_0>0$ and $\delta_0>0$.}
\end{equation}
Recall that Proposition \ref{prop:int} gives that $\mathcal{I}[S^\t\mu_0] \leq \mathcal{I}[\mu_0] - c\t$ for $\t \in [0,\delta]$ with some $c>0$ and
$\delta>0$. As
a result, to show \eqref{eq:goal0}, all we need is to prove that if $h_0>0$ is sufficiently small, then
\begin{equation}
\label{eq:goal1}
\left|\mathcal{I}[\tilde S^\t\mu_0] - \mathcal{I}[ S^\t\mu_0]\right| \leq \frac{c\t}{2}\quad \text{ for all $\t$}.
\end{equation}

To show \eqref{eq:goal1}, we first split $S^\t\mu_0$ as the sum of two integrals in $h\in[h_0,\infty)$ and $h\in[0,h_0)$:
\begin{equation}
\label{eq:split1}
S^\t\mu_0(x_1, x') = \int_{h_0}^\infty \chi_{M^\t(U_{x'}^h)}(x_1)dh + \int_{0}^{h_0} \chi_{M^\t(U_{x'}^h)}(x_1)dh =: f_1(\t,x)+f_2(\t,x).
\end{equation}
We then split $\tilde S^\t \mu_0$ similarly, and since $v(h)=1$ for all $h>h_0$ we obtain
\begin{equation}
\label{eq:split2}
\tilde S^\t \mu_0(x_1, x') = f_1(\t,x) +  \int_{0}^{h_0} \chi_{M^{v(h)\t}(U_{x'}^h)}(x_1)dh =: f_1(\t,x) + \tilde f_2(\t,x).
\end{equation}
For any $\t\geq 0$, we have $\|f_1(\t,\cdot)\|_{L^\infty(\R^d)} \leq \|\mu_0\|_{L^\infty(\R^d)}$, while $\|f_2(\t,\cdot)\|_{L^\infty(\R^d)}$ and $\|\tilde
f_2(\t,\cdot)\|_{L^\infty(\R^d)}$ are both bounded by $h_0$. As for the $L^1$ norm, we have that $\|f_1(\t,\cdot)\|_{L^1(\R^d)} \leq \|\mu_0\|_{L^1(\R^d)}$, and
\[\|f_2(\t,\cdot)\|_{L^1(\R^d)} = \|\tilde f_2(\t,\cdot)\|_{L^1(\R^d)} = \int_{\mathbb{R}^d} \min\{\mu_0(x), h_0\}dx =: m_{\mu_0}(h_0),\]
 where $m_{\mu_0}(h_0)$ approaches 0 as $h_0\searrow 0$.

Also, since $v(h)\leq 1$, we know that for each $\t\geq 0$, there is a transport map $\mathcal{T}(\t,\cdot):[0,\infty)\times \mathbb{R}^d\to\mathbb{R}^d$ with
$\sup_{x\in\mathbb{R}^d} |\mathcal{T}(\t,x)-x|\leq 2 \t$, such that $\mathcal{T}(\t,\cdot)\# f_2(\t,\cdot)=\tilde f_2(\t,\cdot)$ (that is, $\int \tilde f_2(\t,x)
\varphi(x)dx = \int
f_2(\t,x)\varphi(\mathcal{T}(\t,x))dx$ for any measurable function $\varphi$). Indeed, since the level sets of $f_2$ are traveling at speed 1 and the level sets of $\tilde f_2$ are traveling with speed $v(h)$, for each $\t$ we can find a transport plan between them with maximal displacement $L^\infty$ distance at most $2\t$ in its support. Let us remark that since these densities are both in $L^\infty$, there is some optimal transport map $\tilde{\mathcal{T}}$ for the $\infty$-Wasserstein such that $|\tilde{\mathcal{T}}(\t,x)-x|\leq 2\t$. Although existence of an optimal map is known \cite{CPJ}, we just need a transport map with this property below.

Using the decompositions \eqref{eq:split1}, \eqref{eq:split2} and the definition of $\mathcal{I}[\cdot]$, we obtain, omitting the $\t$
dependence on the right hand side,
\begin{equation*}
\begin{split}
\left|\mathcal{I}[\tilde S^\t\mu_0] - \mathcal{I}[ S^\t\mu_0]\right|  \leq & \underbrace{\left|\int f_2(\K*f_1) dx - \int \tilde
f_2(\K*f_1)dx\right|}_{=:A_1(\t)}
\\ &+ \frac{1}{2}\underbrace{\left| \int f_2(\K*f_2)dx - \int \tilde f_2(\K*\tilde f_2)dx\right|}_{=:A_2(\t)},
\end{split}
\end{equation*}
and we will bound $A_1(\t)$ and $A_2(\t)$ in the following. For $A_1(\t)$, denote $\Phi(\t,\cdot) =: \K*f_1(\t,\cdot)$, and using the $L^\infty$,
$L^1$ bounds on
$f_1$
and the assumptions (K2),(K3), we proceed in the same way as in \eqref{lipspot} to obtain that $\|\nabla \Phi\|_{L^\infty(\R^d)} \leq C = C(\|\mu_0\|_{L^\infty(\R^d)},
\|\mu_0\|_{L^1(\R^d)}, C_w, d)$.

Using that $\mathcal{T}(\t,\cdot)\# f_2(\t,\cdot) = \tilde f_2(\t,\cdot)$, we can rewrite $A_1(\t)$ as
\[
\begin{split}
A_1(\t) &= \left| \int f_2(x) \Big(\Phi(x) - \Phi(\mathcal{T}(\t,x))\Big)dx\right| \\
&\leq \|f_2(\t)\|_{L^1(\R^d)} \sup_{x\in\mathbb{R}^d}|\Phi(x)-\Phi(\mathcal{T}(\t,x))| \leq m_{\mu_0}(h_0) \|\nabla \Phi\|_{L^\infty(\R^d)} 2\t \\
&\leq m_{\mu_0}(h_0) C(\|\mu_0\|_{L^\infty(\R^d)}, \|\mu_0\|_{L^1(\R^d)}, C_w, d) \t,
\end{split}
\]
where the coefficient of $\t$ can be made arbitrarily small by choosing $h_0$ sufficiently small. To control $A_2(\t)$, we first use the identity $\int f(\K*g)dx =
\int
g(\K*f)dx$ to bound it by
\[
A_2(\t) \leq \left| \int f_2(\K*f_2)dx - \int \tilde f_2(\K* f_2)dx\right| + \left| \int f_2(\K*\tilde f_2)dx - \int \tilde f_2(\K*\tilde f_2)dx\right|,
\]
and both terms can be controlled in the same way as $A_1(\t)$, since both $\Phi_2 := \K*f_2$ and $\tilde \Phi_2 := \K*\tilde f_2$
satisfy the same estimate
as
$\Phi$. Combining the estimates for $A_1(\t)$ and $A_2(\t)$, we can choose $h_0>0$ sufficiently small, depending on $\mu_0$ and $\K$, such that equation \eqref{eq:goal1} would hold for all $\t$, which finishes the proof.
\end{proof}

\subsection{Proof of Theorem \ref{thm:unique}}

\begin{proof}
Towards a contradiction, assume there is a stationary state $\rho_s$ that is not radially decreasing. Due to Lemma \ref{lem:regularity}, we have that
$\rho_s \in \mathcal{C}(\mathbb{R}^d) \cap L^1_+(\mathbb{R}^d)$, and $|\frac{m}{m-1}\nabla \rho_s^{m-1}| \leq C_0$ in $\supp \rho_s$ for some $C_0>0$ (and if $m=1$, it becomes $|\nabla \log \rho_s|\leq C_0$). In addition, if $m\in (0,1]$, the same lemma also gives $\supp \rho_s = \mathbb{R}^d$. This enables us to apply Proposition
\ref{prop:steiner} to $\rho_s$, hence there exists a continuous family of $\mu(\t,\cdot)$ with $\mu(0,\cdot) = \rho_s$ and constants $C_1>0,
c_0>0,\delta_0>0$,
such that the following holds for all $\t\in[0,\delta_0]$:
\begin{equation}\label{eq1}
\mathcal{E}[\mu(\t)] - \mathcal{E}[\rho_s] \leq - c_0 \t,
\end{equation}
\begin{equation} \label{eq11}
|\mu(\t,x) - \rho_s(x)| \leq C_1 \rho_s(x)^{\max\{1,2-m\}} \t \quad \text{ for all }x\in \mathbb{R}^d,
\end{equation}
\begin{equation}\label{eq02}
 \int_{D_i} \mu(\t,x)-\rho_s(x)dx =0 \text{ for any connected component $D_i$ of $\supp\rho_s$.}
 \end{equation}
Next we will use \eqref{eq11} and \eqref{eq02} to directly estimate $\mathcal{E}[\mu(\t)] - \mathcal{E}[\rho_s]$, and our goal is to show that
there exists
some
$C_2>0$, such that
\begin{equation}\label{eq2}
\big| \mathcal{E}[\mu(\t)] - \mathcal{E}[\rho_s] \big| \leq C_2 \t^2 \quad\text{ for $\t$ sufficiently small.}
\end{equation}
We then directly obtain a contradiction between \eqref{eq1} and \eqref{eq2} for sufficiently small $\t>0$.

Let $g(\t,x) := \mu(\t,x)-\rho_s(x)$. Due to \eqref{eq11}, we have $|g(\t,x)| \leq C_1 \rho_s(x)^{\max\{1,2-m\}} \t$ for all $x\in\mathbb{R}^d$ and
$\t\in [0,\delta_0]$. From now on, we set $\delta_0$ to be the minimum of its previous value and $(2C_1(1+\|\rho_s\|_\infty))^{-1}$. Such $\delta_0$ ensures that $\supp
g(\t,\cdot) \subset \supp \rho_s$ and $|g(\t,x)/\rho_s(x)|\leq \frac{1}{2}$ for all $\tau \in [0,\delta_0]$.

Since the energy $\mathcal{E}$ takes different formulas for $m\neq 1$ and $m=1$, we will treat these two cases differently. Let us start with the case $m\in (0,1)\cup(1,+\infty)$. Using the notation $g(\t,x)$, we have the following: (where in the integrand we omit the $x$ dependence, due to space limitations)
\begin{align}
\mathcal{E}[\mu(\t)] - \mathcal{E}[\rho_s] \!=& \!\int \!\frac{\left((\rho_s+g(\t))^m - \rho_s^m\right)}{m-1}  dx +\frac{1}{2} \! \int\! (\rho_s+g(\t)) \big(\K*(\rho_s+g(\t))\big)  - \rho_s  (\K*\rho_s) dx \nonumber\\
=&\int_{\supp \rho_s} \underbrace{\frac{\rho_s^m}{m-1}\left(\left(1+\frac{g(\t)}{\rho_s}\right)^m - 1\right)}_{:= T(\t,x)} dx \nonumber\\
&+ \int \left[g(\t)(\K*\rho_s)+\frac{1}{2}g(\t)(\K*g(\t))\right] dx. \label{eq:e_estimate}
\end{align}
Recall that for all $|a|<1/2$, we have the elementary inequality
\[
\big| (1+a)^m -1 -ma\big| \leq C(m)a^2 \text{ for some }C(m)>0.
\]
Since for all $x\in \supp \rho_s$ and $\tau \in [0,\delta_0]$ we have $|g(\t,x)/\rho_s(x)|\leq \frac{1}{2}$, we can replace $a$ by $g(x)/\rho_s(x)$ in the
above inequality, then multiply $\frac{1}{|m-1|}\rho_s^m$ to both sides to obtain the following (with $C_2(m)=C(m)/|m-1|$):
\[
\left|T(\t,x) - \frac{m}{m-1} g(\t,x) \rho_s(x)^{m-1}\right|\leq C_2(m) \rho_s^{m-2} g(\t)^2.
\]
Applying this to \eqref{eq:e_estimate}, we have the following for all $\t\leq \min\{\delta_0, C_1/2\}$:
\[
\begin{split}
\big|\mathcal{E}[\mu(\t)] - \mathcal{E}[\rho_s]\big| \leq&\left| \int_{\supp\rho_s}g(\t) \left(\frac{m}{m-1} \rho_s^{m-1} + \K*\rho_s\right) dx\right| + \left| \frac{1}{2}\int
g(\t)(\K*g(\t)) dx\right| \\& \qquad\qquad\qquad\qquad\qquad\qquad\qquad\!\!\!+ C_2(m)\left|\int \rho_s^{m-2} g(\t)^2 dx\right|\\
&=: I_1+I_2+I_3.
\end{split}
\]
Since $\rho_s$ is a steady state solution, from \eqref{eq:rho_s_stat} we have $\frac{m}{m-1}\rho_s^{m-1} + \K*\rho_s = C_i$ in each connected component $D_i\subset \supp \rho_s$, hence $I_1 \equiv 0$
for
all $\t\in[0,\delta_0]$ due to \eqref{eq02} and the definition of $g(\t,\cdot)$.

For $I_2$ and $I_3$, since  $|g(\t,x)| \leq C_1 \rho_s(x)^{\max\{1,2-m\}} \t$ for $\t\in[0,\delta_0]$, for $m>1$ it becomes $|g(\t,x)| \leq C_1 \rho_s(x) \t$, thus we directly have
\[
I_2 \leq \frac{1}{2}C_1^2 \t^2 \int |\rho_s(\K*\rho_s)|dx \leq A \t^2,
\]
\[
I_3 \leq C_2(m) C_1^2 \t^2 \int \rho_s^m dx \leq A \t^2,
\]
for some $A>0$ depending on $\|\rho_s\|_{1}, \|\rho_s\|_{\infty}, m$ and $d$ (where we use \eqref{lipspot} and $\rho_s \omega(1+|x|)\in L^1$ to control $I_2$). For $m\in (0,1)$, the bound of $g$ implies $|g(\t,x)| \leq C_1 \|\rho_s\|_{\infty}^{1-m} \rho_s(x) \t$. Plugging this into $I_2$ gives the same bound as above (with a different $A$). And for $I_3$, plugging in $|g(\t,x)| \leq C_1 \rho_s(x)^{2-m} \t$ gives
\[
I_3 \leq C_2(m) C_1^2 \t^2 \int \rho_s^{2-m} \leq A\t^2,
\]
where in the last inequality we used that $2-m>1$ and $\rho_s \in L^1 \cap L^\infty$.
 Putting them together finally gives $\big|\mathcal{E}[\mu(\t)] - \mathcal{E}[\rho_s]\big| \leq 2A\t^2$ for all $\t\leq \delta_0$, finishing the proof for $m\in (0,1)\cup (1,+\infty)$.

Next we move on to the case $m=1$. Using the notation $g(\t,x)$, the difference $\mathcal{E}[\mu(\t)] - \mathcal{E}[\rho_s]$ can be rewritten as follows: (where  we again omit the $x$ dependence in the integrand)
\begin{align*}
\mathcal{E}[\mu(\t)] &- \mathcal{E}[\rho_s] = \!\int \!\left[(\rho_s+g(\t)) \log(\rho_s+g(\t)) - \rho_s \log \rho_s  +
g(\t)(\K*\rho_s)+\frac{1}{2}g(\t)(\K*g(\t))\right]
dx \\
&= \int g(\t) \left( \log \rho_s + W*\rho_s\right)dx+  \int (\rho_s + g(\t)) \log\left(1+\frac{g(\t)}{\rho_s}\right) dx + \frac{1}{2} \int  g(\t)(\K*g(\t)) dx \\
&=: J_1 + J_2 + J_3.
\end{align*}
Again, we have $J_1 = 0$ since $\int g(\t)dx = 0$, and $\log \rho_s + W*\rho_s = C$ in $\mathbb{R}^d$. $J_3$ is the same term as $I_2$, thus again can be controlled by $A\tau^2$. Finally it remains to control $J_2$. Let us break $J_2$ into
\[
J_2 = \int \rho_s \log\left(1+\frac{g(\t)}{\rho_s}\right) dx + \int g(\t) \log\left(1+\frac{g(\t)}{\rho_s}\right) dx =: J_{21} + J_{22}.
\]
For $J_{22}$, using the inequality $\log(1+a)<a$ for all $a>0$, we have
\begin{equation}\label{j22}
J_{22}\leq \int  \frac{g(\t)^2}{\rho_s} dx  \leq \int C_1^2 \tau^2 \rho_s dx \leq C_1^2 \|\rho_s\|_{1} \tau^2 ,
\end{equation} where we use \eqref{eq11} in the second inequality. To control $J_{21}$, due to the elementary inequality
\[
\left| \log\left(1+a\right) -a\right| \leq C a^2 \quad\text{ for all }a>0
\]
for some universal constant $C$, letting $a = \frac{g(\t)}{\rho_s}$ and apply it to $J_{21}$ gives
\[
\Big|J_{21} - \underbrace{\int g(\tau)dx}_{=0 \text{ by \eqref{eq02}}}\Big| \leq C \int  \frac{g(\t)^2}{\rho_s} dx  \leq C C_1^2 \|\rho_s\|_{1} \tau^2 ,
\]
where the last inequality is obtained in the same way as \eqref{j22}. Combining these estimates above gives $|\mathcal{E}[\mu(\t)] - \mathcal{E}[\rho_s]| \leq A\tau^2$ for some $A>0$ depending on $\|\rho_s\|_{1}, \|\rho_s\|_{\infty}$ and $d$, which completes the proof.
\end{proof}

\subsection{A shortcut for equations with a gradient flow structure} \label{sec:gradient_flow}

In this subsection, we would like to discuss a shortcut for proving Theorem \ref{thm:unique}, once the first order decay under continuous Steiner symmetrization in Proposition \ref{prop:int} has been established, if the equation \eqref{aggregation} has a rigorous gradient flow structure. Over the past two decades, it was discovered that many evolution PDEs have a Wasserstein gradient flow structure including the heat equation, porous medium equation, and the aggregation-diffusion equation \eqref{aggregation} if the kernel $W$ has certain convexity properties, see \cite{
JKO,Otto,AGS08,Carrillo-McCann-Villani06,Craig}. More precisely, for \eqref{aggregation}, if $W$ is known to be $\lambda$-convex, then given any $\rho_0 \in \mathcal{P}_2(\mathbb{R}^d)$  (space of non-negative probability measures with finite second-moment) with $\mathcal{E}[\rho_0]<\infty$, there exists a unique gradient flow $\rho(t)$ of the free energy functional $\mathcal{E}[\rho_0]$ in the space $\mathcal{P}_2(\mathbb{R}^d)$ endowed by the 2-Wasserstein distance. In addition, the gradient flow coincides with the unique weak solution if the velocity field has the necessary integrability conditions.

The $\lambda$-convexity of the potential $W$ does not hold in the generality of our assumptions (K1)-(K4). However, the $\lambda$-convexity assumption on $W$ has been recently relaxed in the following works for the particular, but important, case of the attractive Newtonian kernel. \cite{Craig} has shown that the gradient flow is well-posed if the energy $\mathcal{E}$ is $\xi$-convex, where $\xi$ is a modulus of convexity. \cite{CS2018} has recently shown that for  \eqref{aggregation} with attractive Newtonian potential, for any $\rho_0$ in $L^\infty(\mathbb{R}^d) \cap \mathcal{P}_2(\mathbb{R}^d)$, there is a local-in-time gradient flow solution. The authors show that there are local in time $L^\infty$ bounds at the discrete variational level allowing for local in time well defined gradient flow solutions. Furthermore, this gradient flow solution is unique among a large class of weak solutions due to the earlier results \cite{CLM}. There, it was also shown that the free energy functional $\mathcal{E}$ is $\xi$-convex for $m\geq 1-\tfrac1{d}$ in the set of bounded densities $L^\infty(\mathbb{R}^d) \cap \mathcal{P}_2(\mathbb{R}^d)$ with a given fixed bound allowing the use of the recent theory of $\xi$-convex gradient flows in \cite{Craig}. Summarizing, the recent results for the Newtonian attractive kernel \cite{Craig,CLM,CS2018} allow for a rigorous gradient flow structure of the Newtonian attractive kernel case for $m\geq 1-\tfrac1{d}$ with initial data in $L^\infty(\mathbb{R}^d) \cap \mathcal{P}_2(\mathbb{R}^d)$.

In short we now know two particular more restrictive classes of potentials than the assumptions (K1)-(K4), including the Newtonian kernel case, for which a rigorous gradient flow theory has been developed for \eqref{aggregation}. Next we will show that under a rigorous gradient flow structure, once we use continuous Steiner symmetrization to obtain Proposition \ref{prop:int}, it almost directly leads to radial symmetry via the following shortcut. In particular, Proposition \ref{prop:steiner} is not needed. Below is the statement and proof of the new proposition that we include for the sake of completeness. Note that it is weaker than Theorem \ref{thm:unique}, since Wasserstein gradient flow requires solutions to have a finite second moment, and furthermore for the existence of the gradient flow solutions we need to assume $m\geq 1-\tfrac1{d}$. We will discuss this difference in Remark \ref{rmk:difference}.

\begin{proposition}\label{thm:gradient_flow}
Assume that $W$ is such that \eqref{aggregation} has a local-in-time unique gradient flow solution. Let $\rho_s \in  L^\infty(\mathbb{R}^d) \cap  \mathcal{P}_2(\mathbb{R}^d) $ be a stationary solution of \eqref{aggregation} with $ \mathcal{E}[\rho_s]$ being finite. Then $\rho_s$ must be radially decreasing after a translation.
\end{proposition}

\begin{proof}
Towards a contradiction, assume there is a stationary state $\rho_s$ that is not radially decreasing after any translation. As before, Lemma \ref{lem:regularity} yields that
$\rho_s \in
\mathcal{C}(\mathbb{R}^d) \cap L^1_+(\mathbb{R}^d)$. Applying Lemma~\ref{lem:steiner_radial} to $\rho_s$ allows us to find a hyperplane $H$ that splits the mass of $\rho_s$ into half and half, but $\rho_s$ is not symmetric decreasing about $H$. Without loss of generality assume $H = \{x_1=0\}$. Applying Proposition \ref{prop:int} to $\rho_s$ and using the fact that the $L^m$ norm is conserved under the continuous Steiner symmetrization $S^\tau$, we directly have that
\begin{equation}\label{eq:temp111}
\mathcal{E}[S^\tau \rho_s] \leq \mathcal{E}[ \rho_s] - c_0 \tau \quad\text{ for all }\tau \in [0,\delta_0],
\end{equation}
where $c_0, \delta_0$ are strictly positive constants that depend on $\rho_s$. In addition, since the continuous Steiner symmetrization $S^\tau$ gives an explicit transport plan from $\rho_s$ to $S^\tau\rho_s$, where each layer is shifted by no more than distance $\tau$, we have $W_\infty(\rho_s, S^\tau\rho_s) \leq \tau$, thus
\begin{equation}\label{eq:temp222}
W_2(\rho_s, S^\tau\rho_s) \leq W_\infty(\rho_s, S^\tau\rho_s) \leq \tau \quad \text{ for all }\tau>0.
\end{equation}
Using \eqref{eq:temp111} and \eqref{eq:temp222}, the metric slope  $|\partial \mathcal{E}|(\rho_s)$ as defined in \cite[Definition 1.2.4]{AGS08} satisfies
\[
|\partial \mathcal{E}|(\rho_s) = \limsup_{\rho\to \rho_s} \frac{(\mathcal{E}[\rho_s] - \mathcal{E}[\rho])^+}{W_2(\rho_s, \rho)} \geq  \limsup_{\tau\to 0} \frac{(\mathcal{E}[\rho_s] - \mathcal{E}[S^\tau \rho_s])^+}{W_2(\rho_s, S^\tau\rho_s)} \geq c_0.
\]
On the other hand, the local in time gradient flow solution $\rho(t)$ with initial solution $\rho_s$ satisfies an Evolution Differential Inequality (EVI) (see \cite[Definition 2.10]{Craig} when $W$ is the Newtonian kernel), then arguing as in \cite[Proposition 3.6]{AmbrosioGigli}, see also \cite{CLM}, we have that the following energy dissipation inequality is satisfied, for all $t\geq0$
\begin{equation}\label{EDE}
\mathcal{E}(\rho(t))-\mathcal{E}(\rho_{s})\leq-\frac{1}{2}\int_{0}^{t}|\partial \mathcal{E}|^{2}(\rho(\tau))d\tau-\frac{1}{2}\int_{0}^{t}|\rho^{\prime}(\tau)|^{2}d\tau
\end{equation}
both for $\lambda$-convex potentials, actually \eqref{EDE} holds with equality, and for the Newtonian attractive potential. This is a consequence of the map $t\rightarrow |\partial \mathcal{E}|(\rho(t))$ being decreasing and lower semicontinuous, see for instance \cite[Theorem 2.4.15]{AGS08} in the $\lambda$-convex case and \cite[Theorem 3.12]{Craig} in the Newtonian kernel case. Since $\rho(t)\equiv \rho_s$ is a gradient flow solution, plugging it into \eqref{EDE} yields that the left hand side is 0, whereas the right hand side is less than $-\frac{1}{2}c_0^2 t$ which is negative for all $t>0$, a contradiction.
\end{proof}

\begin{remark}\label{rmk:difference}The assumption that $\rho_s$ is a probability measure does not create any actual restriction. If $\rho_s$ is a stationary solution of \eqref{aggregation} with mass $M_0 \neq 1$, we can simply apply Theorem~\ref{thm:gradient_flow} to  $\tilde\rho_s := \frac{\rho_s}{M_0}$, which has mass 1, and it is a stationary solution of  \eqref{aggregation} with some positive coefficients multiplied to the two terms on the right hand side. However, the assumption that $\rho_s$ has finite second moment (which comes in the definition of $\mathcal{P}_2(\mathbb{R}^d)$) makes it more restrictive than Theorem \ref{thm:unique}, which only requires $\omega(1+|x|)\rho_s \in L^1(\mathbb{R}^d)$. Moreover, the assumption of the existence of a local-in-time unique gradient flow solution implies the more restrictive condition on the nonlinear diffusion $m\geq 1-\tfrac1{d}$ in order to be proved with the available literature \cite{AmbrosioGigli,Craig}.
\end{remark}

At the end of this subsection, let us point out that for our main application in this work, where $W = -\mathcal{N}$ is the attractive Newtonian kernel modulo translation and $m>1$, we could have used this shortcut to show that all stationary solution $\rho_s \in L^1_+(\mathbb{R}^d)\cap L^\infty(\mathbb{R}^d)$ with finite second moment must be radially decreasing. However the longer approach (via Proposition \ref{prop:steiner} and Theorem \ref{thm:unique}) has a larger interest for two reasons. One is that as discussed in Remark \ref{rmk:difference}, Theorem \ref{thm:unique} proves radial symmetry in a more general class of stationary solutions and more general nonlinear diffusions. Another reason is that the longer approach does not rely on any convexity assumption on $W$, thus it works even if the equation does not have a rigorous gradient flow structure. Even more, part of the authors have also recently shown that this longer proof can be generalized to kernels that are more singular than Newtonian \cite{CHMV} for which a rigorous gradient flow theory is missing.

\subsection{Including a potential  term}\label{sec_v}
In this subsection, we consider the aggregation-diffusion equation with an extra drift term given by a potential $V(x)$:
\begin{equation}
\label{eq_v}
\partial_t \rho = \Delta \rho^m + \nabla \cdot (\rho\nabla(\K*\rho + V)) \quad x\in\R^d \,, t\geq 0\,
\end{equation}
where we assume that $m>0$, $V(x)\in \mathcal{C}^1(\mathbb{R}^d)$ is radially symmetric, and $V'(r)>0$ for all $r>0$.

For this equation, its stationary solution is defined in the same way as Definition~\ref{stationarystates}, with \eqref{steady} replaced by $\nabla \rho_s^{m} = -\rho_s\nabla (\psi_s + V)$. We point out that Lemma~\ref{lem:regularity} still holds, except that the right hand side of \eqref{temp_grad1} and \eqref{temp_grad2} are now replaced by an $x$-dependent bound $C + |\nabla V(x)|$. From its proof, we know that if $\rho_s$ is a stationary solution, then
\[
\frac{m}{m-1}\rho_s^{m-1} + \rho_s * \K + V = C_i \quad\text{ in } \supp \rho_s,
\]
where $C_i$ may take different values in different components. As before, if $m=1$ then $\frac{m}{m-1}\rho_s^{m-1}$ is replaced by $\log \rho_s$; and if $0<m\leq 1$ we again have that $\supp \rho_s = \mathbb{R}^d$.

Due to the extra potential term, the energy functional $\mathcal{E}[\rho]$ is now given by $\mathcal{S}[\rho] + \mathcal{I}[\rho] + \mathcal{V}[\rho]$, with the extra potential energy  $\mathcal{V}[\rho] := \int \rho V dx$.
We start with a simple observation that the potential energy is non-increasing under continuous Steiner symmetrization, a consequence of properties of continuous Steiner symmetrization in \cite{Brock}.

\begin{lemma} Let $V \in \mathcal{C}(\mathbb{R}^d)$ be radially symmetric and non-decreasing in $|x|$. Let  $\mu \in L^1_+(\mathbb{R}^d)\cap L^\infty(\mathbb{R}^d)$ be such that $\int \mu V dx < \infty$. Then $\int S^\t[\mu] V dx$ is non-increasing for all $\t>0$. \end{lemma}
\begin{proof}
For any $n\in \mathbb{N}_+$, let $\varphi_n(x) := \max\{0, V(n)-V(x)\}$. (Here we define $V(n):=V(x)|_{|x|=n}$ by a slight abuse of notation.) Note that $\supp \varphi_n \subset B(0,n)$, and is non-increasing in $|x|$. By the Hardy-Littlewood inequality for continuous Steiner symmetrization \cite[Lemma 4]{Brock}, we have
\begin{equation}\label{ineq_asdf}
\int_{\mathbb{R}^d} S^\t[\mu] \varphi_n dx = \int_{\mathbb{R}^d} S^\t[\mu] S^\t[\varphi_n] dx \geq \int_{\mathbb{R}^d} \mu  \varphi_n dx \quad\text{ for all }\t\geq 0, n\in\mathbb{N}^+
\end{equation}
Note that $-\varphi_n = \min\{V(n), V(x)\} - V(n).$ Since $\int S^\t[\mu] dx = \int \mu dx$, \eqref{ineq_asdf} is equivalent with
\[
\int_{\mathbb{R}^d} S^\t[\mu] \min\{V(x), V(n)\} dx \leq \int_{\mathbb{R}^d} \mu \min\{V(x), V(n)\} dx \quad\text{ for all }\t\geq 0, n\in\mathbb{N}^+.
\]
Sending $n\to\infty$, the above inequality becomes $\int S^\t[\mu] V dx\leq \int \mu V dx$ for all $\t\geq 0$. The semigroup property of $S^\t$ then gives us the desired result.
\end{proof}

The above lemma gives that $\frac{d^+}{d\t} \int S^\t[\mu] V dx \leq 0$, but it turns out that we have to improve it into a strict inequality if $\mu$ is not symmetric decreasing about $H = \{x_1=0\}$, which we prove below.

\begin{lemma}
\label{lem_v2} Let $V \in \mathcal{C}(\mathbb{R}^d)$ be radially symmetric and strictly increasing in $|x|$. Assume $\mu \in L^1_+(\mathbb{R}^d)\cap L^\infty(\mathbb{R}^d)$ is such that $\int \mu V dx < \infty$, and $\mu$ is not symmetric decreasing about $H = \{x_1=0\}$. Then $\frac{d^+}{d\t} \int S^\t[\mu] V dx \big|_{\t=0}< 0.$ As a consequence, for such $\mu$, there is a constant $c_{0}>0$ (depending on $\mu$ and $V$) such that for small $\tau>0,$
\[
\int S^\t[\mu] V dx \leq \int \mu\, V dx-c_{0}\t.
\]
\end{lemma}

\begin{proof} Recall that for each $x'\in \mathbb{R}^{d-1}$, $h\in \mathbb{R}^+$, the set $U_{x'}^h$ is an at most countable union of subintervals. Without loss of generality we assume the subintervals do not share a common endpoint; if so, we add a point to merge them into one interval. Each subinterval can be written in the form $I(c,r)=(c-r,c+r)$. Since $\mu$ is not symmetric decreasing about $H$, some of these subintervals must have their center not at $0$ for some $x', h$. This motivates us to define the set $B_\delta \subset \mathbb{R}^{d-1} \times \mathbb{R}^+$ for $0< \delta \ll 1$:
\[
B_\delta := \{(x', h) \in \mathbb{R}^{d-1} \times \mathbb{R}^+ : |x'| \leq \delta^{-1}, \text{ and }U_{x'}^h \text{ has a subinterval }I(c,r) \text{ with }|c|, r \in [\delta, \delta^{-1}]\}.
\]
The assumption of $\mu$ implies that $|B_\delta| > 0$ for sufficiently small $\delta>0$.

By Definition~\ref{def:steiner_func}, $\int S^\t[\mu] V dx$ can be written as
\begin{equation}\label{multi_int}
\int S^\t[\mu] V dx = \int_{\mathbb{R}^+} \int_{\mathbb{R}^{d-1}} \int_{\mathbb{R}} \chi_{M^\t(U_{x'}^h)}(x_1) V(x_1, x') dx_1 dx' dh.
\end{equation}
Now let us investigate the innermost integral.  For any open set $U \subset \mathbb{R}$, let us define
\[
\Phi(\tau; U,x'):= \int_{\mathbb{R}} \chi_{M^\t(U)}(x_1) V(x_1, x') dx_1.
\]
With this notation, the innermost integral in \eqref{multi_int} becomes $\Phi(\tau; U_{x'}^h,x')$.

To estimate $\frac{d^+}{d\t} \Phi(\tau; U_{x'}^h,x')|_{\t=0}$, let us start with an easier estimate $\frac{d^+}{d\t} \Phi(\tau; U,x')|_{\t=0}$ when $U$ is a single interval $I(c,r)$. If $c=0$, clearly $\frac{d^+}{d\t} \Phi(\tau; U,x')\big|_{\t=0} = 0$. If $c\neq 0$ (WLOG assume $c<0$), then $M^\t(U) = I(c+\t, r)$ for sufficiently small $\t>0$,
thus
\[
\frac{d^+}{d\t} \Phi(\tau; U,x')\Big|_{\t=0} = V(c+r, x') - V(c-r, x') < 0,
\]
where we use $|c+r|<|c-r|$ in the last inequality, which follows from $c<0$, and actually we have $|c-r|-|c+r|\geq \min\{2|c|, 2r\}$. And if $c,r, x'$ satisfy  $|c|, r\in [\delta, \delta^{-1}]$ and $|x'| \leq \delta^{-1}$, we have the quantitative estimate
\[
\frac{d^+}{d\t} \Phi(\tau; U,x')\Big|_{\t=0} \leq -C_\delta < 0,
\]
where $C_\delta$ is given by
\[
C_\delta := \inf_{a_1,a_2,b\in\mathbb{R}}\left\{ V\left(\sqrt{a_1^2 + b^2}\right) - V\left(\sqrt{a_2^2 + b^2}\right): |a_1|-|a_2|\geq 2\delta, |a_1|,|a_2|\leq 2\delta^{-1}, |b|\leq \delta^{-1}\right\},
\]
where we denote $V(x)=V(|x|)$ by a slight abuse of notation. The strict positivity of $C_\delta$ follows from the fact that $V(r)$ is strictly increasing in $r$ for $r\geq 0$, as well as the compactness of the set $\{ |a_1|-|a_2|\geq 2\delta, |a_1|,|a_2|\leq 2\delta^{-1}, |b|\leq \delta^{-1}\}$.

The above argument immediately leads to the crude estimate
\[
\frac{d^+}{d\t} \Phi(\tau; U_{x'}^h,x')|_{\t=0}\leq 0 \quad\text{ for all } (x', h)\in\mathbb{R}^{d-1}\times R^+
\] as we take the sum of the estimate $\frac{d^+}{d\t} \Phi(\tau; U,x')|_{\t=0}\leq 0$  over all the subintervals $U \subset U_{x'}^h$. In addition, if $|x'|\leq \delta^{-1}$ and $U_{x'}^h$ has a subinterval $I(c,r)$ with $|c|, r\in [\delta, \delta^{-1}]$, we have the quantitative estimate $\frac{d^+}{d\t} \Phi(\tau; U_{x'}^h,x')|_{\t=0} \leq -C_\delta <0$. By definition of $B_\delta$ at the beginning of this proof, we have
\[
\frac{d^+}{d\t} \Phi(\tau; U_{x'}^h,x')\Big|_{\t=0} \leq -C_\delta <0 \quad\text{ for all } (x', h)\in B_\delta,
\]
thus
\[
\frac{d^+}{d\t} \int S^\t[\mu] V dx \Big|_{\t=0} =  \int_{\mathbb{R}^+} \int_{\mathbb{R}^{d-1}} \frac{d^+}{d\t}\Phi(\tau; U_{x'}^h,x')\Big|_{\t=0}  dx' dh \leq -C_\delta | B_\delta| < 0,
\]
finishing the proof.\end{proof}

Our goal of this subsection is to show that the radial symmetry result in Theorem~\ref{thm:unique} can be generalized to \eqref{eq_v} for certain classes of potential $V$. We will work with one of the following two classes of $V$:

(V1) $0<V'(r)\leq C$ for some $C$ for all $r>0$.

(V2) $V'(r)>0$ for all $r>0$, and $V'(r)\to+\infty$ as $r\to+\infty$.

In the following theorem we prove radial symmetry of stationary solutions under assumption (V1) for all $m>0$, and under assumption (V2) for $m>1$. We expect that when $m\in (0,1]$, it should be possible to refine some estimates in the proof and obtain symmetry for a wider class than (V1). We will not pursue this direction for presentation simplicity, and we  leave further generalizations to interested readers.

\begin{theorem}
\label{thm:symm_v}
Assume that $W$ satisfies $(K1)$-$(K4)$ and $m>0$. Let $\rho_s \in L^1_+(\mathbb{R}^d)\cap L^\infty(\mathbb{R}^d)$ satisfy $\omega(1+|x|)\rho_s \in L^1(\mathbb{R}^d)$ and $ \rho_sV \in L^1(\mathbb{R}^d)$. Assume that $\rho_s$ is a non-negative stationary state of \eqref{eq_v} in the sense of Definition {\rm \ref{stationarystates}}, with \eqref{steady} replaced by $\nabla \rho_s^{m} = -\rho_s\nabla (\psi_s + V)$. Then if $V$ satisfies (V1), or if $V$ satisfies (V2) in addition to $m>1$, then $\rho_s$ is radially decreasing about the origin.
\end{theorem}

\begin{proof}
Note that  Lemma~\ref{lem:regularity} still holds with a potential $V$, except that right hand sides of \eqref{temp_grad1} and \eqref{temp_grad2} are now replaced by an $x$-dependent bound $C + |\nabla V(x)|$, which is uniformly bounded in $x$ under (V1). And under the assumptions (V2) and $m>1$, we will prove in Lemma~\ref{lem_cpt_spt} that $\rho_s$ must be compactly supported. Thus in both cases, the right hand sides of \eqref{temp_grad1} and \eqref{temp_grad2} are still uniformly bounded in $x$ in $\supp \rho_s$.

The rest of the proof follows a similar approach as Theorem~\ref{thm:unique} and Proposition~\ref{prop:steiner}, with $\mathcal{E}$ including an extra potential energy $\mathcal{V}[\rho] := \int \rho V dx$. However, some crucial modifications in the proof of Proposition~\ref{prop:steiner} are needed, which we highlight below.

First, note that with a potential $V$, we will prove radial symmetry about the origin, rather than up to a translation. For this reason, we take an arbitrary hyperplane $H$ passing through the origin, and aim to prove that $\rho_s$ is symmetric decreasing about $H$. (WLOG we let $H = \{x_1=0\}$.) Since $H$ does not split the mass of $\rho_s$ into half-and-half, it is possible that for all $x' \in \mathbb{R}^{d-1}$ and $h>0$, every line segment in $U_{x'}^h$ has its center lying on one side of $H$. Therefore, the estimate in Proposition~\ref{prop:int} might fail for $\rho_s$, and all we have is the crude estimate
\begin{equation}\label{crude111}
\mathcal{I}[S^\t \rho_s] - \mathcal{I}[\rho_s] \leq 0.
\end{equation}

Despite this weaker estimate in the interaction energy, we will show that all 3 estimates of Proposition~\ref{prop:steiner} still hold, if we define $\mu(\cdot,\t)$ in the same way as in its proof. Clearly, \eqref{mu_t} and \eqref{int_0} remain true since $\mu(\cdot,\t)$ is defined the same as before. We claim that \eqref{E_mu_t} still holds, but with a different reason as before: the coefficient $c_0>0$ used to come from contribution from the interaction energy via Proposition~\ref{prop:int}, but now it comes from the potential energy. To see this, consider the following two cases.

Case 1:  $m\in (0,1]$. Combining \eqref{crude111}, Lemma~\ref{lem_v2} with $\mathcal{S}[\mu(\t)] -  \mathcal{S}[\rho_s] \equiv 0$ (where the difference is defined in the sense of \eqref{def_reg_s}), we again have \eqref{E_mu_t} for some $c_0>0$ for all sufficiently small $\t>0$.

Case 2: $m>1$. In this case, recall that $\mu(\t,\cdot) = \tilde S^\t[\mu(0,\cdot)]$, where $\mu(0,\cdot)=\rho_s$ and $\tilde S^\t$ is the continuous Steiner symmetrization which ``slows-down'' at height $h \in (0,h_0)$. From the proof of Lemma \ref{lem_v2}, we know that if $B_\delta$ has a positive measure, then $B_\delta \cap \{(x',h): h>h_0\}$ also has a positive measure for all sufficiently small $h_0>0$, thus Lemma~\ref{lem_v2} still holds for $\mu(\t) = \tilde S^\t[\mu_0]$ if $h_0$ is sufficiently small, leading to
\[
\mathcal{V}[\mu(\t)] - \mathcal{V}[\rho_s] \leq -c\t \text{\quad for some $c>0$ for all sufficiently small $\t>0$.}
\]
 In addition, for sufficiently small $h_0$ we still have \eqref{eq:goal1} (where we fix $c$ to be the constant from the above equation), and combining it with \eqref{crude111} gives
 \[
 \mathcal{I}[\mu(\t)] - \mathcal{I}[\rho_s] \leq \frac{c\t}{2},
 \]
 and adding them together with \eqref{eq:s_dec} gives \eqref{E_mu_t}.

Once we obtain Proposition~\ref{prop:steiner}, the rest of the proof follows closely the proof of Theorem~\ref{thm:unique}, except the following minor changes. With an extra potential energy in $\mathcal{E}$, the right hand side of \eqref{eq:e_estimate} has an addition term $\int g(\t) V dx$. As a result, $I_1$ has a different definition
\[
I_1 =\left| \int_{\supp\rho_s}g(\t) \left(\frac{m}{m-1} \rho_s^{m-1} + \K*\rho_s + V\right) dx\right|,
\]
which is still 0, since the equation for stationary solution now becomes
\[
\frac{m}{m-1}\rho_s^{m-1} + \rho_s * \K + V = C_i \quad\text{ in } \supp \rho_s.
\]
The $m=1$ case is done with a similar modification, where $J_1$ is now $\int g(\t) \left( \log \rho_s + W*\rho_s + V\right)dx$, and again we have $J_1=0$ since $\rho_s$ is stationary. Finally, we obtain the same contradiction as the proof of Theorem~\ref{thm:unique} if $\rho_s$ is not symmetric decreasing about $H$. And since $H$ is an arbitrary hyperplane through the origin, we have that $\rho_s$ is radially decreasing about the origin.
\end{proof}

Finally we state and prove the lemma used in the proof of Theorem~\ref{thm:symm_v}, which shows all stationary solutions must be compactly supported if $m>1$ and $V$ satisfies (V2).

\begin{lemma}\label{lem_cpt_spt}Assume that $m>1$, $W$ satisfies $(K1)$-$(K4)$, and $V$ satisfies (V2). Let $\rho_s \in L^1_+(\mathbb{R}^d)\cap L^\infty(\mathbb{R}^d)$ satisfy $\omega(1+|x|)\rho_s \in L^1(\mathbb{R}^d)$. Assume that $\rho_s$ is a non-negative stationary state of \eqref{eq_v} in the sense of Definition {\rm \ref{stationarystates}}, with \eqref{steady} replaced by $\nabla \rho_s^{m} = -\rho_s\nabla (\psi_s + V)$.  Then $\rho_s$ is compactly supported.
\end{lemma}

\begin{proof}
With a potential term, we have that
\begin{equation}\label{temp_stat_eq}
\frac{m}{m-1}\rho_s^{m-1} + \rho_s * \K + V = C_i \quad\text{ in } \supp \rho_s,
\end{equation}
where $C_i$ takes different values in different connected components of $\supp \rho_s$. By a similar computation as \eqref{lipspot} (with $\K$ replaced by $\min\{\K,0\}$), we have $\rho_s*\K \geq -C(\|\rho_s\|_1, \|\rho_s\|_\infty, \K)$. Thus the first two terms of \eqref{temp_stat_eq} are uniformly bounded below. As a result, every connected component $D$ of $\supp \rho_s$ must be bounded: if not, the left hand side would be unbounded in $D$ due to $\lim_{|x|\to\infty} V(|x|)=\infty$, contradicting with \eqref{temp_stat_eq}.

Note that every connected component being bounded does not imply that $\supp \rho_s$ is bounded: there may be a countable number of connected components going to infinity. We claim that there is some $R(\|\rho_s\|_1, \|\rho_s\|_\infty, \K, V)>0$, such that every connected component $D$ must satisfy that $D \cap B(0,R) \not= \emptyset$. As we will see later, this will help us control the outmost point of $D$.

If $0\in D$, then clearly $D \cap B(0,R) \not= \emptyset$. If $0\not\in D$, we find some unit vector $\nu \in \mathbb{R}^d$, such that the ray starting at origin with direction $\nu$ has a non-empty intersection with $D$. Let
$
t_0 = \inf\{t>0: t\nu \in D\},
$
and let $x_0 = t_0 \nu$.  We take a sequence of points $(t_n)_{n=1}^\infty$ such that $t_n\searrow t_0$ and $t_n \nu \in D$, and denote $x_n = t_n \nu$. Since $x_n \in D$ and $x_0 \in \partial D$,  the left hand side of \eqref{temp_stat_eq} takes the same constant value $C_i$ at $x_0$ and all $x_n$. As a result, for all $n\geq 1$ we have
\[
\frac{\frac{m}{m-1}\left(\rho_s^{m-1}(x_n) - \rho_s^{m-1}(x_0)\right)}{t_n-t_0} + \frac{(\rho_s * \K)(x_n) -  (\rho_s * \K)(x_0)}{t_n-t_0} + \frac{V(x_n)-V(x_0)}{t_n-t_0} = 0.
\]
Note that the first term is non-negative since $\rho_s(x_0)=0$ (which follows from $x_0\in \partial D$ and $\rho_s\in \mathcal{C}(\mathbb{R}^d)$). The second term converges to $\nabla(\rho_s*\K)\cdot \nu$, whose absolute value is bounded by $C(\|\rho_s\|_1, \|\rho_s\|_\infty, \K)$ by \eqref{lipspot0}. The third term converges to $\nabla V(x_0) \cdot \nu = V'(t_0)$. Putting the three estimates together gives that
\[
V'(t_0) \leq C(\|\rho_s\|_1, \|\rho_s\|_\infty, \K),
\]
thus assumption (V2) gives that $t_0 \leq R(\|\rho_s\|_1, \|\rho_s\|_\infty, \K, V)$, finishing the proof of the claim.

Finally, we will show that $D \cap B(0,R) \not= \emptyset$ implies the outmost point of $D$ cannot get too far. Take any $x_1 \in D\cap B(0,R)$, and let $x_2$ be the outmost point of $D$. Taking the difference of \eqref{temp_stat_eq} at $x_2$ and $x_1$ gives
\[  V(x_2)-V(R) \leq V(x_2)-V(x_1) = \frac{m}{m-1}\rho_s^{m-1}\Big|_{x_2}^{x_1} + (\rho_s * W)\Big|_{x_2}^{x_1}.
\]
Due to \eqref{lipspot}, we bound the right hand side by $C(\|\rho_s\|_1,\|\rho_s\|_\infty, \|\omega(1+|x|)\rho_s\|_1, \K) + \omega(1+|x_2|) \|\rho_s\|_1$. Note that the left hand grows superlinearly in $|x_2|$ due to (V2), whereas $\omega(1+|x_2|)$ at most grows linearly in $|x_2|$ by assumption (K3) on $\K$. This leads to
\[
|x_2| \leq C(\|\rho_s\|_1,\|\rho_s\|_\infty, \|\omega(1+|x|)\rho_s\|_1, \K, V),
\]
which completes the proof.
\end{proof}


\section{Existence of global minimizers}

In Section 2, we showed that if $\rho_s \in L^1_+(\mathbb{R}^d)\cap L^\infty(\mathbb{R}^d)$ is a stationary state of \eqref{aggregation} in the sense of Definition  \ref{stationarystates} and it satisfies $\omega(1+|x|)\rho_s \in L^1(\mathbb{R}^d)$, then it must be radially decreasing up to a translation. This section is concerned with the existence of such stationary solutions. Namely, under (K1)-(K4) and one of the extra assumptions (K5) or (K6) below, we will show that for any given mass, there indeed exists a stationary solution satisfying the above conditions.
We will generalize the arguments of \cite{CCV} to show that there exists a radially decreasing global minimizer $\rho$ of the functional \eqref{eq:energy} given by
\begin{align*}
\E[\rho]=\frac{1}{m-1}\int_{\R^{d}}\rho^{m}\,dx+\frac{1}{2}\int_{\R^{d}}\int_{\R^{d}}W(x-y)\,\rho(x)\rho(y)\,dx\,dy\,
\end{align*}
over the class of admissible densities
\begin{equation*}
\mathcal{Y}_{M}:=\left\{\rho\in L_{+}^{1}(\R^{d})\cap L^{m}(\R^{d}):\,\|\rho\|_{L^1(\R^{d})}=M,\int_{\R^{2}}x\rho(x)\,dx=0,\,\omega(1+|x|)\,\rho(x)\in
L^{1}(\R^{d})\right\},
\end{equation*}
and with the potential satisfying at least (K1)-(K4). Note that the condition on the zero center of mass has to be understood in the improper integral sense, i.e.
$$
\int_{\R^{d}}x\rho(x)\,dx=\lim_{R\to\infty} \int_{|x|<R}x\rho(x)\,dx = 0\,
$$
since we do not assume that the first moment is bounded in the class $\mathcal{Y}_{M}$. We emphasize that from now on we will work in the \emph{dominated regime} with degenerate diffusion, namely when
\begin{equation}
m>\max\left\{2-\frac{2}{d},1\right\}.\label{DDR}
\end{equation}
In order to avoid loss of mass at infinity, we need to assume some growth condition at infinity. In this section, we will obtain the existence of global minimizers under two different conditions related to the works \cite{Lions,Bedrossian,CCV}, and show that such global minimizers are indeed $L^1$ and $L^\infty$ stationary solutions. Namely, we assume further that the potential $W$ satisfies at infinity either the property
\begin{enumerate}[(a)]
\item[(K5)] $\displaystyle\lim_{r\rightarrow+\infty}\omega_{+}(r)=+\infty$,
\end{enumerate}
or
\begin{enumerate}[(a)]
\item[(K6)]
$\displaystyle\lim_{r\rightarrow+\infty}\omega_{+}(r)=\ell\in(0,+\infty)$ where the non-negative potential $\mathcal{K}:=\ell -W$ is such that, in the case $m>2$,
$\mathcal{K}\in L^{\hat{p}}(\R^{d}\setminus B_{1}(0))$, for some $1\leq\hat{p}<\infty$,
while for the case $2-(2/d)<m\leq 2$ we will require that
$\mathcal{K}\in L^{p,\infty}(\R^{d}\setminus B_{1}(0))$, for some $1\leq p<\infty$.
Moreover, there exists an $\alpha\in (0,d)$ for which $m>1+\alpha/d$ and
\begin{equation}
\mathcal{K} (\t x)\geq \t^{-\alpha} \mathcal{K}(x),\quad\forall \t\geq1,\, \mbox{for a.e. } x\in \R^{d}.\label{homogeneity}
\end{equation}
\end{enumerate}
Here, we denote by $L^{p,\infty}(\R^{d})$ the weak-$L^p$ or Marcinkiewicz space of index $1\leq p<\infty$. In particular, the attractive Newtonian potential (which is the fundamental solution of $-\Delta$ operator in $\mathbb{R}^d$) is covered by these assumptions: for $d=1,2$ it satisfies (K5), whereas for $d\geq 3$ it satisfies (K6) with $\alpha = d-2$.

Notice that the subadditivity-type condition (K4) allows to claim that $\E[\rho]$ is finite over the class $\mathcal{Y}_{M}$: indeed if we split the $W$ into its positive part $W_{+}$ and negative part $W_{-}$ as done in the bound of $\psi_s$ in Section 2, the integral with kernel $W_{-}$ is finite by the HLS inequality, see \eqref{minimizer2} below, while by (K4) we infer
\begin{align*}
\int_{\R^{d}}\int_{\R^{d}}W_{+}(x-y)\,\rho(x)\rho(y)\,dx\,dy&=\int_{\R^{d}}\int_{|x-y|\geq1}\omega_{+}(|x-y|)\,\rho(x)\rho(y)\,dx\,dy\\
&\leq C M^{2}+2M\int_{\R^{d}}\omega(1+|x|)\rho(x)dx.
\end{align*}

\subsection{Minimization of the Free Energy functional}\label{Minimizsub}
The existence of minimizers of the functional $\E$ can be proven with different arguments according to the choice between condition $(K5)$ or $(K6)$: indeed, $(K5)$ produces a quantitative version of the mass confinement effect while $(K6)$ does it in a nonconstructive way. For such a difference, we first briefly discuss the case when condition $(K6)$ is employed, as it can be proven by a simple application of Lion's concentration-compactness principle \cite{Lions} and its variant in \cite{Bedrossian}.
\begin{theorem}\label{conccomp}
Assume that conditions \eqref{DDR}, $(K1)$-$(K4)$ and $(K6)$ hold. Then for any positive mass $M$, there exists a global minimizer
$\rho_{0}$, which is radially symmetric and decreasing, of the free energy functional
$\E$ in $\mathcal{Y}_{M}$. Moreover, all global minimizers are radially symmetric and decreasing.
\end{theorem}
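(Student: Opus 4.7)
The plan is to invoke the concentration-compactness principle of Lions~\cite{Lions}, in the form adapted by Bedrossian~\cite{Bedrossian} to aggregation-diffusion functionals. Writing $W = \ell - \mathcal{K}$ from (K6) turns the interaction energy into $\frac{\ell M^{2}}{2} - \frac{1}{2}\iint \mathcal{K}(x-y)\rho(x)\rho(y)\,dx\,dy$, with $\mathcal{K}\geq 0$ radially symmetric and decreasing. Hence minimising $\mathcal{E}$ on $\mathcal{Y}_M$ is equivalent to minimising
\[
J[\rho] := \frac{1}{m-1}\|\rho\|_{L^m(\mathbb{R}^d)}^m - \frac{1}{2}\iint_{\mathbb{R}^d\times\mathbb{R}^d} \mathcal{K}(x-y)\rho(x)\rho(y)\,dx\,dy,
\]
and I denote $I_M := \inf_{\mathcal{Y}_M} J$.

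The first step is to show $-\infty < I_M < 0$. For the lower bound I would combine Hardy--Littlewood--Sobolev (using the integrability of $\mathcal{K}$ granted by (K6) at infinity and the Newtonian-type singularity controlled by (K2)) with interpolation between $L^1$ and $L^m$ to obtain
\[
\iint \mathcal{K}(x-y)\rho(x)\rho(y)\,dx\,dy \;\leq\; C\,\|\rho\|_{L^1(\mathbb{R}^d)}^\theta\|\rho\|_{L^m(\mathbb{R}^d)}^{2-\theta}
\]
for some $\theta \in (0,2)$; the condition $m > 1 + \alpha/d$ in (K6) is exactly what produces $2-\theta < m$, so that Young's inequality absorbs the interaction term into the diffusive $\|\rho\|_{L^m}^m$. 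The upper bound $I_M < 0$ comes from evaluating $J$ on a scaled radial bump and exploiting~\eqref{homogeneity}.

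The crucial step is the strict subadditivity $I_M < I_{\theta M} + I_{(1-\theta) M}$ for every $\theta \in (0,1)$; this is what later rules out dichotomy. I would establish it by the mass-preserving dilation $\rho_\lambda(x) := \lambda^d \rho(\lambda x)$: the diffusion part scales as $\lambda^{d(m-1)}\|\rho\|_{L^m}^m$, while~\eqref{homogeneity} constrains the $\mathcal{K}$-interaction to change by at most a factor $\lambda^{\alpha}$. Because $d(m-1) > \alpha$, concentrating a candidate for mass $aM$ strictly lowers the normalized energy, which translates into strict subadditivity of $M \mapsto I_M$.

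With these ingredients I take a minimising sequence $\{\rho_n\}\subset \mathcal{Y}_M$; by Riesz's rearrangement inequality applied to the radially decreasing kernel $\mathcal{K}$ and equimeasurability of the $L^m$ norm, I may replace each $\rho_n$ by its Schwarz symmetrization and assume $\rho_n$ is radially symmetric and decreasing about the origin, so the improper first-moment condition holds automatically. Step one then gives uniform bounds on $\|\rho_n\|_{L^m}$, so Lions' lemma applies to the concentration functions. Vanishing is ruled out because a radial decreasing family of fixed mass $M>0$ cannot vanish; dichotomy is ruled out by the strict subadditivity above. Compactness then produces a limit $\rho_0 \in L^1_+(\mathbb{R}^d)\cap L^m(\mathbb{R}^d)$ with $\|\rho_0\|_{L^1}=M$, and weak-$L^m$ lower semicontinuity of the entropy part together with HLS-based continuity of the interaction part upgrade to $\mathcal{E}[\rho_0] = I_M$. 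Finally, for any other global minimizer $\rho$, the equality case of Riesz rearrangement together with the strict radial monotonicity of $\mathcal{K}$ coming from (K1) (i.e.\ $\omega' > 0$) forces $\rho$ to coincide with a translate of $\rho^{\#}$. The main obstacle in this plan is the strict subadditivity: it rests delicately on balancing the scaling exponents under~\eqref{homogeneity}, and it is exactly here that the compatibility constraint $m > 1 + \alpha/d$ is indispensable.
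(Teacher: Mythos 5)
Your overall route is the same as the paper's: rewrite $\E[\rho]=\widetilde{\E}[\rho]+\tfrac{\ell}{2}M^{2}$ with $\mathcal{K}=\ell-W\geq 0$ radially strictly decreasing, and run concentration--compactness for $\widetilde{\E}$ on $\mathcal{Y}_{M}$; the paper at this point simply invokes \cite[Theorem 1]{Bedrossian} for $m>2$ and \cite[Corollary II.1]{Lions} for $2-2/d<m\leq 2$, and the strict monotonicity of $\mathcal{K}$ for the statement that all minimizers are radially decreasing (your Riesz equality-case argument is the same point). The genuine gaps appear exactly where you try to reprove what the paper cites. The strict subadditivity sketch does not work as described: a mass-preserving dilation $\rho_{\lambda}(x)=\lambda^{d}\rho(\lambda x)$ never changes the mass, so it cannot by itself compare $I_{aM}$, $I_{(1-a)M}$ and $I_{M}$; moreover \eqref{homogeneity} gives, for $\lambda\geq 1$, only the one-sided bound $\mathcal{K}(z/\lambda)\leq\lambda^{\alpha}\mathcal{K}(z)$, i.e.\ an upper bound on how much the attractive term can grow under concentration. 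Combined with $d(m-1)>\alpha$ this is useful for boundedness from below (and, in the paper, \eqref{homogeneity} with $m>1+\alpha/d$ is really used later, in Lemma \ref{lem:compsupp}, to get compact support), but it cannot show that ``concentrating strictly lowers the energy''; it only caps the possible gain, so no strict decrease follows. A correct verification of subadditivity must change the mass (e.g.\ $\rho\mapsto t\rho$, where the entropy scales like $t^{m}$ and the interaction like $t^{2}$), and then the cases $m\leq 2$ and $m>2$ behave differently; this is precisely why (K6) imposes weak-$L^{p}$ integrability of $\mathcal{K}$ outside $B_{1}(0)$ when $2-2/d<m\leq 2$ (feeding Lions) and $L^{\hat p}$ integrability when $m>2$ (feeding Bedrossian), a case distinction your argument never engages with.

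Second, your exclusion of vanishing is false as stated: a radially decreasing family of fixed mass can vanish by spreading out (take $\rho_{n}(x)=n^{-d}\rho(x/n)$). Vanishing is excluded instead through the strict negativity $I_{M}<0$ that you establish in step one: along a vanishing sequence with uniform $L^{1}\cap L^{m}$ bounds, the local integrability of $\mathcal{K}$ near the origin and its decay at infinity force the interaction term to tend to zero, hence $\liminf J[\rho_{n}]\geq 0>I_{M}$, a contradiction. With these two steps repaired (or delegated to the cited results, as the paper does), the remainder of your plan --- symmetrized minimizing sequence, lower semicontinuity, and the equality case of the Riesz rearrangement inequality for the strictly decreasing kernel $\mathcal{K}$ to conclude that every global minimizer is a translate of its Schwarz rearrangement --- is sound and coincides with the paper's argument.
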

\begin{proof}
We write
$\E[\rho]=\widetilde{\E}[\rho]+\frac{\ell}{2}M^{2}$, where
\[
\widetilde{\E}[\rho]=\frac{1}{m-1}\int_{\R^{d}}\rho^{m}\,dx-\frac{1}{2}\int_{\R^{d}}\int_{\R^{d}}\mathcal{K}(x-y)\,\rho(x)\rho(y)\,dx\,dy,
\]
being the kernel $\mathcal{K}$ nonnegative and radially decreasing; furthermore condition $(K3)$ implies $\mathcal{K}\in L^{p,\infty}(B_{1}(0))$, where $p=d/(d-2)$. Then we are in position to apply \cite[Theorem 1]{Bedrossian} for $m>2$ and \cite[Corollary II.1]{Lions} for $2-(2/d)<m\leq2$ to get the existence of a radially decreasing minimizer $\rho_{0}\in \mathcal{Y}_{M}$ of $\widetilde{\E}$ (and then of ${\E}$). Moreover, since $\mathcal{K}$ is strictly radially decreasing, all global minimizers are radially decreasing.
\end{proof}
When considering the presence of condition $(K5)$ the concentration-compactness principle is not applicable but a direct control of the mass confinement phenomenon is possible. Then we first prove the following Lemma, which provides a reversed Riesz inequality, allowing to reduce the study the minimization of $\E$ to the set of all the radially decreasing density in $\mathcal{Y}_{M}$.

\begin{lemma}\label{reversedRiesz}
Assume that conditions $(K1)$-$(K5)$ hold and take a density $\rho$ such that
\[
\rho\in L_{+}^{1}(\R^{d}),\,\omega(1+|x|)\,\rho(x)\in L^{1}(\R^{d}).
\]
Then the following inequality holds:
\[
\mathcal{I}[\rho]=\int_{\R^{2}}\int_{\R^{2}}W(x-y)\rho(x)\rho(y)dx\,dy\geq\int_{\R^{2}}\int_{\R^{2}}W(x-y)\rho^{\#}(x)\rho^{\#}(y)dx\,dy=\mathcal{I}[\rho^\#]
\]
and the equality occurs if and only if $\rho$ is a translate of $\rho^{\#}$.
\end{lemma}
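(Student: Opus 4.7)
The plan is to reduce the inequality to the classical Riesz rearrangement inequality through a splitting of the radially \emph{increasing} kernel $W$ into two pieces that are individually tractable. Since $\omega(1)=0$ and $\omega'>0$, we decompose $W=W_+-W_-$ with $W_-(x):=\max(-\omega(|x|),0)$ supported in the closed unit ball $\overline{B_1}$ and $W_+(x):=\max(\omega(|x|),0)$ supported in $\R^d\setminus B_1$, and we treat the two terms separately.

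For $W_-$, the function is non-negative, radially non-increasing, supported in $\overline{B_1}$, and locally integrable by (K2). Classical Riesz rearrangement applied to $W_-$ with two copies of $\rho$ yields
\begin{equation*}
\iint W_-(x-y)\,\rho(x)\rho(y)\,dx\,dy \;\leq\; \iint W_-(x-y)\,\rho^{\#}(x)\rho^{\#}(y)\,dx\,dy.
\end{equation*}
For $W_+$, we would introduce the truncations $W_+^N:=\min(W_+,N)$, so that $N-W_+^N$ is non-negative, bounded and radially non-increasing. Riesz applied to this auxiliary kernel gives
\begin{equation*}
\iint (N-W_+^N)(x-y)\,\rho(x)\rho(y)\,dx\,dy \;\leq\; \iint (N-W_+^N)(x-y)\,\rho^{\#}(x)\rho^{\#}(y)\,dx\,dy,
\end{equation*}
and subtracting the common quantity $NM^2$ from both sides flips the inequality to yield the analogous reversed bound for $W_+^N$. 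Passing to the limit $N\to\infty$ by monotone convergence, using the finiteness of $\iint W_+(x-y)\rho(x)\rho(y)\,dx\,dy$ guaranteed by (K4) together with $\omega(1+|x|)\rho\in L^1(\R^d)$ (exactly as in the finiteness check for $\E[\rho]$ over $\mathcal{Y}_M$), transfers the inequality to the untruncated $W_+$. For the rearranged side we additionally need $\omega(1+|x|)\rho^{\#}\in L^1(\R^d)$, which follows from a Hardy-Littlewood-type comparison applied to the truncations $(R-\omega(1+|\cdot|))_+$ of the complementary radially non-increasing function, giving $\int\omega(1+|x|)\rho^{\#}\,dx\leq\int\omega(1+|x|)\rho\,dx$. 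Adding the two contributions delivers the desired inequality $\mathcal{I}[\rho]\geq\mathcal{I}[\rho^{\#}]$.

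The main obstacle is the equality case. If $\mathcal{I}[\rho]=\mathcal{I}[\rho^{\#}]$, then both Riesz inequalities above must saturate, since they are oriented consistently in the combined estimate. In particular we have equality in the Riesz inequality associated with $W_-$. Since $\omega$ is strictly increasing by (K1), every nontrivial super-level set $\{W_->s\}$ is an open ball centered at the origin, so $W_-$ is strictly symmetric decreasing in the sense of Lieb. The strict version of Riesz's rearrangement inequality then forces $\rho$ to coincide with a translate of $\rho^{\#}$, which is precisely the claim.
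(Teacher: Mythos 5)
The forward inequality in your proposal is sound: the splitting $W=W_+-W_-$, Riesz for the short-range piece $W_-$, the truncation $N-W_+^N$ for the long-range piece, and the Hardy--Littlewood comparison guaranteeing $\omega(1+|x|)\rho^{\#}\in L^1(\R^d)$ all work (and differ only superficially from the paper, which instead imports the whole scheme from Carlen--Loss). The genuine gap is the equality case. Lieb's strict rearrangement theorem requires the kernel to be \emph{strictly} symmetric decreasing on all of $\R^d$; your $W_-$ vanishes identically outside $B_1$, so the fact that its super-level sets are balls only makes it symmetric decreasing, not strictly so, and the theorem does not apply. Worse, the conclusion you want to extract from the $W_-$ piece alone is simply false: a kernel of range $1$ cannot detect rearrangements occurring at distance larger than $1$ from all level-set boundaries. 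For instance, in $d=1$ with $W_-(x)=(1-|x|)_+$ (a kernel of exactly this type), take $\rho=\chi_{[0,20]}+\chi_{[5,10]}$, so that $\rho^{\#}=\chi_{[-10,10]}+\chi_{[-2.5,2.5]}$. Writing the energy via the level sets $A_1=\{\rho\geq 1\}$, $A_2=\{\rho\geq 2\}$, each self-term depends only on the length of the interval, and each cross-term equals $|A_2|\,\|W_-\|_{L^1}$ because in both configurations the inner plateau lies at distance greater than $1$ from the boundary of the outer one; hence $\iint W_-(x-y)\rho(x)\rho(y)\,dx\,dy=\iint W_-(x-y)\rho^{\#}(x)\rho^{\#}(y)\,dx\,dy$ although $\rho$ is not a translate of $\rho^{\#}$ (the inner plateau is off-center). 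So equality in the $W_-$-inequality does not force translation; the rigidity must come from the long-range part.

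Your treatment of $W_+$ cannot supply that rigidity either: even granting that the total equality forces $\iint W_+\rho\rho=\iint W_+\rho^{\#}\rho^{\#}$, this is obtained as a monotone limit of the truncated inequalities, and equality of the limits does not imply equality at any fixed truncation level $N$ (a sequence of strict inequalities may converge to an equality), so no strict Riesz statement can be invoked along the truncations. This is precisely the difficulty the paper's proof is built to avoid: following Carlen--Loss, one replaces $-\omega$ by the auxiliary kernel $\kappa(r)$, equal to $-\omega(r)$ for $r\leq r_0$ and continued for $r>r_0$ with the damped slope $\omega'(s)\tfrac{1+r_0^2}{1+s^2}$; thanks to (K3) this $\kappa$ is bounded below, while it remains \emph{strictly} decreasing on all of $(0,\infty)$, so after adding a constant Lieb's strict theorem applies globally and yields the equality characterization, and the leftover monotone piece $-\omega-\kappa$ is handled by a truncation argument of the kind you used, where no equality information is needed. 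To repair your proof you would either have to prove a new strict rearrangement statement combining the $W_-$ and $W_+$ information (the example above shows $W_-$ alone cannot do it), or switch to a decomposition whose strictly decreasing summand has infinite range, as in the paper.
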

\begin{proof}
The proof proceeds exactly as in \cite[Lemma 2]{CarlenLoss}, up to replacing the function $k(r)$ defined there by the function
\[
\kappa(r)=
\left\{
\begin{array}
[c]{lll}%
-\omega(r)   &  & \text{if }r\leq r_{0}%
\\[10pt]
-\omega(r_{0})-\int_{r_{0}}^{r}\omega^{\prime}(s)\frac{1+r_{0}^2}{1+s^{2}}ds &  & \text{if }r>r_{0},
\end{array}
\right. %
\]
being $r_{0}>0$ fixed.
\end{proof}

\begin{theorem}\label{thm36}
Assume that \eqref{DDR} and $(K1)$-$(K5)$ hold, then the conclusions of Theorem {\rm\ref{conccomp}} remain true.
\end{theorem}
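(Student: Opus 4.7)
The plan is to apply the direct method of the calculus of variations, using Lemma 3.5 to reduce to the class of radially decreasing densities and using (K5) to secure mass confinement along a minimizing sequence. Let $I_M := \inf_{\mathcal{Y}_M} \mathcal{E}$, which is finite since the discussion preceding this subsection shows $\mathcal{E}[\rho]$ is finite on $\mathcal{Y}_M$ (using (K4) for the $W_+$ piece and HLS for the $W_-$ piece, the latter being admissible by (K2)--(K3)). Pick a minimizing sequence $\{\rho_n\}\subset \mathcal{Y}_M$. By Lemma 3.5, the radial decreasing rearrangement $\rho_n^{\#}$ does not increase the interaction term, and leaves $\mathcal{S}$ unchanged by \eqref{declp}, so $\{\rho_n^{\#}\}$ is still a minimizing sequence in $\mathcal{Y}_M$. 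Therefore I may assume $\rho_n = \rho_n^{\#}$, centered at the origin.

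The main obstacle is to extract uniform bounds that prevent loss of mass at infinity. First, splitting $W=W_+-W_-$ and using (K2)--(K3) together with HLS (exactly as in the argument given for finiteness on $\mathcal{Y}_M$) yields
\[
\mathcal{I}[\rho_n] \geq \tfrac{1}{2}\int\!\!\int_{|x-y|\geq 1} \omega_+(|x-y|)\rho_n(x)\rho_n(y)\,dx\,dy \;-\; C\bigl(1+\|\rho_n\|_{L^m}^m\bigr),
\]
so that the bound $\mathcal{E}[\rho_n]\leq I_M+1$ and the positivity of $\mathcal{S}$ imply simultaneously a uniform bound on $\|\rho_n\|_{L^m}$ (after absorbing the $L^m$ term on the right into $\mathcal{S}[\rho_n]$ via Young's inequality when needed) and a uniform bound
\[
\int\!\!\int_{|x-y|\geq 1} \omega_+(|x-y|)\rho_n(x)\rho_n(y)\,dx\,dy \leq C.
\]
Because $\rho_n$ is radially decreasing with $\|\rho_n\|_{L^1}=M$, a uniform $L^m$ bound gives a uniform $L^\infty$ bound, and hence a radius $r_0>0$ independent of $n$ such that $\int_{|x|\leq r_0} \rho_n \geq M/2$. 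For $|y|\geq r_0+1$ and $|x|\leq r_0$ we have $|x-y|\geq 1$ and $|x-y|\geq |y|-r_0$, so monotonicity of $\omega_+$ and the above display yield $\int_{|y|\geq r_0+1} \omega_+(|y|-r_0)\rho_n(y)\,dy \leq 2C/M$. Combined with (K5), which makes $\omega_+(r)\to\infty$ as $r\to\infty$, this furnishes the uniform tightness estimate $\sup_n \int \omega(1+|x|)\rho_n(x)\,dx < \infty$, and in particular $\int_{|x|>R}\rho_n\,dx\to 0$ as $R\to\infty$ uniformly in $n$.

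Compactness and passage to the limit: the uniform $L^m$ and $L^\infty$ bounds, together with radial monotonicity and Helly's selection theorem, give a subsequence (not relabeled) with $\rho_n\to\rho_0$ pointwise a.e. and weakly in $L^m$. The tightness estimate passes $M$, the zero center of mass (in the improper sense), and $\omega(1+|x|)\rho_0\in L^1$ to the limit, so $\rho_0\in\mathcal{Y}_M$. Lower semicontinuity of $\mathcal{S}$ follows from Fatou. For $\mathcal{I}$, split the double integral at $|x-y|=1$: on the near-field Fatou applies to $W_-$ and a uniform bound on $W_+$ allows dominated convergence; on the far-field the uniform moment bound on $\omega(1+|x|)\rho_n$ together with (K4) provides an integrable majorant, and dominated convergence finishes the job. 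Hence $\mathcal{E}[\rho_0]\leq \liminf \mathcal{E}[\rho_n]=I_M$, so $\rho_0$ is a (radially decreasing) global minimizer.

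Finally, to conclude that \emph{every} global minimizer is a radial decreasing translate of $\rho_0$, apply Lemma 3.5 to an arbitrary minimizer $\tilde\rho\in\mathcal{Y}_M$: then $\tilde\rho^{\#}\in\mathcal{Y}_M$ satisfies $\mathcal{E}[\tilde\rho^{\#}]\leq \mathcal{E}[\tilde\rho]=I_M$, forcing equality in the reversed Riesz inequality, which by the equality case of Lemma 3.5 means $\tilde\rho$ is a translate of its Schwarz rearrangement; the zero-center-of-mass condition in $\mathcal{Y}_M$ then fixes the translation so that $\tilde\rho=\tilde\rho^{\#}$, and in particular $\tilde\rho$ is radially symmetric and decreasing about the origin. \finprf
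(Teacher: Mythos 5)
Your overall route is the same as the paper's (reduce to radially decreasing competitors via Lemma \ref{reversedRiesz}, coercivity from HLS, confinement from (K5), lower semicontinuity, direct method), but the confinement step as written contains a genuine gap. A uniform $L^m$ bound plus radial monotonicity does \emph{not} give a uniform $L^\infty$ bound (the profile may blow up at the origin), and, more importantly, neither an $L^m$ nor an $L^\infty$ bound prevents vanishing: $\rho_n=M|B_n(0)|^{-1}\chi_{B_n(0)}$ is radially decreasing, has mass $M$, $\|\rho_n\|_{L^\infty}\to0$, yet $\int_{|x|\le r_0}\rho_n\,dx\to0$ for every fixed $r_0$. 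Thus the existence of an $n$-independent $r_0$ with $\int_{|x|\le r_0}\rho_n\,dx\ge M/2$ --- which is exactly the statement that mass does not escape --- is assumed rather than proved, and your subsequent tail estimate becomes circular. The whole point of (K5) is to rule out this vanishing scenario quantitatively, using the bound you do derive, $\iint_{|x-y|\ge1}\omega_+(|x-y|)\rho_n(x)\rho_n(y)\,dx\,dy\le C$: for radially symmetric $\rho_n$, pairs of points in the tail $\{|x|>R\}$ whose directions subtend an angle at least $\pi/2$ satisfy $|x-y|\ge\sqrt2\,R$ and carry a fixed fraction of the tail-tail pair mass, so $\bigl(\int_{|x|>R}\rho_n\,dx\bigr)^2\,\omega(\sqrt2\,R)\le C$; this (or the argument of \cite{CCV} invoked in the paper, which yields $\int_{|x|>R}\rho_n\,dx\le C/\omega(R)$, cf.\ Lemma \ref{conv.weak.L1.long}) gives uniform confinement \emph{first}, and only then can one fix $r_0$ and interact tail against core as you do.

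Two further steps need repair. First, your coercivity display with the term $-C\bigl(1+\|\rho_n\|_{L^m}^m\bigr)$ cannot in general be absorbed into $\mathcal{S}[\rho_n]=\frac1{m-1}\|\rho_n\|_{L^m}^m$; the correct HLS-plus-interpolation bound \eqref{minimizer2} produces the subcritical power $2(1-\alpha)<m$, and this is precisely where the diffusion-dominated condition $m>2-2/d$ enters (in $d=2$ one must use the logarithmic HLS inequality rather than HLS). Second, in the lower semicontinuity of $\mathcal{I}$ the near-field part is nonpositive (by (K1) and $\omega(1)=0$ one has $W_+\equiv0$ on $\{|x-y|\le1\}$), so Fatou applied to $W_-$ gives $\iint_{|x-y|<1}W_-\rho_0\rho_0\,dx\,dy\le\liminf_n\iint_{|x-y|<1}W_-\rho_n\rho_n\,dx\,dy$, which is the \emph{wrong} direction for $\mathcal{E}[\rho_0]\le\liminf_n\mathcal{E}[\rho_n]$: what is needed is that the singular negative contribution does not drop in the limit, i.e.\ the uniform smallness along the minimizing sequence of the truncated interaction $\mathsf{A}^{\varepsilon}[\rho_n]$, which the paper obtains through the Newtonian-potential domination and dominated convergence. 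With the confinement argument supplied and these two points corrected, your proof coincides with the paper's.
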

\begin{proof}
We follow the main lines of \cite[Theorem 2.1]{CCV}. By Lemma \ref{reversedRiesz} we can restrict ourselves to consider only radially decreasing densities $\rho$. In order to show that $\mathcal{I}[\rho]$ is bounded from below, we first argue in the case $d\geq3$. Thanks to conditions $(K1)$-$(K2)$ we have
\begin{align*}
\int_{\R^{d}}\int_{\R^{d}}W(x-y)\rho(x)\rho(y)dx\,dy\geq -C \int_{\R^{d}}\int_{|x-y|\leq1}\frac{\rho(x)\rho(y)}{|x-y|^{d-2}}dx\,dy
\geq -C \int_{\R^{d}}\int_{\R^{d}}\frac{\rho(x)\rho(y)}{|x-y|^{d-2}}dx\,dy.
\end{align*}
Now we observe that by \eqref{DDR} we have
\begin{equation*}
1<\frac{2d}{d+2}<m
\end{equation*}
and $\frac{d-2}{d}+\frac{d+2}{2d}+\frac{d+2}{2d}=2$, then by the classical HLS and  $L^{p}$ interpolation inequalities, we find
\begin{equation}
\mathcal{I}[\rho]=
\int_{\R^{d}}\int_{\R^{d}}W(x-y)\rho(x)\rho(y)dx\,dy\geq -C\|\rho\|^{2}_{L^{2d/(d+2)}(\R^{d})}\geq -C\|\rho\|^{2\alpha}_{L^{1}(\R^{d})}
\|\rho\|^{2(1-\alpha)}_{L^{m}(\R^{d})}\label{minimizer2},
\end{equation}
where $\alpha=\frac{1}{m-1}\(m\frac{d+2}{2d}-1\)$.
Then by \eqref{minimizer2} we find that
\begin{equation}
\E[\rho]\geq \frac{1}{m-1}\|\rho\|^{m}_{L^{m}(\R^{d})}-CM^{2\alpha}
\|\rho\|^{2(1-\alpha)}_{L^{m}(\R^{d})}\label{minimizer3}
\end{equation}
where we notice that
$m>2(1-\alpha)$
if and only if
$m>2-\frac{2}{d}$,
that is \eqref{DDR}. Then by \eqref{minimizer3} we can find a constant $C_{1}>0$ and a sufficiently large constant $C_{2}$
such
that
\[
\E[\rho]\geq-C_{1}+C_{2}\|\rho\|^{m}_{L^{m}(\R^{d})}.
\]
Concerning the case $d=2$, we observe that conditions $(K1)$-$(K2)$ yields
\begin{align*}
\int_{\R^{2}}\int_{\R^{2}}W(x-y)\rho(x)\rho(y)dx\,dy & \geq -C\int_{\R^2}dx\int_{|x-y|\leq1}\log(|x-y|)\rho(x)\rho(y)dxdy\\
&\geq- C\int_{\R^2}dx\int_{\R^2}\log(|x-y|)\rho(x)\rho(y)dxdy
\end{align*}
and we can use the classical log-HLS inequality and the arguments of \cite{CCV} to conclude.\\[2pt]
Concerning the mass confinement, due to (K5) and the same arguments in \cite{CCV}, see also Lemma \ref{conv.weak.L1.long}, allow us to show
\[
\int_{|x|>R}\rho(x)\,dx\leq \frac{C}{\omega(R)}\underset{R\rightarrow\infty}{\longrightarrow0}.
\]

Finally, we should check that the interaction potential $W$ is lower semicontinuous as shown in \cite[page8]{CCV}. Indeed, the only technical point to verify in this more general setting relates to the control of the truncated interaction potential $\mathsf{A}^{\varepsilon}$ for $d\geq 3$. Notice that we can estimate due to \eqref{defphi}
\begin{align*}
|\mathsf{A}^{\varepsilon}[\rho]|:=\left|\int_{\R^{d}}\int_{|x-y|\leq\varepsilon}W(x-y)\rho(x)\rho(y)dxdy\right|&\leq C \int_{\R^{d}}\int_{|x-y|\leq\varepsilon}\frac{\rho(x)\rho(y)}{|x-y|^{d-2}}dxdy\\&=C \int_{\R^{d}}\int_{\R^{d}}\rho(x)\,(\chi_{B_{\varepsilon}(0)}\,\mathcal{N})(x-y)\,\rho(y)\,dxdy.
\end{align*}
Now recall that the Newtonian potential
\[
\Psi_{\rho}(x)=\int_{\R^{d}}\frac{\rho(y)}{|x-y|^{d-2}}dy
\]
is well defined for a.e. $x\in \R^{d}$ and is in $L^{1}_{loc}(\R^{d})$, see \cite[Theorem 2.21]{Folland}, then for a.e. $x\in\R^{d}$ we have
$\chi_{B_{\varepsilon}(0)}\,\mathcal{N}\ast\rho\rightarrow0$ as $\varepsilon\rightarrow 0$.
Moreover, by the HLS inequality we have
\[
\rho(x)(\chi_{B_{\varepsilon}(0)}\,\mathcal{N}\ast\rho)(x)\leq \rho(x)\Psi_{\rho}(x)\in L^{1}_{loc}(\R^{d})
\]
with
\[
\|\rho(x)\Psi_{\rho}\|_{L^{1}(\R^{d})}\leq C\|\rho\|^{2\alpha}_{L^{1}(\R^{d})}
\|\rho\|^{2(1-\alpha)}_{L^{m}(\R^{d})}.
\]
Then Lebesgue's dominated convergence theorem allows to conclude that $\mathsf{A}^{\varepsilon}[\rho]\rightarrow 0$ as $\varepsilon\rightarrow 0$. This convergence is uniform taken on a minimizing sequence $\rho_{n}$.

Now, all ingredients are there to argue as in \cite{CCV} showing that $\E$ achieves its infimum in the class of all radially decreasing densities in $\mathcal{Y}_{M}$.
\end{proof}

\begin{remark}
According to Theorem \ref{thm:unique}, the radial symmetry of the global minimizers of $\E$, which are particular critical points of $\E$, is not a surprise. Nevertheless, as pointed out in the proofs of Theorems \ref{conccomp}-\ref{thm36}, this property can be much more easily achieved by rearrangement inequalities.
\end{remark}

A useful result, which will be used in the next arguments, regards the behavior at infinity of the so called $W$-potential, namely the function
\begin{equation*}
\psi_{f}(x)=\int_{\R^{d}}W(x-y)f(y)dy.
\end{equation*}
Following the blueprint of \cite[Lemma 1.1]{CT}, we have the following result.
\begin{lemma}\label{asymptotic}
Assume that $(K1)$-$(K5)$ hold, and let
\[
f\in L^{1}(\R^{d})\cap L^{\infty}(\R^{d}\setminus B_1(0)) ,\,\omega(1+|x|)\, f(x)\in L^{1}(\R^{d}).
\]
Then
\[
\frac{\psi_{f}(x)}{W(x)}\rightarrow\int_{\R^{d}}f(y)dy\quad\text{as }|x|\rightarrow+\infty.
\]
\end{lemma}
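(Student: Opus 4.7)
The plan is to write
\[
\frac{\psi_f(x)}{W(x)} - \int_{\R^d} f(y)\,dy = \int_{\R^d} \left(\frac{W(x-y)}{W(x)} - 1\right) f(y)\,dy,
\]
and show the right-hand side vanishes as $|x|\to\infty$ via dominated convergence after splitting the integration domain according to the singularity of $W$. First I note that under (K5) the first alternative in (K4) is impossible (a bounded $\omega_+$ cannot diverge), so throughout we may use the subadditivity bound $\omega_+(a+b)\leq C_w(1+\omega(1+a)+\omega(1+b))$. The pointwise convergence of the integrand follows from (K3) and (K5): for each fixed $y$, the mean value theorem together with $\omega'\leq C_w$ on $[1,\infty)$ gives $|\omega(|x-y|)-\omega(|x|)|\leq C_w|y|$ once $|x|>|y|+1$, while (K5) supplies $W(x)=\omega(|x|)\to\infty$, so $W(x-y)/W(x)\to 1$ for every fixed $y$.

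Splitting on $\{|x-y|<1\}$ and $\{|x-y|\geq 1\}$, the near-field piece is handled directly. For $|x|>2$ one has $\{|x-y|<1\}\subset\{|y|>1\}$, on which $f$ is essentially bounded by $F:=\|f\|_{L^\infty(\R^d\setminus B_1(0))}$. By (K1)--(K2), $|\omega(|z|)|\lesssim \phi(|z|)$ on $B_1(0)$ (with $\phi$ as in \eqref{defphi}), which is locally integrable; hence
\[
\left| \int_{|x-y|<1} W(x-y)\,f(y)\,dy\right| \leq F\int_{|z|<1}|\omega(|z|)|\,dz <\infty,
\]
and dividing by $W(x)\to\infty$ yields a vanishing contribution, while the compensating term $-\int_{|x-y|<1}f(y)\,dy$ also vanishes by the $L^1$ integrability of $f$.

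For the far-field part $\{|x-y|\geq 1\}$ one has $W(x-y)=\omega(|x-y|)\geq 0$, and (K4) yields
\[
0\leq W(x-y)\leq \omega_+(|x|+|y|)\leq C_w\big(1+\omega(1+|x|)+\omega(1+|y|)\big).
\]
Using (K3) once more, $\omega(1+|x|)/\omega(|x|)\leq 1+C_w/\omega(|x|)\to 1$, so for $|x|$ beyond some threshold $X_0$ the integrand is dominated by
\[
\left|\frac{W(x-y)}{W(x)} - 1\right| f(y) \leq C\big(1+\omega(1+|y|)\big)f(y),
\]
which belongs to $L^1(\R^d)$ exactly by the combined hypotheses $f\in L^1(\R^d)$ and $\omega(1+|\cdot|)f\in L^1(\R^d)$. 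Combining this uniform domination with the pointwise convergence of the integrand to $0$, Lebesgue's dominated convergence theorem closes the argument.

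The main obstacle I anticipate is precisely the construction of the $L^1$ dominating function on the far field. A naive Lipschitz estimate $|W(x-y)-W(x)|\leq C_w|y|$ would require $|y|f(y)\in L^1$, which is not assumed and need not follow from $\omega(1+|y|)f\in L^1$ when $\omega$ grows sublinearly; conversely, the ratio $W(x-y)/W(x)$ does not tend uniformly to $1$ in $y$ when $\omega$ grows at least linearly. It is the subadditivity (K4) that reconciles these two regimes by controlling $W(x-y)$ in terms of $\omega(1+|y|)$, matching exactly the weighted integrability assumption on $f$; once this domination is secured, the rest of the proof is routine.
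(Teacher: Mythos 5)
Your proof is correct, and it assembles the estimates differently from the paper. The paper follows the Chae--Tarantello scheme: it writes $\sigma(x)=\psi_f(x)/\omega(|x|)-\int f$, splits into \emph{three} regions $\{|x-y|<1\}$, $\{|x-y|>1,\ |y|\le R\}$ and $\{|x-y|>1,\ |y|>R\}$ with an auxiliary fixed radius $R$, bounds the middle region by the Lipschitz estimate $|\omega(|x-y|)-\omega(|x|)|\le C_wR$ from (K3), bounds the outer region by the subadditivity in (K4) plus the weighted integrability of $f$, and then takes $|x|\to\infty$ followed by $R\to\infty$. You use the same ingredients in the same roles --- the near-field treatment via $|\omega|\lesssim\phi$ and $f\in L^\infty(\R^d\setminus B_1(0))$ is identical to the paper's $\sigma_1$, the (K3) Lipschitz bound gives your pointwise convergence for fixed $y$, and (K4) gives your bound $W(x-y)\le C_w(1+\omega(1+|x|)+\omega(1+|y|))$ --- but you dispense with the parameter $R$ and the iterated limit by packaging the far field into a single dominated convergence argument, with the uniform (in $|x|\ge X_0$) dominating function $C\,(1+\omega(1+|y|))\,|f(y)|\in L^1(\R^d)$; your observation that (K5) rules out the first alternative in (K4) is also needed (and used only implicitly) in the paper. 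What each buys: your version is shorter and avoids the double limit; the paper's version is more quantitative, giving explicit decay in terms of $1/\omega(|x|)$ and the tails $\int_{|y|>R}|f|$. Two cosmetic points: in the domination you should write $|f(y)|$ rather than $f(y)$ (no sign assumption is made on $f$), and note that $\omega(1+|y|)\ge 0$ so the dominating function is indeed controlled by the hypothesis $\omega(1+|\cdot|)f\in L^1(\R^d)$ together with $f\in L^1(\R^d)$.
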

\begin{proof}
As in Chae-Tarantello \cite{CT}, we first set
\begin{equation}
\label{eq:sigma}
\sigma(x):= \frac{\psi_{f}(x)}{\omega(|x|)}-\int_{\R^{d}}f(y)dy = \frac{1}{\omega(|x|)}\int_{\R^{d}}\left[\omega(|x-y|)-\omega(|x|)\right]f(y)dy
\end{equation}
so that our aim will be to show that $\sigma(x)\rightarrow0$ as $|x|\rightarrow\infty$. Assume that $|x|>2$.
We then write
\[
\sigma(x)=\sigma_{1}(x)+\sigma_{2}(x)+\sigma_{3}(x),
\]
where $\sigma_{i}$, $i=1,2,3$, are defined by breaking the integral on the right hand side of \eqref{eq:sigma} into:
$$
D_{1}=\left\{y:|x-y|<1\right\} \, , \, D_{2}=\left\{y:|x-y|>1,\,|y|\leq R\right\}  \mbox{ and } D_{3}=\left\{y:|x-y|>1,\,|y|>R\right\}
$$
respectively, where $R>2$
is a fixed constant. Recall that $(K2)$ implies
$|\omega(r)|\leq C \phi(r)$ for $r\leq1$,
with $\phi$ given in \eqref{defphi}. Thus, we have
\begin{align*}
|\sigma_{1}(x)|&\leq\frac{1}{\omega(|x|)}\int_{|x-y|<1}\left|\omega(|x-y|)-\omega(|x|)\right||f(y)|dy\\
&\leq \frac{C}{\omega(|x|)}\int_{|x-y|<1}\phi(|x-y|)\,|f(y)|dy+\int_{|y|>|x|-1}|f|dy\\
& \leq \frac{C\|f\|_{L^{\infty}(\R^{d}\setminus B_1(0))} \|\phi\|_{L^1(B_1(0))}}{\omega(|x|)}+\int_{|y|>|x|-1}|f|dy\,,
\end{align*}
where we used $f\in L^{\infty}(\R^{d}\setminus B_1(0))$ and $|x|>2$ in the last inequality.
This means that $\sigma_{1}(x)\rightarrow0$ as $|x|\rightarrow\infty$. Moreover, we notice
that
\[
|\sigma_{2}(x)|\leq\frac{1}{\omega(|x|)}\int_{\left\{y\in\R^{d}:|x-y|>1,\,|y|<R\right\}}\left|\omega(|x-y|)-\omega(|x|)\right||f(y)|dy\,.
\]
Since by property $(K3)$ we can estimate in the region $D_{2}$
\[
\left|\omega(|x-y|)-\omega(|x|)\right|\leq C \big||x-y|-|x|\big|\leq C|y|\leq CR\,,
\]
such that
\[
|\sigma_{2}(x)|\leq C\frac{R}{\omega(|x|)}\|f\|_{L^{1}(\R^{d})},
\]
which implies that also $\sigma_{2}(x)\rightarrow0$ as $|x|\rightarrow+\infty$. As for $\sigma_3$, for $x$ such that $|x|>R$, using (K4)-(K5) we write
\begin{align*}
|\sigma_{3}(x)|\leq&\, \frac{1}{\omega(|x|)}\int_{\left\{y\in\R^{d}:|x-y|>1,\,R<|y|<|x|\right\}}\left|\omega(|x-y|)-\omega(|x|)\right||f(y)|dy\\
&+\frac{1}{\omega(|x|)}\int_{\left\{y\in\R^{d}:|x-y|>1,\,|y|>|x|\right\}}\omega(|x-y|)|f(y)|dy+\int_{|y|>R}|f(y)|dy\\
\leq&\, \frac{\omega(2|x|)}{\omega(|x|)}\int_{|y|>R}|f(y)|dy+2\int_{|y|>R}|f(y)|dy\\
&+\frac{C_w}{\omega(|x|)}\int_{\left\{y\in\R^{d}:|x-y|>1,\,|y|>|x|\right\}}
\left[1+\omega(1+|x|)+\omega(1+|y|)\right]|f(y)|dy\\
\leq&\, C\left(1+\frac{1}{\omega(|x|)}\right)\int_{|y|>R}|f(y)|dy+\frac{1}{\omega(|x|)}\int_{\R^d}\omega(1+|y|)|f(y)|dy\\
&\rightarrow  C \int_{|y|>R}|f(y)|dy
\end{align*}
as $|x|\rightarrow+\infty$, for any fixed $R>1$.
Hence letting $R\rightarrow+\infty$ we get $\sigma_{3}(x)\rightarrow0$.
\end{proof}

In case of assumption (K6), we prove the following Lemma.

\begin{lemma}\label{asymptotic2}
Assume \eqref{DDR}, $(K1)$-$(K4)$ and $(K6)$ hold, and let $\mathcal{K} := \ell-W$ be as defined in $(K6)$. Then the following holds for any radially decreasing $ f\in L^1_+(\R^{d})$:
\begin{equation}
\label{eq:temp1}
\lim_{|x|\to\infty} \int_{\R^{d}}\mathcal{K}(x-y)f(y)dy =0,
\end{equation}
and
\begin{equation}
\label{eq:temp2}
\int_{\R^{d}}\mathcal{K}(x-y)f(y)dy \geq c \mathcal{K}(x) \text{ for all }|x|>1,
\end{equation}
where $c:= 2^{-\alpha}  \int_{B_1(0)} f(y)dy>0$, with $\alpha>0$ as given in $(K6)$.
\end{lemma}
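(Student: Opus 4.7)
The plan is to handle the two assertions separately, relying only on elementary splittings and the fact that $\mathcal{K}(x) = \ell - \omega(|x|)$ is a radially decreasing, nonnegative function with $\mathcal{K}(r) \to 0$ as $r \to \infty$ (since $\omega_+(r) \to \ell$ by (K6)) and $\mathcal{K}(r) \leq \ell$ for $r \geq 1$ (since $\omega(1) = 0$ and $\omega$ is increasing). I will also use that $\mathcal{K} \in L^1(B_1(0))$, which is immediate from the bound $|\omega(r)| \leq C\phi(r)$ for $r \leq 1$ derived earlier from (K2), together with $\phi \in L^1(B_1(0))$. Finally, since $f \in L^1_+(\R^d)$ is radially decreasing, it satisfies $f(r) \leq C r^{-d}$ for all $r>0$ (by comparing $f(r)$ with its average on $B_r$), so in particular $f(r) \to 0$ as $r \to \infty$.

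For (3.7), given $\varepsilon>0$ I would first choose $R>1$ so large that $\ell \int_{|y|>R} f(y)\,dy < \varepsilon/3$, and then split the convolution integral into the three pieces
\begin{align*}
\int_{\R^d} \mathcal{K}(x-y) f(y)\,dy = \int_{|y|\leq R} + \int_{|y|>R,\,|x-y|>1} + \int_{|y|>R,\,|x-y|\leq 1}.
\end{align*}
For $|x|\geq 2R$ the first piece is bounded by $\mathcal{K}(|x|/2)\|f\|_{L^1}$ using that $|x-y|\geq |x|/2$ and $\mathcal{K}$ is radially decreasing; this tends to $0$ as $|x|\to\infty$. The second piece is bounded by $\ell \int_{|y|>R} f(y)\,dy < \varepsilon/3$ by our choice of $R$, again using radial decrease of $\mathcal{K}$ and $\mathcal{K}(1)=\ell$. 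The third piece is bounded by $f(|x|-1)\,\|\mathcal{K}\|_{L^1(B_1(0))}$ because $|y| \geq |x|-1$ on this set forces $f(y) \leq f(|x|-1)$ by radial monotonicity; this too vanishes as $|x|\to\infty$. Combining, $\int \mathcal{K}(x-y)f(y)\,dy < \varepsilon$ for $|x|$ sufficiently large.

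For (3.8), I would simply restrict the integral to the ball $B_1(0)$: for any $|x|>1$ and $|y|<1$ the triangle inequality gives $|x-y| \leq |x|+1 \leq 2|x|$, so by the radial decreasing property of $\mathcal{K}$ and then the homogeneity condition \eqref{homogeneity} applied with $\tau=2$,
\begin{align*}
\int_{\R^d} \mathcal{K}(x-y) f(y)\,dy \geq \int_{B_1(0)} \mathcal{K}(x-y) f(y)\,dy \geq \mathcal{K}(2x) \int_{B_1(0)} f(y)\,dy \geq 2^{-\alpha} \mathcal{K}(x) \int_{B_1(0)} f(y)\,dy,
\end{align*}
which is the desired bound with $c = 2^{-\alpha} \int_{B_1(0)} f(y)\,dy$.

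The mildest obstacle is the third piece in the splitting for (3.7): one needs the decay of $f$ at infinity together with the local integrability of $\mathcal{K}$ to absorb the near-singularity of $\mathcal{K}$, and this is the only spot where the pointwise bound $f(r) \to 0$ coming from radial monotonicity and $L^1$ integrability is essential. Everything else is an immediate consequence of the monotonicity/homogeneity structure of $\mathcal{K}$ in (K6).
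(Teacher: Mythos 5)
Your proof is correct and follows essentially the same route as the paper: a near-field/far-field splitting of the convolution using the radial monotonicity of $f$ and $\mathcal{K}$ together with $\mathcal{K}\in L^1(B_1(0))$ for \eqref{eq:temp1}, and restriction to $B_1(0)$ plus the homogeneity condition \eqref{homogeneity} for \eqref{eq:temp2}. The only differences are cosmetic (an $\varepsilon$--$R$ splitting instead of the paper's split at $|y|=|x|/2$, and applying \eqref{homogeneity} with $\tau=2$ rather than $\tau=(|x|+1)/|x|$), yielding the same constant $c=2^{-\alpha}\int_{B_1(0)}f\,dy$.
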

\begin{proof}
Since both $f$ and $\mathcal{K}$ are radially symmetric,  we define $\bar f$, $\bar{\mathcal{K}}: [0,+\infty)\to \mathbb{R}$ such that $\bar f(|x|) = f(x)$, $\bar{\mathcal{K}}(|x|)=\mathcal{K}(x)$. Note that $\lim_{r\to\infty} \bar f(r) = \lim_{r\to\infty} \bar{\mathcal{K}}(r) = 0$ due to (K1), (K6) and the assumption on $f$. To prove \eqref{eq:temp1}, we break $\int_{\R^{d}}\mathcal{K}(x-y)f(y)dy$ into the following three parts with $|x|>1$ and control them respectively by:
\[
\int_{|y|>\frac{|x|}{2}, |x-y|\leq 1}\mathcal{K}(x-y)f(y)dy \leq \|\mathcal{K}\|_{L^1(B_1(0))} \bar{f}(|x|-1),
\]
\[
\int_{|y|>\frac{|x|}{2}, |x-y|> 1}\mathcal{K}(x-y)f(y)dy \leq \bar{\mathcal{K}}(1) \int_{|y|>\frac{|x|}{2}} f(y)dy,
\]
and
\[
\int_{|y| \leq \frac{|x|}{2}}\mathcal{K}(x-y)f(y)dy \leq \bar{\mathcal{K}}\left(\frac{|x|}{2}\right) \|f\|_{L^1}.
\]
Since all the three parts tend to 0 as $|x|\to\infty$, we obtain \eqref{eq:temp1}. To show \eqref{eq:temp2}, we use $\mathcal{K}, f \geq 0$ to estimate
\[
\begin{split}
\int_{\R^{d}}\mathcal{K}(x-y)f(y)dy &\geq \int_{|y|\leq 1}\mathcal{K}(x-y)f(y)dy \geq \bar{\mathcal{K}}(|x|+1) \int_{B_1(0)} f(y)dy \\
&\geq \left(\frac{|x|+1}{|x|}\right)^{-\alpha} \bar{\mathcal{K}}(|x|) \int_{B_1(0)} f(y)dy \geq c \mathcal{K}(x) \quad\text{ for any }|x|>1,
\end{split}
\]
where  we apply (K6) to obtain the third inequality, and in the last inequality we define $c:= 2^{-\alpha}  \int_{B_1(0)} f(y)dy>0$.
\end{proof}

Using similar arguments as in \cite{CCV}, we are able to derive the following result, which indeed gives a natural form of the Euler-Lagrange equation
associated
to the functional $\E$:
\begin{theorem}\label{idenminimth}
Assume that \eqref{DDR}, $(K1)$-$(K4)$ and either $(K5)$ or $(K6)$  hold.
Let $\rho_0\in\mathcal{Y_{M}}$ be a global minimizer of the free
energy functional $\E$. Then for some positive constant $\mathsf{D}[\rho_{0}]$, we have that
$\rho_0$ satisfies
\begin{equation}
\frac{m}{m-1}\rho_{0}^{m-1}+W\ast\rho_{0}=
\mathsf{D}[\rho_{0}]\quad a.e. \text{ in }
\mbox{supp}(\rho_{0})\label{EulerLagr}
\end{equation}
and
\begin{equation*}
\frac{m}{m-1}\rho_{0}^{m-1}+W\ast\rho_{0}\geq
\mathsf{D}[\rho_{0}]\quad a.e. \text{ outside }
\mbox{supp}(\rho_{0})
\end{equation*}
where
\[
\mathsf{D}[\rho_{0}]=\frac{2}{M}\mathsf{G}[\rho_0]+\frac{m-2}{M(m-1)}\|\rho_{0}\|_{L^m(\R^d)}^{m}.
\]
As a consequence, any global minimizer of $\E$ verifies
\begin{equation}\label{EulerLagr2}
\frac{m}{m-1}\rho_{0}^{m-1}=\left(\mathsf{D}[\rho_{0}]-W\ast\rho_{0}\right)_{+}.
\end{equation}
\end{theorem}

We now turn to show compactness of support and boundedness of the minimizers.

\begin{lemma}\label{lem:compsupp}
Assume that \eqref{DDR}, $(K1)$-$(K4)$ and either $(K5)$ or $(K6)$ hold and
let $\rho_0\in\mathcal{Y}_{M}$ be a global minimizer of the free
energy functional $\E$. Then $\rho_0$ is compactly supported.
\end{lemma}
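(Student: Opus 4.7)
The plan is to exploit the Euler-Lagrange characterization \eqref{EulerLagr2} from Theorem \ref{idenminimth}. Since
\[
\rho_0(x)=\left(\tfrac{m-1}{m}\bigl(\mathsf{D}[\rho_0]-W\ast\rho_0(x)\bigr)\right)_+^{1/(m-1)},
\]
the set $\supp\rho_0$ coincides (up to a null set) with $\{x:W\ast\rho_0(x)\le \mathsf{D}[\rho_0]\}$, so the task reduces to showing $W\ast\rho_0(x)>\mathsf{D}[\rho_0]$ for $|x|$ sufficiently large. Because $\rho_0$ is radially decreasing by Theorems \ref{conccomp} and \ref{thm36} and $\|\rho_0\|_{L^1(\R^d)}=M$, the layer-cake bound $\rho_0(x)\le M/|B_{|x|}|$ gives both $\rho_0\in L^\infty(\R^d\setminus B_1(0))$ and $\rho_0(x)\to 0$ as $|x|\to\infty$; combined with the built-in integrability $\omega(1+|x|)\rho_0\in L^1(\R^d)$ coming from $\mathcal{Y}_M$, these are the standing inputs for both cases.

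In case (K5), I would apply Lemma \ref{asymptotic} with $f=\rho_0$ to conclude $W\ast\rho_0(x)/\omega(|x|)\to M$ as $|x|\to\infty$. Since (K5) yields $\omega(|x|)\to +\infty$, this forces $W\ast\rho_0(x)\to +\infty$, hence $W\ast\rho_0(x)>\mathsf{D}[\rho_0]$ outside some ball, and \eqref{EulerLagr2} then closes this case.

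Case (K6) is the main obstacle, because $W\ast\rho_0$ is bounded near infinity and no pointwise blow-up is available. I would argue by contradiction. Suppose $\rho_0$ were not compactly supported; then radial monotonicity forces $\rho_0>0$ everywhere and \eqref{EulerLagr} holds pointwise on $\R^d$. Writing $W=\ell-\mathcal{K}$ with $\mathcal{K}\ge 0$, the first conclusion of Lemma \ref{asymptotic2} gives $W\ast\rho_0(x)\to \ell M$ as $|x|\to\infty$; combined with $\rho_0(x)\to 0$, passing to the limit in \eqref{EulerLagr} forces $\mathsf{D}[\rho_0]=\ell M$, so that
\[
\tfrac{m}{m-1}\rho_0^{m-1}(x)=\mathcal{K}\ast\rho_0(x)\ge c\,\mathcal{K}(x)\qquad\text{for }|x|>1,
\]
by the second conclusion of Lemma \ref{asymptotic2}. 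Applying the homogeneity \eqref{homogeneity} with $\tau=|x|$ and $x'=x/|x|$ yields the pointwise lower bound $\mathcal{K}(x)\ge c'\,|x|^{-\alpha}$ for $|x|\ge 1$, and therefore $\rho_0(x)\ge C\,|x|^{-\alpha/(m-1)}$ for $|x|\ge 1$. The hypothesis $m>1+\alpha/d$ in (K6) is precisely $\alpha/(m-1)<d$, so this tail bound fails to be integrable at infinity, contradicting $\rho_0\in L^1(\R^d)$ and completing the proof.
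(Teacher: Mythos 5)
Your proof is correct and follows essentially the same route as the paper: under (K5) you use Lemma \ref{asymptotic} to force $W\ast\rho_0\to+\infty$ and conclude via \eqref{EulerLagr2}, and under (K6) you argue by contradiction exactly as the paper does, identifying the Lagrange constant as $\ell M$, invoking both conclusions of Lemma \ref{asymptotic2}, and using the homogeneity \eqref{homogeneity} with $m>1+\alpha/d$ to contradict $\rho_0\in L^1(\R^d)$. The only difference is that you make explicit the pointwise bound $\mathcal{K}(x)\geq c'|x|^{-\alpha}$, which the paper leaves implicit in its claim that $\int_{|x|>1}\mathcal{K}(x)^{1/(m-1)}dx=+\infty$.
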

\begin{proof}
By Theorems \ref{conccomp} and \ref{thm36}, $\rho_0$ is radially decreasing under either set of assumptions.
In addition, under the assumption (K5), Lemma \ref{asymptotic} gives that
\[
\frac{(W*\rho_0)(x)}{W(x)} \to \|\rho\|_{L^1(\R^d)} \text{ as }|x|\to \infty,
\]
hence combining this with (K5) gives us $(W*\rho_0)(x) \to +\infty$ as $|x|\to \infty$. It implies that the right hand side of \eqref{EulerLagr2} must have compact support, hence $\rho_0$ must have compact support too.

Under the assumption (K6), towards a contradiction, suppose $\rho_0$ does not have compact support. Then $\rho_0$ must be strictly positive in $\mathbb{R}^d$ since it is radially decreasing. We can then write \eqref{EulerLagr} as
\[
\frac{m}{m-1} \rho_0^{m-1} - \mathcal{K}*\rho_0 = C \quad\text{a.e. in }\mathbb{R}^d
\]
for some $C\in \mathbb{R}$, where $\mathcal{K} := \ell-W$ is as given in (K6). Indeed, $C$ must be equal to 0, since both $\rho_0(x)$ and $(\mathcal{K}*\rho_0)(x)$ tend to 0 as $|x|\to\infty$, where we used \eqref{eq:temp1} on the latter convergence. Thus
\begin{equation}
\label{eq:temp3}
\rho_0(x) = \left(\frac{m-1}{m}(\mathcal{K}*\rho_0)(x)\right)^{\tfrac{1}{m-1}} \geq \Big(\frac{m-1}{m} c \mathcal{K}(x)\Big)^{1/(m-1)} \text{ for a.e. }|x|>1,
\end{equation}
where we applied \eqref{eq:temp2} to obtain the last inequality, with $c:= 2^{-\alpha}  \int_{B_1(0)} \rho_0(y)dy>0$. Due to the assumptions \eqref{homogeneity} and $\alpha < d(m-1)$ in (K6), we have $\int_{|x|>1} \mathcal{K}(x)^{1/(m-1)}dx = +\infty$. Combining this with \eqref{eq:temp3} leads to $\rho_0 \not\in L^1(\mathbb{R}^d)$, a contradiction.
\end{proof}

\begin{lemma}\label{boundedness}Assume that \eqref{DDR}, $(K1)$-$(K4)$ and either $(K5)$ or $(K6)$ hold and
let $\rho_0\in\mathcal{Y}_{M}$ be a global minimizer of the free
energy functional $\E$. Then $\rho_0 \in L^\infty(\mathbb{R}^d)$.
\end{lemma}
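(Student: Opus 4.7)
The plan is to exploit the Euler--Lagrange identity \eqref{EulerLagr2} from Theorem \ref{idenminimth} together with the compactness of $\supp\rho_0$ from Lemma \ref{lem:compsupp}. Rewriting \eqref{EulerLagr2} as
\[
\rho_0(x)^{m-1}\leq\frac{m-1}{m}\bigl(\mathsf{D}[\rho_0]+|(W\ast\rho_0)(x)|\bigr)\quad\text{a.e. }x\in\R^d,
\]
the problem reduces to showing $W\ast\rho_0\in L^\infty(\R^d)$, at least on a ball containing $\supp\rho_0$; once this is established, taking $L^\infty$ norms in the displayed inequality gives the conclusion.

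Fix $R>0$ with $\supp\rho_0\subset B_R(0)$. For $x\in B_R(0)$ I would split, as in \eqref{lipspot},
\[
|(W\ast\rho_0)(x)|\leq\int_{|x-y|<1}|W(x-y)|\rho_0(y)\,dy+\int_{\{|x-y|\geq 1\}\cap B_R}|W(x-y)|\rho_0(y)\,dy.
\]
Using (K1) and (K3), the far-field integrand is bounded by $\omega(3R)$, so the second integral is bounded by $\omega(3R)M$. For the near-field, (K1)--(K2) give $|W(z)|\leq\tilde C_w\phi(|z|)$ for $|z|<1$, with $\phi$ as in \eqref{defphi}. In dimension $d=1$, $\phi$ is bounded on $B_1$ and there is nothing more to do; in $d=2$, $\phi=-\log r\in L^{m/(m-1)}(B_1)$, so H\"older against $\rho_0\in L^m$ gives a uniform bound. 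In either case $W\ast\rho_0\in L^\infty$.

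The only genuinely nontrivial case is $d\geq3$. There $\phi(r)=r^{2-d}\in L^r(B_1)$ precisely for $r<d/(d-2)$, which by H\"older immediately gives the near-field bound whenever $m>d/2$. For the remaining range $2-2/d<m\leq d/2$, I would run a bootstrap argument: if $\rho_0\in L^p$, Young's inequality applied to $(\phi\chi_{B_1})\ast\rho_0$ with exponent $r$ arbitrarily close to $d/(d-2)$ gives $W\ast\rho_0\in L^q$ for any $q$ with $1/q>1/p-2/d$, and plugging this back into the Euler--Lagrange bound yields $\rho_0\in L^{p'}$ with $1/p'=(1/p-2/d)/(m-1)$. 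Starting from $p_0=m$, this affine iteration in $1/p$ has fixed point $1/p^\ast=2/(d(2-m))$; the existence-theory assumption $m>2-2/d$ (used to prove Theorems~\ref{conccomp} and~\ref{thm36}) is exactly what places $1/p_0=1/m$ on the expanding side of $1/p^\ast$, and a short calculation shows $1/p_n$ strictly decreases until it drops below $2/d$ in finitely many steps. Once $p_n>d/2$, the direct H\"older bound from the previous paragraph applies and $W\ast\rho_0\in L^\infty$, finishing the proof.

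The main obstacle is precisely this iteration in $d\geq3$, $m\leq d/2$: one has to track the dynamics of the affine map $1/p\mapsto(1/p-2/d)/(m-1)$ and verify that the lower bound $m>2-2/d$ inherited from the existence theory is exactly what drives $p_n\to\infty$. Everything else amounts to routine near-field/far-field estimates of the convolution $W\ast\rho_0$ that already appeared in \eqref{lipspot0}--\eqref{lipspot}.
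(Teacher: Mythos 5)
Your argument is correct, but in the hard regime $d\geq 3$, $2-2/d<m\leq d/2$ it takes a genuinely different route from the paper. Both proofs reduce, via the Euler--Lagrange relation \eqref{EulerLagr2} and the compact support from Lemma \ref{lem:compsupp}, to bounding the near-field part of $W\ast\rho_0$ on a ball containing the support, and both treat $d\leq 2$ and $m>d/2$ by direct potential estimates (your H\"older argument against $\phi(|\cdot|)\chi_{B_1}\in L^{m/(m-1)}$, with $\phi$ as in \eqref{defphi}, is interchangeable with the paper's elliptic-regularity/Sobolev step). In the remaining case the paper runs a \emph{pointwise} iteration: using that $\rho_0$ is radially decreasing it expresses $-\partial_r(\rho_0\ast\mathcal{N})$ through the mass function and upgrades a decay bound $\rho_0(r)\leq C(1+r^{p})$ to $\rho_0(r)\leq C(1+r^{(p+2)/(m-1)})$ as in \eqref{eq:refinedb}, iterating $p\mapsto (p+2)/(m-1)$ from $p=-d/m$ (with a separate logarithmic borderline at $p=-2$) until the exponent becomes positive, which gives $\rho_0(0)<\infty$. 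You instead run an \emph{integrability} bootstrap: Young's inequality with $r^{2-d}\chi_{B_1}\in L^{s}$ for every $s<d/(d-2)$, combined with \eqref{EulerLagr2} on the compact support, yields the affine recursion $1/p\mapsto (1/p-2/d)/(m-1)$ up to an arbitrarily small loss (the kernel just misses $L^{d/(d-2)}$), which drops below $2/d$ in finitely many steps, after which the H\"older endpoint gives $W\ast\rho_0\in L^\infty$ and hence the conclusion. Your route never uses radial symmetry of the minimizer (only compact support and the Euler--Lagrange identity) and avoids the logarithmic case, at the price of routine $\varepsilon$-bookkeeping in Young's inequality; the paper's route produces sharper quantitative pointwise information along the way. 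One small correction of wording: for $1<m<2$ the condition that the starting point $1/m$ lies below the repelling fixed point $2/(d(2-m))$ is $m>2d/(d+2)$, which for $d\geq 3$ is strictly weaker than $m>2-2/d$, and for $m\geq 2$ the recursion decreases unconditionally; so the diffusion-dominated assumption is sufficient rather than ``exactly'' what drives the escape.
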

\begin{proof}
By Theorem \ref{conccomp}, Theorem \ref{thm36} and Lemma  \ref{lem:compsupp}, $\rho_0$ is radially decreasing and has compact support say inside the ball $B_R(0)$. Let us first concentrate on the proof under assumption (K5). For notational simplicity in this proof, we will denote by $\|\rho_0\|_m$ the $L^m(\R^d)$-norm of $\rho_0$.

We will show that $\rho_0 \in L^\infty(\mathbb{R}^d)$ by different arguments in several cases:

{\it\bf Case A: $d\leq2$.} Since $\rho_0$ is supported in $B_R(0)$, we can then find some $C_w^1$ and $C_w^2$, such that $W \geq -C_w^1 \mathcal{N} - C_w^2$ in $B_{2R}(0)$. Hence for any $r<R$, we have
\[
-(\rho_0*W)(r) \leq -(\rho_0*(-C_w^1 \mathcal{N}- C_w^2))(r) \leq C_w^1 (\rho_0*\mathcal{N})(r) + C_w^2 \|\rho_0\|_1,
\]
thus recalling \eqref{lipspot}
\[
(\rho_0*W^{-})(r) \leq C_w^1 (\rho_0*\mathcal{N})(r) + C_w^2 \|\rho_0\|_1+ (\rho_0*W^{+})(r)\leq C_w^1 (\rho_0*\mathcal{N})(r) + C_w^2 \|\rho_0\|_1+\widetilde{C}.
\]
Then by equation \eqref{EulerLagr2} it will be enough to show that the \emph{Newtonian potential} $\rho_0*\mathcal{N}$ is bounded in $B_R(0)$ for $d=1,2$. In $d=1$, this is trivial. In $d=2$ it follows from \cite[Lemma 9.9]{Gilbarg} since we have that $\rho_0*\mathcal{N}\in\mathcal{W}^{2,m}(B_R(0))$, then Morrey's Theorem (see for instance \cite[Corollary 9.15]{BrezisFunc}) yields $\rho_0*\mathcal{N}\in L^{\infty}(B_R(0))$.

{\it\bf Case B: $d\geq 3$ and $m> d/2$.} In this case we get $W^{-}\leq C_{w}\,\mathcal{N}$ in the whole $\R^{d}$ for some constant $C_w$, so we have for $r>0$
\[
(\rho_0*W^{-})(r) \leq C_w (\rho_0*\mathcal{N})(r).
\]
Then using Sobolev's embedding theorem again (see again \cite[Corollary 9.15]{BrezisFunc}), we easily argue that for $m>d/2$ we find $(\rho_0*W^{-})(r)\in L^{\infty}_{loc}(\R^{d})$, hence $\rho_0 \in L^\infty(\mathbb{R}^d)$ by \eqref{EulerLagr2} again.

{\it\bf Case C: $d\geq 3$ and $2-\tfrac2{d}<m\leq d/2$.}
We aim to prove that $\rho_{0}(0)$ is finite which is sufficient for the boundedness of $\rho_0$ since $\rho_0$ is radially decreasing.
This is done by an inductive argument. To begin with, observe that since $\rho_0$ is radially decreasing we have that $\rho_0(r)^m |B(0,r)| \leq \|\rho_0\|_m^m < \infty,$
which leads to the basis step of our induction
\[
\rho_0(r) \leq C(d, m, \|\rho_0\|_m) r^{-d/m} \text{ for all }r>0.
\]
We set our first exponent $\tilde p =-d/m$. For the induction step, we claim that if $\rho_0(r) \leq C_1( 1+r^{p})$ with $-d<p<0$, then it leads to the refined estimate
\begin{equation}
\label{eq:refinedb}
\rho_0(r) \leq \begin{cases}
C_2 (1+r^{\frac{p+2}{m-1}}) &\text{ if }p\neq -2\\
C_2 (1+|\log r|^{\frac{1}{m-1}}) & \text{ if }p=-2,
\end{cases}
\end{equation}
where $C_2$ depends on $d,m, \rho_0, W$ and $C_1$.

Indeed, taking into account (K2) and (K5), the compact support of $\rho_0$ together with the fact that $\mathcal{N}> 0$ for $d\geq 3$, we deduce that $W \geq- C_{w,d} \,\mathcal{N}$ for some constant depending on $W$ and $d$. As a result, we have, for $r\in (0,1)$,
\begin{equation}
\label{eq:bd1}
-(\rho_0*W)(r) \leq C_{w,d} (\rho_0*\mathcal{N})(r) = C_{w,d}\left((\rho_0 * \mathcal{N})(1)- \int_r^1 \partial_r (\rho_0*\mathcal{N})(s) ds \right).
\end{equation}
We can easily bound $(\rho_0 * \mathcal{N})(1)$ by some $C(d,\|\rho_0\|_m)$. To control $ \int_r^1 \partial_r (\rho_0*\mathcal{N})(s) ds$, recall that
\begin{equation}
\label{eq:bd2}
-\partial_r (\rho_0*\mathcal{N})(s) = \frac{M(s)}{|\partial B(0,s)|} = \frac{M(s)}{\sigma_d s^{d-1}},
\end{equation} where $M(s)$ is the mass of $\rho_0$ in $B(0,s)$. By our induction assumption, we have
\[
M(s) \leq \int_0^s C_1( 1+t^{p}) \sigma_d t^{d-1} dt = C_1 \sigma_d \left( \frac{s^d}{d} + \frac{s^{d+p}}{d+p}\right).
\]
Combining this with \eqref{eq:bd2}, we have
\[
-\partial_r (\rho_0*\mathcal{N})(s) \leq C_1 \left( \frac{s}{d} + \frac{s^{1+p}}{d+p}\right),
\]
so we get, for $p\neq-2$,
\[
- \int_r^1 \partial_r (\rho_0*\mathcal{N})(s) ds\leq C_{1}\left[\frac{1}{2d}(1-r^{2})+\frac{1}{(d+p)(2+p)}(1-r^{2+p})\right]\,.
\]
Plugging it into the right hand side of \eqref{eq:bd1} yields
\[
-(\rho_0*W)(r) \leq C(d,m,\|\rho_0\|_m, C_{w,d}, C_1)(1+r^{2+p}),
\]
and using this inequality in the Euler-Lagrange Equation \eqref{EulerLagr2} leads to \eqref{eq:refinedb}.
Moreover, in the case $p=-2$, we have instead the inequality
\[
- \int_r^1 \partial_r (\rho_0*\mathcal{N})(s) ds\leq C_{1}\left[\frac{1}{2d}(1-r^{2})-\frac{1}{d-2}\log r\right].
\]

Now we are ready to apply the induction starting at $\tilde p=-d/m$ to show $\rho_0(0)<\infty$. We will show that after a finite number of iterations our induction arrives to
\begin{equation}\label{fbound}
\rho_0(r) \leq C (1+r^a)
\end{equation}
for some $a>0$, which then implies that $\rho_0(0) < \infty$. Let $g(p) := \frac{p+2}{m-1}$, which is a linear function of $p$ with positive slope, and let us denote $g^{(n)}(p)=: \underbrace{(g\circ g\dots \circ g)}_{n \text{ iterations}}(p)$.

{\it Subcase C.1: $m=d/2$.-} In this case, we have $\widetilde{p}=-2$ and by \eqref{eq:refinedb} we obtain
\[
\rho_0(r) \leq C_2 (1+|\log r|^{\frac{1}{m-1}})\leq C_2 (1+r^{-1})
\]
hence applying the first inequality in \eqref{eq:refinedb} for $p=-1$ gives us \eqref{fbound} with $a=1/(m-1)$.

Then it remains to consider the case $m<d/2$. Notice that $-d<\widetilde{p}<-2$. By \eqref{eq:refinedb} we get, for all $r\in (0,1)$,
\begin{equation}
\rho_{0}(r)\leq C_{2}(1+r^{g(\widetilde{p})}).\label{startiteration}
\end{equation}
Then we must consider three cases. We point out that in all the cases we need to discuss the possibility of  $g^{(n)}(p) = -2$ for some $n$: if this happens, the logarithmic case occurs again and the result follows in a final iteration step as in Subcase C.1.

{\it Subcase C.2: $m=2$ and $m<d/2$.-} In this case, we have $g(p) = p+2$, hence
$
g^{(n)}(p)=p+2n,
$
then
\[
\lim_{n\to\infty} g^{(n)}(p) =+\infty.
\]
Therefore we have $g^{(n)}(\widetilde{p}) > 0$ for some finite $n$, whence iterating \eqref{startiteration} $n$ times we find $\rho_0(0)<\infty$.

{\it Subcase C.3: $m>2$ and $m<d/2$.-} In this case, $p=2/(m-2)$ is the only fixed point for the linear function $g(p)$. For all $p<\tfrac{2}{m-2}$ we have
$g(p)>p$ which implies $g^{(n)}(p)>p$ for all $n\in \N$. Notice that
\begin{equation}
g^{(n)}(p)=\frac{2}{m-2}+\frac{p(m-2)-2}{(m-2)(m-1)^{n}},
\label{iteration}
\end{equation}
so the point $p=2/(m-2)$ is attracting in the sense that
\[
\lim_{n\to\infty} g^{(n)}(p) = \frac{2}{m-2}.
\]
Since $\frac{2}{m-2}>0$, it again implies that $g^{(n)}(p) > 0$ for some finite $n$. Then choosing $p=\widetilde{p}$, we have $g^{(n)}(\widetilde{p}) > 0$ for some $n$, then \eqref{startiteration} implies $\rho_0(0)<\infty$ again.

{\it Subcase C.4: $m<\min(2,d/2)$.-} In this case, the only fixed point $\frac{2}{m-2}$ is unstable, and we have $g(p)>p$ for any $p>\frac{2}{m-2}$, then by \eqref{iteration}
\[
\lim_{n\to\infty} g^{(n)}(p) = +\infty \text{ for any }p>\frac{2}{m-2}.
\]
Notice that $\widetilde{p} >\frac{2}{m-2}$, since this condition reads $m>2d/(d+2)$, a direct consequence of \eqref{DDR}. Hence we again obtain $g^{(n)}(\widetilde{p}) > 0$ for some finite $n$, which finishes the last case.

Let us finally turn back to the proof if we assume (K6) instead of (K5). Notice first that the proof of the Case C can also be done as soon as the potential $W$ satisfies the bound $W \geq - C_{w,d} (1+ \,\mathcal{N})$ for some $C_{w,d}>0$. This is trivially true regardless of the dimension if the potential satisfies (K6) instead of (K5).
\end{proof}

Finally, it is interesting to derive some \emph{regularity} properties of a minimizer $\rho_{0}$, as in \cite{CCV}. Since $W$ may not be the classical Newtonian kernel, we are led to prove a nice regularity for the $W$-potential $\psi_{\rho_{0}}(x)$ which can be transferred to $\rho_{0}$ via equation \eqref{EulerLagr} in the support of $\rho_{0}$.
Note that \eqref{EulerLagr2} ensures that $\rho_{0}$ satisfies equation
\eqref{steady} in the sense of distributions: indeed, as shown in \eqref{lipspot0}-\eqref{lipspot}, we find that $\psi_{\rho_{0}}\in\W^{1,\infty}_{loc}(\R^{d})$ thus we can take gradients on both sides of the Euler--Lagrange condition \eqref{EulerLagr2} and multiplying by $\rho$ and writing $\rho \nabla \rho^{m-1}=\tfrac{m-1}{m}\nabla \rho^{m}$ we reach \eqref{steady}.
\normalcolor
Now, using the regularity arguments of the proof of Lemma \ref{lem:regularity} again, together with the compact support property, we finally have $\rho_{0}\in \mathcal{C}^{0,\alpha}(\R^{d})$ with $\alpha=1/(m-1)$.

We can summarize all the results in this section in the following theorem.

\begin{theorem}\label{conccomp2}
In the diffusion dominated regime \eqref{DDR}, assume that  conditions $(K1)$-$(K4)$ and either $(K5)$ or $(K6)$ hold. Then for any positive mass $M$, there exists a global minimizer $\rho_{0}$ of the free energy functional $\E$  \eqref{eq:energy} defined in $\mathcal{Y}_{M}$, which is radially symmetric, decreasing, compactly supported, H\"older continuous, and a stationary solution of \eqref{aggregation} in the sense of Definition {\rm \ref{stationarystates}}.
\end{theorem}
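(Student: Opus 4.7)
The plan is to assemble the theorem from the sequence of results already established in this section, in the order: existence and radial symmetry, compact support, $L^\infty$ boundedness, and finally Hölder regularity together with the stationary-state property.

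For existence of a radially symmetric decreasing global minimizer $\rho_0\in\mathcal{Y}_M$, I split according to the growth hypothesis at infinity. Under (K6), Theorem \ref{conccomp} applies directly: rewriting $\mathcal{E}[\rho]=\widetilde{\mathcal{E}}[\rho]+\tfrac{\ell}{2}M^2$ with the nonnegative strictly radially decreasing kernel $\mathcal{K}=\ell-W$, Lions' concentration--compactness principle (extended by Bedrossian) produces a minimizer, and strict radial decrease of $\mathcal{K}$ makes it radially symmetric and decreasing. Under (K5), Theorem \ref{thm36} applies: the reversed Riesz inequality of Lemma \ref{reversedRiesz} restricts the minimization to radially decreasing densities, the HLS and $L^p$ interpolation inequalities yield a lower bound of the form $\E[\rho]\geq -C_1+C_2\|\rho\|^{m}_{L^m(\R^d)}$ in the diffusion-dominated regime $m>2-2/d$, and (K5) furnishes quantitative mass confinement along any minimizing sequence.

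Next, compact support follows from Lemma \ref{lem:compsupp}. Under (K5), Lemma \ref{asymptotic} gives $(W\ast\rho_0)(x)/\omega(|x|)\to M$ as $|x|\to\infty$, so (K5) forces $W\ast\rho_0\to+\infty$ at infinity and the Euler--Lagrange identity \eqref{EulerLagr2} makes the right-hand side vanish outside a bounded set. Under (K6), Lemma \ref{asymptotic2} instead forces any everywhere-positive radially decreasing candidate minimizer to satisfy $\rho_0(x)\geq c\,\mathcal{K}(x)^{1/(m-1)}$ for $|x|>1$, which contradicts $\rho_0\in L^1$ via the integrability restriction $m>1+\alpha/d$ built into (K6). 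The $L^\infty$ bound is then the delicate step: for $d\leq 2$ or $m>d/2$, Sobolev embedding applied to the Newtonian part of $\psi_{\rho_0}$ gives the conclusion at once; in the remaining regime $2-2/d<m\leq d/2$ the proof proceeds by an inductive bootstrap starting from the crude estimate $\rho_0(r)\leq Cr^{-d/m}$ coming from $\rho_0\in L^m$ and radial decrease, iterating the affine map $g(p)=(p+2)/(m-1)$ via the refined pointwise estimate \eqref{eq:refinedb} obtained by integrating the Newtonian radial derivative of $\rho_0\ast \mathcal{N}$, with separate treatment of the exceptional logarithmic case $p=-2$ and a case analysis of the dynamics of $g$ relative to its fixed point $2/(m-2)$.

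Finally, regularity and the stationary-state property follow together. With $\rho_0\in L^1_+(\R^d)\cap L^\infty(\R^d)$ compactly supported, the estimates \eqref{lipspot0}--\eqref{lipspot} give $\psi_{\rho_0}\in \W^{1,\infty}_{loc}(\R^d)$, so \eqref{EulerLagr2} expresses $\rho_0^{m-1}$ as the positive part of a locally Lipschitz function, producing $\rho_0\in C^{0,\alpha}(\R^d)$ with $\alpha=1/(m-1)$ if $m\geq 2$ and $\alpha=1$ if $1<m<2$, exactly as in the argument of Lemma \ref{lem:regularity}. Differentiating \eqref{EulerLagr} on each connected component of $\supp\rho_0$ recovers \eqref{steady3}, and since $\rho_0$ vanishes continuously on the boundary of its support the identity $\nabla\rho_0^m=-\rho_0\nabla\psi_{\rho_0}$ extends distributionally to all of $\R^d$, matching Definition \ref{stationarystates}. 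The main obstacle in the overall argument is the bootstrap underlying the $L^\infty$ bound in the critical regime $m\leq d/2$; all other steps are essentially a synthesis of the lemmas proved earlier.
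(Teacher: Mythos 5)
Your proposal is correct and follows essentially the same route as the paper, which proves Theorem \ref{conccomp2} precisely by assembling Theorems \ref{conccomp} and \ref{thm36} (existence and radial monotonicity under (K6) resp. (K5)), Lemma \ref{lem:compsupp} via Lemmas \ref{asymptotic} and \ref{asymptotic2} (compact support), the case-by-case bootstrap lemma for the $L^\infty$ bound, and the regularity/stationarity argument transferring \eqref{EulerLagr2} into \eqref{steady} through the reasoning of Lemma \ref{lem:regularity}. No gaps beyond those the paper itself leaves to the cited lemmas.
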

Putting together the previous theorem with the uniqueness of radial stationary solutions for the attractive Newtonian potential proved in \cite{KY,CCV}, we obtain the following result.

\begin{corollary}\label{unique}
In the particular case of the attractive Newtonian potential $W(x)=-\mathcal{N}(x)$ modulo the addition of a constant factor, the global minimizer obtained in Theorem {\rm\ref{conccomp2}} is unique among all stationary solutions in the sense of Definition {\rm \ref{stationarystates}}.
\end{corollary}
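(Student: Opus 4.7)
The plan is a synthesis of the symmetry result of Section 2 and the known uniqueness among radial stationary solutions for the attractive Newtonian potential. Given any stationary solution $\rho_s\in L^1_+(\R^d)\cap L^\infty(\R^d)$ in the sense of Definition \ref{stationarystates}, I want to apply Theorem \ref{thm:unique} to deduce that $\rho_s$ must be radially decreasing up to translation, and then invoke the uniqueness of radial stationary solutions for $W=-\NN$ from \cite{KY,CCV} to identify $\rho_s$ with the global minimizer produced by Theorem \ref{conccomp2}.

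The first task is to verify the integrability hypothesis $\omega(1+|x|)\rho_s\in L^1(\R^d)$ required by Theorem \ref{thm:unique}. In dimensions $d\geq 3$, the Newtonian profile $\omega(r)=c_d(1-r^{2-d})$ (after the normalization $\omega(1)=0$) is bounded on $[1,\infty)$, so the condition follows immediately from $\rho_s\in L^1(\R^d)$. In dimension $d=2$, where $\omega(r)=(2\pi)^{-1}\log r$ is unbounded, the hypothesis is not automatic. Here one invokes the analysis of the 2D Newtonian problem in \cite{CCV}, where it is shown that any stationary solution in the sense of Definition \ref{stationarystates} is compactly supported; this renders $\log(1+|x|)\rho_s\in L^1(\R^2)$ trivial. (The dimension $d=1$ case is covered by the bounded-$\omega$ argument as in $d\geq 3$.)

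With this hypothesis secured, Theorem \ref{thm:unique} yields the existence of $x_0\in\R^d$ such that $\rho_s(\cdot-x_0)$ is radially symmetric and radially non-increasing. By Lemma \ref{lem:regularity}, $\rho_s$ is continuous, bounded, and the Euler--Lagrange identity \eqref{eq:rho_s_stat} holds with $G$ constant on each connected component of $\supp\rho_s$. Since $\rho_s(\cdot-x_0)$ is radially decreasing and continuous, its support is either the whole space or a closed ball centered at $x_0$, hence connected; consequently $G$ takes a single value on $\supp\rho_s$, putting $\rho_s(\cdot-x_0)$ into the class of radial stationary states to which the Newtonian uniqueness results apply.

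The final step is to quote the uniqueness among radially decreasing stationary states for the attractive Newtonian potential $W=-\NN$ with $m>2-2/d$ (in particular all $m>1$ in $d=2$), proved in \cite{KY} for $d\geq 3$ and in \cite{CCV} for $d=2$: for a fixed mass $M$, there is at most one radially decreasing stationary state (up to translation). The global minimizer $\rho_0$ furnished by Theorem \ref{conccomp2} is such a radial stationary state with mass $M=\|\rho_s\|_{L^1(\R^d)}$, and $\rho_s(\cdot-x_0)$ is another; uniqueness forces $\rho_s(\cdot-x_0)=\rho_0$, which is the claim. The only non-routine point in the whole argument is ruling out slowly-decaying tails in the 2D case, which is what pushes us to appeal to the compact-support statement from \cite{CCV}.
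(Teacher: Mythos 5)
Your overall route is exactly the one the paper intends: apply Theorem \ref{thm:unique} to conclude that any stationary state is radially decreasing up to a translation, note that for a radially decreasing profile the support is connected so the Euler--Lagrange constant is single-valued, and then invoke uniqueness of radial stationary states for $W=-\NN$ (\cite{KY,CCV}) to identify the state with the global minimizer of Theorem \ref{conccomp2}. Your verification of the hypothesis $\omega(1+|x|)\rho_s\in L^1(\R^d)$ for $d\geq 3$ (and $d=1$), where $\omega$ is bounded on $[1,\infty)$, is also correct.

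The gap is in your treatment of $d=2$. You dispose of the hypothesis $\log(1+|x|)\rho_s\in L^1(\R^2)$ by asserting that \cite{CCV} proves that \emph{every} stationary solution in the sense of Definition \ref{stationarystates} is compactly supported. That is not what \cite{CCV} contains: there, compact support is an \emph{assumption} on the class of stationary states to which the moving-plane symmetry and the resulting uniqueness apply -- this is precisely the ``compactly supported restriction on the uniqueness class'' that the introduction of the present paper criticizes and that Section 2 is designed to remove. Nor can compact support (or finiteness of the log-moment) be obtained for free from Definition \ref{stationarystates}: the standard argument (Lemma \ref{lem:compsupp}) needs $W*\rho_s\to+\infty$ at infinity, which in 2D is exactly what Lemma \ref{asymptotic} delivers \emph{under} the log-moment hypothesis, so the reasoning would be circular; with only $\rho_s\in L^1\cap L^\infty$ one merely controls $\nabla W*\rho_s$, not the growth of the potential. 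This is why the paper keeps $\omega(1+|x|)\rho_s\in L^1$ as a standing hypothesis in Theorem \ref{thm:unique} and restates it as $\rho_M\in L^1_{log}(\R^2)$ in Theorem \ref{thm.stst.long}. So in $d=2$ your argument proves uniqueness only within the class of stationary states satisfying the log-moment condition (which is how Corollary \ref{unique} is actually used in Section 4); if you want the stronger statement you claim, you must supply a genuine proof that this integrability is automatic, and a citation to \cite{CCV} does not do that.
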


\subsection{Some remarks about the minimization of energies with a potential term}
The aim of this subsection is to generalize the previous result of subsection \ref{Minimizsub} when dealing with free functionals involving a potential energy, namely
\begin{align*}
\E[\rho]=\frac{1}{m-1}\int_{\R^{d}}\rho^{m}\,dx+\frac{1}{2}\int_{\R^{d}}\int_{\R^{d}}W(x-y)\,\rho(x)\rho(y)\,dx\,dy\,+\int_{\R^{d}}V(x)\rho(x)\,dx,
\end{align*}
defined over the same admissible set $\mathcal{Y}_{M}$, for some $\mathcal{C}^{1}$ nonnegative radially \emph{increasing} potential $V=V(r)$, where $r=|x|$, such that
\[
\lim_{r\rightarrow+\infty}V(r)=+\infty.
\]
In this framework, the functional $\E$ might be infinite on some densities $\rho$. The presence of the confinement potential $V$ allows then to prove the following generalization of theorems {\rm\ref{conccomp}}--\ref{thm36}, where no asymptotic behavior at infinity is needed for the radial profile $\omega(r)$ of the kernel $W$:
\begin{theorem}
Assume that \eqref{DDR} and $(K1)$-$(K4)$ hold, then the conclusions of Theorem {\rm\ref{conccomp}}--\rm\ref{thm36} remain true.
\end{theorem}
\begin{proof}
We first observe that by Remark \ref{remarkV} and Lemma \ref{reversedRiesz} we can restrict to radially decreasing densities. Moreover, following the lines of the proof of Theorem \ref{thm36} we find that $\E$ is bounded from below and
\[
\E[\rho]\geq-C_{1}+C_{2}\|\rho\|^{m}_{L^{m}(\R^{d})}+\int_{\R^{d}}V(x)\rho(x)dx.
\]
This inequality easily implies the mass confinement of any minimizing sequence $\left\{\rho_{n}\right\}$, that is for some constant $C>0$
\[
\sup_{n\in \N}\int_{|x|>R}\rho_{n}(x)dx\leq\frac{C}{V(R)}
\]
for some large $R>0$. In particular, we have that the sequence $\left\{\rho_{n}\right\}$ is tight, and by Prokhorov's Theorem (see \cite[Theorem 5.1.3]{AmbrosioGigli}) we obtain that (up to subsequence) $\left\{\rho_{n}\right\}$ converges to a certain density $\rho\in L^{1}_{+}(\R^{d})\cap L^{m}(\R^{d})$, $\|\rho\|_{L^{1}(\R^{d})}=M$, with respect to the narrow topology. Then \cite[Lemma 5.1.7]{AmbrosioGigli} ensures the lower semicontinuity of the potential energies of $\left\{\rho_{n}\right\}$, that is
\[
\liminf_{n\rightarrow\infty}\int_{\R^{d}}V(x)\rho_{n}(x)dx\geq\int_{\R^{d}}V(x)\rho(x)dx.
\]
This implies that the infimum of $\E$ is achieved over a radially decreasing density $\rho\in \mathcal{Y}_{M}$. In order to check that all the global minimizers are radially decreasing, we pick any minimizer $\rho\in \mathcal{Y_{M}}$ and use Remark \ref{remarkV} and Lemma \ref{reversedRiesz} in order to see that
\[
\E[\rho]=\E[\rho^{\#}],
\]
thus
\[
\mathcal{I}[\rho]-\mathcal{I}[\rho^{\#}]=\int_{\R^{d}}V(x)(\rho^{\#}-\rho)dx\leq0
\]
then the equality case in Lemma \ref{reversedRiesz} yields the conclusion.
\end{proof}
We have the following generalization of Theorem \ref{idenminimth}:
\begin{theorem}
Assume that \eqref{DDR}, $(K1)$-$(K4)$ hold.
Let $\rho_0\in\mathcal{Y_{M}}$ be a global minimizer of the free
energy functional $\E$. Then for some positive constant $\mathsf{D}[\rho_{0}]$, we have that
$\rho_0$ satisfies
\begin{equation}
\frac{m}{m-1}\rho_{0}^{m-1}+W\ast\rho_{0}+V(x)=
\mathsf{D}[\rho_{0}]\quad a.e. \text{ in }
\mbox{supp}(\rho_{0})\label{EulerLagrV}
\end{equation}
and
\begin{equation*}
\frac{m}{m-1}\rho_{0}^{m-1}+W\ast\rho_{0}+V(x)\geq
\mathsf{D}[\rho_{0}]\quad a.e. \text{ outside }
\mbox{supp}(\rho_{0}).
\end{equation*}
As a consequence, any global minimizer of $\E$ verifies
\begin{equation*}
\frac{m}{m-1}\rho_{0}^{m-1}=\left(\mathsf{D}[\rho_{0}]-W\ast\rho_{0}-V(x)\right)_{+}.
\end{equation*}
\end{theorem}
The compactly supported property of the minimizers then follows from \eqref{EulerLagrV} and Lemmas \ref{asymptotic}--\ref{asymptotic2}. Moreover, it is straightforward to check that Lemma \ref{boundedness} continues to hold, as well as Theorem \ref{conccomp2}.


\section{Long-time Asymptotics}

We now consider the particular case of \eqref{aggregation} given by the Keller Segel model in two dimensions with nonlinear diffusion as
\begin{eqnarray}\label{KS}
\pa_t \rho&=&\Delta \rho^m-\nabla\cdot(\rho\nabla \NN \ast\r)\,,
 \end{eqnarray}
where $m>1$ and the logarithmic interaction kernel is defined as
$$
\NN(x)=-\frac{1}{2\pi}\log |x|\,.
$$
This system is also referred to as the parabolic-elliptic Keller-Segel system with nonlinear diffusion, since the attracting potential $c=\NN\ast \r$ solves the
Poisson equation $-\Delta c=\r$. It corresponds exactly to the range of diffusion dominated cases as discussed in \cite{CC} since solutions do not show blow-up
and are globally bounded. We will show based on the uniqueness part in Section 2 that not only the solutions to \eqref{KS} exist globally and are uniformly
bounded in time in $L^\infty$, but also the solutions achieve stabilization in time towards the unique stationary state for any given initial mass.

The main tool for analyzing stationary states and the existence of solutions to the evolutionary problem is again the following free energy functional
\begin{eqnarray}\label{E}
\E[\rho](t)=\int_{\mathbb{R}^2} \frac{\rho^m}{m-1}dx+\frac{1}{4\pi}\int_{\mathbb{R}^2} \int_{\mathbb{R}^2}\corr{\log} |x-y| \rho(x)\rho(y) dx\,dy\,.
\end{eqnarray}
A simple differentiation formally shows that $\E$ is decaying in time along the evolution corresponding to \eqref{KS}, namely
\[
\frac{d}{dt}\E[\rho](t)=-\D[\r](t)
\]
which gives rise to the following (free) energy - energy dissipation inequality for weak solutions
\begin{equation}\label{EI}
\E[\r](t) +\int_0^t \D[\r] d\tau \leq  \E[\r_0]
\end{equation}
for nonnegative initial data $\rho_0(x) \in L^1((1+\corr{\log}(1+|x|^2))dx)\cap L^m(\mathbb{R}^2)$. The entropy dissipation is given by
$$
\D[\r]=\int_{\mathbb{R}^2}\r|\nabla h[\r]|^2dx
\,,
$$
where here and in the following we use the notation
\begin{equation*}
h[\r]=\frac{m}{m-1}\r^{m-1}-\NN\ast \r\,.
\end{equation*}
We shall note that $h$ corresponds to $\frac{\delta \E}{\delta \r}$ and that in particular the evolutionary equation \eqref{KS} can be stated as $\pa_t \r
=
\nabla \cdot (\r \nabla h[\r])$. Thus, this equation bears the structure of being a gradient flow of the free energy functional in the sense of
probability
measures, see \cite{AGS08,BCC,BCL,CMV03} and the references therein.

We first prove the global well-posedness of weak solutions satisfying the energy inequality \eqref{EI} in the next subsection as well as global uniform in
time
estimates for the solutions. In the second subsection, we used the uniform in time estimates together with the uniqueness of the stationary states proved
in
Section 2 to derive the main result of this section regarding long time asymptotics for \eqref{KS}.


\subsection{Global well-posedness of the Cauchy problem}

In this section we analyze the existence and uniqueness of a bounded global weak solution for initial data in $L^1_{log}(\mathbb{R}^2)\cap
L^\infty(\mathbb{R}^2)$, where here and in the following we denote
$$
L^1_{log}(\mathbb{R}^2)= L^1((1+\corr{\log}(1+|x|^2))dx)\,.
$$
Assuming to have a sufficiently regular solution with the gradient of the chemotactic potential being uniformly bounded, Kowalczyk \cite{K} derived a
priori
bounds in $L^\infty$ with respect to space and time for the Keller-Segel model with nonlinear diffusion on  bounded domains. These a priori estimates have
been
improved and extended to the whole space by Calvez and Carrillo in \cite{CC}. We shall demonstrate here how these a priori estimates of \cite{CC} can be
made
rigorous when starting from an appropriately regularized equation leading to the following theorem.

\begin{theorem}[Properties of weak solutions]\label{thm.ex} For any nonnegative initial data $\r_0\in L^1_{log}(\mathbb{R}^2)\cap L^\infty(\mathbb{R}^2)$,
there
exists a unique global weak solution $\r$ to \eqref{KS}, which satisfies the energy inequality \eqref{EI} with the energy being bounded from above and
below in
the sense that
$$
\E_*\leq \E[\r](t)\leq \E[\r_0]
$$
for some (negative) constant $\E_*$. In particular $\r$ is uniformly bounded in space and time
\[\sup_{t\geq 0}\|\r(t,\cdot)\|_{L^\infty(\mathbb{R}^2)}\leq C\, ,\]
where $C$ depends only on the initial data.
Moreover the log-moment grows at most linearly in time
\[N(t)=\int_{\mathbb{R}^2}\corr{\log}(1+|x|^2)\r(t,x)dx\leq N(0)+C t\,,\]
where again $C$ depends only on the initial data.
\end{theorem}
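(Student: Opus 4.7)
The plan is the classical three-move strategy: solve a regularized version of \eqref{KS} by standard parabolic theory, derive bounds uniform in the regularization parameter on $\mathcal{E}$, $\|\rho\|_\infty$, and the log-moment $N$, and then pass to the limit; uniqueness is handled separately at the end. For the regularization I would replace $\mathcal{N}$ by $\mathcal{N}_\varepsilon(x):=-\tfrac{1}{4\pi}\log(|x|^2+\varepsilon^2)$, add a small viscous term $\varepsilon \Delta\rho$ to lift the degeneracy at $\rho=0$, and mollify-and-truncate $\rho_0$ to obtain $\rho_0^\varepsilon\in C_c^\infty(\mathbb{R}^2)$. Since $\nabla \mathcal{N}_\varepsilon$ is then bounded and Lipschitz, classical parabolic theory produces a smooth global nonnegative solution $\rho_\varepsilon$ conserving mass and satisfying the exact identity $\frac{d}{dt}\mathcal{E}^\varepsilon[\rho_\varepsilon]=-\mathcal{D}^\varepsilon[\rho_\varepsilon]$.

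The three uniform estimates must be closed jointly, since they feed into each other. The uniform $L^\infty$ bound is the main obstacle and is the whole point of the diffusion-dominated regime $m>1$: following the Kowalczyk/Calvez--Carrillo scheme, I would test \eqref{KS} with $\rho_\varepsilon^{p-1}$ and use $-\Delta\mathcal{N}_\varepsilon\ast\rho_\varepsilon\approx\rho_\varepsilon$ after integration by parts to isolate a bad term of the form $\int \rho^{p+1}$ which has to be absorbed by the dissipation $\tfrac{4mp(p-1)}{(m+p-1)^2}\int|\nabla \rho^{(m+p-1)/2}|^2$. In two dimensions the Gagliardo--Nirenberg inequality together with mass conservation provides precisely the interpolation needed to swallow the bad term whenever $m>1$, yielding a differential inequality of the form $\tfrac{d}{dt}\int\rho^p+c_p\int\rho^p\leq C_p$ which bounds $\|\rho_\varepsilon\|_{L^p}$ uniformly in $\varepsilon$ and in time; a Moser-type iteration $p\to\infty$ then produces the $L^\infty$ bound depending only on $\|\rho_0\|_{L^1\cap L^\infty}$. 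For the log-moment, differentiating $N$ along the flow gives, after integration by parts and symmetrization,
\[
\dot N(t) = \int \Delta \log(1+|x|^2)\,\rho^m\,dx + \tfrac{1}{4\pi}\iint \Big(\tfrac{x}{1+|x|^2}-\tfrac{y}{1+|y|^2}\Big)\cdot \tfrac{x-y}{|x-y|^2}\,\rho(x)\rho(y)\,dx\,dy.
\]
The first term is bounded by $C\|\rho\|_\infty^{m-1}M$ since $|\Delta\log(1+|x|^2)|\le C$, and the symmetrized kernel in the second term is globally bounded because $z\mapsto z/(1+|z|^2)$ is Lipschitz on $\mathbb{R}^2$. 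Hence $\dot N\leq C(\|\rho\|_\infty,M)$ and $N$ grows at most linearly. Finally, to bound $\mathcal{E}^\varepsilon$ from below I would use the logarithmic Hardy--Littlewood--Sobolev inequality to control $-\iint\log|x-y|\rho\rho$ by the Boltzmann entropy $\int \rho\log\rho$ and a constant in $M$; splitting $\rho\log\rho$ across $\{\rho\leq 1\}$ and $\{\rho>1\}$, the positive part is absorbed into a fraction of $\tfrac{1}{m-1}\int\rho^m$ using $m>1$, and the negative part is bounded in terms of $N(t)$ and $M$, giving $\mathcal{E}^\varepsilon[\rho_\varepsilon(t)]\geq \mathcal{E}_*$.

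With these uniform bounds in hand the limit $\varepsilon\to 0$ is routine: $\nabla \rho_\varepsilon^m$ is bounded in $L^2_{\mathrm{loc}}(dt\,dx)$ by the dissipation, $\partial_t\rho_\varepsilon$ is controlled in a negative Sobolev space, and Aubin--Lions yields strong $L^p_{\mathrm{loc}}$ convergence; the drift term passes to the limit because $\nabla\mathcal{N}_\varepsilon\ast\rho_\varepsilon\to\nabla\mathcal{N}\ast\rho$ strongly on compacts thanks to the uniform $L^1\cap L^\infty$ control, and the energy inequality survives by lower semicontinuity of each of its pieces. For uniqueness I would test the difference of two bounded weak solutions against $\mathcal{N}\ast(\rho_1-\rho_2)$, using positivity of $-\Delta$ on the interaction side and monotonicity of the pressure $\rho\mapsto \rho^m$ on the diffusion side to close an $H^{-1}$-type Gr\"onwall argument of Bertozzi--Laurent type. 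Among all these steps, the uniform $L^\infty$ bound is where the real work is: the cancellation between the nonlocal drift and the degenerate diffusion must be made quantitative uniformly in $\varepsilon$, and the hypothesis $m>1$ enters essentially through the Gagliardo--Nirenberg interpolation.
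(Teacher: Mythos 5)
Your overall architecture coincides with the paper's: the same regularization (the kernel $\mathcal{N}_\varepsilon(x)=-\tfrac{1}{4\pi}\log(|x|^2+\varepsilon^2)$ plus an added $\varepsilon\Delta\rho_\varepsilon$, as in Bian--Liu), a priori $L^\infty$ bounds in the spirit of Kowalczyk and Calvez--Carrillo, a log-moment estimate, an energy lower bound via log-HLS, compactness helped by the moment bound, and uniqueness quoted from a Wasserstein-type argument (the paper cites Carrillo--Lisini--Mainini). Your log-moment computation, symmetrizing the interaction kernel and using that $z\mapsto z/(1+|z|^2)$ is Lipschitz, is a perfectly good alternative to the paper's route (which instead applies Cauchy--Schwarz against the dissipation and then the energy inequality). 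One small slip in the energy lower bound: after log-HLS the contribution of $\int_{\{\rho\le 1\}}\rho\log\rho$ enters with a \emph{favorable} sign and should simply be dropped; if you genuinely bounded it by $N(t)$ as you write, your lower bound would degrade linearly in time and would not yield a constant $\mathcal{E}_*$, which is what the uniform-in-time $L^\infty$ bound requires.

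The genuine gap is in the $L^\infty$ bound, precisely the step you flag as ``where the real work is''. Testing with $\rho^{p-1}$ leaves the bad term $(p-1)\int\rho^{p+1}$ against the dissipation $\tfrac{4mp(p-1)}{(m+p-1)^2}\int|\nabla\rho^{(m+p-1)/2}|^2$. Gagliardo--Nirenberg with mass conservation gives $\int\rho^{p+1}\le C_p M\|\nabla\rho^{(m+p-1)/2}\|_{L^2}^{2p/(m+p-1)}$, and since $2p/(m+p-1)<2$ exactly when $m>1$, Young's inequality closes a differential inequality for each \emph{fixed} $p$ --- but the available dissipation per unit of bad term is only of size $O(1/p)$, so the Young remainder grows roughly like $(CMp)^{p/(m-1)}$ and the resulting per-level bounds give $\|\rho\|_{L^p}\lesssim (CMp)^{1/(m-1)}$, which blows up as $p\to\infty$. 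Hence ``a Moser-type iteration then produces the $L^\infty$ bound'' is not justified as sketched: you need either a genuine Alikakos-type recursion relating $\|\rho\|_{L^{2p}}$ to $\|\rho\|_{L^p}$ with $p$-growth tamed upon taking roots, or the device the paper actually uses, namely testing with powers of the truncation $(\rho_\varepsilon-k)_+$ so that the degenerate diffusion contributes a factor $k^{m-1}$ uniform in $p$, and absorbing the bad term through the smallness of $\|(\rho_\varepsilon-k)_+\|_{L^1}$ for large $k$ uniformly in time. That equi-integrability is extracted from the approximate energy inequality combined with log-HLS, which in turn needs a preliminary $L^2$ bound to control the $O(\varepsilon)$ error in the regularized energy identity; this is why, in the paper, uniform-in-time $L^\infty$ bounds are only recovered for the limit solution (where the exact energy decay holds), whereas your claim of time-uniform bounds already at the $\varepsilon$-level rests on the unproven absorption.
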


We shall also state the existence result for radial initial data that was obtained in \cite{LY} and \cite{KY} for higher dimensions and the Newtonian potential. Similar methods can be applied in the case $d=2$ considered here:

\begin{theorem}[Properties of radial solutions]
Let $\r_0 \in L^1_{log}(\mathbb{R}^2)\cap L^\infty(\mathbb{R}^2)$ be nonnegative and radially symmetric.
\begin{itemize}
\item[(a)] Then the corresponding unique weak solution of \eqref{KS} remains radially symmetric for all $t>0$.
\item[(b)] If $\r_0$ is compactly supported, then the solution remains compactly supported for all $t>0$.
\item[(c)] If $\r_0$ is moreover monotonically decreasing, then the solution remains radially decreasing for all $t>0$.
\end{itemize}
\end{theorem}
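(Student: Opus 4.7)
The plan is to prove the three claims in order, deriving each from the already available well-posedness and the radial structure of the Newtonian kernel in $\R^2$. For (a), the strategy is pure uniqueness: the equation \eqref{KS} is invariant under the $O(2)$ action, since $\Delta\rho^m$ and $\NN$ are both rotationally symmetric, so for any rotation $R\in O(2)$ the function $\tilde\rho(t,x):=\rho(t,Rx)$ is again a weak solution in $L^1_{log}\cap L^\infty$ with the same initial data $\rho_0(Rx)=\rho_0(x)$. The uniqueness statement of Theorem~\ref{thm.ex} then forces $\tilde\rho\equiv\rho$, which is radial symmetry for all $t>0$.

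For (b) and (c), I would pass to the cumulative mass profile $M(t,r):=2\pi\int_0^r s\,\rho(t,s)\,ds$, which encodes a radial density through $\partial_r M=2\pi r\rho$. Because $\rho$ is radial, Newton's theorem in the plane makes the aggregation drift local: $\nabla c(t,x)=-\frac{M(t,|x|)}{2\pi|x|^2}\,x$ with $c=\NN*\rho$. Substituting into \eqref{KS} and integrating against $2\pi s\,ds$ turns the PDE into the closed scalar equation
\begin{equation*}
\partial_t M \;=\; 2\pi r\,\partial_r\!\left(\!\left(\tfrac{\partial_r M}{2\pi r}\right)^{\!m}\,\right) \;+\; \tfrac{1}{2\pi r}\,M\,\partial_r M,
\end{equation*}
which is parabolic (and degenerate) in $M$ with an absorbing nonlocal-but-pointwise transport term. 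In this formulation comparison principles are available, and the natural monotonicity properties are transparent: compact support of $\rho_0$ corresponds to $M(0,\cdot)$ reaching its final value $\|\rho_0\|_{L^1}$ at a finite radius $R_0$, while $\rho_0$ being radially decreasing corresponds to concavity/convexity information on $M(0,\cdot)$ (after the change of variable $s=\pi r^2$, since $\partial_s V=\rho$ with $V(t,s):=M(t,\sqrt{s/\pi})$ reduces radial monotonicity of $\rho$ to concavity of $V$ in $s$).

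For (b), I would construct a radial barrier: a compactly supported Barenblatt-type supersolution $\bar\rho(t,r)$, whose support grows at a controlled finite rate, large enough to dominate both the porous-medium dissipation and the aggregation drift $|\nabla c|\le \|\rho\|_{L^1}/(2\pi r)$ outside the support of $\rho$. Comparison in the $M$-equation (or, equivalently, a classical finite-speed-of-propagation argument for the porous medium equation with a bounded drift) then shows that $\supp\rho(t,\cdot)\subset B_{R(t)}$ with $R(t)<\infty$ on every finite time interval. For (c), the plan is to show that radial monotonicity is preserved by applying the parabolic maximum principle to $\partial_s V$ (equivalently, to $\rho$ regarded as a function of the volume variable): the $V$-equation inherited from the $M$-equation has the structure of a nonlinear diffusion plus an order-zero drift in which the sign of $\partial_s V$ is propagated. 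The main obstacle in both (b) and (c) will be the degeneracy of the diffusion at $\{\rho=0\}$, which forces us to work with a vanishing-viscosity regularization of \eqref{KS}, verify the barrier/monotonicity comparison for the smooth approximants, and then pass to the limit using the stability and uniqueness provided by Theorem~\ref{thm.ex}; the extra care needed at $r=0$ (where the formulas for $\nabla c$ have an apparent singularity) is handled by the a priori $L^\infty$ bound on $\rho$ that yields Lipschitz control on $c$.
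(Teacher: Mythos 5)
First, a point of comparison: the paper does not actually prove Theorem \ref{thm.ex.rad}; it states it and refers to \cite{LY,KY}, noting that the methods there carry over to $d=2$. Your plan is essentially that route: part (a) by rotation invariance of \eqref{KS} plus the uniqueness in Theorem \ref{thm.ex} is correct and clean, and your reduction of (b)--(c) to the cumulative mass $M(t,r)$, for which the nonlocal drift becomes local ($\partial_r c=-M/2\pi r$) and a scalar comparison/maximum principle becomes available, is exactly the mechanism of \cite{KY} that also underlies Propositions \ref{prop111}--\ref{prop222}; your $M$-equation is derived correctly.

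Two steps need tightening. For (c), ``the sign of $\partial_s V$ is propagated'' is not the right quantity: $\partial_s V=\rho\geq 0$ trivially, and radial monotonicity is concavity of $V$ in $s$, i.e.\ the sign of $\partial_s^2 V$ (equivalently $\partial_s\rho$); preserving concavity by a maximum principle means differentiating the degenerate equation once more and controlling the resulting terms, which is where the real work of \cite{KY} lies and which your sketch currently glosses over. Within the paper's own toolbox there is a shortcut you could use instead: by (a) the solution is radial, Proposition \ref{prop222} applied with $\mu(0,\cdot)=\rho_0^{\#}=\rho_0$ together with uniqueness gives $\rho^{\#}(t,\cdot)\prec\rho(t,\cdot)$, while any radial density trivially satisfies $\rho(t,\cdot)\prec\rho^{\#}(t,\cdot)$; hence the two mass functions coincide and $\rho(t,\cdot)=\rho^{\#}(t,\cdot)$ is radially decreasing. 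For (b), note that comparison of mass functions alone cannot control the support: if $\bar M\geq M$ with equal total masses, saturation of $\bar M$ at a finite radius gives no information about where $M$ saturates, so the ``comparison in the $M$-equation'' half of your alternative does not work as stated; the operative argument is the density-level barrier with the drift frozen as a given field $b=\nabla\NN*\rho$, using that $\|b\|_{L^\infty}$ and $\|\mathrm{div}\,b\|_{L^\infty}=\|\rho\|_{L^\infty}$ are uniformly bounded by Theorem \ref{thm.ex}. Finally, be aware that the paper's regularization \eqref{KS.e} adds a linear viscosity $\e\Delta\rho_\e$, so the approximants are strictly positive and have no free boundary; if you run the barrier at the regularized level you must either add an $\e$-dependent tail to the supersolution that vanishes as $\e\to0$, or justify the comparison directly for the limiting degenerate equation with bounded drift. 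These are fixable, but they are precisely the technical content that the cited references supply.
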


In the remainder of this section we carry out the proof
of the existence of a bounded global weak solution to \eqref{KS} as stated in Theorem \ref{thm.ex}. We therefore introduce the following regularization of
\eqref{KS}
\begin{equation}\label{KS.e}
\pa_t \rho_\e=\Delta (\rho_\e^m + \e \r_\e) -\nabla\cdot(\rho_\e\nabla \NN_{\e} \ast\r_\e)\,,
 \end{equation}
where $m>1$ and the regularized logarithmic interaction potential is defined as
\begin{equation*}
\NN_\e(x)=-\frac{1}{4\pi}\log (|x|^2+\e^2)\,.
\end{equation*}
Moreover we have for the derivatives
$$
\nabla \NN_\e=-\frac{1}{2\pi}\frac{x}{|x|^2+\e^2}\,,\qquad \Delta \NN_\e=-\frac{1}{\pi}\frac{\e^2}{\left(|x|^2+\e^2\right)^2}=-J_\e
$$
satisfying
\[\|J_\e\|_{L^1(\R^2)}=1\,.\]

The regularization in \eqref{KS.e} was used by Bian and Liu \cite{BL}, who studied the Keller-Segel equation with nonlinear diffusion and the Newtonian potential for $d\geq3$, which has been modified accordingly for the logarithmic interaction kernel in $d=2$.
The additional linear diffusion term in \eqref{KS.e} removes the degeneracy and the regularized logarithmic potential $\NN_\e$ possesses a uniformly
bounded gradient, such that the local well posedness of (\ref{KS.e}) is a standard result for any $\e>0$. We shall note that a slightly different regularization for such
nonlinear diffusion Keller-Segel type of equations has been introduced by Sugiyama in \cite{S}, which also yields the existence and uniqueness of a global weak solution. The advantage of the regularization in \eqref{KS.e} resembling the one in \cite{BL} is the fact that the regularized problem satisfies a free energy inequality, that in the limit gives exactly \eqref{EI}, whereas in \cite{S} the dissipation term could only be retained with a factor of $3/4$.

We point out that in the case $d=2$ other a priori estimates are available than in higher space dimensions leading to a different proof for global well posedness of the Cauchy problem for \eqref{KS.e} and the limit $\e\rightarrow 0$ compared to \cite{BL}.

\subsubsection{Global well posedness of the regularized Cauchy problem}
To derive a priori estimates for the regularized problem \eqref{KS.e} we use the iterative method used by Kowalczyk \cite{K} based on employing test
functions
that are powers of $\r_{\e,k}=(\r_{\e}-k)_+$ for some $k>0$. When testing \eqref{KS.e} against $p\r_{\e,k}^{p-1}$ for any $p\geq 2$, we obtain:
\begin{align}
&\frac{d}{dt}\int_{\mathbb{R}^2}\r^{p}_{\e,k}dx\nonumber \\
&\label{est.1} = -\frac{4(p-1)}{p}\int_{\mathbb{R}^2}(m \r_\e^{m-1}+\e) |\nabla \r^{\frac{p}{2}}_{\e,k}|^2dx+ p\int_{\mathbb{R}^2}(\r_{\e,k}+k)(\nabla \NN_\e\ast
\r_\e)\cdot\nabla \r_{\e,k}^{p-1}dx\\
& \leq-\frac{4(p-1)}{p}m\int_{\mathbb{R}^2}\r_\e^{m-1} |\nabla \r^{\frac{p}{2}}_{\e,k}|^2dx
+ \int_{\mathbb{R}^2}(\nabla \NN_\e\ast \r_\e)\cdot((p-1)\nabla \r_{\e,k}^{p} + k p \nabla \r_{\e,k}^{p-1})dx\nonumber\\
&\leq-\frac{4(p-1)}{p}mk^{m-1}\|\nabla \r_{\e,k}^{\frac{p}{2}}\|^2_{L^2} + \int_{\mathbb{R}^2} J_\e\ast\r_{\e}((p-1)\r_{\e,k}^{p}+k p \r_{\e,k}^{p-1}
)dx\nonumber\\
&\leq-\frac{4(p-1)}{p}mk^{m-1}\|\nabla \r_{\e,k}^{\frac{p}{2}}\|^2_{L^2} + \int_{\mathbb{R}^2} (J_\e\ast\r_{\e,k} + k)((p-1)\r_{\e,k}^{p}+k p
\r_{\e,k}^{p-1} )
dx\nonumber\\
&\label{est.2} \leq-\frac{4(p-1)}{p}mk^{m-1}\|\nabla \r_{\e,k}^{\frac{p}{2}}\|^2_{L^2} + C(p-1) \int_{\mathbb{R}^2} \r^{p+1}_{\e,k} dx  + C k p
\int_{\mathbb{R}^2} \r_{\e,k}^p dx + k^2 p \int_{\mathbb{R}^2} \r_{\e,k}^{p-1} dx\,,
\end{align}
where for estimating the integrals involving convolution terms we used the inequality
\beq\label{est.conv}
\int f(x) (g\ast h) (x) dx \leq C \|f\|_{L^p}\|g\|_{L^q}\|h\|_{L^r},\qquad \frac{1}{p}+\frac{1}{q}+\frac{1}{r}=2, \quad\textnormal{where} \  p,q,r\geq1\,,
\eeq
see e.g. Lieb and Loss \cite{LL}.
Closing the estimate \eqref{est.2} would yield an estimate for $\r_{\e,k}$ in $L^\infty(0,T;L^p(\mathbb{R}^2))$ and thus also for $\r_\e \in
L^\infty(0,T;L^p(\mathbb{R}^2))$, since
\begin{align}
\int_{\mathbb{R}^2} \r^p_\e dx \leq&\, k^{p-1}\int_{\{\r_\e<k\}}\r_\e dx+\int_{\{\r_\e\geq k\}}(\r_\e-k)^p dx +C(p,k)\int_{\{\r_\e\geq k\}}k dx \nonumber \\
\corr{\label{rho.p}}\leq&  \int_{\mathbb{R}^2}\r_{\e,k}^p dx + (k^{p-1}+C(p,k))M\,.
\end{align}
Kowalczyk proceeded from \eqref{est.1} with the assumption corresponding to $\|\nabla \NN_\e\ast \r_{\e}\|_{L^\infty}\leq C$. Observe that it would be
sufficient
to prove $\r_\e \in L^\infty(0,T;L^p(\mathbb{R}^2))$ for some $p>2$ implying $\Delta \NN_\e\ast\r_\e \in  L^\infty(0,T;L^p(\mathbb{R}^2))$ and hence the
uniform
boundedness of the gradient term by Sobolev imbedding. Calvez and Carrillo \cite{CC} circumvent this assumption and derive the bound by using an
equi-integrability property in the inequality \eqref{est.2}.
Hence, in order to being able to follow the ideas of \cite{CC} for the regularized problem, we need to derive the corresponding energy inequality for the
latter.

\begin{proposition}\label{Prop1}
For any finite time $T>0$ the solution  $\r_\e$ to the Cauchy problem \eqref{KS.e} supplemented with initial data $\r_{0} \in L^1_{log}(\mathbb{R}^2)\cap
L^\infty(\mathbb{R}^2)$ satisfies the energy inequality
\beq\label{EED.e}
{\cal E}_\e[\r_\e](t) + \int_0^t\D_\e[\r_\e](t)dt \leq{\cal E}_\e[\r_{0}]+\e C (1+t)t\,,
\eeq
for a positive constant $C=C(M,\|\r_{0}\|_{\infty})$ and $0\leq t\leq T$, where ${\cal E}_\e$ is an approximation of the free energy functional in \eqref{E}:
$$
{\cal E}_\e[\r_\e] =\int_{\mathbb{R}^2}\left(\frac{\r_\e^m}{m-1}-\frac{\r_\e}{2}  \NN_\e\ast \r_\e \right)dx
$$
and $\D_\e$ the corresponding dissipation
$$
\D_\e[\r_\e](t)=\int_{\mathbb{R}^2}\r_\e|\nabla h_\e[\r_\e]|^2 dx\,\quad \textnormal{with} \quad h_\e[\r_\e]=\frac{m}{m-1}\nabla \r_\e^{m-1}-\nabla
\NN_\e\ast\r_\e\,.
$$
In particular, we obtain equi-integrability
\[\lim_{k\rightarrow \infty}\sup_{t\in [0,T]}\int_{\mathbb{R}^2}(\r_\e-k)_+ dx =0 \,.\]
\end{proposition}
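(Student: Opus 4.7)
The plan is to formally test the regularized equation \eqref{KS.e} against $h_\e[\rho_\e]$, which is the first variation of $\E_\e$, identify the extra contribution produced by the linear-diffusion regularization, and estimate it quantitatively; equi-integrability then follows as a corollary once a uniform-in-$\e$ bound on $\|\rho_\e\|_{L^m}$ is extracted from the resulting energy inequality. Because the additional term $\e\Delta \rho_\e$ makes \eqref{KS.e} uniformly parabolic and $\rho_0\in L^\infty(\R^2)$, classical parabolic theory provides a smooth, strictly positive solution $\rho_\e$, so $h_\e[\rho_\e]$ is an admissible test function.

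Rewriting \eqref{KS.e} in gradient-flow form $\partial_t\rho_\e = \nabla\cdot(\rho_\e\nabla h_\e[\rho_\e]) + \e\Delta \rho_\e$, multiplying by $h_\e[\rho_\e]$ and integrating by parts (using the symmetry of $\NN_\e$) gives
\begin{equation*}
\frac{d}{dt}\E_\e[\rho_\e] = -\D_\e[\rho_\e] - \e\int_{\R^2}\nabla\rho_\e\cdot\nabla h_\e[\rho_\e]\,dx.
\end{equation*}
Expanding $\nabla h_\e[\rho_\e] = m\rho_\e^{m-2}\nabla\rho_\e - \nabla\NN_\e*\rho_\e$, the first piece contributes the nonpositive term $-\e m \int \rho_\e^{m-2}|\nabla\rho_\e|^2\,dx$, which may be discarded. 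The second piece, after one more integration by parts and the identity $-\Delta\NN_\e = J_\e$, becomes the error term $\e\int_{\R^2}\rho_\e(J_\e*\rho_\e)\,dx$; Hölder's and Young's inequalities combined with $\|J_\e\|_{L^1(\R^2)}=1$ control it by $\e M\|\rho_\e(t)\|_{L^\infty(\R^2)}$.

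To close the estimate one needs $\|\rho_\e(t)\|_{L^\infty}$ bounded in terms of $M$ and $\|\rho_0\|_{L^\infty}$, with at most linear growth in $t$. This is obtained from a Kowalczyk-type iterative $L^p$ estimate \eqref{est.1}--\eqref{est.2} applied to the regularized problem (the extra $\e$-diffusion only helps in these bounds), followed by a Moser-type iteration; integrating in time then produces the announced bound $\E_\e[\rho_\e](t) + \int_0^t \D_\e\,ds \leq \E_\e[\rho_0] + \e C(1+t)t$. For the equi-integrability statement, the just-derived energy inequality together with a logarithmic Hardy--Littlewood--Sobolev inequality applied to the mollified interaction term yields $\sup_{t\in[0,T]}\|\rho_\e(t)\|_{L^m(\R^2)}^m \leq C(T)$ uniformly in $\e$, whence
\begin{equation*}
\int_{\R^2}(\rho_\e-k)_+\,dx \leq \frac{1}{k^{m-1}}\int_{\{\rho_\e>k\}}\rho_\e^m\,dx \leq \frac{C(T)}{k^{m-1}} \longrightarrow 0 \quad\text{as }k\to\infty,
\end{equation*}
uniformly in $t\in[0,T]$ and in $\e$.

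The main obstacle is the circular dependence between the error estimate and the $L^\infty$ control on $\rho_\e$: the error bound needs an a priori $L^\infty$ estimate, while the Kowalczyk $L^p$-scheme eventually needs the equi-integrability extracted from the energy inequality itself. The resolution is a bootstrap in which the local-in-time parabolic theory first provides $L^\infty$ bounds on short intervals, the energy identity is justified there, and then the iterative $L^p$ bounds propagate $L^\infty$ control globally and uniformly in $\e$. This interlacing between the energy inequality and the $L^p$ iterations is the delicate technical point of the argument.
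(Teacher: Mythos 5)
Your derivation of the energy identity (testing with $h_\e[\r_\e]$, discarding the nonpositive term $-\e m\int \r_\e^{m-2}|\nabla\r_\e|^2dx$, and reducing the remainder to $\e\int_{\R^2}\r_\e (J_\e*\r_\e)\,dx$) matches the paper, and your equi-integrability step (lower bound on $\E_\e$ via $\ln(|x|^2+\e^2)\geq 2\ln|x|$ and log-HLS, giving a uniform $L^m$ bound and then a Chebyshev estimate) is essentially the paper's argument. The genuine gap is in how you close the error term. You bound it by $\e M\|\r_\e(t)\|_{L^\infty}$ and then invoke a Kowalczyk/Moser scheme to get $\|\r_\e(t)\|_{L^\infty}\lesssim 1+t$ uniformly in $\e$. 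But that $L^\infty$ bound is exactly what, in this setting, requires the equi-integrability produced by the energy inequality you are trying to prove: the iterative estimate \eqref{est.2} cannot be closed for general $p$ uniformly enough to run Moser without either a uniform bound on $\nabla\NN_\e*\r_\e$ or the equi-integrability, and the bound \eqref{estimconv} on $\nabla\NN_\e*\r_\e$ itself uses $\|\r_\e\|_{L^\infty}$. You acknowledge this circularity, but the proposed resolution does not repair it: for fixed $\e>0$ local parabolic theory (or Kowalczyk's argument using $\|\nabla\NN_\e\|_{L^\infty}\lesssim \e^{-1}$) yields only $\e$-dependent $L^\infty$ bounds with no control as $\e\to 0$, whereas the statement requires $C=C(M,\|\r_0\|_\infty)$ independent of $\e$ (and this uniformity is indispensable for the later limit $\e\to 0$ and for Lemma \ref{lem.mom}). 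Nothing in your bootstrap shows how the short-time, $\e$-dependent bounds become global, uniform in $\e$, and at most linearly growing in $t$.

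The way out, and the route the paper takes, is to notice that the error term does not need $L^\infty$ at all: by \eqref{est.conv} with exponents $\tfrac12+1+\tfrac12=2$ and $\|J_\e\|_{L^1}=1$, one has $\e\int\r_\e(J_\e*\r_\e)dx\leq \e\|\r_\e\|^2_{L^2(\R^2)}$, so only an $L^2$ bound is required, and this can be closed self-consistently at the single level $p=2$. Indeed, from \eqref{est.2} with $p=2$ and the 2D Gagliardo--Nirenberg inequality $\|\r_{\e,k}\|_{L^3}^3\leq CM\|\nabla\r_{\e,k}\|_{L^2}^2$, one gets
\begin{equation*}
\frac{d}{dt}\int_{\R^2}\r_{\e,k}^2\,dx \leq -(2mk^{m-1}-CM)\|\nabla\r_{\e,k}\|_{L^2}^2 + Ck^2M,
\end{equation*}
so choosing the truncation level $k=k(M)$ large (here $m>1$ is used) and integrating in time yields $\|\r_\e(t)\|_{L^2}^2\leq C(M,\|\r_0\|_\infty)(1+t)$, with no equi-integrability, no $L^\infty$ control, and no dependence on $\e$. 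Plugging this into the tested identity and integrating gives \eqref{EED.e} with the stated error $\e C(1+t)t$. In short: replace your $L^1\times L^\infty$ estimate of the convolution term by the $L^2\times L^1\times L^2$ estimate, and close the a priori bound at $p=2$; the $L^\infty$ bound then comes afterwards, as a consequence of the equi-integrability this proposition provides, not as an ingredient of it.
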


\begin{remark}
Note that due to the $\epsilon\Delta \rho_\epsilon$ regularization term in \eqref{KS.e}, its associated energy functional actually includes an extra term $\epsilon \int \rho_\epsilon \log \rho_\epsilon$ compared to $\mathcal{E}_\epsilon$. But in this lemma we choose to obtain an energy inequality for $\mathcal{E}_\epsilon$ (rather than the actual associated energy functional), since the absence of the extra term $\epsilon \int \rho_\epsilon \log \rho_\epsilon$ will make it easier for us to obtain a priori estimates independent of $\e$ later.
\end{remark}

\begin{proof}
Testing \eqref{KS.e} with $\frac{m}{m-1}\r_\e^{m-1}-\NN_\e\ast \r_\e$ we obtain
\begin{align*}
\frac{d}{dt}{\cal E}_\e(t) + \int_{\R^{2}}\r_{\e}&\left|\frac{m}{m-1}\nabla \r_\e^{m-1}-\nabla \NN_\e\ast\r_\e\right|^2dx +\e
\frac{4}{m}\int_{\mathbb{R}^2}\left|\nabla \r_\e^{\frac{m}{2}}\right|^2dx \\
& =\e\int_{\mathbb{R}^2} \nabla \NN_\e \ast \r_\e \cdot \nabla \r_\e dx =\e\int_{\mathbb{R}^2} \r_\e (J_\e*\r_\e) dx \leq \e
\|\r_\e\|^2_{L^2(\mathbb{R}^2)}\,,
\end{align*}
where we have used \eqref{est.conv} and the fact that $\|J_\e\|_{L^1(\mathbb{R}^2)}=1$. Hence we need to derive an a priori bound for $\r_\e$ in
$L^2(\mathbb{R}^2)$.
We use the estimate \eqref{est.2} for $p=2$ and bound $\int_{\mathbb{R}^2}\r_{\e,k}^3 dx$ using the Gagliardo-Nirenberg inequality (see for instance
\cite{Gagli}, \cite{Nir}) as follows:
\begin{align*}
\|\r_{\e,k}\|^3_{L^3(\mathbb{R}^2)}&\leq C\|\nabla \r_{\e,k}\|_{L^2(\mathbb{R}^2)}^2\|\r_{\e,k}\|_{L^1(\mathbb{R}^2)}\\
&\leq
 C M\|\nabla \r_{\e,k}\|_{L^2(\mathbb{R}^2)}^2.
\end{align*}
Then by \eqref{est.2} and interpolation of the $L^2$-integral, we have
\begin{align*}
\frac{d}{dt}\int_{\mathbb{R}^2}\r^{2}_{\e,k}dx &\leq -2mk^{m-1}\|\nabla\r_{\e,k}\|^{2}_{L^{2}} + C  \int_{\mathbb{R}^2} \r_{\e,k}^3\, dx + 3k^2
\int_{\mathbb{R}^2} \r_{\e,k}dx\nonumber\\
&\leq -(2mk^{m-1}-CM)\|\nabla\r_{\e,k}\|^{2}_{L^{2}} +Ck^2M.
\end{align*}
Hence, choosing $k$ large enough, \corr{recalling $m>1$ and estimate \eqref{rho.p}, we can conclude by integrating in time that}
\[
\|\r_\e(t,\cdot)\|_{L^2(\mathbb{R}^2)}^2\leq C (1+t)\,
\]
for some constant  $C=C(M,\|\r_{0}\|_{L^\infty(\R^2)})$, which implies the stated energy inequality.

In order to obtain a priori bounds and in particular the equi-integrability property, we need to bound the energy functional also from below. The
difference to
the corresponding energy functional for the original model \eqref{KS} lies only in the regularized interaction kernel. Since clearly for all $x\in
\mathbb{R}^2$
we have $\corr{\log}(|x|^2+\e^2)\geq 2\corr{\log}|x| $, we obtain
\begin{align*}
{\cal E}_\e[\r_\e]&= \int_{\mathbb{R}^2} \frac{\r_\e^{m}}{m-1}dx + \frac{1}{8\pi}\int_{\mathbb{R}^2}  \int_{\mathbb{R}^2}  \r_\e(x) \corr{\log}(|x-y|^2+\e^2)
\r_\e(y) dx
dy\\
&\geq \int_{\mathbb{R}^2}  \frac{\r_\e^{m}}{m-1}dx + \frac{1}{4\pi}\int_{\mathbb{R}^2}  \int_{\mathbb{R}^2}  \r_\e(x) \corr{\log}|x-y| \r_\e(y) dx dy \ = \  {\cal
E}[\r_\e]
\end{align*}
Following \cite{CC} we can estimate further using  the logarithmic Hardy-Littlewood-Sobolev inequality
\begin{equation}
{\cal E}_\e[\r_{\e}]\geq {\cal E}[\r_{\e}]\geq -\frac{M}{8\pi}C(M)+\int_{\R^{2}}\Theta(\rho_{\e})\,dx\,,\label{useHLS}
\end{equation}
where $C(M)$ is a constant depending on the mass $M$ and
\[
\Theta(\rho):=\frac{\rho^{m}}{m-1}-\frac{M}{8\pi}\rho\log\rho.
\]
Now it is easy to verify there is a constant $\kappa=\kappa(m,M)>1$ for which
\[
\Theta(\rho)\geq0\quad\textnormal{ for }\rho\geq\kappa,
\]
such that
\[
\int_{\R^{2}}\Theta^{-}(\r_{\e})dx=\int_{1\leq\r_{\e}\leq \kappa} \Theta^{-}(\r_{\e})dx\leq\frac{M^{2}}{8\pi}\log\kappa\,,
\]
implying in particular
\begin{equation}\label{bound.below}
{\cal E}_\e[\r_\e]\geq {\cal E}[\r_\e]\geq -\frac{M}{8\pi}C(M)-\frac{M^2}{8\pi}\corr{\log} \kappa =:{\cal E}_*\,.
\end{equation}
We therefore find from \eqref{EED.e}, \eqref{useHLS} and \eqref{bound.below} that
\[
\int_{\R^{2}}\Theta^{+}(\r_{\e}(t))dx\leq C+\e CT^2\,,
\]
with $C =C(m,\|\rho_0\|_{L^1(\R^2)}, \|\rho_0\|_{L^\infty(\R^2)})$ being a constant independent of $t$. Since $\Theta^{+}$ is superlinear at infinity, we obtain the equi-integrability as in Theorem 5.3 in
\cite{CC}.
\end{proof}

The equi-integrability from  Proposition \ref{Prop1} allows to close the estimate \eqref{est.2} analogously to Lemma 3.1 of \cite{CC} leading to  a bound
for
$\r_{\e}$ in $L^\infty(0,T;L^p(\mathbb{R}^2))$. Moreover, using Moser's iterative methods of Lemma 3.2 in \cite{CC} we finally get a bound for $\r_{\e}$
in
$L^\infty(0,T;L^\infty(\mathbb{R}^2))$.
In order to avoid mass loss at infinity typically the boundedness of the second moment of the solution is employed. We here however demonstrate that the
bound of
the log-moment provides sufficient compactness, having the advantage of less restrictions on the initial data.  We therefore denote for the regularization
$$
N_\e(t)=\int_{\mathbb{R}^2}\corr{\log}(1+|x|^2)\r_\e(t,x) dx\,.
$$
The following lemma is now obtained following the ideas of \cite{CC}:

\begin{lemma}\label{lem.mom} The solution $\r_\e$ to \eqref{KS.e} for a nonnegative initial data $\r_{0}\in L^1_{log}(\mathbb{R}^2)\cap
L^\infty(\mathbb{R}^2)$
satisfies for any $T>0$:
\begin{equation*}
\sup_{t\in[0,T]}\|\r_\e(t,\cdot)\|_{L^\infty(\mathbb{R}^2)} +  \|N_\e(t)\|_{L^\infty(0,T)} \leq C(1+T+\epsilon T^2)\,,
\end{equation*}
where the constant $C$ depends on the initial data.
\end{lemma}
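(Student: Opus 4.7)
The plan is to extract the two ingredients of the estimate in parallel: the $L^{\infty}$ bound on $\rho_{\e}$ from the Kowalczyk-type differential inequalities \eqref{est.1}--\eqref{est.2} combined with the equi-integrability already supplied by Proposition \ref{Prop1}, and the log-moment bound from a direct test of \eqref{KS.e} against $\ln(1+|x|^2)$, with the $\e T^{2}$ correction tracked through the $\e$-dependent term in the regularized energy inequality \eqref{EED.e}.

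First, I would close \eqref{est.2} for arbitrary $p\ge 2$ by using the two-dimensional Gagliardo--Nirenberg inequality in the form
$$
\int_{\mathbb{R}^{2}} \rho_{\e,k}^{p+1}\,dx \;\le\; C_{p}\,\|\nabla \rho_{\e,k}^{p/2}\|_{L^{2}(\mathbb{R}^{2})}^{2}\,\|\rho_{\e,k}\|_{L^{1}(\mathbb{R}^{2})},
$$
so that the superquadratic term on the right-hand side of \eqref{est.2} is controlled by $C(p-1)C_{p}\|\rho_{\e,k}\|_{L^{1}}\,\|\nabla \rho_{\e,k}^{p/2}\|_{L^{2}}^{2}$. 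By the equi-integrability statement at the end of Proposition \ref{Prop1}, for each fixed $p$ one can choose $k=k(p)$ large enough, independently of $\e\in(0,1]$ and $t\in[0,T]$, that this quantity is absorbed into the negative term $-\tfrac{4(p-1)}{p}mk^{m-1}\|\nabla \rho_{\e,k}^{p/2}\|_{L^{2}}^{2}$. The remainder is a linear Gr\"onwall inequality for $\int \rho_{\e,k}^{p}\,dx$ on $[0,T]$, which produces a bound on $\|\rho_{\e}\|_{L^{\infty}(0,T;L^{p})}$ that depends only on $p$, the initial data, $T$ and $\e T^{2}$ through the energy bound \eqref{EED.e}. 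Applying this first at $p=2$ and then iterating along a sequence $p_{n}\uparrow\infty$ in the spirit of the Moser scheme of Lemma~3.2 of \cite{CC}, and carefully tracking the dependence of the constants on $p_{n}$, yields the desired $L^{\infty}$-bound with the stated $(1+T+\e T^{2})$ growth.

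For the log-moment, I would formally compute
$$
\frac{d}{dt}N_{\e}(t)=\int_{\mathbb{R}^{2}}\Delta\bigl(\ln(1+|x|^{2})\bigr)\bigl(\rho_{\e}^{m}+\e\rho_{\e}\bigr)\,dx+\int_{\mathbb{R}^{2}}\nabla\ln(1+|x|^{2})\cdot\rho_{\e}\,(\nabla\NN_{\e}\ast\rho_{\e})\,dx,
$$
after integration by parts. Since $|\nabla\ln(1+|x|^{2})|\le 2$ and $|\Delta\ln(1+|x|^{2})|\le 4$ uniformly on $\mathbb{R}^{2}$, the first integral is bounded by $C(\|\rho_{\e}\|_{L^{m}}^{m}+\e M)$, which is controlled by the $L^{p}$ estimates of the previous step. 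The second integral is estimated by $2\int \rho_{\e}|\nabla\NN_{\e}\ast\rho_{\e}|\,dx\le C\|\rho_{\e}\|_{L^{4/3}(\mathbb{R}^{2})}^{2}$ via the Hardy--Littlewood--Sobolev inequality, noting that $|\nabla\NN_{\e}(x)|\le \tfrac{1}{2\pi|x|}$ uniformly in $\e$. Both bounds are uniform in $\e$ and $t\in[0,T]$, so $N_{\e}'(t)\le C$ and integration in time delivers $N_{\e}(t)\le N_{\e}(0)+Ct$ with a constant having the stated structure.

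The main obstacle I expect is the Moser step: one must verify that the Gagliardo--Nirenberg constants $C_{p}$ and the threshold $k(p)$ at which equi-integrability absorbs the bad term grow only polynomially in $p$, so that the iteration converges and the resulting $L^{\infty}$ bound remains of the order $C(1+T+\e T^{2})$ rather than blowing up. The $\e$-dependence is delicate too, since the extra linear diffusion contributes to both the dissipation and the upper bound on ${\cal E}_{\e}$ in Proposition \ref{Prop1}; one needs the equi-integrability to hold uniformly in $\e$ (which it does, since Proposition \ref{Prop1} controls $\Theta^{+}(\rho_{\e})$ uniformly on $[0,T]$), and this is what allows all constants in the iteration to be chosen $\e$-independent modulo the explicit $\e T^{2}$ factor coming from \eqref{EED.e}.
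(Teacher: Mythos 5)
Your treatment of the $L^\infty$ bound is essentially the paper's: the estimate \eqref{est.2} is closed via the equi-integrability of Proposition \ref{Prop1} and the two-dimensional Gagliardo--Nirenberg inequality, and the passage to $L^\infty$ is the Moser iteration of Lemma 3.2 in \cite{CC}, exactly as you sketch. The genuine gap is in the log-moment part. After moving all derivatives onto $\phi=\ln(1+|x|^2)$ you must estimate the nonnegative term $\int(\rho_\e^m+\e\rho_\e)\Delta\phi\,dx\le 4(\|\rho_\e(t)\|_{L^m(\R^2)}^m+\e M)$ pointwise in time, and your assertion that this is ``uniform in $\e$ and $t\in[0,T]$, so $N_\e'(t)\le C$'' is circular: the only available control of $\|\rho_\e(t)\|_{L^m}^m$ on $[0,T]$ comes either from the $L^\infty$ bound being proven (which is allowed to grow like $1+T+\e T^2$) or from the regularized energy inequality \eqref{EED.e} combined with the lower bound \eqref{bound.below}, which gives at best $\|\rho_\e(t)\|_{L^m}^m\le C(1+\e T^2)$. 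Either way the constant depends on $T$, and integrating $N_\e'$ over $[0,T]$ produces something like $C(T+\e T^3)$, or $T(1+T+\e T^2)^{m-1}$, but not the stated $C(1+T+\e T^2)$. The same defect, more mildly, affects your HLS bound $\|\rho_\e\|_{L^{4/3}}^2$ for the drift term.

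The missing idea is not to decouple the two fluxes at all: write \eqref{KS.e} as $\partial_t\rho_\e=\nabla\cdot(\rho_\e\nabla h_\e[\rho_\e])+\e\Delta\rho_\e$, test with $\phi$, and apply Young's inequality to the single term $-\int\rho_\e\nabla h_\e[\rho_\e]\cdot\nabla\phi\,dx$, which yields
\[
\frac{d}{dt}N_\e\le\frac12\int_{\R^2}\rho_\e|\nabla\phi|^2dx+\frac12\D_\e[\rho_\e]+\e\int_{\R^2}\rho_\e|\Delta\phi|\,dx\le\frac12\big((1+8\e)M+\D_\e[\rho_\e]\big),
\]
using $|\nabla\phi|\le1$ and $|\Delta\phi|\le4$. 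The crucial gain is that the time integral of the dissipation is not $T$ times a pointwise bound: by \eqref{EED.e} together with \eqref{bound.below}, $\int_0^t\D_\e[\rho_\e]\,ds\le\E_\e[\rho_0]-\E_*+\e C(1+t)t$. Integrating in time then gives $N_\e(t)\le N_\e(0)+\frac12(1+8\e)Mt+C+\e C(1+t)t\le C(1+T+\e T^2)$, which is exactly the claimed structure. Without routing both the diffusion and aggregation fluxes through $\D_\e$ and the energy--energy dissipation inequality, your scheme cannot recover the linear-in-$T$ growth plus the $\e T^2$ correction, so the proposal as written does not prove the lemma.
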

\begin{proof}
Computing formally the evolution of the log-moment in \eqref{KS.e} in a similar fashion to \cite{CC2}, we find for the test function $\phi(x)=\corr{\log}
(1+|x|^2)$
after integrating by parts
\begin{eqnarray*}
\frac{d}{dt}N_{\e}&=&\int_{\mathbb{R}^2} \pa_t\r_\e\, \phi dx =-\int_{\R^{2}}\r_\e \nabla h_\e[\r_\e]\cdot \nabla \phi dx  + \e \int_{\mathbb{R}^2}\r_\e \Delta \phi  dx
\nonumber\\
&\leq& \frac{1}{2}\int_{\R^{2}}\r_\e | \nabla \phi|^2 dx + \frac12\int_{\R^{2}}\r_\e |\nabla h_\e[\r_\e]|^2dx + \e \int_{\mathbb{R}^2}\r_\e \Delta \phi
dx\,.
\end{eqnarray*}
Computing the derivatives of $\phi$ we see
\[|\nabla \phi|=\left|\frac{2 x}{1+|x|^2}\right|\leq 1\,,\qquad |\Delta \phi| = \frac{4}{(1+|x|^2)^2}\leq 4\,.\]
We thus obtain
\begin{eqnarray*}
\frac{d}{dt}N_{\e}\leq \frac{1}{2}((1+8\e)M+  {\cal D}_\e[\r_\e])\,.
\end{eqnarray*}
Integration in time and making use of the energy - energy dissipation inequality \eqref{EED.e} and the uniform bound on ${\cal E}_\e$ from below in \eqref{bound.below} gives
\begin{eqnarray*}
N_\e(t)\leq N_\e (0)+\frac{1}{2}(1+8\e)M t + {\cal E}_\epsilon(\rho_0) - \cal E_*+\e C(1+t)t  \leq C(1+ t+\epsilon t^2)
\end{eqnarray*}
The argument can easily be made rigorous by using compactly supported approximations of $\phi$ on $\mathbb{R}^2$ as test functions, see e.g. also
\cite{BDP}. The
proof is concluded by referring to Lemma 3.2 in \cite{CC} for the proof of uniform boundedness of $\r_\e$.
\end{proof}

\begin{remark}\
\begin{itemize}
\item[(i)]
The fact that the uniform bound of $\r_\e$ grows linearly with time originates from the term of order $\e$ in the energy inequality  for the
regularized
equation. Hence the bound on the energy and therefore the modulus of equi-continuity for the regularized problem are depending on time. However, for
the
limiting equation \eqref{KS} this term vanishes and the energy is decaying for all times, which allows to deduce uniform boundedness of the solution
to
\eqref{KS} globally in time and space, see also \cite[Lemma 5.7]{CC}.
\item[(ii)] The log-moment of $\r_\e$ grows at most linearly in time. The same statement is true for the limiting function. Hence it is only possible
    to
    guarantee confinement of mass for finite times. This property allowing for compactness results will in the following be used to pass to the limit
    in the
    regularized problem. Due to the growth of the bound with time it cannot be employed for the long-time behavior. Hence different methods will be
    required.
\end{itemize}
\end{remark}

\subsubsection{The limit $\e\rightarrow 0$}
In order to deduce the global well-posedness of the Cauchy problem for \eqref{KS} it remains to carry out the limit $\e\rightarrow 0$.  Knowing that the
solution
remains uniformly bounded and having the bounds from the energy inequality, we obtain weak convergence properties of the solution. In order to pass to the
limit
with the nonlinearities and in the entropy inequality, strong convergence results will be required.
The following lemma summarizes the uniform bounds we obtain from Proposition \ref{Prop1} and Lemma \ref{lem.mom}:
\begin{lemma}\label{lem.bounds}
Let $\r_\e$ be the solution as in Proposition {\rm\ref{Prop1}}, then we obtain the following uniform in $\e$ bounds
\beqs
&&\|\r_\e\|_{L^\infty(0,T;L_{log}^1(\mathbb{R}^2))}+\|\r_\e\|_{L^\infty((0,T)\times \mathbb{R}^2)}\leq C\,,
\\
&&\|\sqrt{\r_\e}\nabla \NN_\e\ast \r_\e\|_{L^2((0,T)\times\mathbb{R}^2)} + \|\nabla \NN_\e\ast \r_\e\|_{L^\infty((0,T)\times\mathbb{R}^2)} + \sqrt{\e}\|\nabla \r_\e\|_{L^2((0,T)\times\mathbb{R}^2)} \leq C\,,
\\
&&\|\pa_t \r_\e\|_{L^2(0,T;H^{-1}(\mathbb{R}^2))}+\|\r^{q}_\e\|_{L^2(0,T;H^1(\mathbb{R}^2))}\leq C \qquad \textnormal{for any} \ \ q\geq m-\frac12\,,
\eeqs
where $C$ depends on $m, q, \rho_0$ and $T$.
\end{lemma}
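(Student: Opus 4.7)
The plan is to piece together the uniform-in-time bounds from Lemma \ref{lem.mom} with the energy--energy dissipation inequality \eqref{EED.e} from Proposition \ref{Prop1} and a handful of direct manipulations of the regularized equation \eqref{KS.e}. The uniform $L^\infty$ bound on $\r_\e$ provided by Lemma \ref{lem.mom} will be the workhorse throughout, allowing us to close estimates independently of $\e$. The first line of bounds is essentially immediate: mass conservation follows by (formally) integrating \eqref{KS.e} in $x$, giving $\|\r_\e(t,\cdot)\|_{L^1(\R^2)}=M$, while the $L^\infty$ and linear-in-$t$ log-moment bounds are exactly the content of Lemma \ref{lem.mom}.

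For the $L^\infty$ bound on the drift $\nabla\NN_\e\ast\r_\e$, I would split the convolution into the near-field $\{|x-y|\leq 1\}$ and far-field $\{|x-y|>1\}$ exactly as in \eqref{lipspot0}: the pointwise estimate $|\nabla\NN_\e(z)|\leq \tfrac{1}{2\pi|z|}$ is integrable in two dimensions on the near-field and absorbs $\|\r_\e\|_{L^\infty(\R^2)}$, while $|\nabla\NN_\e|\leq \tfrac{1}{2\pi}$ uniformly on the far-field where it absorbs the $L^1$ mass. This yields $\|\nabla\NN_\e\ast\r_\e\|_{L^\infty(\R^2)}\leq C$ and, in turn, $\int_{\R^2}\r_\e|\nabla\NN_\e\ast\r_\e|^2\,dx \leq M\,\|\nabla\NN_\e\ast\r_\e\|_{L^\infty}^2 \leq C$ uniformly in $t\in[0,T]$, covering the $\sqrt{\r_\e}\nabla\NN_\e\ast\r_\e$ bound. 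To obtain the $\sqrt{\e}$-weighted gradient bound, I would test \eqref{KS.e} against $\r_\e$ itself and integrate by parts, producing
\[
\tfrac12\tfrac{d}{dt}\|\r_\e\|_{L^2(\R^2)}^2 + m\!\int_{\R^2}\!\r_\e^{m-1}|\nabla\r_\e|^2\,dx + \e\!\int_{\R^2}\!|\nabla\r_\e|^2\,dx = \tfrac12\int_{\R^2}\r_\e^2\,(J_\e\ast\r_\e)\,dx,
\]
whose right-hand side is controlled by $\tfrac12\|\r_\e\|_{L^\infty}\|\r_\e\|_{L^2}^2$ since $\|J_\e\|_{L^1}=1$. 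Integrating in time then yields $\sqrt{\e}\,\|\nabla\r_\e\|_{L^2((0,T)\times\R^2)}\leq C$.

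The crux, which I expect to be the main technical obstacle, is the $L^2H^1$ bound on $\r_\e^q$ for $q\geq m-\tfrac12$. The idea is to extract from the dissipation $\D_\e[\r_\e]=\int_{\R^2}\r_\e|\nabla h_\e[\r_\e]|^2\,dx$ a uniform bound on $\nabla\r_\e^{m-1/2}$. Applying $|a|^2\leq 2|a-b|^2+2|b|^2$ with $a=\tfrac{m}{m-1}\sqrt{\r_\e}\,\nabla\r_\e^{m-1}$ and $b=\sqrt{\r_\e}\,\nabla\NN_\e\ast\r_\e$, together with the algebraic identity $\r_\e|\nabla\r_\e^{m-1}|^2 = \tfrac{4(m-1)^2}{(2m-1)^2}|\nabla\r_\e^{(2m-1)/2}|^2$, gives
\[
\frac{4m^2}{(2m-1)^2}\int_{\R^2}|\nabla\r_\e^{m-1/2}|^2\,dx \;\leq\; 2\D_\e[\r_\e] + 2\int_{\R^2}\r_\e|\nabla\NN_\e\ast\r_\e|^2\,dx,
\]
and both terms on the right have already been bounded in $L^1(0,T)$: the first by \eqref{EED.e} together with the uniform lower bound \eqref{bound.below} on $\E_\e$, the second by the previous paragraph. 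For any $q\geq m-\tfrac12$, the identity $|\nabla\r_\e^q|^2 = \tfrac{q^2}{(m-1/2)^2}\,\r_\e^{2q-2m+1}|\nabla\r_\e^{m-1/2}|^2$ combined with the uniform $L^\infty$ bound transfers the estimate to any larger power, while the $L^2$ part is controlled by $\|\r_\e^q\|_{L^2}^2\leq \|\r_\e\|_{L^\infty}^{2q-1}M$.

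Finally, for the time-derivative bound I would rewrite \eqref{KS.e} as $\pa_t\r_\e = \nabla\cdot\bigl(\nabla\r_\e^m + \e\nabla\r_\e - \r_\e\nabla\NN_\e\ast\r_\e\bigr)$, so that $\|\pa_t\r_\e\|_{H^{-1}(\R^2)}$ is controlled by the $L^2(\R^2)$ norm of the flux. The first term is bounded by the $q=m$ case just established, the second by $\sqrt{\e}$ times the $\sqrt{\e}\nabla\r_\e$ estimate from paragraph two, and for the third $\|\r_\e\nabla\NN_\e\ast\r_\e\|_{L^2}^2\leq \|\r_\e\|_{L^\infty}\int_{\R^2}\r_\e|\nabla\NN_\e\ast\r_\e|^2\,dx$ has already been controlled. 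Integrating in $t$ over $(0,T)$ concludes.
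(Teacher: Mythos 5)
Your proposal is correct and follows essentially the same route as the paper: mass conservation plus Lemma \ref{lem.mom} for the first line, the near/far-field splitting \eqref{estimconv} for the drift, the $L^2$-estimate (testing with $\rho_\e$ and using $\|J_\e\|_{L^1}=1$) for the $\sqrt{\e}$-weighted gradient, the dissipation bound from \eqref{EED.e} combined with the lower bound \eqref{bound.below} and the drift estimate to control $\nabla\rho_\e^{m-1/2}$ (then transferring to $q\geq m-\tfrac12$ via the uniform $L^\infty$ bound, exactly as in \eqref{boundpwrrho}), and duality against $\phi\in L^2(0,T;H^1)$ for $\partial_t\rho_\e$. The algebra in your dissipation step and the minor variants (Young's convolution inequality instead of \eqref{est.conv}) are all sound, so no gaps.
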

\begin{proof} The uniform bounds of the $L_{log}^1(\mathbb{R}^2)$- and $L^\infty(\mathbb{R}^2)$-norms follow from the conservation of mass and Lemma \ref{lem.mom}.
The convolution term
\[\nabla \NN_\e\ast \r_\e  = -\frac{1}{2\pi}\int_{\mathbb{R}^2}\frac{y-x}{|y-x|^2+\e^2}\r_\e(t,y) dy\]
can be estimated as follows:
	\begin{equation}
2\pi |\nabla \NN_\e\ast \r_\e|\leq \|\r_\e\|_{L^\infty(\R^2)}\int_{|x-y|\leq 1}\frac{1}{|x-y|}dy + M \leq C\,.\label{estimconv}
\end{equation}
The bound of $\sqrt{\r_\e} \nabla \NN_\e\ast \r_\e$ in $L^2((0,T)\times \mathbb{R}^2)$ follows now easily by using the conservation of mass.

The basic $L^2$-estimate corresponding to \eqref{est.1} for $p=2$ and $k=0$ implies after integration in time
\beqs
\frac12\int_{\mathbb{R}^2}\r_\e^2 dx &\leq&  \frac12\int_{\mathbb{R}^2}\r_0^2 dx   -  \e \int_0^T\int_{\mathbb{R}^2} |\nabla \r_\e|^2 dxdt - m\int_0^T\int_{\mathbb{R}^2}\r_\e^{m-1} |\nabla \r_\e|^2 dxdt \\
&&\qquad + \int_0^T\int_{\mathbb{R}^2}(\corr{J_\e}\ast \r_\e) \r_\e^2 \,  dxdt \,.
\eeqs
Using the above a priori estimates we can further bound employing the inequality in \eqref{est.conv}
\beqs
\e \|\nabla \r_\e\|^2_{L^2((0,T)\times\mathbb{R}^2)}&\leq& \frac12\int_{\mathbb{R}^2}\r_0^2 dx+ C\int_0^T\int_{\mathbb{R}^2}\r_\e^3  dxdt \leq C\,.
\eeqs
Since $m>1$,  the conservation of mass and the uniform boundedness of $\r_\e$ give $\r_\e^{m-1/2}$ in $L^2((0,T)\times\mathbb{R}^2)$. For the gradient we
now use
the bound on the entropy dissipation \eqref{EED.e}
\begin{eqnarray}
\|\nabla \r^{m-1/2}_\e\|^2_{L^2((0,T)\times\mathbb{R}^2)}&\leq& 2\frac{(m-1/2)^{2}}{m^{2}}\left( \int_0^T\D_\e[\r_\e]dt + \|\sqrt{\r_\e}\nabla \NN_\e\ast
\r_\e\|^2_{L^2((0,T)\times\mathbb{R}^2)}\right)\nonumber\\
&\leq& C + C\|\nabla \NN_\e\ast \r_\e\|^{2}_{L^\infty((0,T)\times\mathbb{R}^2)}MT\label{boundpwrrho}\,.
\end{eqnarray}
The bound for $\nabla \r^q$ follows easily by rewriting
\[\frac{m-\frac12}{q}\,\nabla \r_\e^q=\r_\e^{q-m+\frac12}\nabla \r_\e^{m-\frac12}\]
and using the uniform boundedness of $\r_\e$.

It thus now remains to derive the estimate for the time derivative. Using the previous estimates  we have for any test function $\phi\in
L^2(0,T;H^1(\mathbb{R}^2))$,
\begin{eqnarray*}
&&\left|\int_0^T\!\!\int_{\mathbb{R}^2}\pa_t \r_\e \phi \, dx\,dt\right|\leq \int_0^T\int_{\mathbb{R}^2}\left|\nabla (\r_\e^m +\e\r_\e)\cdot \nabla \phi \right| \,
dx\,dt +
\int_0^T\int_{\mathbb{R}^2}\left| \r_\e (\nabla \NN_\e\ast \r_\e )\cdot \nabla \phi \right| \, dx\,dt\\
&&\quad \leq \left(\|\nabla\r_\e^m\|_{L^2((0,T)\times\mathbb{R}^2)}+ \e\|\nabla\r_\e\|_{L^2((0,T)\times\mathbb{R}^2)}\right.\\
&&\qquad\qquad\qquad\qquad\qquad\qquad\qquad
\left.+\|\sqrt{\r_\e}|\nabla
\NN_\e\ast\r_\e|\|_{L^\infty((0,T)\times\mathbb{R}^2)}\sqrt{TM}\right)\|\nabla \phi\|_{L^2((0,T)\times\mathbb{R}^2)} \\
&&\quad \leq C\|\nabla \phi\|_{L^2((0,T)\times\mathbb{R}^2)}\,\,.
\end{eqnarray*}
\end{proof}

We now use these bounds to derive weak convergence properties. The Dubinskii Lemma (see Lemma \ref{lem-Dub} in the Appendix) can be applied to obtain the strong convergence
locally in
space, which can be extended to global strong convergence using the boundedness of the log-moment.
\begin{lemma}\label{lem.conv} Let $\r_\e$ be the solution as in Proposition {\rm\ref{Prop1}}. Then, up to a subsequence,
\begin{eqnarray}
\label{conv.1}\r_\e\ &\rightarrow & \ \r \qquad  \ \ \textnormal{in} \quad L^{q}((0,T)\times\mathbb{R}^2)\quad \textnormal{for any} \ \ 1\leq
q<\infty\,,\\
\label{conv.2}\r^{p}_{\e}\ &\rightharpoonup & \ \r^{p} \ \qquad  \textnormal{in} \quad L^2(0,T;H^1(\mathbb{R}^2))\,\quad \textnormal{for any} \ \
m-\frac{1}{2}\leq p<\infty,\\
\label{conv.3}\sqrt{\r_\e}\ &\rightarrow & \ \sqrt{\r} \qquad  \textnormal{in} \quad \ L^2((0,T)\times \mathbb{R}^2)\,,\\
\label{conv.4}\e \nabla \r_\e\ &\rightarrow &\  0 \qquad  \textnormal{in} \quad \ L^2((0,T)\times \mathbb{R}^2;\R^2)
\end{eqnarray}
\end{lemma}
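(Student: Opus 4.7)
The plan is to combine the uniform bounds of Lemma \ref{lem.bounds} with a nonlinear compactness argument, treating the four convergences in order of difficulty. The easiest is \eqref{conv.4}: one simply writes $\|\e \nabla \r_\e\|_{L^2((0,T)\times\R^2)} = \sqrt{\e}\,\|\sqrt{\e}\,\nabla \r_\e\|_{L^2((0,T)\times\R^2)} \leq C\sqrt{\e} \to 0$, so the limit is immediately zero. The weak convergence \eqref{conv.2}, up to identifying the limit, also comes for free: for each $p\geq m-\tfrac12$, Lemma \ref{lem.bounds} provides $\r_\e^p$ bounded in $L^2(0,T;H^1(\R^2))$, so along a subsequence $\r_\e^p \rightharpoonup \xi_p$ in this space. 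The remaining work is to establish the strong convergence \eqref{conv.1} and then use it to identify $\xi_p = \r^p$.

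For \eqref{conv.1} I would first establish local (in space) strong compactness by the Dubinskii nonlinear Aubin--Lions-type lemma referenced in the appendix, feeding it the three pieces provided by Lemma \ref{lem.bounds}: the uniform $L^\infty$ bound on $\r_\e$, the bound on $\r_\e^{m-1/2}$ in $L^2(0,T;H^1_{loc}(\R^2))$, and the bound on $\pa_t \r_\e$ in $L^2(0,T;H^{-1}(\R^2))$. These hypotheses fit exactly into the framework of Lemma~\ref{lem-Dub} and yield, along a subsequence, $\r_\e \to \r$ strongly in $L^2((0,T)\times B_R)$ for every $R>0$; the uniform $L^\infty$ bound then upgrades this to $L^q((0,T)\times B_R)$ convergence for every $1\le q<\infty$.

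To promote local to global convergence on all of $\R^2$, I would exploit the log-moment bound from Lemma \ref{lem.mom}: uniformly in $\e$,
\[
\int_{|x|>R}\r_\e(t,x)\,dx \;\leq\; \frac{N_\e(t)}{\log(1+R^2)} \;\leq\; \frac{C_T}{\log(1+R^2)},
\]
which gives uniform tightness of $\{\r_\e\}$ in $L^1((0,T)\times \R^2)$. Combining this tightness with the $L^1_{loc}$ convergence of the previous paragraph produces $\r_\e \to \r$ in $L^1((0,T)\times\R^2)$, and interpolating with the uniform $L^\infty$ bound yields \eqref{conv.1} for every $1\leq q<\infty$.

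Finally, passing to a further a.e.\ convergent subsequence and using the $L^\infty$ bound, $\r_\e^p \to \r^p$ strongly in every $L^q((0,T)\times\R^2)$; uniqueness of weak $H^1$ limits identifies $\xi_p = \r^p$, proving \eqref{conv.2}. The convergence \eqref{conv.3} is then immediate from the elementary inequality $(\sqrt{a}-\sqrt{b})^2 \leq |a-b|$ for $a,b\geq 0$, which gives $\|\sqrt{\r_\e}-\sqrt{\r}\|_{L^2((0,T)\times\R^2)}^2 \leq \|\r_\e - \r\|_{L^1((0,T)\times\R^2)} \to 0$. The main technical obstacle is the nonlinear compactness step: because the spatial $H^1$ bound is available only on the nonlinear power $\r_\e^{m-1/2}$ rather than on $\r_\e$ itself, the standard Aubin--Lions lemma does not apply directly, and one genuinely needs the Dubinskii version to transfer compactness from $\r_\e^{m-1/2}$ back to $\r_\e$ via the $L^\infty$ and weak time-derivative bounds.
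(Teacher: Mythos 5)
Your proposal is correct and follows essentially the same route as the paper: the Dubinskii lemma applied with the $L^\infty$, $H^1$-on-$\rho_\e^{m-1/2}$ and $H^{-1}$-time-derivative bounds gives local strong convergence, the log-moment bound prevents escape of mass so the convergence becomes global, interpolation with the $L^\infty$ bound gives all $q$, and \eqref{conv.2}--\eqref{conv.4} follow from the uniform bounds plus identification of limits by a.e.\ convergence. Your shortcuts at the end (deducing global $L^1$ convergence directly from tightness, with the tail bound on $\rho$ obtained by Fatou, instead of invoking Lemma \ref{lem.A.1}, and proving \eqref{conv.3} via $(\sqrt{a}-\sqrt{b})^2\leq|a-b|$ rather than the paper's weak-convergence expansion) are valid minor simplifications of the same argument.
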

\begin{proof}
Since $\{\r_\e\}_\e$ are uniformly bounded in $L^q((0,T)\times \mathbb{R}^2)$ for any $1\leq q\leq \infty$, we obtain from the reflexivity of the Lebesgue
spaces
for $1<q<\infty$\corr{, up to a subsequence,} the weak convergence
\beq
\label{weak.limit}
\r_\e\rightharpoonup \r \qquad \textnormal{in} \ \ L^q((0,T)\times\mathbb{R}^2) \quad \textnormal{for any } \ 1<q<\infty\,.\eeq
Moreover due to the uniform bounds from  Lemma \ref{lem.bounds}
\[\|\pa_t \r_\e\|_{L^2(0,T;H^{-1}(\mathbb{R}^2))}+\|\r^{r}_\e\|_{L^2(0,T;H^1(\mathbb{R}^2))}\leq C
\]
for any $r\geq m-\frac12$, we can apply the Dubinskii Lemma stated in the Appendix to derive
\[\r_\e\rightarrow \r  \qquad  \textnormal{in} \quad L^{r}((0,T)\times B_{R}(0))\qquad \textnormal{for any} \ \   2m\leq r<\infty\
\textnormal { and any }R>0
\,.\]
The boundedness of the log-moment $N(t)$ allows to extend the strong convergence to the whole space, since for any $1\leq q<\infty$  we have
$$
\int_0^T\!\!\!\int_{|x|>R} \r_\e^q dx dt \leq \|\r_\e\|^{q-1}_{L^\infty((0,T)\times\mathbb{R}^2)}\int_0^T\!\!\!\int_{|x|>R}\frac{\corr{\log}(1+|x|^2)}{\corr{\log}
(1+R^2)}\r_\e
dx dt\leq \frac{C(1+T)}{\corr{\log} (1+R^2)} \rightarrow 0 \,,
$$
as $R\rightarrow \infty$. Due to the weak lower semi-continuity of the $L^q$-norm we can now conclude with \eqref{weak.limit} that also
\[\int_{|x|>R}\r^q(t,x) dx\leq\liminf_{\e>0}\int_{|x|>R}\r_\e^q(t,x) dx \rightarrow 0 \qquad \textnormal{as} \  R\rightarrow \infty \quad \textnormal{for
all} \
q\geq 1\,.\]
Hence we can extend the strong convergence locally in space to strong convergence in $\mathbb{R}^2$:
\[\r_\e\rightarrow \r  \qquad  \textnormal{in} \quad L^{r}((0,T)\times\mathbb{R}^2)\qquad \textnormal{for any} \ \   2m\leq r<\infty\,.\]
Additionally the strong convergence in $L^1((0,T)\times \mathbb{R}^2)$ can be deduced using the bound from the energy as stated in Lemma \ref{lem.A.1} in
the Appendix. Interpolation now yields \eqref{conv.1}.

The weak convergence of $ \r_\e^{m-1/2}$ in $L^2(0,T;H^1(\mathbb{R}^2))$ holds due to its uniform boundedness given by inequality \eqref{boundpwrrho} and
the
reflexivity of the latter space, where the limit is identified arguing by the density of spaces. Due to the uniform boundedness of $\r_\e$ this assertion
can be
extended to any finite power bigger than $m-1/2$.

Since moreover $\sqrt{\r_\e}$ is uniformly bounded in $L^2((0,T)\times \mathbb{R}^2)$ we have the weak convergence towards $\sqrt{\r}$ in $L^2((0,T)\times
\mathbb{R}^2)$, where again the limit is identified by using the a.e. convergence of $\r_\e$  from the strong convergence above. To see \eqref{conv.3} we
rewrite
\begin{eqnarray*}
\|\sqrt{\r_\e}-\sqrt{\r}\|^2_{L^2((0,T)\times \mathbb{R}^2)}&=&\int_0^T\int_{\mathbb{R}^2}(\r_\e-2\sqrt{\r_\e}\,\sqrt{\r} +\r )dx\,dt\\
&=&\int_0^T\int_{\mathbb{R}^2}(\r_\e-\r)dx\,dt - 2\int_0^T\int_{\mathbb{R}^2}\sqrt{\r}\,(\sqrt{\r_\e} -\sqrt{\r} )dx\,dt\,.
\end{eqnarray*}
The first integral vanishes and the second one converges to $0$ due to the weak convergence of $\sqrt{\r_\e}\rightharpoonup \sqrt{\r}$ in $L^2((0,T)\times
\mathbb{R}^2)$.

Finally the convergence in \eqref{conv.4} is a direct consequence of the bound $\sqrt{\e}\|\nabla \r_\e\|_{L^2((0,T)\times \mathbb{R}^2)}\leq C$ in Lemma \ref{lem.bounds}.
\end{proof}

These convergence results from Lemma \ref{lem.conv}  are sufficient to obtain the weak convergence of the nonlinearities $\sqrt{\r_\e}\nabla h_\e[\r_\e]$
and
$\r_\e\nabla h_\e[\r_\e]$ in $L^2((0,T)\times \mathbb{R}^2)$, which allow to pass to the limit in the weak formulation and to deduce the weak lower
semicontinuity of the entropy dissipation term:
\begin{lemma}\label{lem.diss} Let $\r_\e$ and $\r$ be as in Lemma {\rm\ref{lem.conv}}. Then
\beq
\label{conv.diss.e}\sqrt{\r_\e}\,\nabla h_\e[\r_\e]\rightharpoonup \sqrt{\r}\,\nabla h[\r] \qquad \textnormal{in} \ L^2((0,T)\times
\mathbb{R}^2;\R^{2})\,\\
\label{conv.nonl.e}\r_\e\,\nabla h_\e[\r_\e]\rightharpoonup \r\,\nabla h[\r] \qquad \textnormal{in} \ L^2((0,T)\times \mathbb{R}^2;\R^{2})\,.
\eeq
\end{lemma}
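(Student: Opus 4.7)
The plan is to decompose the vector fields via
\[
\sqrt{\rho_\epsilon}\,\nabla h_\epsilon[\rho_\epsilon] = \frac{2m}{2m-1}\,\nabla \rho_\epsilon^{m-1/2} - \sqrt{\rho_\epsilon}\bigl(\nabla \NN_\epsilon \ast \rho_\epsilon\bigr),
\]
\[
\rho_\epsilon\,\nabla h_\epsilon[\rho_\epsilon] = \nabla \rho_\epsilon^{m} - \rho_\epsilon\bigl(\nabla \NN_\epsilon \ast \rho_\epsilon\bigr),
\]
which follow from the chain-rule identities $\sqrt{\rho_\epsilon}\nabla \rho_\epsilon^{m-1} = \tfrac{2(m-1)}{2m-1}\nabla \rho_\epsilon^{m-1/2}$ and $\rho_\epsilon\nabla \rho_\epsilon^{m-1} = \tfrac{m-1}{m}\nabla \rho_\epsilon^{m}$. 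The gradient pieces $\nabla \rho_\epsilon^{m-1/2}$ and $\nabla \rho_\epsilon^{m}$ already converge weakly in $L^{2}((0,T)\times \R^{2};\R^{2})$ to $\nabla \rho^{m-1/2}$ and $\nabla \rho^{m}$ by \eqref{conv.2}. Consequently it suffices to prove that the two interaction pieces $\sqrt{\rho_\epsilon}(\nabla \NN_\epsilon \ast \rho_\epsilon)$ and $\rho_\epsilon(\nabla \NN_\epsilon \ast \rho_\epsilon)$ converge strongly in $L^{2}((0,T)\times \R^{2};\R^{2})$ to the expected limits $\sqrt{\rho}(\nabla \NN\ast\rho)$ and $\rho(\nabla \NN\ast\rho)$; strong plus weak convergence in $L^{2}$ then gives \eqref{conv.diss.e} and \eqref{conv.nonl.e}.

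The core of the argument is to establish the strong convergence
\[
\nabla \NN_\epsilon \ast \rho_\epsilon \longrightarrow \nabla \NN \ast \rho \quad \text{in } L^{p}\bigl(0,T;L^{\infty}(\R^{2};\R^{2})\bigr)\text{ for every }p<\infty.
\]
Splitting as $(\nabla \NN_\epsilon - \nabla \NN) \ast \rho_\epsilon + \nabla \NN \ast (\rho_\epsilon - \rho)$, the first term is controlled using the pointwise identity
\[
\bigl|\nabla \NN_\epsilon(z) - \nabla \NN(z)\bigr| = \frac{1}{2\pi}\,\frac{\epsilon^{2}}{|z|\,(|z|^{2}+\epsilon^{2})},
\]
combined with a near/far splitting around a fixed radius $R$ and the uniform $L^{1}\cap L^{\infty}$ bounds on $\rho_\epsilon$ from Lemma \ref{lem.bounds}, to give an $O(\epsilon)$ bound uniformly in $(t,x)$. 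For the second term I would decompose the singular kernel as $\nabla \NN=\nabla \NN\,\chi_{B_{1}(0)}+\nabla \NN\,\chi_{B_{1}(0)^{c}}$, the local piece lying in $L^{q'}(\R^{2})$ for every $q'<2$ and the tail being bounded; Young's inequality applied to each piece together with the strong convergence $\rho_\epsilon\to \rho$ in $L^{q}((0,T)\times \R^{2})$ for every $q<\infty$ from \eqref{conv.1} and the uniform bound in $L^{1}$ then delivers the claim (after H\"older in time to pass from $L^{q}_{t,x}$ to $L^{p}_{t}L^{\infty}_{x}$).

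With this in hand, writing $a b_\epsilon - c b = (a-c)b_\epsilon + c(b_\epsilon-b)$ with $b_\epsilon = \nabla \NN_\epsilon \ast \rho_\epsilon$, $b = \nabla \NN \ast \rho$, and $(a,c)$ equal to either $(\sqrt{\rho_\epsilon},\sqrt{\rho})$ or $(\rho_\epsilon,\rho)$, the first summand is controlled by the uniform $L^{\infty}$ bound on $b_\epsilon$ from Lemma \ref{lem.bounds} multiplied by the strong $L^{2}((0,T)\times \R^{2})$ convergences supplied by \eqref{conv.3} and \eqref{conv.1}; the second summand is bounded by the $L^{2}_{t}L^{\infty}_{x}$ convergence just proved combined with the uniform $L^{1}\cap L^{\infty}$ bounds on $\sqrt{\rho}$ and $\rho$ (concretely, $\|c(b_\epsilon-b)\|_{L^{2}}^{2}\leq \|c\|_{L^\infty}\|c\|_{L^{1}_{x}}\|b_\epsilon-b\|_{L^{2}_{t}L^{\infty}_{x}}^{2}$). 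The principal technical obstacle is that $\nabla \NN\notin L^{p}(\R^{2})$ for any finite $p$, so one cannot directly apply Young's inequality to $\nabla \NN\ast(\rho_\epsilon-\rho)$; the near/far splitting of the kernel together with the mass conservation $\|\rho_\epsilon\|_{L^{1}}=\|\rho\|_{L^{1}}=M$ is precisely what turns this into a tractable estimate.
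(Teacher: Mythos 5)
Your argument is correct, but it handles the key technical point differently from the paper. The paper also reduces, via \eqref{conv.2}--\eqref{conv.3}, to the interaction term, but it only proves \emph{weak} $L^2$ convergence of $\sqrt{\r_\e}\,\nabla\NN_\e\ast\r_\e$ by testing against $\phi\in L^2$ and splitting into three pieces, $(\sqrt{\r_\e}-\sqrt{\r})(\nabla\NN_\e\ast\r_\e)$, $\sqrt{\r}\,(\nabla(\NN_\e-\NN)\ast\r_\e)$ and $\sqrt{\r}\,\nabla\NN\ast(\r_\e-\r)$; the last piece is the delicate one and is treated by writing $\r_\e-\r=(\sqrt{\r_\e}-\sqrt{\r})(\sqrt{\r_\e}+\sqrt{\r})$ and running a Cauchy--Schwarz estimate on the resulting double integral with a near/far splitting of the kernel $|x-y|^{-1}$. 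You instead upgrade the potential term itself: using the same $O(\e)$ bound on $(\nabla\NN_\e-\nabla\NN)\ast\r_\e$ as the paper, plus Young's inequality on the near/far split of $\nabla\NN$ together with the strong convergence $\r_\e\to\r$ in $L^q((0,T)\times\R^2)$ for all finite $q$ from \eqref{conv.1} (including $q=1$), you get $\nabla\NN_\e\ast\r_\e\to\nabla\NN\ast\r$ strongly in $L^p(0,T;L^\infty(\R^2))$, and hence \emph{strong} $L^2$ convergence of both products $\sqrt{\r_\e}(\nabla\NN_\e\ast\r_\e)$ and $\r_\e(\nabla\NN_\e\ast\r_\e)$; combined with the weak convergence of $\nabla\r_\e^{m-1/2}$ and $\nabla\r_\e^m$ this gives both claims at once, whereas the paper only details the $\sqrt{\r_\e}$ case. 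Your route proves more than is needed and avoids the double-integral estimate, at the price of leaning harder on \eqref{conv.1}; both are legitimate. One small repair: in your final bound you should not invoke $\|\sqrt{\r}\|_{L^1(\R^2)}$, which need not be finite; the correct estimate is $\|c(b_\e-b)\|_{L^2((0,T)\times\R^2)}^2\leq \sup_t\|c(t)\|_{L^2(\R^2)}^2\,\|b_\e-b\|_{L^2(0,T;L^\infty(\R^2))}^2$, which suffices since $\|\sqrt{\r}(t)\|_{L^2(\R^2)}^2=\|\r(t)\|_{L^1(\R^2)}\leq M$ and $\|\r(t)\|_{L^2(\R^2)}^2\leq M\|\r\|_{L^\infty}$.
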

\begin{proof}
Due to \eqref{conv.2} and \eqref{conv.3} it remains to verify
\[\sqrt{\r_\e}\,\nabla \NN_\e\ast\r_\e \rightharpoonup  \sqrt{\r}\,\nabla \NN\ast \r   \quad \ \textnormal{in} \quad L^2((0,T)\times
\mathbb{R}^2;\R^{2})\,.\]
Due to Lemma \ref{lem.bounds}, we have the weak convergence of $\sqrt{\r_\e}\,\nabla  \NN_\e\ast\r_\e$ in $L^2((0,T)\times \mathbb{R}^2;\R^{2})$. In order
to
identify the limit we consider for a $\phi \in L^2((0,T)\times \mathbb{R}^2;\R^{2})$:
\begin{eqnarray}
&&\int_0^T\int_{\mathbb{R}^2}(\sqrt{\r_\e}\,\nabla \NN_\e\ast\r_\e-\sqrt{\r}\,\nabla \NN \ast\r)\cdot\phi  \, dx dt \nonumber\\
&&=\int_0^T\int_{\mathbb{R}^2}(\sqrt{\r_\e}-\sqrt{\r})(\nabla  \NN_\e\ast\r_\e)\cdot\phi\, dx dt +
\int_0^T\int_{\mathbb{R}^2}\sqrt{\r}(\nabla ( \NN_\e- \NN)\ast\r_\e)\cdot\phi\, dx dt\nonumber\\
\label{int.diss.e}&&\quad +\int_0^T\int_{\mathbb{R}^2}\sqrt{\r}\,\nabla  \NN\ast(\r_\e-\r)\cdot\phi\, dx dt  \end{eqnarray}
The first term converges to zero using \eqref{conv.3}, since by \eqref{estimconv} it is bounded by
\[\|\sqrt{\r_\e}-\sqrt{\r}\|_{L^2((0,T)\times\mathbb{R}^2)}\|\nabla
\NN_\e\ast\r_\e\|_{L^\infty((0,T)\times\mathbb{R}^2)}\|\phi\|_{L^2((0,T)\times\mathbb{R}^2)}\leq
C\|\sqrt{\r_\e}-\sqrt{\r}\|_{L^2((0,T)\times\mathbb{R}^2)}\rightarrow 0\,.\]
For the second term we first use the Cauchy-Schwarz inequality
\beqs
&&\int_0^T\int_{\mathbb{R}^2}\sqrt{\r}(\nabla  (\NN_\e- \NN)\ast\r_\e)\cdot\phi\, dx dt \leq \sqrt{MT}\|\phi\|_{L^2((0,T)\times\mathbb{R}^2)}\|\nabla
(\NN_\e-\NN)\ast\r_\e\|_{L^\infty((0,T)\times\mathbb{R}^2)}\,.
\eeqs
To see that this convolution term vanishes we bound further
\beqs
&&|\nabla (\NN_\e-\NN)\ast \r_\e|=\left|\int_{\mathbb{R}^2}\left(\frac{x-y}{|x-y|^2+\e^2}-\frac{x-y}{|x-y|^2}\right)\r_\e(y)dy\right|\\
&&\quad \leq \e^2\|\r_\e\|_{L^\infty(\mathbb{R}^2)}\int_{\mathbb{R}^2}\frac{|x-y|}{(|x-y|^2+\e^2)|x-y|^2}dy=\e C\int_0^\infty\frac{1}{s^2+1}ds \leq \e
C\rightarrow 0
\eeqs
uniformly in $x,t$, where we substituted $s = |x-y|/\e$.
For the remaining term in \eqref{int.diss.e} we proceed changing the order of integration, where we again skip the dependence of $\r_\e$ and $\phi$ on $t$
in the
following:
\begin{eqnarray*}
&&\int_0^T\int_{\mathbb{R}^2}\sqrt{\r}\,(\nabla  \NN\ast(\r_\e-\r))\cdot\phi \, dx dt \\
&&=\frac{1}{2\pi}\int_0^T\int_{\mathbb{R}^2}\left(\sqrt{\r_\e(y)}-\sqrt{\r(y)}\right) \left(\sqrt{\r_\e(y)}+\sqrt{\r(y)}\right)\left(\int_{\mathbb{R}^2}
\sqrt{\r(x)}\frac{x-y}{|x-y|^2}\cdot\phi(x) dx\right) dydt\\
&&\leq\frac{1}{2\pi}\left\|\sqrt{\r_\e}-\sqrt{\r}\right\|_{L^2((0,T)\times\mathbb{R}^2)}\left\|
\left(\sqrt{\r_\e(\cdot)}+\sqrt{\r(\cdot)}\right)\left(\int_{\mathbb{R}^2} \sqrt{\r(x)}\frac{1}{|x-\cdot|}|\phi(x)| dx\right)
\right\|_{L^2((0,T)\times\mathbb{R}^2)}\\
\end{eqnarray*}
 To prove that this integral vanishes in the limit, due to \eqref{conv.3} it suffices to show that
\[\int_0^T \int_{\mathbb{R}^2} \left(\left(\sqrt{\r_\e(y)}+\sqrt{\r(y)}\right)\int_{\mathbb{R}^2} \sqrt{\r(x)}\frac{1}{|x-y|}|\phi(x)| dx \right)^2 dydt
\leq C
\,.\]
We shall therefore split the integral into two parts and consider first
\begin{align*}
&\int_0^T \int_{\mathbb{R}^2} \left(\left(\sqrt{\r_\e(y)}+\sqrt{\r(y)}\right)\int_{|x-y|\leq 1} \sqrt{\r(x)}\frac{1}{|x-y|}|\phi(x)| dx \right)^2 dydt\\
& \leq 2 \int_0^T \int_{\mathbb{R}^2} (\r_\e(y)+\r(y))\left(\int_{|x-y|\leq1}|\phi|^2(x) \frac{1}{|x-y|}dx\right)\left( \int_{|x-y|\leq1}\r(x)
\frac{1}{|x-y|}dx\right)\, dy dt\\
& \leq C\|\r\|_{L^\infty((0,T)\times\mathbb{R}^2)}\left(\|\r\|_{L^\infty((0,T)\times\mathbb{R}^2)}+\|\r_\e\|_{L^\infty((0,T)\times\mathbb{R}^2)}\right)
 \int_0^T \int_{\mathbb{R}^2}\int_{|x-y|\leq1} \frac{|\phi|^2(x)}{|x-y|}dy dx dt \\
& \leq C
 \int_0^T \int_{\mathbb{R}^2}|\phi|^{2}(x)\left(\int_{|x-y|\leq1} \frac{1}{|x-y|}dy\right) dx dt \ \leq \ C\|\phi\|_{L^2((0,T)\times\mathbb{R}^2)}^2
\end{align*}
It remains to bound the integral for $|x-y|>1$:
\begin{align*}
&\int_0^T \int_{\mathbb{R}^2} \left(\left(\sqrt{\r_\e(y)}+\sqrt{\r(y)}\right)\int_{|x-y|> 1} \sqrt{\r(x)}\frac{1}{|x-y|}|\phi(x)| dx \right)^2 dydt\\
&\leq 2 \int_0^T\|\phi\|_{L^2(\mathbb{R}^2)}^2 \int_{\R^{2}} \left(\r_\e(y)+\r(y)\right)  \int_{|x-y|>1}\r(x) \frac{1}{|x-y|^2}dxdy dt\\
&\leq 2M \int_0^T\|\phi\|_{L^2(\mathbb{R}^2)}^2\int_{\R^{2}} \left(\r_\e(y)+\r(y)\right) dydt\ = \ 4M^{2}\|\phi\|^2_{L^{2}((0,T)\times\R^{2})}.
\end{align*}
\end{proof}

\begin{proof}[Proof of Theorem {\rm\ref{thm.ex}}] The convergence property of the nonlinearity in \eqref{conv.nonl.e} and the weak convergence of the time
derivative
due to Lemma \ref{lem.bounds} allow to pass to the limit in the weak formulation of the Cauchy problem for \eqref{KS}, where the linear diffusion term vanishes due to \eqref{conv.4}. The uniqueness of the solution
is implied from Theorem 1.3 and Corollary 6.1 of \cite{CLM}, where we shall not go further into detail here.

It thus remains to pass to the limit in the energy inequality. Since the energy dissipation is weakly lower semicontinuous due to \eqref{conv.diss.e}, we
get
\beqs
\int_0^T\D[\r](t)dt\leq \liminf_{\e>0}\int_0^T\D_\e[\r_\e](t)dt\,.
\eeqs
In order to obtain the energy inequality \eqref{EI} in the limit $\e\rightarrow 0$ it thus remains to show
${\cal E}_\e[\r_\e](t) \rightarrow {\cal E}[\r](t)\,$ for $t\in[0,T]$.
Lemma \ref{lem.A.1} and the uniform bounds on $\r_\e$ in Lemma \ref{lem.bounds} directly imply the strong convergence of $\r_\e$ in
$L^\infty(0,T;L^m(\mathbb{R}^2))$. It is therefore left to prove the convergence for the convolution term and we rewrite
\beqs
&&-4\pi \int_{\R^2} (\r_\e \NN_\e\ast \r_\e-\r \NN \ast \r ) dx
=\int_{\mathbb{R}^2}\int_{\mathbb{R}^2}\r_\e(x)\r_\e(y)\corr{\log}\frac{|x-y|^2+\e^2}{|x-y|^2} dxdy\\
&&\qquad \qquad+ 2\int_{\mathbb{R}^2}\int_{\mathbb{R}^2}\left(\r_\e(x)(\r_\e(y)-\r(y))+\r(y)(\r_\e(x)-\r(x))\right)\corr{\log} |x-y| dxdy\,.
\eeqs
We split the domain of integration and first analyze the case $|x-y|\geq 1$. In this domain, we get
$$
\int_{\mathbb{R}^2}\int_{|x-y|\geq 1}\!\!\!\!\!\!\r_\e(x)\r_\e(y)\corr{\log} \frac{|x-y|^2+\e^2}{|x-y|^2} dxdy\leq \int_{\mathbb{R}^2}\int_{|x-y|\geq 1}\!\!\!\!\!\!\r_\e(x)\r_\e(y)\corr{\log}
(1+\e^2)
dxdy\leq \e^2M^2\,,
$$
and thus it converges to zero as $\e\to 0$. Using the Cauchy-Schwarz inequality, we obtain moreover
\beqs
&&\left(\int_{\mathbb{R}^2}\int_{|x-y|\geq 1}\r_\e(x)(\r_\e(y)-\r(y))\corr{\log} |x-y| dxdy\right)^2\\
&&\quad \leq \|\r_\e-\r\|_{L^1(\mathbb{R}^2)}\int_{\mathbb{R}^2}
|\r_\e(y)-\r(y)|\left|\int_{|x-y|\geq 1}\r_\e(x)\corr{\log}|x-y| dx\right|^2dy\\
&&\quad \leq 2M\|\r_\e-\r\|_{L^1(\mathbb{R}^2)}\int_{\mathbb{R}^2}\int_{|x-y|\geq 1}(\corr{\log}(1+|x|)+\corr{\log}(1+|y|))\r_\e(x)(\r_\e(y)+\r(y))dxdy\\
&&\quad \leq 4M^2(N(t)+N_\e(t))\|\r_\e-\r\|_{L^1(\mathbb{R}^2)}\leq C(1+T)\|\r_\e-\r\|_{L^{\infty}(0,T;L^1(\mathbb{R}^2))}\rightarrow 0
\eeqs
We now turn to the integration domain $|x-y|<1$, \corr{where by dominated convergence}
\beqs
\int_{\mathbb{R}^2}\int_{|x-y|< 1}\r_\e(x)\r_\e(y)\corr{\log}\frac{|x-y|^2+\e^2}{|x-y|^2} dx dy &\leq& \|\r_\e\|_{L^\infty(\mathbb{R}^2)}
\int_{\mathbb{R}^2}\r_\e(y)\int_0^1 r\corr{\log}\frac{r^2+\e^2}{r^2} drdy\\
&\leq& CM\int_0^1 r\corr{\log}\frac{r^2+\e^2}{r^2} dr \corr{ \  \rightarrow \  0. }
\eeqs
This proves the convergence of the entropy, which together with the weak lower semicontinuity of the entropy-dissipation leads to the desired
energy-energy
dissipation inequality \eqref{EI} for the limiting solution $\r$.
\end{proof}


\subsection{Long-Time Behavior of Solutions}
Our main result of Section 2 together with the uniqueness argument for radial stationary solutions to \eqref{KS} of \cite{KY} and the characterization of global minimizers in \cite{CCV} and Corollary \ref{unique} leads to the following result:

\begin{theorem}\label{thm.stst.long} There exists a unique stationary state $\r_M$ of \eqref{KS} with mass $M$ and zero center of mass in the sense of Definition {\rm\ref{stationarystates}}
with the property $\r_M\in L^1_{log}(\R^2)$.  Moreover, $\r_M$ is compactly supported, bounded, radially symmetric and non-increasing. Moreover, the unique stationary state is characterized as the
unique global minimizer of the free energy functional \eqref{E} with mass $M$.
\end{theorem}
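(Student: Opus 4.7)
The plan is to assemble the statement from the results already proved in Sections~2 and~3, applied to the 2D Newtonian logarithmic kernel $W=-\NN$. The first task is to verify that this $W$ fits the framework: with $\omega(r)=(2\pi)^{-1}\log r$ one has $\omega(1)=0$ and $\omega'(r)=(2\pi r)^{-1}>0$, which checks (K1)--(K3); the subadditivity bound (K4) follows from $(1+a)(1+b)\ge a+b$, giving $\log(a+b)\le \log(1+a)+\log(1+b)$; and (K5) is $\log r\to\infty$. Moreover $\omega(1+|x|)\sim (2\pi)^{-1}\log|x|$ and $\log(1+|x|^2)\sim 2\log|x|$ at infinity, so the hypothesis $\rho\in L^1_{log}(\R^2)$ coincides (modulo constants and the $L^1$ part itself) with the weighted integrability $\omega(1+|x|)\,\rho\in L^1(\R^2)$ used throughout Sections~2 and~3.

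For existence I would invoke Theorem~\ref{conccomp2}: for any mass $M>0$ it supplies a global minimizer $\rho_M\in\mathcal{Y}_M$ of $\E$ which is radially symmetric, non-increasing, compactly supported, H\"older continuous, and a stationary state in the sense of Definition~\ref{stationarystates}. Being H\"older continuous with compact support, $\rho_M$ is bounded and lies in $L^1_{log}(\R^2)$, while membership in $\mathcal{Y}_M$ encodes both $\|\rho_M\|_{L^1(\R^2)}=M$ and the zero center of mass in the improper-integral sense. This covers all the existence claims in one shot.

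For uniqueness, let $\rho_s$ be any stationary state in the sense of Definition~\ref{stationarystates} with mass $M$, zero center of mass, and $\rho_s\in L^1_{log}(\R^2)$. Then $\rho_s\in L^1(\R^2)\cap L^\infty(\R^2)$ by definition, and the $L^1_{log}$ hypothesis gives $\omega(1+|x|)\rho_s\in L^1(\R^2)$, so Theorem~\ref{thm:unique} applies and $\rho_s$ is radially non-increasing about some point $x_0\in\R^2$. The Euler--Lagrange identity recalled in Lemma~\ref{lem:regularity}, combined with the fact (contained in Lemma~\ref{asymptotic}) that $W*\rho_s\to+\infty$ as $|x|\to\infty$ for the 2D logarithmic kernel, forces $\rho_s$ to be compactly supported, and the zero center-of-mass condition then pins down $x_0=0$. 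Corollary~\ref{unique}, which packages the uniqueness of radial stationary solutions for the 2D attractive Newtonian kernel proved in \cite{KY,CCV}, now identifies $\rho_s$ with the minimizer $\rho_M$ produced above; this simultaneously yields uniqueness of $\rho_M$ and its characterization as the unique global minimizer of $\E$ in $\mathcal{Y}_M$.

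I do not expect a real obstacle: the theorem is essentially the synthesis, in the 2D Newtonian case, of the continuous Steiner symmetrization result of Section~2 and the existence/regularity/uniqueness machinery of Section~3. The only mildly delicate point is reading the zero center-of-mass condition in the improper sense and using the compact support of radially decreasing stationary states (a consequence of the divergence of the 2D Newtonian potential at infinity) to convert radial symmetry \emph{up to translation} into radial symmetry about the origin before invoking the radial uniqueness theorem.
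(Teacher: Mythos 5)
Your proposal is correct and follows essentially the same route as the paper, which presents Theorem \ref{thm.stst.long} precisely as the synthesis of the radial symmetry result of Section 2 (Theorem \ref{thm:unique}), the existence/regularity of global minimizers in Theorem \ref{conccomp2}, and the radial uniqueness for the 2D Newtonian kernel packaged in Corollary \ref{unique}. Your added care about verifying (K1)--(K5) for the logarithmic kernel, identifying $L^1_{log}$ with the weighted condition $\omega(1+|x|)\rho\in L^1$, and using compact support plus the zero center of mass to fix the translation matches the intended argument.
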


As a consequence, all stationary states of \eqref{KS} in the sense of Definition \ref{stationarystates} with mass $M$  are given by translations of the given profile $\r_M$:
$$
\mathcal{S}=\left\{\r_M(x-x_0) \mbox{ such that } x_0\in\R^2 \right\} \,.
$$

\begin{remark}\label{rmk:support}
As in \cite[Corollary 2.3]{KY} we have the following result comparing the support and height for stationary states with different masses based on a
scaling
argument: Let $\r_1$ be the radial solution with unit mass. Then the radial solution with mass $M$ is of the form
$$
\r_M(x) = M^{\frac{1}{m-1}}\r_1(M^{-\frac{m-2}{2(m-1)}}x)\,.
$$
For two stationary states $\r_{M_1}$ and $\r_{M_2}$ with masses $M_1>M_2$ the following relations hold:
\begin{itemize}
\item[(a)] If $m>2$, then $\r_{M_1}$ has a bigger support and a bigger height than $\r_{M_2}$.
\item[(b)] If $m=2$, then all stationary states have the same support.
\item[(c)] If $1<m<2$, then $\r_{M_1}$ has smaller support and bigger height than $\r_{M_2}$.
\end{itemize}
\end{remark}

We will study now the long time asymptotics for the global weak solutions $\r$ of \eqref{KS} that according to the entropy inequality in Theorem \ref{thm.ex} satisfy
$$
\lim_{t\to\infty}\E[\r](t)+\int_0^\infty \D[\r](t) dt \leq \E[\r_0]\,.
$$
Since the entropy is bounded from below, this implies for the entropy dissipation
\[\lim_{t\rightarrow\infty}\int_t^\infty \D[\r](s)ds =0 \,.\]
Let us therefore now consider the sequence
\[\r_k(t,x)=\r(t+t_k,x)\qquad \textnormal{on} \ (0,T)\times\mathbb{R}^2 \quad \textnormal{for some } \ t_k\rightarrow \infty\,, \]
for which we obtain
\begin{eqnarray*}
0&=&\lim_{k\rightarrow\infty}\int_{t_k}^\infty \D[\r](t) dt
\geq \lim_{k\rightarrow \infty}\int_0^T\D[\r](t+t_k) dt\geq0\,.\end{eqnarray*}
Thus $ \D[\r_k]\rightarrow 0$ in $L^1(0,T)$, or equivalently
\begin{equation*}
\|\sqrt{\r_k}\left|\nabla h[\r_k]\right|\|^2_{L^2((0,T)\times\mathbb{R}^2)}\rightarrow 0 \qquad \textnormal{as} \  k\rightarrow \infty\,.
\end{equation*}
The proof of convergence towards the steady state will be based on weak lower semicontinuity of the entropy dissipation. Assume that $\r_k\rightharpoonup
\overline{\rho} $ in $L^\infty(0,T;L^1(\mathbb{R}^2)\cap L^m(\mathbb{R}^2))$, then we have to derive
\[\|\sqrt{\overline{\rho}}|\nabla h[\overline{\rho}]|\|_{L^2((0,T)\times\mathbb{R}^2)}\leq \liminf_{k\rightarrow \infty}\|\sqrt{\r_k}|\nabla
h[\r_k]|\|_{L^2((0,T)\times\mathbb{R}^2)}=0\,.\]
Since the $L^2$-norm is weakly lower semicontinuous, it therefore remains to show similarly as in Lemma \ref{lem.diss}
\[\sqrt{\r_k}\nabla h[\r_k]\rightharpoonup \sqrt{\overline{\rho}}\nabla h[\overline{\rho}]\,\qquad \textnormal{in} \ L^2((0,T)\times \mathbb{R}^2)\,.\]
From there it can be deduced that $\overline{\rho}$ is the stationary state $\r_M$ with $M=\|\r_0\|_{L^1(\mathbb{R}^2)}$ by the uniqueness theorem
\ref{thm.stst.long}, if we can guarantee that no mass gets lost in the limit.

The main difficulty for passing to the limit in the long-time behavior lies in obtaining sufficient compactness avoiding the loss of mass at infinity. Even though the mass of $\rho(t,\cdot)$ is conserved for all time, if a positive amount of mass escapes to infinity, then a subsequence of $\rho(t,\cdot)$ may weakly converge to a stationary solution with mass strictly less than $M$. To rule out this scenario, we need to show that the sequence $\{\rho(t,\cdot)\}_{t>0}$ is tight, which can be done by obtaining uniform-in-time bounds for certain moments for $\rho(t,\cdot)$. So far we only have a time-dependent bound on the logarithmic moment in Theorem~\ref{thm.ex}, which is not enough.  Moreover, even if we know that $\{\rho(t,\cdot)\}_{t>0}$ is tight, if we want to choose the right limiting profile among all stationary states in $\mathcal{S}$, we need to show the conservation of some symmetry. In fact, it is easy to check that the center of mass should formally be preserved by the evolution due to the antisymmetry of the gradient of the Newtonian potential. But to rigorously justify this, we need to work with moments that are larger than first moment, so the center of mass is well defined.

Below we state the main theorem in this section, where a key argument is to establish a uniform-in-time bound on the second moment of $\rho(t,\cdot)$, if $\rho_0$ has a finite second moment.

\begin{theorem}\label{thm.long2} Let $\r$ be the weak solution to \eqref{KS} given in Theorem {\rm \ref{thm.ex}} with nonnegative initial data $\r_0\in L^1((1+|x|^2)dx)\cap L^\infty(\mathbb{R}^2)$.  Then, as $t\rightarrow \infty$, $\r(\cdot,t)$ converges to the unique stationary state with the same mass and center of mass as the initial data, i.e., to
$$
\r_M^c:=\r_M(x-x_c) \qquad \mbox{where } x_c=\frac{1}{M}\int_{\R^2} x \rho_0(x) \,dx\,,
$$
with $M=\|\rho_{0}\|_{L^{1}(\R^{2})}$, ensured by Theorem {\rm\ref{thm.stst.long}}. More precisely, we have
$$
\lim_{t\to \infty}\|\r(t,\cdot)-\r_M^c(\cdot)\|_{L^q(\mathbb{R}^2)}\rightarrow 0 \qquad \textnormal{for all} \ 1\leq q<\infty\,.
$$
\end{theorem}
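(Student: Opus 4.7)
The strategy follows the outline already sketched before the statement: along any sequence $t_k\to\infty$, consider the time-shifted solutions $\rho_k(s,x):=\rho(s+t_k,x)$ on $[0,T]\times\mathbb{R}^2$, extract a limit $\bar\rho$, show it is a stationary state with mass $M$ and center of mass $x_c$, identify it by Theorem \ref{thm.stst.long} as $\rho_M^c$, and then upgrade this subsequential convergence to convergence of the full family $\rho(t,\cdot)$ as $t\to\infty$.

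I would start from the dissipation. Since $\mathcal{E}$ is non-increasing along the flow and bounded below by the logarithmic HLS-based bound \eqref{bound.below}, $\int_0^\infty\mathcal{D}[\rho](t)\,dt$ is finite, hence $\mathcal{D}[\rho_k]\to 0$ in $L^1(0,T)$. Combined with the uniform $L^\infty$ estimate of Theorem \ref{thm.ex} and conservation of mass, this gives uniform bounds for $\rho_k$ in $L^\infty(0,T;L^p(\mathbb{R}^2))$ for all $p$, for $\rho_k^p$ in $L^2(0,T;H^1(\mathbb{R}^2))$ with $p\geq m-\tfrac{1}{2}$, for $\partial_t\rho_k$ in $L^2(0,T;H^{-1}(\mathbb{R}^2))$, and for $\sqrt{\rho_k}\nabla h[\rho_k]$ in $L^2((0,T)\times\mathbb{R}^2)$ -- the same a priori package used in Lemmas \ref{lem.bounds}--\ref{lem.conv}. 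The Dubinskii/Aubin--Lions argument of Lemma \ref{lem.conv} then supplies (along a subsequence) strong convergence $\rho_k\to\bar\rho$ in $L^q((0,T)\times\mathbb{R}^2)$ for all $q\in[1,\infty)$ and the weak convergences required to apply the lower-semicontinuity reasoning of Lemma \ref{lem.diss}, yielding $\sqrt{\bar\rho}\,\nabla h[\bar\rho]=0$ a.e. Thus $\partial_t\bar\rho=\nabla\cdot(\bar\rho\nabla h[\bar\rho])=0$, so $\bar\rho(t,\cdot)\equiv\rho_\infty$ is time-independent; the regularity argument of Lemma \ref{lem:regularity} then promotes $\rho_\infty$ to a stationary state in the sense of Definition \ref{stationarystates}.

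The principal obstacle -- and the step I expect to be the hardest -- is preventing loss of mass and of the first moment at infinity when passing to the limit, so that $\|\rho_\infty\|_{L^1}=M$ and $\int x\,\rho_\infty\,dx=x_c$. The two-dimensional virial identity reads
\begin{equation*}
\frac{d}{dt}M_2[\rho](t)\;=\;4\int_{\mathbb{R}^2}\rho^m(t,x)\,dx\;-\;\frac{M^2}{2\pi},
\end{equation*}
and the uniform $L^\infty$ bound only yields $M_2[\rho](t)\leq M_2[\rho_0]+Ct$, which is insufficient since then $M_2[\rho_k(s)]$ could blow up with $k$. The plan is to strengthen this to $\sup_{t\geq 0}M_2[\rho](t)<\infty$ by exploiting the fact that at the stationary state $\rho_M$ we have $4\int\rho_M^m=M^2/(2\pi)$, so the virial integrand can be written as $4\int(\rho^m-\rho_M^m)\,dx$; combined with convergence of $\mathcal{E}(t)$ to a limit $\mathcal{E}_\infty$, integrability of $\mathcal{D}$ in time, and the identity relating $\int\rho^m$ to $\int\rho\,h[\rho]$ through the Euler--Lagrange structure of Theorem \ref{idenminimth}, one aims to show that $4\int\rho^m-M^2/(2\pi)$ is integrable in time and hence that $M_2[\rho]$ is uniformly bounded. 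With such a bound the tightness is preserved in the limit, $\rho_\infty$ has mass exactly $M$, and the formal conservation of the center of mass -- justified by the antisymmetry of $\nabla\mathcal{N}$ once the first moment is absolutely convergent -- transfers to the limit to give $\int x\,\rho_\infty\,dx=x_c$.

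Finally, once $\rho_\infty$ is known to be a stationary state in the sense of Definition \ref{stationarystates} with mass $M$ and center of mass $x_c$, Theorem \ref{thm.stst.long} together with Corollary \ref{unique} forces $\rho_\infty=\rho_M^c$. Since every subsequence $t_k\to\infty$ admits a further subsequence along which $\rho_k\to\rho_M^c$, the whole family converges: $\rho(t,\cdot)\to\rho_M^c$ as $t\to\infty$, strongly in $L^1(\mathbb{R}^2)$, and then by interpolation against the uniform $L^\infty$ bound, strongly in $L^q(\mathbb{R}^2)$ for every $1\leq q<\infty$.
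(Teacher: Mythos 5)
Your overall architecture (time shifts $\rho_k=\rho(\cdot+t_k,\cdot)$, the a priori package of Lemmas \ref{lem.bounds}--\ref{lem.diss} adapted to the shifted sequence, lower semicontinuity of the dissipation to get $\sqrt{\bar\rho}\,\nabla h[\bar\rho]=0$, identification via Theorem \ref{thm.stst.long} and conservation of the center of mass, then subsequence--subsequence to upgrade to the full limit) coincides with the paper's proof. The genuine gap is in the step you yourself flag as the hardest: the uniform-in-time bound on $M_2[\rho(t)]$. Your plan is to show that $4\int\rho^m\,dx-\tfrac{M^2}{2\pi}$ is integrable in time using convergence of $\mathcal{E}(t)$, integrability of $\mathcal{D}$, and the Euler--Lagrange structure, but none of these ingredients controls that sign-indefinite quantity in $L^1(dt)$. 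Monotone convergence of $\mathcal{E}(t)$ to a limit carries no rate; $\mathcal{D}\in L^1(0,\infty)$ only gives $\mathcal{D}[\rho(t_k)]\to0$ along subsequences; and the direct virial/Cauchy--Schwarz route, $\bigl|\tfrac{d}{dt}M_2\bigr|\le 2\,M_2^{1/2}\mathcal{D}^{1/2}$, only yields $M_2(t)\lesssim t$ after integrating, since $\mathcal{D}^{1/2}$ is merely in $L^2_{loc}$ in time. In effect, integrability in time of $4\int\rho^m-\tfrac{M^2}{2\pi}$ amounts to a quantitative rate of relaxation of $\int\rho^m$ to its equilibrium value $\tfrac{M^2}{8\pi}$, which is precisely the kind of information that is unavailable at this stage (the paper explicitly lists rates of convergence as open). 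So, as written, the key confinement step does not close, and without it the limit $\bar\rho$ could lose mass and the center-of-mass argument also collapses.

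The paper closes this gap by a completely different mechanism (Theorem \ref{2ndmbound}): it invokes the comparison-of-mass-concentration machinery of Section 2 and \cite{KY,DNR}. One chooses $\lambda>1$ so that $\rho_0^{\#}\prec\rho_{M,\lambda}:=\lambda^2\rho_M(\lambda\,\cdot)$, lets $\mu(t,\cdot)$ solve \eqref{KS} from $\rho_{M,\lambda}$, and uses Propositions \ref{prop111} and \ref{prop222} to get $\rho^{\#}(t,\cdot)\prec\mu(t,\cdot)$, hence $\int\rho^m\le\int\mu^m$ for all $t$. Writing the virial identity \eqref{law2ndm} for both $\rho$ and $\mu$ and subtracting gives $M_2[\rho(t)]-M_2[\rho_0]\le M_2[\mu(t)]$, and since $\rho_M\prec\rho_{M,\lambda}$ implies $\rho_M\prec\mu(t,\cdot)$, Lemma \ref{lem:lp} bounds $M_2[\mu(t)]\le M_2[\rho_M]<\infty$. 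This comparison argument, not an energy/dissipation estimate, is what delivers $\sup_{t\ge0}M_2[\rho(t)]<\infty$; you would need to incorporate it (or an equivalent substitute) for your proof to be complete. The remaining parts of your proposal (tightness, Dunford--Pettis, Dubinskii, Lemma \ref{lem.A.1}, identification of the translate via the preserved center of mass, and interpolation with the uniform $L^\infty$ bound) then go through as in the paper.
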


Our aim is to show that the second moment of solutions to \eqref{KS} is uniformly bounded in time for all $t\geq 0$. This in turn shows easily that the first moment is preserved in time for all $t\geq 0$, as we will prove below. Recall that by \eqref{secondmoment} we denote by $M_2[f]$ the second moment of $f\in L^1_+(\R^d)$.
We first derive rigorously the evolution of the second moment in time:
\beq\label{law2ndm}
M_2[\r(t,\cdot)]-M_2[\r(0,\cdot)]=4\int_0^t\int_{\mathbb{R}^2} \r^m dx \,dt-\frac{t M^2}{2\pi}\,
\eeq
starting from the regularized system \eqref{KS.e}.
Computing the second moment of the regularized problem, we obtain \corr{
\beq
\frac{d}{dt}M_{2}[\r_\e]&=&4\int_{\R^{2}}(\r_\e^m+\e\r_\e)dx-\frac{1}{\pi}\int_{\R^{2}}\int_{\R^{2}}\r_{\e}(x,t)\r_{\e}(y,t)\frac{x\cdot(x-y)}{|x-y|^{2}+\e^{2}}dx\,dy\nonumber\\
&=& 4\int_{\R^{2}}(\r_\e^m+\e\r_\e)dx-\frac{1}{2\pi}\int_{\R^{2}}\int_{\R^{2}}\r_{\e}(x,t)\r_{\e}(y,t)\frac{|x-y|^2}{|x-y|^{2}+\e^{2}}dx\,dy\,
\nonumber\\
\label{M2}&=&4\int_{\R^{2}}(\r_\e^m+\e\r_\e)dx-\frac{M^2}{2\pi}+R_\e(t)\,.
\eeq
The strong convergence in \eqref{conv.1} allows to pass to the limit $\e\to 0$ in the first integral of \eqref{M2} and for the remainder term we moreover have due to the conservation of mass and the uniform boundedness of $\r_\e$
\beqs
R_\e(t)&=&\frac{1}{2\pi}\int_{\R^{2}}\int_{\R^{2}}\r_{\e}(x,t)\r_{\e}(y,t)\frac{\e^2}{|x-y|^{2}+\e^2}dx\,dy\leq \frac{\e}{4\pi}\int_{\R^{2}}\int_{\R^{2}}\r_{\e}(x,t)\r_{\e}(y,t)\frac{1}{|x-y|}dx\,dy \\
&\leq& \e C \quad \rightarrow \quad 0.
\eeqs
}The argument can easily be made rigorous by using compactly supported approximations of $|x|^2$ on $\mathbb{R}^2$ as test functions, see e.g. also \cite{BDP}. We finally obtain \eqref{law2ndm} by integrating in time.

Now, we want to compare general solutions to \eqref{KS} with its radial solutions. In order to do this we will make use of the concept of mass concentration, which has been recalled in \eqref{Massconcdef}, and used for instance in \cite{KY,DNR} for classical applications to Keller-Segel type models.

Following exactly the same proof as in \cite{KY}, the following two results hold for the solutions of \eqref{KS}. The first result says that for two radial solutions, if one is initially ``more concentrated'' than the other one, then this property is preserved for all time. The second result compares a general (possibly non-radial) solution $\rho(t,\cdot)$ with another solution $\mu(t,\cdot)$ with initial data $\rho^\#(0,\cdot)$, i.e., the decreasing rearrangement of the initial data for $\rho(t,\cdot)$, and it says that the symmetric rearrangement of $\rho(t,\cdot)$ is always ``less concentrated'' than the radial solution $\mu(t,\cdot)$. This result generalizes  the results from \cite{DNR} to nonlinear diffusion with totally different proofs. We also refer the interested reader to the survey \cite{VANS05} for a general exposition of the mass concentration comparison results for local nonlinear parabolic equations and to the recent developments obtained in \cite{VazVol1}, \cite{VazVol2} in the context of nonlinear parabolic equations with fractional diffusion.

\begin{proposition}
\label{prop111}
Let $m>1$ and $f, g$ be two radially symmetric solutions to \eqref{KS} with $f(0,\cdot) \prec g(0,\cdot)$. Then we have $f(t,\cdot) \prec g(t,\cdot)$ for all $t>0$.
\end{proposition}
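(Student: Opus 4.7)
The plan is to recast the claim in terms of the radial mass functions $F(t,r):=\int_{B_r(0)}f(t,x)\,dx$ and $G(t,r):=\int_{B_r(0)}g(t,x)\,dx$. The hypothesis $f(0,\cdot)\prec g(0,\cdot)$ reads $F(0,r)\leq G(0,r)$ for every $r\geq 0$, and the desired conclusion is exactly that this inequality is propagated for all $t>0$. Since the total mass is conserved along \eqref{KS}, we also have $\lim_{r\to\infty}F(t,r)=\|f_0\|_{L^1(\R^2)}\leq \|g_0\|_{L^1(\R^2)}=\lim_{r\to\infty}G(t,r)$, and obviously $F(t,0)=G(t,0)=0$. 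Hence the statement becomes a parabolic comparison principle for $W:=F-G$, which starts nonpositive and stays nonpositive at the spatial ``boundary'' $r=0$ and $r=\infty$.

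First I would derive the quasilinear PDE satisfied by the mass function $M$ of any radial solution $\rho$ of \eqref{KS}. Newton's theorem in two dimensions gives $\nabla\NN\ast\rho(x)=-\frac{x}{2\pi|x|^2}M(t,|x|)$, so the radial form of \eqref{KS} reads
\[
\partial_t\rho=\tfrac{1}{r}\partial_r\bigl(r\,\partial_r\rho^m\bigr)+\tfrac{1}{2\pi r}\partial_r\bigl(\rho\,M\bigr).
\]
Multiplying by $2\pi s$, integrating from $0$ to $r$, and using $\rho=\frac{\partial_r M}{2\pi r}$, one obtains
\[
\partial_t M \;=\; m\Bigl(\tfrac{\partial_r M}{2\pi r}\Bigr)^{m-1}\Bigl(\partial_{rr}M-\tfrac{1}{r}\partial_r M\Bigr)+\tfrac{M\,\partial_r M}{2\pi r}.
\]
The diffusion coefficient is $m\rho^{m-1}$, so the equation is degenerate parabolic where $\rho$ vanishes; the transport term carries $M$ itself as its coefficient. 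Crucially, $F$ and $G$ satisfy the \emph{same} PDE.

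Next I would run the standard tangency argument on $W=F-G$. After replacing $G$ by $G+\varepsilon t$ to enforce strict inequalities, suppose for contradiction there exist a first $t_0>0$ and $r_0\in(0,\infty)$ with $W(t_0,r_0)=0$ and $W\leq 0$ on $[0,t_0]\times[0,\infty)$. At this interior contact point the standard second-derivative test yields $\partial_r W=0$, $\partial_{rr}W\leq 0$ and $\partial_t W\geq 0$. Subtracting the two mass equations at $(t_0,r_0)$, and using $F=G$ together with $\partial_r F=\partial_r G$, the transport contribution $\frac{F\partial_r F-G\partial_r G}{2\pi r_0}$ vanishes identically, leaving
\[
\partial_t W\big|_{(t_0,r_0)}=m\Bigl(\tfrac{\partial_r F}{2\pi r_0}\Bigr)^{m-1}\,\partial_{rr}W\big|_{(t_0,r_0)}\;\leq\;0,
\]
which contradicts the strict version $\partial_t W>0$ produced by the $\varepsilon t$ shift. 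Sending $\varepsilon\to 0$ concludes $W\leq 0$.

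The main obstacle is that the argument above is formal: the mass equation is degenerate wherever $\rho=0$ and $f,g$ are only the weak solutions supplied by Theorem~\ref{thm.ex}, with the regularity summarized in Section~4. To make it rigorous I would run the tangency argument at the level of the regularized system \eqref{KS.e}, whose solutions $f_\varepsilon,g_\varepsilon$ are smooth and strictly positive and whose mass functions satisfy a uniformly parabolic variant of the above PDE (augmented by an $\varepsilon(\partial_{rr}M-\tfrac{1}{r}\partial_r M)$ term). For those approximations the classical parabolic maximum principle applies directly and yields $F_\varepsilon(t,\cdot)\leq G_\varepsilon(t,\cdot)$ provided the initial data are ordered in concentration, which can be arranged by choosing radial mollifications of $f_0,g_0$ compatible with $\prec$. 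Passing to the limit $\varepsilon\to 0$ via the strong $L^1_{\mathrm{loc}}$ convergence $\rho_\varepsilon\to\rho$ from Lemma \ref{lem.conv} transfers the pointwise comparison to the limiting mass functions, proving $f(t,\cdot)\prec g(t,\cdot)$.
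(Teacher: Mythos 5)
Your overall strategy---recast $\prec$ in terms of the cumulative mass functions $F,G$, derive the quasilinear PDE they satisfy (your derivation of $\partial_t M = m\big(\tfrac{\partial_r M}{2\pi r}\big)^{m-1}\big(\partial_{rr}M-\tfrac1r\partial_r M\big)+\tfrac{M\partial_r M}{2\pi r}$ via Newton's theorem is correct), and run a maximum-principle comparison after regularizing---is exactly the route of the paper, which simply invokes the proof of \cite{KY} for this proposition. However, the key tangency step is inconsistent as written. After replacing $G$ by $G+\varepsilon t$, at the first contact point you have $F=G+\varepsilon t_0$, \emph{not} $F=G$, so the transport difference does not vanish: it equals $\tfrac{(F-G)\partial_r F}{2\pi r_0}=\varepsilon t_0\,\rho(t_0,r_0)\ge 0$. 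Conversely, if you do not shift, the transport term does vanish, but then you only get the non-strict pair $\partial_t W\ge 0$ and $m\rho^{m-1}\partial_{rr}W\le 0$, which is no contradiction. The underlying issue is that the right-hand side of the mass equation is \emph{increasing} in $M$ (its $M$-derivative is $\rho\ge 0$), i.e.\ the zeroth-order dependence has the wrong sign for a naive comparison; some quantitative input is needed. The standard repair is available here: replace $\varepsilon t$ by $\varepsilon e^{\lambda t}$ with $\lambda>\sup_{t\ge0}\|f(t)\|_{L^\infty}+\|g(t)\|_{L^\infty}$ (finite by Theorem \ref{thm.ex}); then at contact $\partial_t W\ge \lambda\varepsilon e^{\lambda t_0}$ while the right-hand-side difference is at most $\varepsilon e^{\lambda t_0}\rho(t_0,r_0)<\lambda\varepsilon e^{\lambda t_0}$, a genuine contradiction. (Equivalently, run your argument on time intervals of length less than $1/\|\rho\|_{L^\infty}$ and iterate.)

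A second, smaller wrinkle concerns your regularization plan: if you approximate by \eqref{KS.e}, the mollified kernel $\NN_\e$ no longer obeys Newton's theorem, so the mass function of a radial solution does not satisfy your local PDE plus an $\e(\partial_{rr}-\tfrac1r\partial_r)$ term---the drift becomes a genuinely nonlocal functional of the mass distribution, whose monotonicity must be checked before the comparison goes through. Alternatively you can regularize only the diffusion (keeping the Newtonian drift), but then well-posedness of that approximation is not supplied by the paper and must be argued separately; one also needs the mollified radial initial data to remain ordered under $\prec$ and the ordering to survive the limit, which your $L^1_{loc}$ convergence does give for the mass functions. These are precisely the technical points that the cited proof in \cite{KY} is structured to handle, so the fix is a matter of care rather than a new idea---but as written, the contradiction at the contact point does not yet close.
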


\begin{proposition}
\label{prop222}
Let $m>1$ and $\rho$ be a solution to \eqref{KS}, and let $\mu$ be a solution to \eqref{KS} with initial condition $\mu(0,\cdot) = \rho^\#(0,\cdot).$ Then we have that $\mu(t,\cdot)$ remains radially symmetric for all $t\geq 0$, and in addition we have
\[
\rho^\#(t,\cdot) \prec \mu(t,\cdot) \quad\text{ for all } t\geq 0.
\]
\end{proposition}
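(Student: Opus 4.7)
The plan is to adapt the mass concentration comparison arguments of \cite{KY}, originally developed for the $d$-dimensional Newtonian case, to the present two-dimensional logarithmic setting of \eqref{KS}.

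\textbf{Radial symmetry of $\mu$.} This part is immediate from the uniqueness of bounded weak solutions established in Theorem~\ref{thm.ex} combined with the rotation invariance of \eqref{KS}: for every $R\in SO(2)$, $(t,x)\mapsto\mu(t,Rx)$ is another bounded weak solution with the same radial initial datum $\rho^\#(0,\cdot)$, and so must coincide with $\mu$.

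\textbf{Mass comparison.} I would introduce the cumulative mass functions
\begin{equation*}
U(t,r):=\int_{B_r(0)}\rho^\#(t,x)\,dx,\qquad V(t,r):=\int_{B_r(0)}\mu(t,x)\,dx,
\end{equation*}
so that $\rho^\#(t,\cdot)\prec\mu(t,\cdot)$ is equivalent to $U\leq V$; initially $U(0,\cdot)=V(0,\cdot)$. Since $\mu$ is radial, Newton's theorem in two dimensions gives $(\NN\ast\mu)_r(t,r)=-V(t,r)/(2\pi r)$, so integrating \eqref{KS} over $B_r(0)$ produces the local quasilinear degenerate parabolic equation
\begin{equation*}
V_t \;=\; 2\pi r\,\partial_r\!\left[\left(\frac{V_r}{2\pi r}\right)^{\!m}\right] + \frac{V\,V_r}{2\pi r}
\end{equation*}
on $(0,\infty)\times(0,\infty)$, with boundary conditions $V(t,0)=0$ and $V(t,\infty)=M$. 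The heart of the argument is to show that $U$ satisfies the same identity as a one-sided inequality $(\leq)$: the porous medium contribution is controlled by classical Talenti-type rearrangement inequalities (see the survey \cite{VANS05}), while the nonlocal drift is handled by adapting the rearrangement estimates for the classical Keller--Segel model in \cite{DNR} to the nonlinear diffusion case, relying essentially on the Hardy--Littlewood inequality $M_\rho(r)\leq U(t,r)$ together with the explicit radial expression of the attractive potential for rearranged densities.

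Once $U$ is known to be a subsolution, $W:=V-U$ satisfies a (formal) degenerate parabolic inequality with zero initial and boundary data, from which a comparison principle yields $W\geq 0$ and hence the claim. The main obstacle is to carry out this comparison rigorously at the level of our weak solutions, which are only H\"older continuous and for which the porous medium operator degenerates where $V_r=0$. Following the scheme of \cite{KY}, I would perform the entire argument at the level of the nondegenerate regularization \eqref{KS.e}, where the solutions are classical and the maximum principle applies directly, and then pass to the limit $\varepsilon\to 0$ using the strong convergence $\rho_\varepsilon\to\rho$ of Lemma~\ref{lem.conv} together with the continuity in $L^1(\R^2)$ of the Schwarz rearrangement. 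The same blueprint proves Proposition~\ref{prop111}, with the subsolution now being the mass function of the second radial solution instead of that of a rearrangement.
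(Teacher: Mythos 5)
Your proposal is correct and follows essentially the same route as the paper: the authors give no independent argument here but simply invoke the mass-concentration comparison proof of \cite{KY} (together with the rearrangement techniques of \cite{DNR,VANS05}), which is exactly the scheme you outline --- radial symmetry via uniqueness and rotation invariance, the cumulative mass functions $U,V$ with the correct $2$D mass-function equation $V_t=2\pi r\,\partial_r[(V_r/(2\pi r))^m]+V V_r/(2\pi r)$, Talenti/Hardy--Littlewood inequalities to show $U$ is a subsolution, and a comparison principle made rigorous through a nondegenerate regularization before passing to the limit.
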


Now we are ready to bound the second moment of solutions in the two-dimensional case: we will show that if $\rho(t,\cdot)$ is a solution to \eqref{KS} with $M_2[\rho_0]$ finite, then $M_2[\rho(t)]$ must be uniformly bounded for all time.

\begin{theorem}\label{2ndmbound}
Let $\rho_0 \in L^1((1+|x|^2)dx) \cap L^\infty(\mathbb{R}^2)$. Let $\rho(t,\cdot)$ be the solution to \eqref{KS} with initial data $\rho_0$. Then we have that
\[
M_2[\rho(t)] \leq M_2[\rho_0] + C(\|\rho_0\|_{L^1}) \quad\text{for all }t\geq 0.
\]
\end{theorem}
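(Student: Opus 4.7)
The plan is to use the virial identity \eqref{law2ndm} in tandem with the mass-concentration comparison results (Propositions \ref{prop111} and \ref{prop222}) and the fact that the stationary state $\rho_M$ balances the diffusion and attraction terms exactly: since $\rho_M$ is time-independent, \eqref{law2ndm} applied to $\rho_M$ forces $\int_{\R^2}\rho_M^m\,dx = M^2/(8\pi)$.

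\textbf{Step 1: Reduction to radial decreasing data.} Let $\mu(t,\cdot)$ denote the radial solution to \eqref{KS} with initial datum $\mu_0 := \rho_0^\#$. By Proposition \ref{prop222}, $\rho^\#(t,\cdot) \prec \mu(t,\cdot)$ for all $t\geq 0$. Applying Lemma \ref{lemma1} to the convex nondecreasing $\Phi(s)=s^m$ and using the rearrangement invariance of $L^m$-norms gives $\int \rho(t)^m\,dx = \int \rho^\#(t)^m\,dx \leq \int \mu(t)^m\,dx$. Subtracting \eqref{law2ndm} for $\mu$ from the same identity for $\rho$ yields
\[
M_2[\rho(t)] - M_2[\rho_0] \;\leq\; M_2[\mu(t)] - M_2[\mu_0].
\]
Since $M_2[\rho_0] \geq M_2[\rho_0^\#] = M_2[\mu_0]$ (by Lemma \ref{lem:lp} with $f=\rho_0$, $g=\rho_0^\#$), the theorem reduces to proving a uniform bound $M_2[\mu(t)] \leq M_2[\mu_0] + C(M)$ for radial decreasing solutions.

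\textbf{Step 2: Comparison with the stationary state $\rho_M$.} Since $\rho_M$ is a (time-independent) radial solution of \eqref{KS}, Proposition \ref{prop111} applies with $\rho_M$ as one of the two comparison profiles:
\begin{itemize}
\item If $\mu_0 \prec \rho_M$, then $\mu(t) \prec \rho_M$ for all $t$, so Lemma \ref{lemma1} gives $\int \mu(t)^m\,dx \leq \int \rho_M^m\,dx = M^2/(8\pi)$. The virial identity then forces $\frac{d}{dt}M_2[\mu(t)] \leq 0$, hence $M_2[\mu(t)] \leq M_2[\mu_0]$.
\item If $\rho_M \prec \mu_0$, then $\rho_M \prec \mu(t)$ for all $t$. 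Applying Lemma \ref{lem:lp} with $f=\rho_M$ and $g=\mu(t)$ (which is already rearranged) yields the uniform bound $M_2[\mu(t)] \leq M_2[\rho_M]$.
\end{itemize}

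\textbf{Step 3: The general (incomparable) case.} When $\mu_0$ and $\rho_M$ are not ordered by concentration, the mass distribution functions $N_{\mu_0}(r) = \int_{B_r}\mu_0$ and $N_{\rho_M}(r) = \int_{B_r}\rho_M$ must cross. The idea is to construct a radial decreasing auxiliary profile $\bar\mu_0$ of mass $M$ that combines $\mu_0$ and $\rho_M$ across a crossing radius so that $\bar\mu_0$ is comparable to one of them and controls $\mu$ via Proposition \ref{prop111}; the excess contribution of the "unfavorable" piece is then estimated by $M_2[\rho_M]$, leading to an overall bound of the form $M_2[\mu(t)] \leq M_2[\mu_0] + M_2[\rho_M]$. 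This is the main obstacle: the concentration ordering is not a total order on radial decreasing profiles, so such a construction must be carried out delicately to preserve the radial decreasing property and the correct total mass while simultaneously providing a usable comparison. Once Step 3 is completed, combining with Step 1 produces the desired bound with $C(M) = M_2[\rho_M]$, depending only on the mass $\|\rho_0\|_{L^1}$ through $\rho_M$.
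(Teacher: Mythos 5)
Your Steps 1 and 2 are sound and run along the same lines as the paper: the virial identity \eqref{law2ndm} plus the concentration comparisons of Propositions \ref{prop111} and \ref{prop222} reduce everything to controlling the second moment of a radial decreasing solution, and the observation that $\int_{\R^2}\rho_M^m\,dx=M^2/(8\pi)$ (from stationarity in \eqref{law2ndm}) is correct, though not actually needed. The genuine gap is Step 3, which you yourself flag but do not carry out: $\prec$ is only a partial order on radial decreasing profiles of equal mass, and for generic data $\rho_0^\#$ and $\rho_M$ are incomparable, so the case split of Step 2 does not exhaust the theorem. The proposed fix (splicing $\mu_0$ and $\rho_M$ at a crossing radius, or more generally building a least upper bound $\bar\mu_0\succ\mu_0$, $\bar\mu_0\succ\rho_M$) is plausible --- one would have to check that the resulting profile is rearranged, has mass $M$, lies in $L^\infty$ with finite second moment, and then rerun the comparison through the solution it generates --- but none of this is done, so as written the proof is incomplete precisely at its main difficulty.

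The paper avoids the incomparable case altogether with a scaling trick, which is the idea missing from your argument: set $\rho_{M,\lambda}(x):=\lambda^2\rho_M(\lambda x)$ with $\lambda>1$. Since $\rho_0\in L^1\cap L^\infty$, for $\lambda$ large enough one has $\rho_0^\#\prec\rho_{M,\lambda}$, and $\lambda>1$ automatically gives $\rho_M\prec\rho_{M,\lambda}$. Taking $\mu$ to be the solution with initial datum $\rho_{M,\lambda}$, Propositions \ref{prop111} and \ref{prop222} yield $\rho^\#(t)\prec\mu(t)$, hence $\int\rho(t)^m\,dx\le\int\mu(t)^m\,dx$ and, subtracting the two virial identities as in your Step 1, $M_2[\rho(t)]-M_2[\rho_0]\le M_2[\mu(t)]$; on the other hand $\rho_M\prec\mu(t)$ for all $t$ by Proposition \ref{prop111}, so Lemma \ref{lem:lp} gives $M_2[\mu(t)]\le M_2[\rho_M]$, which is a constant depending only on $M$. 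In other words, instead of comparing with $\rho_M$ itself (which forces your case analysis), one compares with a dilated copy that is simultaneously more concentrated than both $\rho_0^\#$ and $\rho_M$; this single choice makes the two-sided comparison always available and closes the proof. If you incorporate this replacement of $\rho_M$ by $\rho_{M,\lambda}$ in your Step 2, your Steps 1--2 become a complete proof and Step 3 is unnecessary.
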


\begin{proof}
Recalling that $\rho_M$ is the unique radially symmetric stationary solution with the same mass as $\rho_0$ and zero center of mass, we let $\rho_{M,\lambda} := \lambda^2 \rho_M(\lambda x)$ with some parameter $\lambda>1$. Since $\rho_0\in L^1(\mathbb{R}^2) \cap L^\infty(\mathbb{R}^2)$, we can choose a sufficiently large $\lambda$ such that $\rho_0^\# \prec \rho_{M,\lambda}$. Note that $\lambda>1$ also directly yields that $\rho_M \prec \rho_{M,\lambda}$.

Let $\mu(t,\cdot)$ be the solution to \eqref{KS} with initial data $\rho_{M,\lambda}$. Combining Proposition \ref{prop111} and Proposition \ref{prop222}, we have that
\[
\rho^\#(t,\cdot) \prec \mu(t,\cdot) \quad\text{ for all }t\geq 0.
\]
It then follows from \eqref{declp} \corr{and Lemma \ref{lemma1}} that
\begin{equation}
\label{eq:comp_lm}
\int_{\mathbb{R}^2}\rho^m(t,x) dx = \int_{\mathbb{R}^2} [\rho^\#]^m(t,x) dx \leq \int_{\mathbb{R}^2} \mu^m(t,x) dx \quad\text{ for all }t\geq 0.
\end{equation}
Now using the computation of the time derivative of $M_2[\rho(t)]$ in \eqref{law2ndm}, where $\rho(\cdot,t)$ is a solution to \eqref{KS}, we get
\begin{equation}\label{eq:dm2_dt}
M_2[\rho(t)] - M_2[\rho_0] = 4\int^t_0\int_{\mathbb{R}^2} \rho^m(t,x) dx\,dt - \frac{t M^2}{2\pi}\,.
\end{equation}
Since $\mu(t,\cdot)$ is also a solution to \eqref{KS}, \eqref{eq:dm2_dt} also holds when $\rho$ is replaced by $\mu$. Combining this fact with \eqref{eq:comp_lm}, we thus have
\begin{equation}
\label{eq:m2_diff}
M_2[\rho(t)] - M_2[\rho_0] \leq M_2[\mu(t)] - M_2[\mu(0)] \leq M_2[\mu(t)].
\end{equation}

Finally, it suffices to show $M_2[\mu(t)]$ is uniformly bounded for all time. Since $\rho_M$ is a stationary solution and we have $\rho_M \prec \rho_{M,\lambda}$, it follows from Proposition \ref{prop111} that $\rho_M \prec \mu(t,\cdot)$ for all $t\geq 0$, hence we have $M_2[\rho_M] \geq M_2[\mu(t)]$ due to Lemma \ref{lem:lp}. Plugging this into \eqref{eq:m2_diff} yields
\[
M_2[\rho(t)] \leq M_2[\rho_0] + M_2[\rho_M] \quad\text{ for all }t\geq 0,
\]
where $M_2[\rho_M]$ is a constant only depending on the mass $M:=\|\rho_0\|_{L^1(\R^2)}$, which can be computed as follows: using Remark \ref{rmk:support}, we know the support of $\rho_M$ is given by the ball centered at 0 of radius $R(M) = C_0 M^{\frac{m-2}{2(m-1)}}$ (where $C_0$ is the radius of the support for the stationary solution with unit mass), hence $M_2[\rho_M] \leq M R(M)^2 \leq C_0^2 M^{\frac{2m-3}{m-1}}$.
\end{proof}

\begin{remark}
\corr{The last result showing uniform-in-time bounds for the second moment for $m>1$ finite is also interesting in comparison to the results} in \cite{Craig,CKY} where the case $m\to\infty$ limit of the gradient flow is analysed. In the ``$m=\infty$" case, the second moment of any solution is actually decreasing in time, leading to the result that all solutions converge towards the global minimizer with some explicit rate. As mentioned in the introduction, a result of this sort for any other potential rather than the attractive logarithmic potential is lacking.
\end{remark}

As already mentioned above, a key ingredient in the proof of Theorem \ref{thm.long2} is the confinement of mass, which is first now obtained as follows:

\begin{lemma}\label{conv.weak.L1.long} Let $\r$ be a global weak solution as in Theorem {\rm\ref{thm.ex}} with mass $M$ with initial data $\rho_0 \in L^1((1+|x|^2)dx)\cap L^\infty(\mathbb{R}^2)$ and consider as above the sequence $\{\r_k\}_{k\in\mathbb{N}}=\{\r(\cdot+t_k,\cdot)\}_{k\in \mathbb{N}}$ in $(0,T)\times \mathbb{R}^2$. Then there exists a $\overline{\rho} \in
L^1((0,T)\times\mathbb{R}^2)\cap L^m((0,T)\times\mathbb{R}^2)$ and a subsequence, that we denote with the same index without loss of generality, such that:
\beqs
\r_k(t,x)\rightharpoonup \overline{\rho}(t,x) \qquad \textnormal{in} \ L^1((0,T)\times\mathbb{R}^2)\cap L^m((0,T)\times\mathbb{R}^2)
\eeqs
as $k\rightarrow \infty$.
\end{lemma}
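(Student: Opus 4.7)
The plan is to use the Dunford--Pettis theorem together with the uniform-in-time second moment bound just established in Theorem \ref{2ndmbound}. First I would observe that, by Theorem \ref{thm.ex} applied to the translates $\rho_k(t,\cdot)=\rho(t+t_k,\cdot)$, there is a uniform bound
\[
\sup_{k}\|\rho_k\|_{L^\infty(0,T;L^1(\R^2))}+\sup_k\|\rho_k\|_{L^\infty((0,T)\times\R^2)}\le C,
\]
since both the mass and the $L^\infty$ norm of $\rho(\cdot,t)$ are bounded independently of $t$. By interpolation, $\{\rho_k\}_{k\in\N}$ is bounded in $L^m((0,T)\times\R^2)$, which is reflexive for $m>1$, so up to extraction of a subsequence (not relabelled) there exists $\overline{\rho}\in L^m((0,T)\times\R^2)$ with $\rho_k\rightharpoonup\overline{\rho}$ weakly in $L^m((0,T)\times\R^2)$.

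Next I would upgrade this to weak convergence in $L^1((0,T)\times\R^2)$ via Dunford--Pettis. The boundedness in $L^1((0,T)\times\R^2)$ is immediate from mass conservation ($\|\rho_k\|_{L^1((0,T)\times\R^2)}=MT$), and equi-integrability in the measure-theoretic sense follows from the uniform $L^\infty$ bound: for every measurable $E\subset(0,T)\times\R^2$ one has $\int_E\rho_k\le C|E|$. The main -- and only nontrivial -- obstacle is uniform tightness in space, which is exactly where Theorem \ref{2ndmbound} is used. Because the initial datum $\rho_0\in L^1((1+|x|^2)dx)\cap L^\infty(\R^2)$ has finite second moment, Theorem \ref{2ndmbound} gives $M_2[\rho(s)]\le M_2[\rho_0]+C(M)$ for all $s\ge 0$, hence by Chebyshev
\[
\int_0^T\int_{|x|>R}\rho_k(t,x)\,dx\,dt
=\int_0^T\int_{|x|>R}\rho(t+t_k,x)\,dx\,dt
\le \frac{1}{R^2}\int_0^T M_2[\rho(t+t_k)]\,dt
\le \frac{T(M_2[\rho_0]+C(M))}{R^2},
\]
uniformly in $k$, which tends to $0$ as $R\to\infty$. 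Dunford--Pettis then provides a further subsequence and a limit $\widetilde\rho\in L^1((0,T)\times\R^2)$ with $\rho_k\rightharpoonup\widetilde\rho$ in $L^1$.

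Finally, I would identify the two limits. Since weak convergence in either $L^1$ or $L^m$ implies convergence in the sense of distributions on $(0,T)\times\R^2$, and distributional limits are unique, we must have $\widetilde\rho=\overline\rho$ a.e. Therefore along the common subsequence, $\rho_k\rightharpoonup\overline\rho$ both weakly in $L^1((0,T)\times\R^2)$ and weakly in $L^m((0,T)\times\R^2)$, and in particular $\overline\rho\in L^1((0,T)\times\R^2)\cap L^m((0,T)\times\R^2)$, which is the claim. The key and only truly new input beyond the standard compactness toolkit is the uniform second moment bound of Theorem \ref{2ndmbound}, without which tightness would be lost as $t_k\to\infty$ because the logarithmic moment control from Lemma \ref{lem.mom} degrades linearly in time.
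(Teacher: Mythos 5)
Your proof is correct and follows essentially the same route as the paper: uniform bounds on $\rho_k$ (you via the $L^1$--$L^\infty$ bounds and interpolation, the paper via the entropy inequality for the $L^m$ bound) combined with the uniform second moment bound of Theorem \ref{2ndmbound} for tightness, and then the Dunford--Pettis theorem to obtain weak convergence in $L^1\cap L^m$. Your extra care with the Chebyshev tightness estimate and the identification of the $L^1$ and $L^m$ weak limits only makes explicit what the paper leaves implicit.
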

\begin{proof}
Due to the entropy being uniformly bounded from below and by the entropy inequality \eqref{E}, we have $\r_k\in L^\infty((0,T);L^m(\mathbb{R}^2))$. Using Theorem \ref{2ndmbound}, we deduce that
\begin{equation}\label{2ndunifb}
M_2[\rho_k(t)] \leq M_2[\rho_0] + C(\|\rho_0\|_{L^1(\R^2)}) \quad\text{for all }k\in\mathbb{N} \mbox{ and } 0\leq t\leq T.
\end{equation}
Since $\{\r_k\}_{k\in\N}$ are also uniformly bounded in $L^\infty(0,T;L^m(\mathbb{R}^2))$ we obtain equi-integrability and can therefore apply the Dunford-Pettis theorem (see Theorem \ref{thm.dun} in Appendix) to obtain the weak convergence  in $L^1((0,T)\times\mathbb{R}^2)\cap L^m((0,T)\times\mathbb{R}^2)$.
\end{proof}

In order to obtain weak lower semicontinuity of the entropy dissipation term, we need additional convergence results. These are derived from the following uniform bounds:

\begin{lemma}
Let $\r$ be a global weak solution as in Theorem {\rm\ref{thm.ex}} with mass $M$ and consider as above the sequence $\{\r_k\}_{k\in\mathbb{N}}=\{\r(\cdot+t_k,\cdot)\}_{k\in \mathbb{N}}$ in $(0,T)\times \mathbb{R}^2$. Then
\beqs
&&\|\r_k\|_{L^\infty(0,T;L^1(\mathbb{R}^2))}+\|\r_k\|_{L^\infty((0,T)\times \mathbb{R}^2)}\leq C
\\
&&\|\sqrt{\r_k}\nabla \NN\ast \r_k\|_{L^2((0,T)\times\mathbb{R}^2)} + \|\nabla \NN\ast \r_k\|_{L^\infty((0,T)\times\mathbb{R}^2)} \leq C
\\
&&\|\pa_t \r_k\|_{L^2(0,T;H^{-1}(\mathbb{R}^2))}+\|\r^{q}_k\|_{L^2(0,T;H^1(\mathbb{R}^2))} \leq  C \qquad \textnormal{for any} \ \ q\geq m-\frac12\,.
\eeqs
\end{lemma}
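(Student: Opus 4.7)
The strategy is to mirror the proof of Lemma \ref{lem.bounds} but for the un-regularized equation \eqref{KS} and for the time-shifted densities $\rho_k(t,\cdot) = \rho(t+t_k,\cdot)$, with the essential new requirement that every constant be independent of the shift index $k$. The crucial new input compared with Lemma \ref{lem.bounds} is that Theorem \ref{thm.ex} supplies both a \emph{global-in-time} $L^\infty$ bound for $\rho$ and two-sided bounds $\mathcal{E}_* \leq \mathcal{E}[\rho](t) \leq \mathcal{E}[\rho_0]$, so the dissipation integral $\int_0^\infty \mathcal{D}[\rho]\,d\tau$ is finite.

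The pointwise bounds on $\rho_k$ are immediate: conservation of mass gives the $L^\infty_t L^1_x$ estimate, and the $L^\infty$ bound is precisely the global-in-time estimate of Theorem \ref{thm.ex}; both are uniform in $k$. Once these are in hand, the $L^\infty$ bound on $\nabla\mathcal{N}*\rho_k$ follows by the standard near/far-field splitting already carried out in \eqref{estimconv}, and the $L^2$ bound on $\sqrt{\rho_k}\,\nabla\mathcal{N}*\rho_k$ is a direct consequence:
\[
\|\sqrt{\rho_k}\,\nabla\mathcal{N}*\rho_k\|_{L^2((0,T)\times\mathbb{R}^2)}^2 \leq MT\,\|\nabla\mathcal{N}*\rho_k\|_{L^\infty((0,T)\times\mathbb{R}^2)}^2.
\]

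For the $L^2(H^1)$ bound on $\rho_k^q$ I would exploit the energy inequality \eqref{EI}: combining $\mathcal{E}[\rho](t) + \int_0^t \mathcal{D}[\rho]\,d\tau \leq \mathcal{E}[\rho_0]$ with $\mathcal{E}_* \leq \mathcal{E}[\rho](t)$ gives the key estimate
\[
\int_0^T \mathcal{D}[\rho_k](\tau)\,d\tau \;=\; \int_{t_k}^{t_k+T}\mathcal{D}[\rho](\tau)\,d\tau \;\leq\; \mathcal{E}[\rho_0] - \mathcal{E}_*,
\]
uniform in $k$ and $T$. Using the algebraic identity
\[
\tfrac{m}{m-1/2}\nabla \rho_k^{m-1/2} \;=\; \sqrt{\rho_k}\,\nabla h[\rho_k] + \sqrt{\rho_k}\,\nabla\mathcal{N}*\rho_k
\]
combined with the $L^2$ bound on $\sqrt{\rho_k}\,\nabla\mathcal{N}*\rho_k$ just obtained, one controls $\nabla\rho_k^{m-1/2}$ uniformly in $L^2((0,T)\times\mathbb{R}^2)$, while the $L^2$-norm of $\rho_k^{m-1/2}$ itself is dominated by $\|\rho_k\|_\infty^{m-1}\sqrt{MT}$ via mass conservation. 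The extension to general $q \geq m-\tfrac12$ is immediate from $\nabla\rho_k^q = \frac{q}{m-1/2}\rho_k^{q-m+1/2}\nabla\rho_k^{m-1/2}$ and the $L^\infty$ bound on $\rho_k$. Finally, for the time-derivative I would use the weak formulation $\partial_t\rho_k = \nabla\cdot(\rho_k\nabla h[\rho_k])$ and estimate, for $\phi \in L^2(0,T;H^1(\mathbb{R}^2))$,
\[
\left|\int_0^T\!\!\!\int \partial_t\rho_k\,\phi\,dx\,dt\right| \;\leq\; \|\sqrt{\rho_k}\|_{L^\infty}\,\|\sqrt{\rho_k}\,\nabla h[\rho_k]\|_{L^2}\,\|\nabla\phi\|_{L^2} \;\leq\; C\,\|\phi\|_{L^2(0,T;H^1)}.
\]

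The main obstacle in this scheme is not technical but structural: uniformity of every constant in $k$ hinges on the global-in-time lower bound $\mathcal{E}_* \leq \mathcal{E}[\rho](t)$. For the regularized problem this was established via the logarithmic HLS inequality in \eqref{useHLS}--\eqref{bound.below}, and one must verify that this lower bound survives the limit $\varepsilon \to 0$ performed in Lemma \ref{lem.conv}; this is the only non-routine point, and once it is in place the estimates above fall into line.
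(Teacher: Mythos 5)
Your proposal is correct and follows essentially the same route as the paper, which simply transfers the estimates of Lemma \ref{lem.bounds} to \eqref{KS} via the energy inequality \eqref{EI}, noting that replacing $\NN_\e$ by $\NN$ leaves the estimate \eqref{estimconv} unchanged. The one point you flag as non-routine is in fact not an obstacle: the lower bound $\E_*\leq \E[\r](t)$ is already part of the statement of Theorem \ref{thm.ex}, and in any case the log-HLS argument of \eqref{useHLS}--\eqref{bound.below} applies directly to $\E[\r]$ for any density with the given mass and $L^1\cap L^\infty$ bounds, so no additional limiting argument is required.
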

\begin{proof}
The bounds are obtained from the energy-energy dissipation inequality \eqref{EI} in an analogous way to the ones given in Lemma \ref{lem.bounds} with the only difference concerning the replacement of $\NN_\e$ by $\NN$, which however makes no difference in the estimate \eqref{estimconv}.
\end{proof}

Using these estimates the following convergence properties can be derived in an analogous way to the proof of Lemma \ref{lem.conv}.
\begin{lemma}\label{lem.conv.long}
Let the assumptions of Lemma {\rm\ref{conv.weak.L1.long}} hold. Then, up to subsequences that we denote with the same
index,
\begin{eqnarray*}
&&\r_k\ \rightarrow  \ \overline{\rho} \qquad  \textnormal{in} \quad L^{q}((0,T)\times\mathbb{R}^2))\qquad \textnormal{for any} \ \
1\leq
q<\infty\,,\\
&&\r^{p}_{k}\ \rightharpoonup  \ \overline{\rho}^{p} \qquad  \textnormal{in} \quad L^2(0,T;H^1(\mathbb{R}^2))\qquad \mbox{for any } m-\tfrac12 \leq p<\infty,\\
&&\sqrt{\r_k}\ \rightarrow  \ \sqrt{\overline{\rho}} \qquad  \textnormal{in} \quad \ L^2((0,T)\times \mathbb{R}^2)\,.
\end{eqnarray*}
\end{lemma}
These convergence results from Lemma \ref{lem.conv.long} and Lemma \ref{conv.weak.L1.long} are sufficient to obtain the weak convergence of the nonlinearities $\sqrt{\r_k}\nabla h[\r_k]$ and $\r_k\nabla h[\r_k]$ in $L^2((0,T)\times \mathbb{R}^2)$, which allows to deduce the weak lower semicontinuity of the entropy dissipation term and to pass to the limit in the weak formulation of \eqref{KS} in the
same way as in the proof of Lemma \ref{lem.diss}.

\begin{lemma}\label{lem.diss.long} Let $\r_k$ and $\overline{\rho}$ be as in Lemma {\rm\ref{lem.conv.long}}. Then
\begin{eqnarray*}
&&
\sqrt{\r_k}\,\nabla h[\r_k]\rightharpoonup \sqrt{\overline{\rho}}\,\nabla h[\overline{\rho}] \qquad \textnormal{in} \ L^2((0,T)\times
\mathbb{R}^2;\R^{2})\\
&&
\r_k\,\nabla h[\r_k]\rightharpoonup \overline{\rho}\,\nabla h[\overline{\rho}] \qquad \textnormal{in} \ L^2((0,T)\times
\mathbb{R}^2;\R^{2})\,.
\end{eqnarray*}
\end{lemma}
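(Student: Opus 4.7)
The plan is to adapt the argument of Lemma \ref{lem.diss} to the long-time setting, replacing the regularized kernel $\NN_\e$ by $\NN$ itself and letting the subsequence index $k$ play the role of $\e$. Writing $\sqrt{\r_k}\,\na h[\r_k] = \tfrac{2m}{2m-1}\na \r_k^{m-1/2} - \sqrt{\r_k}\,\na\NN\ast\r_k$, the first summand converges weakly in $L^2((0,T)\times\R^2;\R^2)$ to $\tfrac{2m}{2m-1}\na\overline{\r}^{m-1/2}=\sqrt{\overline{\r}}\,\na(\tfrac{m}{m-1}\overline{\r}^{m-1})$ by Lemma \ref{lem.conv.long}. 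So the task reduces to identifying the weak limit of $\sqrt{\r_k}\,\na\NN\ast\r_k$ (which is bounded in $L^2$ by Lemma \ref{lem.bounds.long}) as $\sqrt{\overline{\r}}\,\na\NN\ast\overline{\r}$.

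To do this I would decompose the difference as
\[
\sqrt{\r_k}\,\na\NN\ast\r_k-\sqrt{\overline{\r}}\,\na\NN\ast\overline{\r} = (\sqrt{\r_k}-\sqrt{\overline{\r}})\,\na\NN\ast\r_k + \sqrt{\overline{\r}}\,\na\NN\ast(\r_k-\overline{\r}).
\]
The first term tends to $0$ \emph{strongly} in $L^2$ thanks to the strong convergence $\sqrt{\r_k}\to\sqrt{\overline{\r}}$ in $L^2$ from Lemma \ref{lem.conv.long} and the uniform bound $\|\na\NN\ast\r_k\|_{L^\infty}\leq C$ of Lemma \ref{lem.bounds.long}. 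For the second term, I would pair it against an arbitrary $\phi\in L^2((0,T)\times\R^2;\R^2)$, apply Fubini to rewrite the resulting double integral as $-\tfrac{1}{2\pi}\int_0^T\!\!\int_{\R^2}(\r_k-\overline{\r})(y)\bigl(\int_{\R^2}\sqrt{\overline{\r}(x)}\,\tfrac{x-y}{|x-y|^2}\cdot\phi(x)\,dx\bigr)dy\,dt$, and then split the inner integral at $|x-y|=1$ exactly as in the closing estimates of Lemma \ref{lem.diss}: the near-field piece is bounded by $C\|\phi\|_{L^2}$ using Cauchy--Schwarz and $\|\overline{\r}\|_{L^\infty}\leq C$, and the far-field piece by $C\|\phi\|_{L^2}$ using $|x-y|^{-1}\leq 1$ there together with the uniform $L^1\cap L^\infty$ bounds of Lemma \ref{lem.bounds.long}. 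Combined with the strong $L^2$ convergence $\r_k\to\overline{\r}$, this forces the second term to vanish.

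For the companion statement $\r_k\,\na h[\r_k]\rightharpoonup\overline{\r}\,\na h[\overline{\r}]$ in $L^2$, one rewrites $\r_k\,\na h[\r_k]=\na \r_k^m - \r_k\,\na\NN\ast\r_k$; the first piece converges weakly in $L^2$ to $\na\overline{\r}^{m}$ by Lemma \ref{lem.conv.long} with $p=m$, and the drift piece is treated by the same decomposition $(\r_k-\overline{\r})\,\na\NN\ast\r_k + \overline{\r}\,\na\NN\ast(\r_k-\overline{\r})$, where now the first summand tends to zero strongly in $L^2$ via the $L^\infty$ bound on $\na\NN\ast\r_k$ and the strong $L^2$ convergence of $\r_k$ to $\overline{\r}$, while the second is dispatched by the same duality and near/far split as above, with $\overline{\r}$ in place of $\sqrt{\overline{\r}}$, which only improves the bounds. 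The main obstacle throughout is the borderline singularity of the two-dimensional Newtonian kernel in the second summand; it is overcome because the uniform $L^\infty\cap L^1$ control of $\r_k$ and $\overline{\r}$ from Lemma \ref{lem.bounds.long}, together with $\phi\in L^2$, is just enough to close the estimates exactly as in Lemma \ref{lem.diss}.
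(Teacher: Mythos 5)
Your proposal is correct and follows essentially the route the paper intends: the paper gives no separate proof of this lemma but asserts that it follows from the convergences in Lemmas \ref{conv.weak.L1.long} and \ref{lem.conv.long} in the same way as Lemma \ref{lem.diss}, which is precisely what you carry out (with the regularized-kernel term absent, as it should be, and the gradient parts handled via Lemma \ref{lem.conv.long} with $p=m-\tfrac12$ and $p=m$). One point of precision: in the term $\sqrt{\overline{\rho}}\,\nabla\NN\ast(\rho_k-\overline{\rho})$, your far-field inner integral $\int_{|x-y|>1}\sqrt{\overline{\rho}(x)}\,|x-y|^{-1}|\phi(x)|\,dx$ is only bounded pointwise in $y$ by $M^{1/2}\|\phi(t,\cdot)\|_{L^2(\R^2)}$, not square-integrable in $y$, so it cannot be paired with the strong $L^2$ convergence of $\rho_k$ as you state; either pair it against $\rho_k-\overline{\rho}$ in $L^1$ (strong $L^1$ convergence is part of Lemma \ref{lem.conv.long}), or, as the paper does in Lemma \ref{lem.diss}, keep the factorization $\rho_k-\overline{\rho}=(\sqrt{\rho_k}-\sqrt{\overline{\rho}})(\sqrt{\rho_k}+\sqrt{\overline{\rho}})$ so that the weight $\sqrt{\rho_k}+\sqrt{\overline{\rho}}\in L^2_y$ supplies the missing decay and only the strong $L^2$ convergence of the square roots is used. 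With that minor adjustment your argument closes exactly as in the paper.
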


This enables us to close the proof of convergence towards the set of stationary states.

\begin{proof}[Proof of Theorem {\rm\ref{thm.long2}}]
Let us first notice that $\overline{\rho}\in L^\infty((0,T)\times\mathbb{R}^2)$ due to the first convergence in Lemma \eqref{lem.conv.long} and the uniform in time bound on the weak solutions in Theorem \ref{thm.ex}.
Due to the weak lower semicontinuity of the $L^2((0,T)\times\mathbb{R}^2)$-norm and the bound from below of the entropy as done in Proposition \ref{Prop1} implies that $ \D[\r_k]\rightarrow 0$ in $L^1(0,T)$, and as consequence
$$
\|\sqrt{\overline{\rho}}|\nabla h[\overline{\rho}]|\|_{L^2((0,T)\times\mathbb{R}^2)}^2\leq
\liminf_{k\rightarrow \infty}\|\sqrt{\r_k}|\nabla h[\r_k]|\|_{L^2((0,T)\times\mathbb{R}^2)}^2=0\,.
$$
Thus $\overline{\rho}$ solves
\begin{equation}\label{crucial}
\overline{\rho} |\nabla h[\overline{\rho}]|^2 =0 \qquad \textnormal{a.e. in } (0,T)\times \mathbb{R}^2\,.
\end{equation}
Moreover, due to the convergence properties in Lemmas \ref{lem.conv.long} and \ref{lem.diss.long} the limiting density $\overline{\rho}$ is a weak distributional solution to \eqref{KS} with test functions is $L^2(0,T;H^1(\R^2))$. Due to \eqref{crucial}, we get that $\overline{\rho} \nabla h[\overline{\rho}]=0$ a.e. in
$(0,T)\times \mathbb{R}^2$ and thus $\partial_t \overline{\rho}=0$ in $L^2(0,T;H^{-1}(\R^2))$. This yields that $\overline{\rho}(t,x) \equiv \overline{\rho}(x)$ does not depend on time.

Due to the convergence properties in Lemma \ref{lem.conv.long}, the uniform bound on the second moment \eqref{2ndunifb} together with Lemma \ref{lem.A.1} in the Appendix, we can deduce that $\overline{\rho}\in L^1((1+|x|^2)dx)$ and that $\r_k \rightarrow \overline{\rho}$ in $L^\infty(0,T;L^1(\mathbb{R}^2))$. In particular, $\overline{\rho}$ has mass $M$.

Putting together all the properties of $\overline{\rho}$ just proved together with the fact that $\nabla \overline{\rho}^m \in L^2(\R^2)$ due to Lemma \eqref{lem.conv.long}, we infer that $\overline{\rho}$ corresponds to a steady state of equation \eqref{KS} in the sense of Definition \ref{stationarystates}. The uniqueness up to translation of stationary states in Theorem \ref{thm.stst.long} shows that $\overline{\rho}$ is a translation of $\r_M$, and thus $\overline{\rho}\in\mathcal{S}$.
In fact, we have shown that the limit of all convergent sequences $\{\rho_k\}_{k\in\N}$ must be a translation of $\r_M$. This in turn shows that the set of accumulation points of any time diverging sequence belongs to $\mathcal{S}$.

Finally, in order to identify uniquely the limit, we take advantage of the translational invariance. We first remark that the center of mass of the initial data is preserved for all time due to the antisymmetry of $\nabla \mathcal{N}$. Due to Proposition \ref{2ndmbound}, all time diverging sequences have uniformly bounded second moments, thus since $\overline{\rho}$ is an accumulation point of a sequence $\rho_{k}$, by Lemma \ref{lem.A.1} we have
\begin{align*}
\left|\int_{\R^2}x\overline{\rho}(x)dx- x_{c} M\right|&=\left|\int_{\R^2}x(\overline{\rho}(x)-\rho_{k}(t,x))dx- \int_{\R^2}x(\rho_{k}(t,x)-\rho_{0}(x))dx\right|\\
&\leq \int_{\R^2}|x||\overline{\rho}(x)-\rho_{k}(t,x)|dx\leq M_{2}[|\rho_{k}(t)-\overline{\rho}|]^{1/2}\|\rho_{k}(t)-\overline{\rho}\|^{1/2}_{L^{1}(\R^{2})}\\
&\leq C \|\rho_{k}(t)-\overline{\rho}\|^{1/2}_{L^{\infty}(0,T;L^{1}(\R^{2}))}\rightarrow 0.
\end{align*}
Hence all accumulation points of the sequences have the same center of mass as the initial data. Then, all possible limits reduce to
the translation of $\r_M$ to the initial center of mass as desired.
\end{proof}


\section*{Appendix}

\begin{theorem}[Dunford-Pettis Theorem]\label{thm.dun}
Let $(X,\Sigma, \mu)$ be a probability space and ${\mathcal F}$ be a bounded subset of $L^1(\mu)$. Then ${\mathcal F}$ is equi-integrable if and only if
${\mathcal F}$ is a relatively compact subset in $L^1(\mu)$ with the weak topology.
\end{theorem}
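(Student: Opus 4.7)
The plan is to prove this classical result by reducing weak compactness to a sequential statement via the Eberlein-Šmulian theorem, and then treating each implication separately: the forward direction by a truncation and diagonal extraction argument, and the reverse direction by the Vitali-Hahn-Saks theorem.

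For the implication equi-integrable $\Rightarrow$ relatively weakly compact, I would fix a sequence $\{f_n\}\subset\mathcal{F}$ and, for each integer $M\geq1$, set $f_n^M := f_n\chi_{\{|f_n|\leq M\}}$. Because $\mu$ is a probability measure, $\{f_n^M\}_n$ is bounded in $L^2(\mu)$ (with bound $M$), so the reflexivity of $L^2$ yields a subsequence along which $f_n^M\rightharpoonup g^M$ weakly in $L^2$, hence weakly in $L^1$. A Cantor diagonal extraction produces one subsequence (still written $\{f_n\}$) for which $f_n^M\rightharpoonup g^M$ in $L^1$ for every $M$. Equi-integrability gives $\sup_n\|f_n-f_n^M\|_{L^1}\to 0$ as $M\to\infty$, and weak lower semicontinuity of the norm then forces $\{g^M\}$ to be Cauchy in $L^1$, with limit some $f\in L^1(\mu)$. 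Testing against any $\varphi\in L^\infty$ and splitting
\[
\Bigl|\int(f_n-f)\varphi\,d\mu\Bigr|\leq\Bigl|\int(f_n-f_n^M)\varphi\,d\mu\Bigr|+\Bigl|\int(f_n^M-g^M)\varphi\,d\mu\Bigr|+\Bigl|\int(g^M-f)\varphi\,d\mu\Bigr|,
\]
the first and third terms are made small by equi-integrability and choice of $M$, and the middle tends to $0$ by the weak convergence $f_n^M\rightharpoonup g^M$. Eberlein-Šmulian converts this sequential statement into the desired relative weak compactness.

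For the converse, I would argue by contradiction: suppose $\mathcal{F}$ is weakly relatively compact but not equi-integrable. Then there exist $\varepsilon>0$, $f_n\in\mathcal{F}$, and measurable $E_n$ with $\mu(E_n)\to 0$ such that $\int_{E_n}|f_n|\,d\mu\geq\varepsilon$. By Eberlein-Šmulian, after extraction, $f_n\rightharpoonup f$ weakly in $L^1$. Then $\nu_n(A):=\int_A f_n\,d\mu$ defines a sequence of signed measures, each absolutely continuous with respect to $\mu$, and weak convergence (tested against $\chi_A\in L^\infty$) gives $\nu_n(A)\to\int_A f\,d\mu$ for every $A\in\Sigma$. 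The Vitali-Hahn-Saks theorem then forces uniform absolute continuity: $\sup_n|\nu_n(A)|\to 0$ as $\mu(A)\to 0$. Since $\int_A |f_n|\,d\mu = \nu_n(A\cap\{f_n>0\})-\nu_n(A\cap\{f_n<0\})$, taking $A=E_n$ contradicts $\int_{E_n}|f_n|\,d\mu\geq\varepsilon$.

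The main obstacle is the converse direction, which genuinely requires the non-trivial Vitali-Hahn-Saks theorem (equivalently, Nikodym's uniform boundedness principle for measures); passing from weak convergence of $\{f_n\}$ to uniform absolute continuity of the corresponding measures is the non-routine step, as weak $L^1$ convergence by itself does not obviously propagate to $|f_n|$, which is what equi-integrability demands. The forward direction is more elementary once one accepts Eberlein-Šmulian, the only subtle point being to verify that the limit $f$ constructed via the $g^M$'s really is the weak $L^1$ limit of the original sequence.
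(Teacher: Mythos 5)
Your proof is correct, but there is nothing in the paper to compare it against: the Dunford--Pettis theorem is stated in the Appendix as a classical result, without proof, purely as a tool for the compactness argument in Lemma \ref{conv.weak.L1.long}. Your argument is a sound, standard textbook proof. The sufficiency direction (truncation $f_n^M=f_n\chi_{\{|f_n|\leq M\}}$, weak $L^2$ compactness of the truncations on a probability space, diagonal extraction, Cauchy property of the limits $g^M$ via weak lower semicontinuity, and the three-term estimate identifying the weak $L^1$ limit) is complete, and Eberlein--\v{S}mulian legitimately upgrades weak sequential compactness to relative weak compactness. The necessity direction via Vitali--Hahn--Saks applied to $\nu_k(A)=\int_A f_{n_k}\,d\mu$ is also correct, including the transfer from uniform absolute continuity of the signed measures $\nu_k$ to that of $|f_{n_k}|$ by splitting $A$ along $\{f_{n_k}>0\}$ and $\{f_{n_k}<0\}$. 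One small point worth making explicit: you use equi-integrability in the uniform-integrability form ($\sup_n\int_{\{|f_n|>M\}}|f_n|\,d\mu\to 0$) in the forward direction and in the uniform-absolute-continuity form (small integrals over sets of small measure) when negating it in the converse; these are equivalent here only because $\mathcal{F}$ is assumed bounded in $L^1(\mu)$ and $\mu$ is finite, so a one-line remark invoking that equivalence (Chebyshev in one direction, the splitting $\int_A|f|\leq\int_{\{|f|>M\}}|f|+M\mu(A)$ in the other) would close the loop. Your closing assessment is accurate: the genuinely non-elementary inputs are Eberlein--\v{S}mulian and Vitali--Hahn--Saks, which is exactly how the classical proofs (e.g.\ in Dunford--Schwartz) proceed.
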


\begin{lemma}\label{lem.A.1}
 Let $(f_\e)$ be a sequence of nonnegative functions uniformly bounded in the space $L^\infty(0,T;L^1_{log}(\mathbb{R}^2)\cap L^\infty(\mathbb{R}^2))$ with
 $\|f_\e\|_{L^1(\mathbb{R}^2)}=\|f\|_{L^1(\mathbb{R}^2)}=M$. Assume moreover that $f_\e\rightarrow f$ a.e. in $\mathbb{R}^2\times (0,T)$. Then, $f\in
 L^\infty(0,T;L^1_{log}(\mathbb{R}^2)\cap L^\infty(\mathbb{R}^2))$ and
\[f_\e \rightarrow f \qquad \textnormal{in} \ L^\infty(0,T;L^1(\mathbb{R}^2))\,.\]
The same result holds by replacing the logarithmic moment by the second moment, i.e., by replacing $L^1_{log}(\mathbb{R}^2)$ by $L^1((1+|x|^2)dx)$ everywhere.
\end{lemma}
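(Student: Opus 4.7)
The strategy is the classical combination of a uniform tail bound at infinity (coming from the log-moment estimate) with a dominated-convergence argument on bounded balls (supplied by the uniform $L^\infty$ bound), together with the crucial mass-conservation hypothesis $\|f_\e(t)\|_{L^1}=\|f(t)\|_{L^1}=M$, which prevents mass from escaping in the limit. First, by Fubini, $f_\e\to f$ a.e.\ in $\mathbb{R}^2\times(0,T)$ implies that for a.e.\ $t\in(0,T)$ one has $f_\e(t,\cdot)\to f(t,\cdot)$ a.e.\ in $\mathbb{R}^2$. The uniform bound $\|f_\e(t)\|_{L^\infty}\le C$ then passes to the limit pointwise, and Fatou applied to the integrand $\log(1+|x|^2)f_\e(t,x)$ yields $\int \log(1+|x|^2)f(t,x)\,dx\le C$, so that $f\in L^\infty(0,T;L^1_{\log}(\mathbb{R}^2)\cap L^\infty(\mathbb{R}^2))$.

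Next I would establish the uniform tail estimate: using the trivial bound $1\le \log(1+|x|^2)/\log(1+R^2)$ on $\{|x|>R\}$,
\[
\int_{|x|>R} f_\e(t,x)\,dx \;\le\; \frac{1}{\log(1+R^2)}\int_{\mathbb{R}^2} f_\e(t,x)\log(1+|x|^2)\,dx\;\le\; \frac{C}{\log(1+R^2)},
\]
uniformly in $\e$ and $t$, and the same bound applies to $f$ by the previous step. On the bounded set $B_R$, the envelope $|f_\e(t,\cdot)-f(t,\cdot)|\le 2C$ together with a.e.\ convergence in the product space yields, by dominated convergence on $(0,T)\times B_R$, that $f_\e\to f$ in $L^1((0,T)\times B_R)$. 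Combined with the tail bound applied with $R=R(\eta)$ chosen so that $\int_{|x|>R}(f_\e+f)(t,x)\,dx<\eta/2$, this gives strong convergence in $L^1((0,T)\times\mathbb{R}^2)$. To recover the pointwise-in-$t$ statement, I would invoke the Brezis--Lieb lemma: from $f_\e(t,\cdot)\to f(t,\cdot)$ a.e.\ and the uniform $L^1$ bound, $\|f_\e(t)\|_{L^1}-\|f_\e(t)-f(t)\|_{L^1}-\|f(t)\|_{L^1}\to 0$, and the mass equality forces $\|f_\e(t)-f(t)\|_{L^1}\to 0$ for a.e.\ $t$.

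The main delicate point will be upgrading this pointwise-in-$t$ convergence to the uniform convergence in $t$ claimed by the lemma: a.e.\ convergence alone, even with the uniform $L^\infty$ and tail bounds, cannot in general deliver a strong $L^\infty(0,T;L^1)$ statement without some form of equi-continuity in time. In the intended applications $f_\e=\rho_\e$ is a sequence of (regularised or time-translated) solutions whose time derivative is controlled uniformly in $L^2(0,T;H^{-1}(\mathbb{R}^2))$, and this provides the missing equi-continuity via an Arzel\`a--Ascoli argument in a weak topology, closing the argument. The second-moment version is identical in spirit: replace $\log(1+|x|^2)$ by $1+|x|^2$ throughout, using $1\le (1+|x|^2)/(1+R^2)$ in the tail estimate, while all other steps go through unchanged.
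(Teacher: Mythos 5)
Your positive steps are correct, but they take a different route from the paper and they stop short of the stated conclusion. The paper's proof does not use Brezis--Lieb or the mass identity at all: it first gets $\sup_{(0,T)}\int f^m\,dx\le C$ by Fatou (interpolating the uniform $L^1$ and $L^\infty$ bounds), then truncates, $g_\eps=\min\{f_\eps,L\}$, so that the untruncated parts are controlled uniformly in $t$ and $\eps$ by the Chebyshev-type bound $\int_{\{f_\eps\ge L\}\cap B_R(0)}(f_\eps-L)\,dx\le L^{1-m}\int f_\eps^m\,dx$, asserts $\sup_{(0,T)}\int_{B_R(0)}|g_\eps-g|\,dx\le C L^{1-m}$ for small $\eps$ ``by dominated convergence'', and finishes with exactly your tail estimate $\int_{\{|x|>R\}}|f_\eps-f|\,dx\le C/\ln(1+R^2)$, letting $L,R\to\infty$. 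Your route (dominated convergence on $(0,T)\times B_R(0)$ giving space-time $L^1$ convergence, then Brezis--Lieb together with $\|f_\eps(t)\|_{L^1}=\|f(t)\|_{L^1}=M$ to get convergence for a.e.\ $t$) is sound as far as it goes, though the mass identity is not really needed for the space-time statement: the logarithmic-moment tail bound already rules out loss of mass.

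Where you stop --- upgrading a.e.-in-$t$ (or $L^1$-in-time) convergence to the claimed $L^\infty(0,T;L^1(\mathbb{R}^2))$ convergence --- is precisely the step the paper hides in the phrase ``by dominated convergence'': a.e.\ convergence on the product space plus the uniform bounds does not make $\sup_{t\in(0,T)}\int_{B_R(0)}|g_\eps-g|\,dx$ small. Your scepticism is justified: take two fixed bounded, compactly supported profiles $\phi\neq\psi$ with the same mass, and set $f_\eps(t,\cdot)=\psi$ for $t\in(1/2,1/2+\eps)$ and $f_\eps(t,\cdot)=\phi$ otherwise; every hypothesis of the lemma holds with $f=\phi$, yet $\esssup_{t}\|f_\eps(t)-f(t)\|_{L^1(\mathbb{R}^2)}=\|\psi-\phi\|_{L^1(\mathbb{R}^2)}$ for all $\eps$. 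So the uniform-in-time conclusion genuinely uses information beyond the stated hypotheses; in the paper's applications it is supplied by the time regularity of the approximating solutions (e.g.\ $\partial_t\rho_\eps$ bounded in $L^2(0,T;H^{-1}(\mathbb{R}^2))$, giving the equicontinuity in time you invoke), and your proposal is transparent about this, whereas the paper's proof asserts the sup-in-time smallness without it. In short: your argument is correct up to $L^1$-in-time and a.e.-in-$t$ convergence, it is genuinely different from the paper's truncation argument, and the gap you flag is real --- it sits, unacknowledged, in the paper's own proof and is closed only by the extra time-compactness available where the lemma is actually used.
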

\begin{proof} A similar argument was used in the proof of Proposition 2.1  in \cite{HJ}. First observe that by the Fatou lemma, for any $m>1$
$$
  \sup_{(0,T)}\int f^m dx
  = \sup_{(0,T)}\int\lim_{\e\rightarrow 0}f^m_\e dx
  \le \liminf_{\e\rightarrow 0}\sup_{(0,T)}\int f^m_\e dx \le C.
$$
Let now $L>1$ and $g_\eps=\min\{f_\eps,L\}$. Then $g_\eps\to g=\min\{f,L\}$ a.e. Let moreover $R>1$, then by the  dominated convergence, it holds for
sufficiently small $\eps>0$ on the ball $B_R(0)$:
$$
  \sup_{(0,T)}\int_{B_R(0)}|g_\eps-g|dx \le \frac{C}{L^{m-1}}.
$$
and we obtain
\begin{align*}
  \sup_{(0,T)}\int_{B_R(0)}|f_\eps-f|dx
  &\le \sup_{(0,T)}\int_{B_R(0)}|f_\eps-g_\eps|dx
  + \sup_{(0,T)}\int_{B_R(0)}|g_\eps-g|dx
  + \sup_{(0,T)}\int_{B_R(0)}|g-f|dx \\
  &\le \sup_{(0,T)}\int_{\{f_\eps\ge L\}\cap B_R(0)}(f_\eps-L)dx + \frac{1}{L^{m-1}}
  + \sup_{(0,T)}\int_{\{f\ge L\}\cap B_R(0)}(f-L)dx \\
  &\le \sup_{(0,T)}\int_{B_R(0)}\frac{f_\eps^m}{L^{m-1}}dx
  + \frac{C}{L^{m-1}}
  + \sup_{(0,T)}\int_{B_R(0)}\frac{f^m }{L^{m-1}}dx
  \le \frac{3C}{L^{m-1}}.
\end{align*}
Using additionally the confinement of mass from the bound on the log-moment, we obtain
\[\sup_{(0,T)}\int_{\{|x|>R\}}|f_\eps-f|dx\leq \int_{\mathbb{R}^2}\frac{\corr{\log}(1+|x|^2)}{\corr{\log}(1+R^2)}|f_\eps-f|dx\leq \frac{C}{\corr{\log}(1+R^2)}\rightarrow 0 \qquad
\textnormal{as} \quad R\rightarrow \infty.\]
Since $L>1$ is arbitrary and $m>1$, this shows that $f_\eps\to f$ strongly in
$L^\infty(0,T;L^1(\mathbb{R}^2))$. The proof in case we replace $L^1_{log}(\mathbb{R}^2)$ by $L^1((1+|x|^2)dx)$ is done analogously.
\end{proof}

For the proof of the following Dubinskii Lemma we refer to \cite{CHJ} or Theorem 12.1 in \cite{L}:
\begin{lemma}\label{lem-Dub}
Let $\Omega\subset \mathbb{R}^2$ be bounded with $\partial \Omega \in C^{0,1}$ and let $\{f_\eps\}$, $0<\eps<1$, satisfy
\[\|\pa_tf_\eps\|_{L^1(0,T;(H^{s}(\Omega))')}+\|f_\eps^p\|_{L^q(0,T;H^1(\Omega))}\leq C,\]
for some $p\geq 1$, $q\geq 1$ and $s\geq 0$. Then $\{f_\eps\}$ is relatively compact in $L^{pl}(0,T;L^r(\Omega))$ for any $r<\infty$ and $l<q$.
\end{lemma}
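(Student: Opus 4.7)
The plan is to apply the classical Aubin--Lions--Dubinskii compactness scheme, whose essential difficulty here is a mismatch between the available regularities: spatial regularity is given only for $f_\eps^p$ (through the $L^q(0,T;H^1(\Omega))$ bound), while time regularity is given only for $f_\eps$ (through the bound on $\partial_t f_\eps$ in $L^1(0,T;(H^s(\Omega))')$). My strategy is to first establish strong compactness of $\{f_\eps^p\}$ in $L^q(0,T;L^r(\Omega))$ and then transfer this compactness back to $\{f_\eps\}$, upgrading the time-integrability exponent with the aid of Sobolev embedding.

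For the spatial part, since $\Omega\subset\mathbb{R}^2$ is bounded with Lipschitz boundary, Rellich--Kondrachov gives that $H^1(\Omega)$ embeds compactly into $L^r(\Omega)$ for every $r<\infty$. Combined with the hypothesis, this yields precompactness of $\{f_\eps^p(t,\cdot)\}$ in $L^r(\Omega)$ for a.e.\ $t$ and a uniform $L^q(0,T;L^r(\Omega))$ bound. For the time part I would verify the Kolmogorov--Riesz--Fr\'echet criterion in the form
\[
\int_0^{T-h}\bigl\|f_\eps^p(t+h,\cdot) - f_\eps^p(t,\cdot)\bigr\|_{L^r(\Omega)}^q\, dt \to 0
\]
uniformly in $\eps$ as $h\to 0$. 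The formal chain-rule identity
\[
f_\eps^p(t+h,\cdot) - f_\eps^p(t,\cdot) = p\int_t^{t+h} f_\eps^{p-1}(s,\cdot)\,\partial_s f_\eps(s,\cdot)\, ds
\]
reduces this to a dual pairing of $f_\eps^{p-1}$ with $\partial_t f_\eps\in L^1(0,T;(H^s(\Omega))')$. Sobolev embedding applied to the $L^q H^1$ bound furnishes uniform $L^{pq}(0,T;L^\sigma(\Omega))$ bounds on $f_\eps$ for every $\sigma<\infty$, hence uniform bounds on $f_\eps^{p-1}$ in $L^{pq/(p-1)}(0,T;L^{\sigma/(p-1)}(\Omega))$; pairing these against $\partial_t f_\eps$ in a suitably regular space closes the temporal equicontinuity estimate.

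Once both ingredients are in place, the Aubin--Lions lemma (or equivalently Theorem 12.1 of Lions) yields strong compactness of $\{f_\eps^p\}$ in $L^q(0,T;L^r(\Omega))$. Extracting a subsequence, one obtains $f_\eps^p\to g$ strongly in $L^q(0,T;L^r(\Omega))$ and a.e.\ on $(0,T)\times\Omega$ for some nonnegative $g$; continuity of $y\mapsto y^{1/p}$ then yields $f_\eps\to g^{1/p}$ pointwise a.e. The uniform $L^{pq}(0,T;L^\sigma(\Omega))$ bounds on $f_\eps$ produce equi-integrability of $\{|f_\eps|^{pl}\}$ on $(0,T)\times\Omega$ for any $l<q$, and Vitali's convergence theorem upgrades the a.e.\ convergence to strong convergence in $L^{pl}(0,T;L^r(\Omega))$.

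The main obstacle is the time-regularity step: rigorously justifying the chain-rule identity above and converting the weak-negative-Sobolev time regularity of $f_\eps$ into a genuine time-equicontinuity estimate on $f_\eps^p$. For $1\leq p<2$ one must additionally handle the non-smoothness of $y\mapsto y^{p-1}$ near zero, which will require approximating $y^p$ by a sequence of smooth increasing functions $F_\delta(y)$ with $F_\delta'(y)$ bounded, applying the chain rule to $F_\delta(f_\eps)$, passing to the limit $\delta\to 0$ using the uniform bounds, and only then letting $h\to 0$ in the equicontinuity estimate.
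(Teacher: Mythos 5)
There is a genuine gap, and it sits exactly at the step you yourself flag as the main obstacle: the time-equicontinuity estimate for $f_\varepsilon^p$. Your plan is to write $f_\varepsilon^p(t+h)-f_\varepsilon^p(t)=p\int_t^{t+h}f_\varepsilon^{p-1}\partial_s f_\varepsilon\,ds$ and interpret the right-hand side as a duality pairing with $\partial_t f_\varepsilon\in L^1(0,T;(H^{s}(\Omega))')$. But that pairing is only defined against test functions lying in $H^{s}(\Omega)$, so you would need a uniform bound on $f_\varepsilon^{p-1}(s,\cdot)$ in $H^{s}(\Omega)$ — and the hypotheses give nothing of the sort. The only spatial regularity available is one derivative of $f_\varepsilon^p$; since $\nabla f_\varepsilon^{p-1}=\frac{p-1}{p}\,f_\varepsilon^{-1}\nabla f_\varepsilon^p$, this degenerates wherever $f_\varepsilon$ vanishes (and in the intended application $f_\varepsilon=\rho_\varepsilon$ is compactly supported, so it does vanish), while for $s>1$ no power of $f_\varepsilon$ has the required smoothness at all — note the lemma is stated for an arbitrary $s\ge 0$, and larger $s$ makes $(H^s)'$ bigger, i.e.\ the time information weaker. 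Your proposed regularization $F_\delta$ with $F_\delta'$ bounded cures the singularity of $y\mapsto y^{p-1}$ at $y=0$, but not the real problem: boundedness of $F_\delta'(f_\varepsilon)$ does not put it in $H^{s}(\Omega)$, and $\nabla F_\delta'(f_\varepsilon)=F_\delta''(f_\varepsilon)\nabla f_\varepsilon$ involves $\nabla f_\varepsilon$ itself, which is not controlled. So the Kolmogorov-type estimate for $f_\varepsilon^p$ cannot be closed from the stated hypotheses, and with it the strong compactness of $\{f_\varepsilon^p\}$, the a.e.\ convergence, and the final Vitali step all collapse. (The remaining ingredients — Rellich–Kondrachov in 2D, the $L^{pq}(0,T;L^\sigma)$ bounds on $f_\varepsilon$, and the equi-integrability/Vitali upgrade — are fine.)

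For comparison, the paper does not prove this lemma at all: it invokes the Dubinskii compactness theorem (Théorème 12.1 in Lions' book, or the version in the cited Carrillo–Hittmeir–J\"ungel paper). The standard argument never differentiates $f_\varepsilon^p$ in time and never pairs $\partial_t f_\varepsilon$ with nonlinear functions of $f_\varepsilon$. Instead, one measures time translates of $f_\varepsilon$ itself in the weak norm, using only
\begin{equation*}
\int_0^{T-h}\|f_\varepsilon(t+h)-f_\varepsilon(t)\|_{(H^{s}(\Omega))'}\,dt\le h\,\|\partial_t f_\varepsilon\|_{L^1(0,T;(H^{s}(\Omega))')},
\end{equation*}
and combines this with the fact that the cone $\{u\ge 0:\ \|u^p\|_{H^1(\Omega)}\le R\}$ is relatively compact in $L^{rp}(\Omega)$ (hence $u$ is confined to a compact set of $L^{rp}$, with $L^{rp}(\Omega)\hookrightarrow (H^{s}(\Omega))'$). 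The delicate point, which is precisely why Dubinskii's theorem rather than the classical Aubin–Lions/Ehrling argument is needed, is that differences $u-v$ leave this nonlinear cone, so the interpolation step must be adapted; this is what the cited theorem supplies. If you want a self-contained proof, you should follow that route rather than the chain-rule/duality one.
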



\section*{Acknowledgments}
\small{JAC was partially supported by the Royal Society by a Wolfson Research Merit Award and the EPSRC grants EP/K008404/1 and  EP/P031587/1. SH acknowledges support by the Austrian Science Fund via the Hertha-Firnberg project T-764, and the previous funding by the Austrian Academy of Sciences \"OAW via the New Frontiers project NST-0001. BV is partially supported by the INDAM-GNAMPA project 2015 ``\textit{Propriet\`a qualitative di soluzioni di equazioni ellittiche e paraboliche}'' and by This work has been partially supported by GNAMPA of
the Italian INdAM (National Institute of High Mathematics) and by ``Programma triennale della Ricerca
dell'Universit\`{a} degli Studi di Napoli ``Parthenope'' - Sostegno alla ricerca individuale 2015-2017'' (ITALY). YY was partially supported by the NSF grant DMS-1565480 and DMS-1715418. YY wants to thank Almut Burchard for a helpful discussion.}


\end{document}